\documentclass[11pt, letterpaper]{amsart}

\oddsidemargin0.25in
\evensidemargin0.25in
\textwidth6.00in
\topmargin0.00in
\textheight8.50in

\renewcommand{\Re}{\operatorname{Re}}
\renewcommand{\Im}{\operatorname{Im}}

\newcommand{\defeq}{\stackrel{\rm{def}}{=}}

\usepackage{amssymb}
\usepackage{amsthm}
\usepackage{amsxtra}
\usepackage{mathrsfs}
\usepackage{graphicx}
\usepackage{color}
\usepackage{inputenc}

\newcommand{\R}{\mathbb R}
\newcommand{\C}{\mathbb C}

\newcommand{\Z}{\mathbb Z}


\newtheorem{theorem}{Theorem}[section]

\newtheorem{proposition}{Proposition}[section]
\newtheorem{lemma}[proposition]{Lemma}

\newtheorem{claim}[theorem]{Claim}
\theoremstyle{remark}
\newtheorem{remark}[proposition]{Remark}

\numberwithin{equation}{section}

\ifx\pdfoutput\undefined
  \DeclareGraphicsExtensions{.pstex, .eps}
\else
  \ifx\pdfoutput\relax
    \DeclareGraphicsExtensions{.pstex, .eps}
  \else
    \ifnum\pdfoutput>0
      \DeclareGraphicsExtensions{.pdf}
    \else
      \DeclareGraphicsExtensions{.pstex, .eps}
    \fi
  \fi
\fi

\begin{document}
	\title[]
	{Threshold solutions for the Hartree equation}
	
\author[Anudeep K. Arora]{Anudeep Kumar Arora}
\address{Department of Mathematics, Statistics, and
	Computer Science, University of Illinois at Chicago, Chicago, IL, USA}
\curraddr{}
\email{anudeep@uic.edu}
\thanks{} 

\author[Svetlana Roudenko]{Svetlana Roudenko}
\address{Department of Mathematics \& Statistics\\
	Florida International University,  Miami, FL, USA}
\curraddr{}
\email{sroudenko@fiu.edu}
\thanks{}
	
	
	
	\date{}

	\begin{abstract}
		We consider the focusing $5$d Hartree equation, which is $L^2$-supercritical, with finite energy initial data, and investigate the solutions at the mass-energy threshold. We establish the existence of special solutions following the work of Duyckaerts-Roudenko \cite{DR10} for the $3$d focusing cubic nonlinear Schr\"odinger equation (NLS). In particular, apart from the ground state solution $Q$, which is global but non-scattering, there exist special solutions $Q^+$ and $Q^-$, which in one time direction approach $Q$ exponentially, and in the other time direction $Q^+$ blows up in finite time and $Q^-$ exists for all time, exhibiting scattering behavior. We then characterize all radial threshold solutions either as scattering and blow up solutions in both time directions (similar to the solutions under the mass-energy threshold, see Arora-Roudenko \cite{AKAR1}), or as the special solutions described above. To obtain the existence and classification result, in this paper we perform a thorough and meticulous investigation of the spectral properties of the linearized operator associated to the Hartree equation.
	\end{abstract}

\maketitle

	\section{Introduction}
\noindent
We consider the focusing Hartree equation in 5d,
\begin{align}\label{H}
\begin{cases}
iu_t + \Delta u  + \left(\frac{1}{|x|^{3}}\ast |u|^2\right) u = 0, \quad t \in \R, ~ x \in \R^5, \\
u(x,0)=\,u_0(x)\in H^1(\R^5).
\end{cases}
\end{align}
The Hartree equation \eqref{H} can be considered as a classical limit of a field equation describing a quantum mechanical non-relativistic many-boson system interacting through a two body potential. Lieb \& Yau \cite{LY87} mention it in a context of developing theory for stellar collapse and a special case with $\frac{1}{|x|}$ in 3D is referred to as the Coulomb potential, which goes back to the work of Lieb \cite{Lieb77}. Unlike the standard NLS with pure nonlinearity $|u|^pu$, the distinct feature of Hartree equation is that it models systems with long-range interactions. 

The Hartree equation \eqref{H} is locally wellposed in $H^1(\R^5)$, see \cite{GV80}, \cite{C03}. The solutions to \eqref{H}, during their lifespan, formally conserve the total energy 
\begin{equation}\label{energyH}
E[u] \defeq \frac{1}{2}\int_{\R^5}^{}|\nabla u|^2- \frac{1}{4}Z_H(u)  = E[u_0], \ \ \text{where}\ \ Z_H(u) \defeq \int_{\R^5}(|x|^{-3}\ast|u|^2)|u|^{2}\,dx,
\end{equation}
along with their mass and momentum,
\begin{equation}\label{mass+moment}
M[u]\defeq\int_{\R^5}^{}|u|^2\,dx = M[u_0],\ \quad P[u]\defeq\Im\left(\int_{\R^5}^{}\bar{u}\,\nabla u\,dx\right) = P[u_0].
\end{equation}
The solutions to \eqref{H} 
is invariant under the scaling
\begin{equation}\label{scale}
u_{\lambda}(x,t) = \lambda^2 u(\lambda x,\lambda^2 t),
\end{equation}
which identifies the criticality of the equation \eqref{H} as 
$\dot{H}^{1/2}$-critical. 
 The equation \eqref{H} admits solitary wave (global and non-scattering) solutions of the form $u(x,t)=e^{it}Q(x)$, where $Q$ solves the following nonlinear nonlocal elliptic equation
\begin{equation}\label{HQ}
	-Q + \Delta Q +\left(\frac{1}{|x|^{3}}\ast Q^{2}\right)Q=0.
\end{equation}
   We are interested in the ground state solution $Q$, i.e., positive, smooth, decaying at infinity solution of the equation \eqref{HQ}. The existence of solutions using a variational approach was first proved by Lieb \cite{Lieb77}, see also Lions \cite{Lions80} and a relatively recent work by Moroz \& Schaftingen \cite{MS13}. The existence result via ordinary differential equations approach was proved in \cite{Penrose98, TM99, ChSV08}. The regularity, decay asymptotics and symmetry of solutions have been shown in \cite{CCS12}, see also \cite{MZ10, MS13}. It is also known that in this case, the ground state is unique, see \cite{AKAR1} for dimension $2 < d < 6$ and the convolution of the form $\frac{1}{|x|^b}\ast |u|^2$ with $b=d-2$. The uniqueness proof goes back to Lieb \cite{Lieb77} for $d=3$ and in \cite{KLR09} for $d = 4$. We note that the uniqueness of the ground state is only known when the convolution is of the form $\frac{1}{|x|^{d-2}}\ast |u|^2$; uniqueness in a general case is not known.
   
In \cite{GW10}, the authors studied the dichotomy for global versus finite time solutions for \eqref{H} under the mass-energy threshold $M[u_0]E[u_0] < M[Q]E[Q]$ and showed $H^1(\R^5)$ scattering for the global solutions. In particular, depending on the size of the initial mass and gradient, the dichotomy holds, i.e., if $\|u_0\|_{L^2(\R^5)}\|\nabla u_0\|_{L^2(\R^5)} < \|Q\|_{L^2(\R^5)}\|\nabla Q\|_{L^2(\R^5)}$, then the solution will exist for all time and scatter in $H^1(\R^5)$; if $\|u_0\|_{L^2(\R^5)}\|\nabla u_0\|_{L^2(\R^5)} > \|Q\|_{L^2(\R^5)}\|\nabla Q\|_{L^2(\R^5)}$, then the solution will blow up in finite time. 

Our aim is to understand the dynamics of solutions to the Hartree equation
exactly at the
mass-energy threshold 
\begin{equation}\label{ME}
M[u_0]E[u_0] = M[Q]E[Q].
\end{equation}
The first result studying the behavior of solutions at the mass-energy threshold goes back to Duyckaerts \& Merle \cite{DM09} for the energy-critical focusing nonlinear Schr\"odinger equation (NLS)
\begin{align*}
\begin{cases}
iu_t + \Delta u  + |u|^{\frac{4}{d-2}} u = 0, \quad t \in \R, ~ x \in \R^d, ~~ d\in\{3,4,5\}, \\
u(x,0)=\,u_0(x)\in \dot{H}^1(\R^d).
\end{cases}
\end{align*}
This was followed by the result of Duyckaerts \& Roudenko \cite{DR10}, in which the authors described the behavior of solutions to the focusing $\dot{H}^{1/2}$-critical NLS in $3$d 
\begin{align*}
\begin{cases}
iu_t + \Delta u  + |u|^{2} u = 0, \quad t \in \R, ~ x \in \R^3, \\
u(x,0)=\,u_0(x)\in H^1(\R^3).
\end{cases}
\end{align*}
 Both of the above results exhibited a richer dynamics for the behavior of solutions, in particular, there exist two special solutions, $Q^+$ and $Q^-$. These special solutions in one time direction approach the soliton $Q$ in $H^1(\R^5)$, i.e., there exists $e_0>0$ such that
 \[
 \|Q^{\pm} - e^{it}Q\|_{H^1(\R^5)}\leq Ce^{-e_0t} \ \ \text{for}\ \ t\geq 0.
 \]
However, in the other time direction $Q^-$ scatters and $Q^+$ blows up in finite time. The existence and classification result of these special solutions is derived from the spectral properties of the linearized operator around the soliton $e^{it}Q$. These special solutions satisfy $M[Q^{\pm}] = M[Q]$ and $E[Q^{\pm}]=E[Q]$, but the gradient size is different: $\|\nabla Q^-\|_{L^2(\R^5)} < \|\nabla Q\|_{L^2(\R^5)}$ and $\|\nabla Q^+\|_{L^2(\R^5)} > \|\nabla Q\|_{L^2(\R^5)}$. For the extension of \cite{DR10} to all intercritical cases, see \cite{CFR22}. For a similar result on NLS with an obstacle, refer to \cite{DLR22}.

Our goal is to study the behavior of solutions to \eqref{H} at the mass-energy threshold \eqref{ME}, which to our best knowledge is an open problem, specifically, due to the nonlocal potential of convolution type and understanding spectral properties of the operators appearing because of this potential. For this purpose in this paper we consider this specific 5d $\dot{H}^{1/2}$-critical case as it allows us to develop the spectral theory of the linearized operators (see Section \ref{S:lin-oper}), which is one of the main ingredients to obtain the threshold characterization and solutions classification. The reason for the $5d$ is that the Hartree equation (with the convolution $|x|^{d-2}$) is mass-critical in $4d$ and energy-critical in $6d$. But more importantly, this is the case where the decay of the potential is sufficient to develop the spectral properties of the linearized operators.  

We characterize all radial threshold solutions and show that in this case the classification is similar to the NLS equation: either a solution (i) scatters in both time directions, or (ii) blows up  in both time directions, or (iii) behaves as $Q$ up to symmetries, or (iv) exhibits asymmetric behavior in different time directions: as either of the special solutions $Q^+$ and $Q^-$, more precisely, approaching (exponentially) $Q$ (the ground state
solution) in one time direction and in the other time direction behaving either as $Q^+$, which has a finite time of existence, or as $Q^-$, which exists for all time and scatters.

The first main result of this paper is to prove the existence of special solutions to \eqref{H} exactly at the critical mass-energy threshold. 
\begin{theorem}\label{mainthm1}
	There exist two radial solutions $Q^+$ and $Q^-$ of \eqref{H} such that 
	\begin{enumerate}
		\item[a.] $Q^\pm$ is defined at least in $[0,+\infty)$ with $M[Q^+] = M[Q] = M[Q^-]$, $E[Q^+] = E[Q] = E[Q^-]$ and there exists $e_0>0$ such that for all $t\geq 0$ 
		$$
		\|Q^{\pm} - e^{it}Q\|_{H^1(\R^5)} \leq Ce^{-e_0t},
		$$
		\item[b.] $\|\nabla Q^+\|_{L^2(\R^5)} > \|\nabla Q\|_{L^2(\R^5)}$ and $Q^+$ blows up in finite negative time,
		\item [c.] $\|\nabla Q^-\|_{L^2(\R^5)} < \|\nabla Q\|_{L^2(\R^5)}$ and $Q^-$ is globally defined and scatters for negative time.
	\end{enumerate}
\end{theorem}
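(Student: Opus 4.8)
The plan is to adapt the construction of Duyckaerts--Merle \cite{DM09} and Duyckaerts--Roudenko \cite{DR10}, the genuinely new difficulty being to carry every estimate through the nonlocal potential $|x|^{-3}\ast(\cdot)$; the spectral input needed is precisely what Section~\ref{S:lin-oper} supplies. First I would linearize around the standing wave: writing $u=e^{it}(Q+h)$ and splitting $h$ into real and imaginary parts turns \eqref{H} into $\partial_t h=\mathcal L h+\mathcal R(h)$, where $\mathcal L=\left(\begin{smallmatrix}0&L_-\\-L_+&0\end{smallmatrix}\right)$ with $L_\pm=-\Delta+1-(\text{nonlocal potentials built from }Q)$, and $\mathcal R(h)$ collects the quadratic and cubic nonlocal terms in $h$. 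From Section~\ref{S:lin-oper} I would take the spectral picture of $\mathcal L$ on radial functions: a single pair of simple real eigenvalues $\pm e_0$ with $e_0>0$ and eigenfunctions $\mathcal Y^\pm$, the generalized nullspace generated by the phase and scaling symmetries, and the rest of the spectrum purely imaginary with no embedded eigenvalues; in particular $je_0\notin\sigma(\mathcal L)$ for every integer $j\ge 2$.

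Next I would build the approximate solutions. For a real parameter $a$, set $h\approx\sum_{j\ge1}a^je^{-je_0t}V_j$ with $V_1=\mathcal Y^+$, and define the higher profiles recursively by $V_j:=(je_0-\mathcal L)^{-1}F_j$ for $j\ge 2$, where $F_j$ is the coefficient of $e^{-je_0t}$ in $\mathcal R(\sum_{i<j}a^ie^{-ie_0t}V_i)$; the operator $je_0-\mathcal L$ is invertible by the spectral picture. Each $F_j$ is a finite sum of nonlocal products of the exponentially decaying functions $Q,V_1,\dots,V_{j-1}$, and since convolution with $|x|^{-3}$ is well defined in $5$d and produces $|x|^{-3}$-tails that, after multiplication by a decaying factor, remain integrable and exponentially decaying, $F_j$ is smooth and exponentially decaying; as $(je_0-\mathcal L)^{-1}$ preserves exponentially weighted spaces (since $je_0$ lies at positive distance from the spectrum of the free operator and from $\sigma(\mathcal L)$), so is $V_j$. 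Truncating, the function $U^k_a(t):=e^{it}(Q+\sum_{j=1}^k a^je^{-je_0t}V_j)$ solves \eqref{H} up to an error of size $e^{-(k+1)e_0t}$ in $H^1$ and in the relevant Strichartz norms on $[t,\infty)$.

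The exact solutions are then produced by a fixed point. Fixing $k$ large and $a\in\{+1,-1\}$---any other $a\ne 0$ reduces to these by a time translation combined with a phase rotation, and $a=0$ gives back $e^{it}Q$---I would look for $u=U^k_a+w$ and solve the Duhamel equation for $w$ with the semigroup $e^{t\mathcal L}$ by a contraction in $\{\,w:\sup_{t\ge t_0}e^{(k+\frac12)e_0t}(\|w(t)\|_{H^1}+\text{Strichartz})<\infty\,\}$, decomposing $w$ along the unstable, stable and central $\mathcal L$-invariant subspaces and integrating the unstable component from $+\infty$ so that the growing exponential becomes a decaying one. The decisive ingredient---and, once Section~\ref{S:lin-oper} is granted, the main obstacle of the whole argument---is the boundedness of $e^{t\mathcal L}$ on the central subspace together with the corresponding dispersive and Strichartz estimates for the inhomogeneous term, which have to be established in the presence of the nonlocal convolution potential rather than the local potential of \cite{DR10}. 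This yields two solutions, which (with the labelling fixed below) we denote $Q^\pm$; one gets $\|Q^\pm(t)-e^{it}(Q+a e^{-e_0t}\mathcal Y^+)\|_{H^1}\lesssim e^{-2e_0t}$, hence $\|Q^\pm-e^{it}Q\|_{H^1}\le C e^{-e_0t}$ for $t\ge 0$, and since $M$ and $E$ are conserved and continuous on $H^1$, letting $t\to+\infty$ gives $M[Q^\pm]=M[Q]$ and $E[Q^\pm]=E[Q]$; a companion Gronwall argument shows that a solution converging to $e^{it}Q$ at the rate $e^{-e_0t}$ is, up to time translation and phase, one of $Q$, $Q^+$, $Q^-$, so that $Q^\pm$ does not depend on $k$ or $t_0$. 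This establishes (a).

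For (b) and (c) I would first expand, using the fixed-point bound, $\|\nabla Q^a(t)\|_{L^2}^2=\|\nabla Q\|_{L^2}^2+2a\,e^{-e_0t}c_0+O(e^{-2e_0t})$ with $c_0\ne0$ a constant determined by $Q$ and $\mathcal Y^+$, so that for $t$ large the sign of $\|\nabla Q^a(t)\|_{L^2}-\|\nabla Q\|_{L^2}$ equals that of $a$. Since $M[Q^a]E[Q^a]=M[Q]E[Q]$, the sharp Gagliardo--Nirenberg-type inequality for $Z_H$ prevents the continuous map $t\mapsto\|\nabla Q^a(t)\|_{L^2}\|Q^a\|_{L^2}$ from ever equalling $\|\nabla Q\|_{L^2}\|Q\|_{L^2}$ on the maximal interval---that would force $Q^a(t)$ to be a modulated ground state, hence $Q^a$ a modulated $Q$ by the uniqueness above, contradicting $a\ne0$---so the strict gradient inequality propagates to all times. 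Labelling $Q^+$ the solution with $\|\nabla Q^+(t)\|_{L^2}>\|\nabla Q\|_{L^2}$ throughout and $Q^-$ the other, part (b) follows, as in \cite{DR10}, from the radial (localized) virial/convexity identity: since $E[Q^+]=E[Q]$ and $\int|x|^2|e^{it}Q|^2$ is stationary, the virial functional is strictly concave along $Q^+$ while converging to a finite positive limit with vanishing derivative as $t\to+\infty$, which forces it to $-\infty$ in finite negative time---impossible unless $Q^+$ has already blown up. Part (c) follows because the uniform strict inequality $\|\nabla Q^-(t)\|_{L^2}<\|\nabla Q\|_{L^2}$ together with $E[Q^-]=E[Q]$ keeps $\|\nabla Q^-(t)\|_{L^2}$ bounded---hence $Q^-$ is globally defined---while scattering as $t\to-\infty$ comes from the radial scattering/rigidity theory at the mass--energy threshold developed in this paper: a global threshold solution with sub-threshold gradient whose trajectory is not precompact up to symmetries must scatter, and precompactness would, by the localized-virial rigidity argument, force $Q^-$ to be a modulated ground state, again contradicting $\|\nabla Q^-\|_{L^2}\ne\|\nabla Q\|_{L^2}$.
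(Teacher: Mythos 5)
Your proposal follows the paper's route in broad outline: linearize via $u=e^{it}(Q+h)$, draw the spectral picture of $\mathcal L$ from Section~\ref{S:lin-oper} (in particular that $je_0\notin\sigma(\mathcal L)$ for $j\geq 2$, which is exactly Lemma~\ref{spectrum}), build approximate solutions by inverting $je_0-\mathcal L$ inductively (this is Proposition~\ref{approx-solns}), close with a fixed-point in an exponentially weighted Strichartz space (Proposition~\ref{sp-solns}), and then settle the gradient inequalities and the asymmetric behavior in (b),(c) with the localized virial and the rigidity theory (Propositions~\ref{blow-up} and~\ref{main-sub}, plus the uniqueness Proposition~\ref{prop:unique}).

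There is, however, one genuine structural departure that would cost you significant extra work and is worth flagging. In the fixed-point step you propose to run Duhamel with the perturbed semigroup $e^{t\mathcal L}$, splitting $w$ along stable/unstable/central $\mathcal L$-invariant subspaces and singling out ``boundedness of $e^{t\mathcal L}$ on the central subspace together with dispersive and Strichartz estimates'' as the decisive new obstacle. The paper avoids all of this: it rewrites \eqref{lineq} as $ih_t+\Delta h-h=-S(h)$, moves every $Q$-dependent term of $\mathcal L$ into the right-hand side via $V(h)=(|x|^{-3}\ast Q^2)h+2(|x|^{-3}\ast(Qh_1))Q$, and iterates with the \emph{free} propagator $e^{i(t-s)(\Delta-1)}$ and free Strichartz estimates, integrating the entire unknown (not just an unstable component) from $+\infty$; the convergence of that integral is forced purely by the exponential weight $e^{(k+\frac12)e_0 t}$ built into the space $E(k,t_k)$ and the smallness of $V(h)$ on a short time interval (Lemma~\ref{R-V-est}). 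This sidesteps entirely the need for any spectral-subspace decomposition of $\mathcal L$ or any dispersive theory for the perturbed operator, which is precisely the kind of machinery you identified as the bottleneck; in this sense your route would solve a harder problem than is necessary.

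A smaller gap: for parts (b),(c) you write $\|\nabla Q^a(t)\|_{L^2}^2=\|\nabla Q\|_{L^2}^2+2a\,e^{-e_0t}c_0+O(e^{-2e_0t})$ and assert $c_0\neq 0$ without proof. This is true but needs an argument; you can get it from Section~\ref{S:lin-oper}. First, $\int Q\mathcal Y_1=0$ because $L_-Q=0$ and $L_-\mathcal Y_2=-e_0\mathcal Y_1$, so $\int Q\mathcal Y_1=-e_0^{-1}\int(L_-Q)\mathcal Y_2=0$. Then using $L_+Q=-2(|x|^{-3}\ast Q^2)Q$ and self-adjointness of $L_+$ together with $L_+\mathcal Y_1=e_0\mathcal Y_2$, one finds $c_0=\int\nabla Q\cdot\nabla\mathcal Y_1=-\tfrac{e_0}{2}\int Q\mathcal Y_2$. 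Finally $\int Q\mathcal Y_2\neq 0$: if it vanished, $Q$ would satisfy all the orthogonality conditions \eqref{eq:ortho1} and \eqref{eq:ortho3}, hence $Q\in G_\perp^y$, and Proposition~\ref{positivity} would give $\Phi(Q)>0$, contradicting $\Phi(Q)=-Z_H(Q)<0$ from \eqref{eq:LEatQ}. The paper itself leaves the sign determination largely implicit (it appears only as ``the parameter $A$ must be positive'' in Section~\ref{S:Uniq}), so your instinct to address it is sound, but the claim should be backed with something like the argument just sketched rather than asserted.
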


The second result of the paper concerns with the classification of all radial solutions to \eqref{H} exactly at the mass-energy threshold. While the analysis works for all solutions, it is our proof of coercivity for the linearized energy (Proposition \ref{positivity}) requires the radiality assumption. 
\begin{theorem}\label{mainthm2}
	Suppose $u(t)$ is a radial solution to \eqref{H} with $u_0\in H^1_{rad}(\R^5)$ satisfying \eqref{ME}. Then 
	\begin{enumerate}
		\item[a.] If $\|u_0\|_{L^2(\R^5)}\|\nabla u_0\|_{L^2(\R^5)} > \|Q\|_{L^2(\R^5)}\|\nabla Q\|_{L^2(\R^5)}$, then either $u(t)$
		blows-up in finite positive and negative time, or $u = Q^+$ up to the symmetries.  
		\item[b.] If $\|u_0\|_{L^2(\R^5)}\|\nabla u_0\|_{L^2(\R^5)} = \|Q\|_{L^2(\R^5)}\|\nabla Q\|_{L^2(\R^5)}$, then $u=Q$ up to the symmetries.
		\item[c.] If $\|u_0\|_{L^2(\R^5)}\|\nabla u_0\|_{L^2(\R^5)} < \|Q\|_{L^2(\R^5)}\|\nabla Q\|_{L^2(\R^5)}$, then $u(t)$ is globally defined and either it scatters in both time directions, or $u=Q^-$ up to the symmetries.     
	\end{enumerate} 
\end{theorem}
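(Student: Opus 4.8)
The plan is to follow the Duyckaerts--Roudenko scheme \cite{DR10} as adapted to the intercritical setting (cf.\ \cite{CFR22, AKAR1}), using the special solutions $Q^{\pm}$ from Theorem \ref{mainthm1} together with the coercivity of the linearized energy near $Q$. The first step is the variational characterization: if $u(t)$ is a threshold solution whose trajectory does \emph{not} approach the orbit of $Q$ (under the scaling, phase, and translation symmetries of \eqref{H}), then I would show there is a uniform distance $\delta_0>0$ from that orbit for all times in the lifespan, and deduce from the concentration-compactness / rigidity machinery already developed under the threshold in \cite{AKAR1} (applied now with a limiting-profile argument at the threshold) that the corresponding scattering/blowup dichotomy persists: the sign of $\|u_0\|_{L^2}\|\nabla u_0\|_{L^2} - \|Q\|_{L^2}\|\nabla Q\|_{L^2}$ is preserved and governs blowup in both directions (case a.\ alternative) versus global existence and scattering in both directions (case c.\ alternative). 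Case b.\ is immediate: when the gradient quantity equals that of $Q$, the refined Gagliardo--Nirenberg/virial identities force equality in the sharp constant, hence $u_0$ is a modulated rescaled copy of $Q$ and $u=Q$ up to symmetries.

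The heart of the matter is the remaining regime, where the trajectory of $u(t)$ \emph{does} get arbitrarily close to the orbit of $Q$ along some sequence of times. Here I would set up modulation theory: writing $u(t) = e^{i\theta(t)}\lambda(t)^{2}\bigl(Q + h(t)\bigr)(\lambda(t)x)$ near that orbit, with orthogonality conditions on $h$ chosen against the generalized kernel of the linearized operator $\mathcal{L}$ (the operator whose spectral analysis is carried out in Section \ref{S:lin-oper}), one derives modulation equations controlling $\dot\theta, \dot\lambda$ and an estimate of the form $\frac{d}{dt}(\text{something}) \gtrsim \|h\|_{H^1}^2 - (\text{error})$ using the coercivity Proposition \ref{positivity} on the radial subspace. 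The key dichotomy on $h$: either $\|h(t)\|_{H^1}\to 0$ (so $u$ converges exponentially to the orbit of $Q$), or the unstable/stable directions of $\mathcal L$ dominate and a convexity/monotonicity argument on the projection of $h$ onto the unstable eigenmode forces $\|h(t)\|_{H^1}$ to grow, contradicting the assumed closeness unless $u$ exits any neighborhood in one direction. One then compares $u$ with $Q^{+}$ or $Q^{-}$: a uniqueness statement saying the threshold solution exponentially approaching $Q$ in one time direction is \emph{unique up to symmetries} once the sign of the unstable-mode coefficient is fixed — this is proved by a Gronwall-type contraction on the difference of two such solutions in the $H^1$ topology, exploiting the exponential decay to kill the neutral modes and the spectral gap to close the estimate. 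The sign of $\|\nabla u_0\|_{L^2} - \|\nabla Q\|_{L^2}$ then selects $Q^{+}$ (case a.) or $Q^{-}$ (case c.).

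The step I expect to be the main obstacle is the modulation/convexity argument combined with the uniqueness of $Q^{\pm}$, because here the nonlocal potential $\tfrac{1}{|x|^{3}}\ast|u|^2$ genuinely complicates the error terms: the quadratic and higher remainders in the expansion of $Z_H$ around $Q$ are not pointwise but involve the convolution, so the bilinear/trilinear estimates controlling $\mathcal{N}(h)$ must be done in $H^1(\R^5)$ using Hardy--Littlewood--Sobolev and the decay of $Q$ established in \cite{CCS12}, and one must verify that these error terms are genuinely superlinear in $\|h\|_{H^1}$ with constants small enough to be absorbed by the spectral gap. A secondary subtlety is that Proposition \ref{positivity} only gives coercivity on the radial subspace, which is why Theorem \ref{mainthm2} is stated for radial data; propagating the orthogonality conditions and the coercivity together along the flow — i.e.\ checking that the modulation parameters stay in range and the quadratic form stays positive on the constrained space for all $t$ — requires care but is, modulo these nonlocal estimates, parallel to \cite{DR10}. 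Finally, assembling the pieces: closeness-to-orbit case gives (iii) or the $Q^{\pm}$ alternatives; the non-closeness case gives (i) or (ii) via \cite{AKAR1}; matching these against the three sign regimes of $\|u_0\|_{L^2}\|\nabla u_0\|_{L^2}$ yields exactly the trichotomy a.--c.
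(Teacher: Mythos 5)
Your proposal captures the right high-level architecture (modulation near $Q$, coercivity of the linearized energy, virial/rigidity in the regime away from $Q$, uniqueness of the exponentially-approaching solution), and this is indeed the DR10 route that the paper follows. But there are two concrete mismatches worth flagging.

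First, the modulation ansatz you write, $u(t)=e^{i\theta(t)}\lambda(t)^{2}(Q+h(t))(\lambda(t)x)$, carries a spatial-scaling parameter $\lambda(t)$. In the normalized setting $M[u]=M[Q]$, $E[u]=E[Q]$ (which one achieves by the scaling symmetry once and for all), the scale is already fixed: any further $\lambda(t)\neq 1$ would change the mass. The paper's Lemma \ref{modulation} accordingly modulates only by phase $\gamma$ and translation $X$, and then uses an \emph{amplitude} parameter $\beta(t)$ in \eqref{decomp} (not a dilation) to impose the orthogonality $\Re\int h\,\Delta Q=0$ needed in Proposition \ref{positivity}. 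Carrying an extra free scale parameter would overdetermine the system (there is no corresponding neutral mode left after normalization) and would have to be removed anyway; it does not invalidate the scheme, but it is not the right implicit-function-theorem setup and would complicate the modulation equations for no gain.

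Second, you lump cases (a) and (c) into a single ``concentration-compactness / rigidity at the threshold'' argument. In the paper these are handled by genuinely different mechanisms. Case (a) (gradient above $Q$) is resolved by a \emph{direct virial-concavity} argument, not compactness: for $E[u]=E[Q]$, $M[u]=M[Q]$ one has $V''(t)=-8\alpha(u(t))<0$, so if the solution is global forward in time then $V'>0$ for all $t$; combining this with a sharp Cauchy--Schwarz estimate $(V')^2\lesssim V\,(V'')^2$ (deduced from the sharp Gagliardo--Nirenberg inequality, as in the lemma inside Section \ref{subs:finvar}) yields $\int_t^\infty\alpha\lesssim e^{-ct}$ and hence exponential convergence to $Q$; the radial case uses a truncated variance and the error bound \eqref{bound-AR}. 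Profile decomposition and compactness of the trajectory (Lemma \ref{compact}) enter only for case (c) (gradient below $Q$, global, non-scattering), where one first derives precompactness modulo translation and then uses the localized virial plus mean-convergence of $\alpha$ (Lemma \ref{control2}) and control on $x(t)$ (Lemmas \ref{control1}, \ref{control3}) to get the exponential decay. So your ``uniform distance $\delta_0$ from the orbit / limiting-profile argument'' is not the mechanism in (a), and if you tried to run a compactness argument there you would immediately face the issue that the solution may blow up in one time direction, so the trajectory is not a priori bounded or precompact.

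Your sketch of the uniqueness step (exponential decay kills neutral modes, spectral gap closes a Gronwall/contraction estimate) is essentially Lemma \ref{lem:expo1} and Proposition \ref{prop:unique}, so that part is on target, as is your remark that the nonlocal convolution pushes the remainder estimates through HLS and the decay of $Q$, and that radiality is used because Proposition \ref{positivity} is only proved in the radial class. With the two corrections above --- drop the spurious dilation from the modulation ansatz, and separate the virial-concavity argument for (a) from the compactness-plus-virial argument for (c) --- your outline matches the paper's proof.
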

Here, the meaning of `$u=v$ up to the symmetries' is that $u$ and $v$ agree up to scaling, phase rotation, time translation and time reversal symmetries associated to the equation \eqref{H}. We emphasize that the treatment of the Hartree case is different from the nonlinear Schr\"odinger case, in particular, the nonlocal nonlinear term of convolution type presents various challenges in proving the spectral properties of the linearized operator associated to the Hartree equation \eqref{H}. For example, for challenges in the 4d model refer to  \cite{KLR09}, and in more general case, refer to \cite{Thesis}. Nevertheless, we are able to study the linearized operators via Newton's theorem (see Claim \ref{Tkernel}) along with spherical harmonics, which allow us to write the Newtonian potential in terms of Gegenbauer polynomials (see \eqref{gen-expansion} for details) and obtain characterization of their null spaces, see Proposition \ref{null-sp}. Next, we prove the existence of eigenfunctions for the linearized operators and using the theory of Bessel potentials we show the decay of eigenfunctions at infinity. This culminates with the proof of characterization of real spectrum of the linearized operator. We then proceed with finding the special solutions and completing both theorems above.
\smallskip
 
The paper is organized as follows: in Section \ref{S:lin-oper} we carefully investigate the spectral properties of the linearized operators coming from the linearization of the Schr\"odinger operator with convolution Hartree potential around the ground state. This allows as to investigate the special solutions $Q^+$ and $Q^-$ in Section \ref{S:special-solutions}. 
In Sections \ref{S:less} and \ref{S:more} we study the threshold radial solutions with gradients being either less or more of those of the ground state, respectively, and in Section \ref{S:Uniq} we obtain the uniqueness and finish the proof of Theorem \ref{mainthm2}.

\textbf{Acknowledgments.}
A.K.A. and S.R. thank Thomas Duyckaerts for helpful discussions and suggestions on this project,  
A.K.A. is especially grateful to Oussama Landoulsi for many enlightening conversations related to this topic. 
The research of A.K.A. and S.R. on this project was partially supported by the NSF grant DMS-1927258 (PI:Roudenko).
\smallskip

\textbf{Notation.} For a given $1\leq p <\infty$, we define the space
\[
L^p(\R^5) = \Big\{f\,:\,\R^5\rightarrow\R\,\,:\,\,\int_{\R^5}|f(x)|^p\,dx<\infty\Big\},
\]
equipped with the norm 
\[
\|f\|_{L^p(\R^5)}=\left(\int_{\R^5}|f(x)|^5\,dx\right)^{{1}/{5}}.
\]
If $p=\infty$, we denote by $L^{\infty}(\R^5)$ the space of essentially bounded functions. Define the Schwartz space $\mathcal{S}(\R^5)$, the space of the $C^{\infty}$-functions decaying at infinity, i.e.,
\[
\mathcal{S}(\R^5) = \{f\in C^{\infty}(\R^5)\,\,:\,\,x^\alpha\partial^\beta f\in L^{\infty}(\R^5)\,\,\text{for every}\,\,\alpha,\beta\in\Z_+^{5}\}.
\] 
The Fourier transform on $\R^5$ for $f\in\mathcal{S}(\R^5)$ is given by
\[
\hat{f}(\xi) = (2\pi)^{-5/2}\int_{\R^5}^{}e^{-ix\xi}f(x)\,dx.
\] 
Define the operator $|\nabla|^s$ for $s\geq 0$ as
\[
\widehat{|\nabla|^sf}(\xi) = |\xi|^s\hat{f}(\xi),
\]
which is also known as the Riesz operator of order $-s$, where $s>0$. This allows us to define the homogeneous Sobolev space 
\[
\dot{W}^{s,p}(\R^5) = \Big\{f\in\mathcal{S}'(\R^5)\,\, :\,\,|\nabla|^sf\in L^p(\R^5) \Big\},
\]
and the associated norm  
\[
\|f\|_{\dot{W}^{s,p}(\R^5)} = \||\nabla|^sf\|_{L^p(\R^5)},
\]
for $p \geq 1$.  
The inhomogeneous Sobolev space is defined by 
\[
W^{s,p}(\R^5) = \Big\{f\in\mathcal{S}'(\R^5)\,\, :\,\,(1+|\nabla|^2)^{\frac{s}{2}}f\in L^p(\R^5) \Big\},
\]
equipped with the norm 
\[
\|f\|_{W^{s,p}(\R^5)} = \|(1+|\nabla|^2)^{\frac{s}{2}}f\|_{L^p(\R^5)},
\]
where 
\[
\big((1+|\nabla|^2)^{\frac{s}{2}}f\big){}\,\widehat{} = \langle\xi\rangle^s\hat{f}.
\]
Here, $\langle\xi\rangle = (1 + |\xi|^2)^{1/2}$, often called the Japanese bracket. If $p=2$, we denote 
\[
\dot{H}^s(\R^5) = \dot{W}^{s,2}(\R^5) \quad \text{and} \quad H^s(\R^5) = W^{s,2}(\R^5).
\]
For any spacetime slab $I\times\R^5$, we use $L_t^qL_x^r(I\times\R^5)$ to denote the space of functions $u\: :\: I\times\R^5 \rightarrow \C$, whose norm is
$$
\|u\|_{L_I^qL_x^r(I\times\R^5)} = \left(\int_{I}^{}\left(\int_{\R^5}|f(x,t)|^r\,dx\right)^{{q}/{r}} \,dt\right)^{{1}/{q}} < \infty.
$$  
For the next notation, we need the following restriction on the range of pairs $(q, r)$ (as introduced in \cite{G}, also see \cite{HR08}): 
\begin{align}\label{range}
\left(\frac{2}{1-s}\right)^+\leq q\leq \infty,\quad \frac{10}{5-2s}\leq r \leq \left(\frac{10}{3}\right)^-,\,\,\text{if}\,\, N\geq 3.
\end{align}
Here, $n^+$ is a fixed number (slightly) greater than $n$ such that $\frac{1}{n}=\frac{1}{n^+}+\frac{1}{(n^+)'}$. Respectively, $n^-$ is a fixed number (slightly) less than $n$. 
We now introduce the $S(\dot{H}^s)$ notation for $u\: :\: I\times\R^5 \rightarrow \C$: 
\begin{equation*}
\|u\|_{S(\dot{H}^s;I)}=\sup\Big\{ \|u\|_{L_I^q \, L_x^r}: (q,r) ~\mbox{satisfies ~ in ~} \frac{2}{q} + \frac{N}{r} = \frac{5}{2} - s\,\,  ~\mbox{and}~ \eqref{range} \Big\}.
\end{equation*}
Lastly, we write $X \lesssim Y$ (or $Y \gtrsim X$) whenever $X \leq CY$ for some constant $C > 0$. Similarly, we will write $X\sim Y$ if $X\lesssim Y \lesssim X$.

\section{Linearized equation around $Q$}\label{S:lin-oper}
\noindent
We decompose the solution $u(x,t)$ to \eqref{H} around $Q$ via the following decomposition:
\begin{equation}\label{can-decomp}
	u(x,t) = e^{it}(Q(x)+h(x,t)).
\end{equation}
Here, $h=h_1+ih_2$ is a solution of the equation
\begin{equation}\label{lineq}
	h_t + \mathcal{L}h = R(h),\quad \mathcal{L} := \begin{pmatrix}
	0 & -L_-\\
	L_+ & 0
	\end{pmatrix},
\end{equation}
where from the equation \eqref{H} it follows
\begin{equation}\label{L+exp-H}
	L_+h_1 := -\Delta h_1 + h_1 - \big(|x|^{-3} \ast Q^2\big)h_1 - 2 \big(|x|^{-3} \ast (Qh_1)\big)Q,
\end{equation}
\begin{equation}\label{L-exp-H}
L_-h_2 := -\Delta h_2 + h_2 - \big(|x|^{-3} \ast Q^2\big)h_2
\end{equation}
and
\begin{equation}\label{Rexp-H}
	R(h) := i \Big[\big(|x|^{-3} \ast |h|^2\big)Q + 2 \big(|x|^{-3} \ast (Qh_1)\big)h + \big(|x|^{-3} \ast |h|^2\big)h\Big].
\end{equation}

\subsection{Spectral properties of $L_+$ and $L_-$}
We recall the Gagliardo-Nirenberg inequality of convolution type in the setting of \eqref{H}, or more precisely, for the potential term in \eqref{energyH}
\begin{equation}\label{H-GNC}
Z_H(u)=\int_{\R^5}\int_{\R^5}\frac{|u(x)|^2\,|u(y)|^2}{|x-y|^3}\,dx\,dy\leq C_{GN}\,\|\nabla u\|_{L^2(\R^5)}^{3}\,\|u\|_{L^2(\R^5)}
\end{equation}
with the sharp constant $C_{GN}$ (see section 4 in \cite{AKAR1} for details). 

From \cite{AKAR1} (see also \cite{MS13}), we know that $Q$ is the unique minimizer of the Gagliardo-Nirenberg functional given by
\begin{equation}\label{GN-func}
J(u)=\frac{\|\nabla u\|_{L^2(\R^5)}^3\|u\|_{L^2(\R^5)}}{Z_H(u)}
\end{equation}
for $u\in H^1(\R^5)$ with $u\not\equiv 0$ (note that a similar statement holds for other dimensions as well).
Therefore, 
\begin{equation}\label{est4}
	\frac{d^2}{d\epsilon^2}\bigg|_{\epsilon=0}J(Q+\epsilon h)\geq 0,\quad\forall h\in C_0^{\infty}(\R^5),
\end{equation}
where
$$
J(Q+\epsilon h)=\frac{\|\nabla Q + \epsilon\nabla h \|_{L^2(\R^5)}^3\|Q + \epsilon h\|_{L^2(\R^5)}}{Z_H(Q+\epsilon h)}.
$$
Calculating the first (variational) derivative of above expression, we obtain 
\begin{align*}
\frac{J'}{J} =&\,\, \frac{\int\left(Qh_1+\epsilon|h|^2\right)}{\|Q+\epsilon h\|_{L^2(\R^5)}^2} + \frac{3\int\left(\nabla Q\nabla h_1+\epsilon|\nabla h|^2\right)}{\|\nabla Q+\epsilon \nabla h\|_{L^2(\R^5)}^2}\\
&-\frac{\int\left(|x|^{-3}\ast |Q+\epsilon h|^{2}\right)(2Qh_1+2\epsilon|h|^2)+\int\left(|x|^{-3}\ast (2Qh_1+2\epsilon|h|^2)\right)|Q+\epsilon h|^{2}}{Z_H(Q+\epsilon h)}.
\end{align*}
Multiplying the above expression with $J$ and taking the second derivative 
\begin{align*}
\frac{J''}{J} = & \Bigg(\frac{\int\left(Qh_1+\epsilon|h|^2\right)}{\|Q+\epsilon h\|_{L^2(\R^5)}^2} + \frac{3\int\left(\nabla Q\nabla h_1+\epsilon|\nabla h|^2\right)}{\|\nabla Q+\epsilon \nabla h\|_{L^2(\R^5)}^2}\\
&-\frac{\int\left(|x|^{-3}\ast |Q+\epsilon h|^{2}\right)(2Qh_1+2\epsilon|h|^2)+\int\left(|x|^{-3}\ast (2Qh_1+2\epsilon|h|^2)\right)|Q+\epsilon h|^{2}}{Z_H(Q+\epsilon h)}\Bigg)^2\\
&+\frac{\int |h|^2}{\|Q+\epsilon h\|_{L^2(\R^5)}^2}-\frac{2\left(\int Qh_1+\epsilon|h|^2\right)^2}{\|Q+\epsilon h\|_{L^2(\R^5)}^4} + \frac{3\int |\nabla h|^2}{\|\nabla Q+\epsilon\nabla h\|_{L^2(\R^5)}^2}\\
&-\frac{6\left(\int\nabla Q\nabla h_1+\epsilon|\nabla h|^2\right)^2}{\|\nabla Q+\epsilon \nabla h\|_{L^2(\R^5)}^4} - \frac{\int\left(|x|^{-3}\ast 2|h|^2\right)|Q+\epsilon h|^2 + \int\left(|x|^{-3}\ast |Q+\epsilon h|^2\right)2|h|^2}{Z_H(Q+\epsilon h)}\\
&-\frac{2\int\left(|x|^{-3}\ast (2Qh_1+2\epsilon|h|^2)\right)(2Qh_1+2\epsilon|h|^2)}{Z_H(Q+\epsilon h)}\\
&+\frac{\left(\int\left(|x|^{-3}\ast |Q+\epsilon h|^2\right)(2Qh_1+2\epsilon|h|^2)+\int\left(|x|^{-3}\ast (2Qh_1+2\epsilon|h|^2)\right)|Q+\epsilon h|^2\right)^2}{(Z_H(Q+\epsilon h))^2}.
\end{align*} 
Expanding the square, gathering common terms, substituting $\epsilon = 0$ and using the following relations (outcome of Pohozaev identities, see for instance \cite[Section 4]{AKAR2})
\begin{equation}
\label{eq:pohid}
Z_H(Q)=4\|Q\|_{L^2(\R^5)}^2=\frac{4}{3}\|\nabla Q\|_{L^2(\R^5)}^2,
\end{equation}
we get
\begin{align*}
\frac{J''}{J} = & \Bigg(\frac{\int\left(Qh_1\right)}{\|Q\|_{L^2(\R^5)}^2} + \frac{3\int\left(\nabla Q\nabla h_1\right)}{\|\nabla Q\|_{L^2(\R^5)}^2}-\frac{\int\left(|x|^{-3}\ast Q^{2}\right)(2Qh_1)+\int\left(|x|^{-3}\ast (2Qh_1)\right)Q^{2}}{Z_H(Q)}\Bigg)^2\\
&+\frac{\int |h|^2}{\|Q\|_{L^2(\R^5)}^2}-\frac{2\left(\int Qh_1\right)^2}{\|Q\|_{L^2(\R^5)}^4} + \frac{3\int |\nabla h|^2}{\|\nabla Q\|_{L^2(\R^5)}^2} -\frac{6\left(\int\nabla Q\nabla h_1\right)^2}{\|\nabla Q\|_{L^2(\R^5)}^4}\\
& - \frac{\int\left(|x|^{-3}\ast 2|h|^2\right)Q^2 + \int\left(|x|^{-3}\ast Q^2\right)2|h|^2}{Z_H(Q)}-\frac{2\int\left(|x|^{-3}\ast (2Qh_1)\right)(2Qh_1)}{Z_H(Q)}\\
&+\frac{\left(\int\left(|x|^{-3}\ast Q^2\right)(2Qh_1)+\int\left(|x|^{-3}\ast (2Qh_1)\right)Q^2\right)^2}{(Z_H(Q))^2}.
 \end{align*}
Using the ground state equation \eqref{HQ}, integrating by parts and substituting $J=\frac{3\sqrt{3}}{4}\|Q\|_{L^2(\R^5)}^2$, we obtain
\begin{align*}
J''=\frac{3\sqrt{3}}{4}&\left(\mathcal{L}h,h\right) \\
&+\frac{3\sqrt{3}}{4\|Q\|_{L^2(\R^5)}^2}\left(- \left(\int Qh_1\right)^2 + \frac{1}{3} \left(\int\nabla Q\nabla h_1\right)^2 + 2 \left(\int Qh_1\right)\left(\int\nabla Q\nabla h_1\right)\right).
\end{align*}
Now \eqref{est4} is equivalent to 
\begin{equation}\label{L-non-neg}
\big(L_-h_2,h_2\big)\geq 0,
\end{equation}
and
\begin{equation*}
\big(L_+h_1,h_1\big)\geq \frac{1}{\|Q\|_{L^2(\R^5)}^2}\left( \left(\int Qh_1\right)^2 - \frac{1}{3} \left(\int\nabla Q\nabla h_1\right)^2 - 2 \left(\int Qh_1\right)\left(\int\nabla Q\nabla h_1\right)\right).
\end{equation*}
The last estimate can be reduced to
 \begin{equation}\label{L+non-neg}
\big(L_+h_1,h_1\big)\geq 0 \quad\text{if}\quad h_1\perp \Delta Q.
\end{equation} 
Our next goal is to identify the null-space of $L_+$ and $L_-$ (see Proposition \ref{null-sp} below). Before the proof of this Proposition, we obtain several claims and lemmas for the operator $L_+$. We start with the following claim:
\begin{claim}\label{-evalue}
	The operator $L_+$ has exactly one negative eigenvalue.
	\begin{proof}
		First we observe that 
		$$
		\big(Q,L_+Q\big)=-2Z_H(Q)<0,
		$$
		where $Z_H$ is defined in \eqref{energyH}. Recalling the min-max principle (\cite[Theorem XIII.1]{RS4}), we have that the operator $L_+$ has at least one negative eigenvalue. Let $\mu_j$ be the $j$th eigenvalue of $L_+$. Then again using the min-max principle one can deduce that there is exactly one function $\phi_1$ in the orthogonal space (one dimensional subspace of $L^2$) such that 
		$$
		\mu_2(L_+) = \sup_{\phi_1}\quad\inf_{h_1\perp\phi_1;\,\,\,\|h_1\|_{L^2}=1}\big(h_1,L_+h_1\big)\geq 0,
		$$
		since $L_+$ is nonnegative for $\phi_1=\Delta Q$. Thus, $0$ is the second eigenvalue, since we know from a direct computation that $L_+\nabla Q=0$. Therefore, there can be at most one negative eigenvalue. Combining this with the starting observation, we conclude that $L_+$ has exactly one negative eigenvalue.  
	\end{proof}
\end{claim}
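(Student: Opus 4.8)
The statement to prove is Claim \ref{-evalue}: the operator $L_+$ has exactly one negative eigenvalue.

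The plan is to combine a test-function computation with the min-max (Courant–Fischer) characterization of eigenvalues. First I would establish the existence of at least one negative eigenvalue by exhibiting an explicit direction of negativity: compute $(Q, L_+ Q)$. Using the definition \eqref{L+exp-H} of $L_+$, integration by parts, the ground state equation \eqref{HQ}, and the Pohozaev relations \eqref{eq:pohid}, one finds $(Q,L_+Q) = -2 Z_H(Q) < 0$. Since $L_+$ is a Schrödinger-type operator $-\Delta + 1 - V$ with $V = (|x|^{-3}\ast Q^2) + 2(|x|^{-3}\ast(Q\,\cdot\,))Q$ decaying at infinity, its essential spectrum is $[1,\infty)$, so the bottom of the spectrum is a negative eigenvalue; by the min-max principle $\mu_1(L_+) < 0$.

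Next I would show there is at most one negative eigenvalue, i.e. $\mu_2(L_+) \geq 0$. The key input is the coercivity estimate \eqref{L+non-neg}: $(L_+ h_1, h_1) \geq 0$ whenever $h_1 \perp \Delta Q$. By the min-max formula,
\[
\mu_2(L_+) = \sup_{\phi}\ \inf_{\substack{h_1 \perp \phi,\ \|h_1\|_{L^2}=1}} (h_1, L_+ h_1),
\]
and choosing the trial one-dimensional subspace $\phi = \Delta Q$ gives $\mu_2(L_+) \geq \inf_{h_1 \perp \Delta Q,\ \|h_1\|=1}(L_+h_1,h_1) \geq 0$. Hence at most one eigenvalue (counting multiplicity, via the fact that $\mu_2\geq 0$ forces the first eigenvalue to be simple) lies below zero. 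Finally, $L_+ \nabla Q = 0$ follows by differentiating \eqref{HQ} in each spatial variable (translation invariance of the elliptic equation), so $0 \in \sigma(L_+)$; combined with $\mu_2(L_+)\geq 0$ this pins down $\mu_2(L_+)=0$ and confirms that nothing strictly negative can appear beyond $\mu_1$. Putting the two halves together, $L_+$ has exactly one negative eigenvalue.

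The main obstacle is the nonlocal structure: justifying that $L_+$ is self-adjoint with essential spectrum $[1,\infty)$ requires controlling the convolution potential terms — in particular showing $(|x|^{-3}\ast Q^2)$ and the off-diagonal piece $2(|x|^{-3}\ast(Q\,h_1))Q$ are relatively compact perturbations of $-\Delta+1$ on $H^1_{rad}$ (or $H^1$), which uses the known decay and regularity of $Q$ together with the Hardy–Littlewood–Sobolev inequality. The rest is a clean application of min-max once the coercivity inequality \eqref{L+non-neg} and the identities $L_+\nabla Q = 0$, $(Q,L_+Q)<0$ are in hand.
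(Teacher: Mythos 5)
Your proposal follows the same route as the paper: test against $Q$ to produce $(Q,L_+Q)=-2Z_H(Q)<0$ and hence $\mu_1<0$, then invoke Courant--Fischer with the trial constraint $\phi=\Delta Q$ together with the coercivity \eqref{L+non-neg} to get $\mu_2\geq 0$, and note $L_+\nabla Q=0$ to pin down $\mu_2=0$. The only thing you add beyond the paper's proof is the explicit remark that one must know $\sigma_{\text{ess}}(L_+)\subset[1,\infty)$ for the min-max count to locate genuine discrete eigenvalues below zero; the paper uses this implicitly and defers the essential-spectrum computation to Lemma \ref{spectrum}. Your version is therefore slightly more self-contained but not a different argument.
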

We follow the strategy developed by Lenzmann \cite{L09} and claim that 
\begin{claim}\label{Tkernel}
	$\text{ker}\,L_+ = \{0\}$ when $L_+$ is restricted to $L^2_{\text{rad}}(\R^5)$.
	\begin{proof}
		We rewrite the nonlocal term in $L_+$ using the Newton's theorem (see \cite[Theorem 9.7]{LiebL2001}): for any radial function $f(|x|)$, $r=|x|>0$, we have
		\begin{equation*}
		-(|x|^{-1}\ast f)(r)=\int_{0}^{r}K(r,s)\,f(s)\,ds - \int_{\R^5} \frac{f(|x|)}{|x|^3},
		\end{equation*}
		where $K(r,s)$ (in $5$d) is given by
		\begin{equation*}
		K(r,s)=\frac{8\pi^2}{3}\,s\Big(1-\frac{s^3}{r^3}\Big)\geq 0\quad\text{for}\,\,r\geq s.
		\end{equation*}
		Applying the Newton's theorem to $f=v$ for $v\in L^2_{\text{rad}}(\R^5)$, we obtain
		\begin{equation}\label{newL+}
		L_+ v= \mathscr{L}_+v - 2\left(\int_{\R^5}\frac{Q\,v}{|x|^3}\right)Q,
		\end{equation}
		where $\mathscr{L}_+$ is given by
		\begin{equation}\label{newL+exp}
		\mathscr{L}_+v = -\Delta v + v - (|x|^{-3}\ast Q^2)v + W\,v,
		\end{equation}
		with
		\begin{equation}\label{W}
		(W\,v)(r) = 2\left(\int_{0}^{r}K(r,s)\,Q(s)\,v(s)\,ds\right)Q(r).
		\end{equation}
		Before continuing with the proof of Claim \ref{Tkernel}, we establish that solutions $v$ to the linear equation $\mathscr{L}_+v = 0$ are exponentially growing. 
		\begin{lemma}\label{exp.growth}
			Suppose the radial function $v$ solves $\mathscr{L}_+v = 0$ with $v(0)\neq 0$ and $v^{\prime}(0) = 0$. Then the function $v$ has no sign change and grows exponentially as $r \rightarrow \infty$. More precisely, for any	$0 <\delta < 1$, there exist constants $C > 0$ and $R > 0$ such that
			$$
			|v(r)|\geq C e^{+\delta r},
			$$
			for all $r\geq R.$ In particular, we have that $v \notin L^2_{\text{rad}}(\R^5).$
			\begin{proof}
				
				Assuming without loss of generality that $v(0) > Q(0)> 0$,  we write $\mathscr{L}_+v=0$ as 
				\begin{equation}\label{eq1}
				v^{\prime\prime}(r) +\frac{4}{r}v^{\prime}(r) = v(r) - (|x|^{-3}\ast Q^2)v(r)  + (W\,v)(r).
				\end{equation}  
				Next, note that $Q(r)$ satisfies
				\begin{equation}\label{eq2}
				Q^{\prime\prime}(r) +\frac{4}{r}Q^{\prime}(r) = Q(r) - (|x|^{-3}\ast Q^{2})Q(r).
				\end{equation}
				Multiplying \eqref{eq1} with $Q(r)$ and \eqref{eq2} with $v(r)$, subtracting the two resulting equations, we get after multiplying by $r^4$
				\begin{equation}\label{eq3}
				\frac{d}{dr}(r^4(Qv^{\prime}-Q^{\prime}v)) = r^4\,(W\,v)(r)Q(r).
				\end{equation} 
				Integrating \eqref{eq3}, we obtain
				\begin{equation}\label{eq4a}
				r^4(Qv^{\prime}-Q^{\prime}v)(r) = \int_{0}^{r}(W\,v)(s)\,Q(s)\,s^4\,ds.
				\end{equation}
				Recalling that $v(0) > Q(0)$ and by continuity of $v$ (and of $Q$), we have that $v(r) > Q(r)$ for some sufficiently small $r>0$. Suppose now that $v(r)$ intersects $Q(r)$ at $r_1 > 0$ for the first time. Then, since $v'(r)<0$ and $Q(r)>0$ we have that the left hand-side of
			\eqref{eq4a}	at $r_1$ is non-positive due to the monotonicity of both $v(r)$ and $Q(r)$, however, the right hand-side of 
			\eqref{eq4a} is positive, since $v(r) > Q(r)$ for $0 < r < r_1$. This leads to a contradiction, thus, $v(r)$ and $Q(r)$ do not intersect, implying that 
				\begin{equation}\label{eq5}
				v(r) > Q(r)	\quad \text{for all}\,\,\, r \geq 0.
				\end{equation}
We then write \eqref{eq4a} (with the observation that $Q(r)>0$) as
\begin{equation}\label{eq4}
				r^4\left(\frac{v(r)}{Q(r)}\right)^{\prime} = \frac{1}{Q^2(r)}\int_{0}^{r}(W\,v)(s)\,Q(s)\,s^4\,ds.
				\end{equation}
				Inserting \eqref{eq5} together with \eqref{W} into the \eqref{eq4} gives
				\begin{equation}\label{eq6}
				r^4\left(\frac{v(r)}{Q(r)}\right)^{\prime} > \frac{2}{Q^2(r)}\int_{0}^{r}s^4Q^{2}(s)\int_{0}^{s}K(s,\rho)\,Q^{2}(\rho)\,d\rho\,ds.
				\end{equation} 
				We now consider $H=-\Delta + V$, where $V= -(|x|^{-3}\ast Q^{2}).$ Since $HQ = -Q$ with $V\in L^q(\R^5)+L^{\infty}(\R^5)$ for $q>5/2$ (as shown in the proof of Lemma \ref{spectrum}) and $V\rightarrow 0$ as $|x|\rightarrow \infty$, the result  of \cite[Theorem 3.2]{CS81} shows that we have the following bound: for any $\epsilon > 0$, there exists constants $A_\epsilon$, $B_\epsilon > 0$ such that
				\begin{equation}\label{eq7}
				B_\epsilon e^{-(1+\epsilon)r} \leq Q(r) \leq A_\epsilon e^{-(1-\epsilon)r}\quad\text{for all}\,\,\, r\geq 0.
				\end{equation} 
				Substituting the bounds \eqref{eq7} into \eqref{eq6}, we get
				\begin{equation*}
				r^4\left(\frac{v(r)}{Q(r)}\right)^{\prime} > C_\epsilon\, e^{2(1-\epsilon)r}\int_{0}^{r}s^4e^{-2(1+\epsilon)s}(s)\int_{0}^{s}K(s,\rho)\,e^{-2(1+\epsilon)\rho}(\rho)\,d\rho\,ds.
				\end{equation*} 
				Since the integral in the above estimate converges as $r\rightarrow \infty$ to some finite positive value, there exists some $R  > 0$ such that
				\begin{equation*}
				r^4\left(\frac{v(r)}{Q(r)}\right)^{\prime} > C_\epsilon\, e^{2(1-\epsilon)r}\quad\text{for all}\,\,\, r\geq R,
				\end{equation*}
				for some constant $C_\epsilon > 0$. Integrating the above lower bound and using \eqref{eq7}, we obtain
				\begin{equation*}
				v(r) \geq C_\epsilon\,Q(r)\,\int_{R}^{\infty}\frac{ e^{2(1-\epsilon)r}}{r^4}\,dr\geq \frac{C_\epsilon\,Q(r)}{R^{10}}\,\int_{R}^{\infty} e^{2(1-\epsilon)r}\,dr  \geq C\, e^{(1-3\epsilon)r}
				\end{equation*} 
				with some constants $C > 0$ and $R \gg 1$. Taking $0< \epsilon < \frac{1-\delta}{3}$ for any $0 < \delta < 1$ completes the proof of Lemma \ref{exp.growth}.	
			\end{proof}
		\end{lemma}
		Coming back to the proof of Claim \ref{Tkernel}, suppose that there exists $\eta \in L^2_{\text{rad}}(\R^5)$ with $\eta \neq 0$ such that $L_+\eta = 0$. Then, by \eqref{newL+}, $\eta$   solves the inhomogeneous problem
		\begin{equation}\label{eq8}
		\mathscr{L}_+\eta =  2\left(\int_{\R^5}\frac{Q\,\eta}{|x|^3}\right)Q\defeq 2\,\sigma\,Q,\quad \text{where}\,\,\,\sigma = \int_{\R^5}\frac{Q(x)\,\eta(x)}{|x|^3}\,\,\,\text{is a constant}.
		\end{equation}
		Consider $\eta = v + w$, where $v$ is a solution to the homogeneous equation $\mathscr{L}_+v = 0$ and $w$ is a particular solution to \eqref{eq8}. We want to find $w\in L^2_{\text{rad}}(\R^5)$.  Consider a radial function $Q_1 = (5/2)Q +r\partial_{r}Q \in L^2_{\text{rad}}(\R^5)$ for which we know that $L_+Q_1 = -2Q$.
		Using this, we apply \eqref{newL+} to $Q_1$ to obtain 
		\[
		\mathscr{L}_+Q_1 = -2Q + 2 Q\int_{\R^5}\frac{Q\,Q_1}{|x|^3}\defeq 2\,(\tau-1)\,Q,\ \ \text{where}\ 	 \tau = \int_{\R^5}\frac{Q(x)\,Q_1(x)}{|x|^3}\,\,\,\text{is a constant}.\]
		We point out that $\tau \neq 1$ must hold, because otherwise taking $v=Q_1$ and applying Lemma \ref{exp.growth} would yield $Q_1\notin L^2_{\text{rad}}(\R^5)$, which yields a contradiction. Thus,  
		$$
		w = \left(\frac{\int_{\R^5}\frac{Q\,\eta}{|x|^3}}{ \int_{\R^5}\frac{Q\,Q_1}{|x|^3}-1}\right)\,Q_1(r) = \frac{\sigma}{\tau-1}\,Q_1\in L^2_{\text{rad}}(\R^5)
		$$
		is a particular solution to \eqref{eq8}. Now we present two scenarios: 
		
		$\underline{\textit{Scenario 1:}}$ Suppose that $v\equiv 0$. Then, we have $\eta = w$ and $\sigma\neq 0$ (because $\sigma=0$ implies $w=0$, however $\eta\neq 0$ by initial assumption). This contradicts $L_+\eta =0$, since $L_+w = -\frac{2\sigma}{\tau-1}Q\neq 0$. Therefore, $v\neq 0$.
		
		$\underline{\textit{Scenario 2:}}$ Suppose now that $v(0)\neq 0$. Then by Lemma \ref{exp.growth}, we deduce that $v\notin  L^2_{\text{rad}}(\R^5)$, which is again a contradiction, since both $\eta$ and $w$ are in $L^2_{\text{rad}}(\R^5)$. Thus, $v(0) = 0$ holds. Then, $v$ solves $\mathscr{L}_+v = 0$ with $v(0) = 0$ and $v^{\prime}(0)=0$ (by smoothness of $v$). Thus, Volterra integral theory (for example, see Lemmas 2.4-2.6 and Theorem 2.1 in \cite{YRZ1}) guarantees a unique local $C^2$ solution to the initial-value problem $\mathscr{L}_+v = 0$ for a given initial data $v(0)=v_0\in\R$ and $v^{\prime}(0)=0$. Hence, $v(0)=0$ and $v^{\prime} = 0$ implies that $v\equiv 0$, which again yields a contradiction. This completes the proof of Claim \ref{Tkernel}.
	\end{proof}
\end{claim}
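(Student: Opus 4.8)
The plan is to adapt Lenzmann's strategy \cite{L09} to the nonlocal setting, reducing the question to an ODE comparison argument. The first step is to dispose of the nonlocal term using Newton's theorem: for radial $v$, the convolution $|x|^{-3}\ast(Qv)$ splits into a Volterra-type piece plus a rank-one piece, so that on $L^2_{\text{rad}}(\R^5)$ we may write $L_+ v = \mathscr{L}_+ v - 2\sigma Q$, where $\sigma = \int_{\R^5} \frac{Qv}{|x|^3}$ is a scalar and $\mathscr{L}_+ v = -\Delta v + v - (|x|^{-3}\ast Q^2)v + Wv$ is a local radial Schr\"odinger operator with the extra potential $W$ coming from the Newton kernel $K(r,s)\geq 0$. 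The advantage of this reduction is that $\mathscr{L}_+ v = 0$ is now a genuine second-order ODE, for which Volterra integral theory yields a unique local $C^2$ solution for prescribed data $v(0)$, $v'(0)=0$.

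The heart of the argument — and what I expect to be the main obstacle — is showing that any nontrivial homogeneous solution $v$ of $\mathscr{L}_+ v = 0$ with $v(0)\neq 0$, $v'(0)=0$ grows exponentially, hence fails to lie in $L^2_{\text{rad}}(\R^5)$. Normalizing so that $v(0) > Q(0) > 0$, I would pair $\mathscr{L}_+ v = 0$ against the ground-state equation for $Q$ to obtain the Wronskian identity $\frac{d}{dr}\big(r^4(Qv' - Q'v)\big) = r^4 (Wv)(r)\,Q(r)$; since $W v$ and $Q$ are positive, integrating and running a first-crossing argument forces $v(r) > Q(r)$ for all $r$, so $v$ never changes sign. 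Rewriting as $r^4 (v/Q)' = Q^{-2}\int_0^r (Wv)(s)\,Q(s)\,s^4\,ds$, substituting $v > Q$, and then inserting the sharp two-sided exponential bounds $B_\epsilon e^{-(1+\epsilon)r}\leq Q(r)\leq A_\epsilon e^{-(1-\epsilon)r}$ (available since $-(|x|^{-3}\ast Q^2)\in L^q+L^\infty$, $q>5/2$, vanishes at infinity, via \cite{CS81}), the right-hand side is bounded below by a convergent integral times $e^{2(1-\epsilon)r}$; a final integration gives $v(r)\gtrsim e^{(1-3\epsilon)r}$. Choosing $\epsilon$ small completes this lemma. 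The delicate points here are controlling the Volterra term $W$ throughout — it is what makes a textbook ODE-asymptotics citation inapplicable — and keeping track of the interplay between the decay of $Q$ and the growth of $v$.

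Granting the growth lemma, I would finish as follows. Suppose $\eta\in\ker L_+\cap L^2_{\text{rad}}(\R^5)$ with $\eta\neq 0$; then $\mathscr{L}_+\eta = 2\sigma Q$. To build an $L^2$ particular solution, use the scaling-generated function $Q_1 = \frac52 Q + r\partial_r Q\in L^2_{\text{rad}}(\R^5)$, which satisfies $L_+ Q_1 = -2Q$; applying Newton's theorem gives $\mathscr{L}_+ Q_1 = 2(\tau-1)Q$ with $\tau = \int_{\R^5}\frac{QQ_1}{|x|^3}$. One must have $\tau\neq 1$, for otherwise $Q_1$ would solve $\mathscr{L}_+ Q_1 = 0$ and the growth lemma would contradict $Q_1\in L^2_{\text{rad}}$. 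Hence $w := \frac{\sigma}{\tau-1}Q_1$ solves $\mathscr{L}_+ w = 2\sigma Q$ and lies in $L^2_{\text{rad}}$, so $v := \eta - w$ solves the homogeneous equation in $L^2_{\text{rad}}$. Now split into cases: if $v\equiv 0$ then $\eta = w$ with $\sigma\neq 0$ (since $\sigma=0$ would force $\eta=0$), but then $L_+\eta = L_+ w = -\frac{2\sigma}{\tau-1}Q\neq 0$, a contradiction; if $v(0)\neq 0$ the growth lemma gives $v\notin L^2$, again a contradiction; so $v(0)=0$, and since $v'(0)=0$ by smoothness, ODE uniqueness forces $v\equiv 0$, returning to the first (contradictory) case. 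Therefore no such $\eta$ exists and $\ker L_+ = \{0\}$ on $L^2_{\text{rad}}(\R^5)$.
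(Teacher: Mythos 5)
Your proposal is correct and follows essentially the same route as the paper: Newton's theorem to split off the rank-one piece, the Wronskian/first-crossing argument for exponential growth of homogeneous solutions of $\mathscr{L}_+v=0$, the particular solution built from $Q_1=\tfrac52 Q+r\partial_r Q$ with $\tau\neq 1$, and the concluding scenario dichotomy with Volterra uniqueness.
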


We next write the Laplacian in $5$d in spherical coordinates as
$$
\Delta f = \frac{\partial^2f}{\partial r^2} + \frac{4}{r} \frac{\partial f}{\partial r} + \frac{1}{r^2}\Delta_{\mathbb{S}^4}f,
$$
where $\Delta_{\mathbb{S}^4}$ is the Laplace-Beltrami operator on $\mathbb{S}^4$. It is well-known that the (homogeneous) harmonic functions, i.e., solutions of the Laplace equation can be found by separation of variables. Thus, we write $f(r,\theta,\phi)=r^lg(\theta,\phi)$ and obtain that 
$$
\Delta f = r^{l-2}\big(l(l+3)\,g + \Delta_{\mathbb{S}^4}g\big).
$$
Hence, 
$$
\Delta f = 0 \quad \text{if and only if}\quad \Delta_{\mathbb{S}^4} g = -l(l+3)\,g,
$$
i.e., $g$ is an eigenfunction of $\Delta_{\mathbb{S}^4}$ for the eigenvalue $-l(l+3)$. Hence, the Laplacian (in 5d) in spherical coordinates can be expressed as
\begin{equation}\label{sLaplacian}
\Delta_{(l)} = \frac{\partial^2}{\partial r^2} + \frac{4}{r} \frac{\partial }{\partial r} - \frac{l(l+3)}{r^2}.
\end{equation}
Therefore, decomposing any solution $\eta\in L^2(\R^5)$ of $L_+\eta = 0$ via spherical harmonics, we have 
\begin{equation}\label{sdecompose}
\eta = \sum_{l=0}^{\infty}\sum_{k=1}^{C_l}\eta_{l,k}(r)Y_{l,k}(\omega),
\end{equation}
where $C_l=\frac{(2l+3)(l+2)(l+1)}{6}$, $x = r\omega$ with $r=|x|$ and $\omega\in\mathbb{S}^4$. Here, $Y_{l,k}$ denotes the spherical harmonics, which is the eigenfunction corresponding to the eigenvalue $-l(l+3)$ of the spherical Laplacian $\Delta_{\mathbb{S}^4}$.

Now we introduce the Gegenbauer (or ultraspherical) polynomials $P_k^\lambda$ (see \cite[Chapter 4, Section 2]{Stein71}), which are considered to be a natural extensions of Legendre polynomials. We remark that the Newtonian potential in $\R^N$ has the expansion 
\begin{equation}\label{gen-expansion}
	\frac{1}{|x-y|^{b}} = |x|^{-b}\left(1+\frac{|y|^2}{|x|^2}-2\frac{|x|}{|y|}\cos\theta\right)^{-b}= \sum_{l=0}^{\infty}\frac{|x|^l}{|y|^{l+b}}P_l^b(\cos\theta),
\end{equation}
where $0\leq \frac{|y|}{|x|}<1$, $b=N-2>0$ and $P_l^b$ is called the Gegenbauer polynomial of degree $l$ associated with $b$. When $N = 3$, this gives the Legendre polynomial expansion of the gravitational or Coulomb potential, see \cite{L09} for pseudorelativistic Hartree equation. Therefore, to account for the case $\frac{|y|}{|x|}>1$ we use \eqref{gen-expansion} to get    
\begin{equation}\label{expansion}
	\frac{1}{|x-x^\prime|^3} = \sum_{l=0}^{\infty}\frac{r_{min}^l}{r_{max}^{l+3}}\,P_l^{3}(\cos\theta),
\end{equation}
where $r_{min} = \min(|x|,|x^{\prime}|)$ and $r_{max} = \max(|x|,|x^{\prime}|)$. Using the property regarding the spherical harmonics called the addition theorem (see \cite[Theorem 2.26]{Morimoto98}), we have
\begin{equation}\label{multipole}
\frac{1}{|x-x^{\prime}|^3} = \frac{8\,\pi^2}{3}\sum_{l=0}^{\infty}\frac{1}{C_l}\,\frac{r_{min}^l}{r_{max}^{l+3}}\sum_{k=1}^{C_l}Y_{l,k}(\omega)\,Y^*_{l,k}(\omega^{\prime}).
\end{equation}
 Then with $\eta$ given by \eqref{sdecompose}, we have 
\begin{equation}\label{sL+}
L_+\eta = 0\iff L_{+,(l)}\eta_{l,k} = 0,\quad\text{for}\,\,l=0,1,2,\ldots\,\,\text{and}\,\,k=1,\ldots,C_l,
\end{equation}
where 
\begin{equation}\label{sL+exp}
(L_{+,(l)}f)(r) =-\Delta_{(l)}f(r) + f(r) + (V_{(l)}f)(r)
\end{equation}
with $-\Delta_{(l)}$ given by \eqref{sLaplacian} and
\begin{align}\label{Vexp}\notag
&(V_{(l)}f)(r) = -\,V(r)\,f(r) - \frac{16\pi^2}{3\,C_l} (W_{(l)}f)(r)\\
&=-\big(|x|^{-3}\ast Q^{2}\big)(r)\,f(r) - \frac{16\pi^2}{3\,C_l}\left(\int_{0}^{\infty}\,\frac{r_{min}^l}{r_{max}^{l+3}}\,Q(s)\,f(s)\,s^4\,ds\right)Q(r).
\end{align}
Here, $r_{min} = \min(r,s)$ and $r_{max} = \max(r,s)$. Observe that $L_{+,(0)}f=0$ implies $f\equiv 0$ by Lemma \ref{exp.growth}, thus, it is sufficient to consider $l\geq 1$. Now we show that each $L_{+,(l)}$ satisfies Perron-Frobenius property. This result was proved for pseudo-relativistic Hartree equations in \cite[Lemma 7]{L09}. We next prove this for our $5$d gHartree case.  
\begin{lemma}\label{PFproperty}
	For each $l\geq 1$, the operator $L_{+,(l)}$ is essentially self-adjoint on $C^{\infty}_0(\R^+)\subset L^2(\R_+)$ in spherical coordinates	and bounded below. Moreover, each $L_{+,(l)}$ satisfies the Perron-Frobenius property, i.e., if $e_{0,(l)}$ denotes the lowest eigenvalue of $L_{+,(l)}$, then $e_{0,(l)}$ is simple and the corresponding eigenfunction $\phi_{0,(l)}(r)$ is strictly positive.
	\begin{proof}
		Observe that from \eqref{L+non-neg}, we have that 
		$L_{+,(l)}$ is bounded below. We also have from \cite{RS1} (Appendix to X.1, Theorem X.10 and Example 4) that $-\Delta_{(l)}$ (defined in \eqref{sLaplacian}) is essentially self-adjoint on $C_0^{\infty}(\R^+)$ provided $l(l+3)/r^2 \geq 3/4r^2$ (which is true since $l\geq 1$).  
		
		By invoking the Kato-Rellich theorem, we also have that 		$L_+$ (defined in \eqref{L+exp-H}) is essentially self-adjoint on $C_0^\infty(\R^+)$. This implies from the equivalence of $L_+$ and $L_{+,(l)}$ (see \eqref{sL+}) that $L_{+,(l)}$ (defined in \eqref{sL+exp}) is also essentially self-adjoint on $C_0^\infty(\R^+)$. 
		
		To prove the Perron-Frobenius property of $L_{+,(l)}$, we first recall that an operator $T$ is called {\it positivity improving} if $Tf$ is strictly positive whenever $f$ is positive. We also recall (see expression (7-15) in \cite{L09}) that  
		$
		(-\Delta_{(l)} + \mu )^{-1}$ is positivity improving on $L^2(\R^+,r^4dr)$ for all $\mu > 0$.	Next, since $Q$ is positive, we also have that $-V_{(l)}$ (defined in \eqref{Vexp}) is positivity improving on  $L^2(\R^+,r^4dr)$. Therefore, we have
		$$
		\big(L_{+,(l)} + \mu\big)^{-1} = \big(-\Delta_{(l)} + \mu\big)^{-1}\big(1+V_{(l)}\big(-\Delta_{(l)}+\mu\big)^{-1}\big)^{-1}.
		$$ 
		Next, recall the generalization of geometric series given by 
		$$
		\big(\text{Id}-T\big)^{-1} = \sum_{k=0}^{\infty}T^k,
		$$
		where $T$ is a bounded linear operator and $\text{Id}$ is the identity operator. Using this generalization, we write 
		\begin{equation}\label{gen-geometric}
		\big(L_{+,(l)} + \mu\big)^{-1} = \big(-\Delta_{(l)} + \mu\big)^{-1}\sum_{k=0}^{\infty}\big(-V_{(l)}\big(-\Delta_{(l)}+\mu\big)^{-1}\big)^k.
		\end{equation}
		Since $V_{(l)}$ is bounded, we deduce that the convergence of the above series is guaranteed provided $\mu\gg 1$. Using  \eqref{gen-geometric} along with positivity improving property of $\big(-\Delta_{(l)} + \mu\big)^{-1}$ and $-V_{(l)}$, we conclude that  $\big(L_{+,(l)} + \mu\big)^{-1}$ must also be positivity improving.
		
		Now let $e_{0,(l)}$ be the lowest eigenvalue of $L_{+,(l)}$. Observe that $\big(L_{+,(l)} + \mu\big)^{-1}$ is bounded, self-adjoint and positivity improving with $\big(e_{0,(l)}+\mu\big)^{-1}$ being the 
		largest eigenvalue. Hence, by Theorem XIII.43 part (a) in \cite{RS4}, $\big(e_{0,(l)}+\mu\big)^{-1}$ is simple and the corresponding eigenfunction $\phi_{0,(l)}$ is strictly positive. This completes the proof of Lemma \ref{PFproperty}.
	\end{proof}
\end{lemma}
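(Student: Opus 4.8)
The plan is to establish Lemma \ref{PFproperty} in three stages: self-adjointness and semiboundedness, the positivity-improving property of the resolvent, and finally the Perron--Frobenius conclusion via the abstract theory of \cite{RS4}. For semiboundedness, I would simply quote \eqref{L+non-neg}: restricting to the $l$-th sector and using that $h_1 \perp \Delta Q$ is automatic for spherical harmonics of degree $l \geq 1$ (since $\Delta Q$ is radial), we get $(L_{+,(l)} f, f) \geq 0$, so each $L_{+,(l)}$ is in fact nonnegative, hence bounded below. For essential self-adjointness, I would first record that $-\Delta_{(l)} = -\partial_r^2 - \tfrac{4}{r}\partial_r + \tfrac{l(l+3)}{r^2}$ on the half-line with weight $r^4\,dr$ is essentially self-adjoint on $C_0^\infty(\R^+)$ whenever the coefficient of $r^{-2}$ is at least $3/4$ — this is the classical limit-point criterion at the origin (Reed--Simon vol.~II, Appendix to X.1, Theorem X.10 and Example 4), and $l(l+3) \geq 4 > 3/4$ for $l \geq 1$. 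Then $V_{(l)}$ consists of $-V(r) = -(|x|^{-3}\ast Q^2)$, which is bounded (indeed $V \in L^\infty$ since $Q \in H^1 \cap L^\infty$ decays and Newton's theorem gives boundedness near the origin), plus the rank-one-in-structure term $-\tfrac{16\pi^2}{3C_l}(W_{(l)}f)$, which is also a bounded operator because $Q$ is exponentially decaying and the kernel $r_{\min}^l/r_{\max}^{l+3}$ is controlled; hence $V_{(l)}$ is bounded and symmetric, and the Kato--Rellich theorem yields essential self-adjointness of $L_{+,(l)} = -\Delta_{(l)} + 1 + V_{(l)}$ on $C_0^\infty(\R^+)$.

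For the positivity-improving step, I would follow Lenzmann \cite{L09} closely. The key facts are: (i) $(-\Delta_{(l)} + \mu)^{-1}$ is positivity improving on $L^2(\R^+, r^4\,dr)$ for every $\mu > 0$ — this follows from the explicit positivity of the heat kernel / Bessel-function Green's function for the operator $-\Delta_{(l)}$ (the radial Laplacian in the $l$-sector corresponds to a Bessel operator whose Green's function is manifestly positive), as recorded in \cite[(7-15)]{L09}; and (ii) $-V_{(l)}$ is positivity improving since both $-V(r) = (|x|^{-3}\ast Q^2)(r) > 0$ acts by multiplication by a strictly positive function, and the term $\tfrac{16\pi^2}{3C_l}(W_{(l)}f)(r) = \tfrac{16\pi^2}{3C_l}\big(\int_0^\infty \tfrac{r_{\min}^l}{r_{\max}^{l+3}} Q(s) f(s) s^4\,ds\big) Q(r)$ is an integral operator with a strictly positive kernel. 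Then I expand the resolvent identity
$$
(L_{+,(l)} + \mu)^{-1} = (-\Delta_{(l)} + \mu)^{-1}\big(1 + V_{(l)}(-\Delta_{(l)}+\mu)^{-1}\big)^{-1} = (-\Delta_{(l)} + \mu)^{-1}\sum_{k=0}^\infty \big(-V_{(l)}(-\Delta_{(l)}+\mu)^{-1}\big)^k,
$$
where the Neumann series converges in operator norm once $\mu$ is large enough that $\|V_{(l)}\| \, \|(-\Delta_{(l)}+\mu)^{-1}\| < 1$ (using $\|(-\Delta_{(l)}+\mu)^{-1}\| \leq \mu^{-1}$ by the spectral theorem). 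Each summand is a composition of positivity-improving and positivity-preserving operators, and the $k=1$ term already improves positivity, so the whole sum is positivity improving; composing once more with $(-\Delta_{(l)}+\mu)^{-1}$ keeps it positivity improving. Hence $(L_{+,(l)}+\mu)^{-1}$ is a bounded, self-adjoint, positivity-improving operator for all sufficiently large $\mu$.

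Finally, since $L_{+,(l)}$ is bounded below with lowest eigenvalue $e_{0,(l)}$ (I should note that $L_{+,(l)}$ has compact resolvent, so the bottom of the spectrum is an eigenvalue — this follows because the potential $1 + V_{(l)} \to 1$ at infinity and the centrifugal term confines near the origin, or more directly because $(-\Delta_{(l)}+\mu)^{-1}$ is compact on $L^2(\R^+,r^4dr)$ and $V_{(l)}$ is a relatively compact perturbation, placing the essential spectrum at $[1,\infty)$), the operator $(L_{+,(l)}+\mu)^{-1}$ has $(e_{0,(l)}+\mu)^{-1}$ as its largest eigenvalue. Applying \cite[Theorem XIII.43(a)]{RS4} (positivity improving $\Rightarrow$ largest eigenvalue is simple with strictly positive eigenvector) gives that $(e_{0,(l)}+\mu)^{-1}$ is simple with a strictly positive eigenfunction $\phi_{0,(l)}$; translating back, $e_{0,(l)}$ is a simple eigenvalue of $L_{+,(l)}$ with strictly positive eigenfunction, which is exactly the Perron--Frobenius property. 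The main obstacle I anticipate is the careful verification that $V_{(l)}$ is genuinely a bounded operator uniformly enough to run the Neumann series — in particular controlling $W_{(l)}$ near $r=0$ and at $r = \infty$ using only the exponential decay \eqref{eq7} of $Q$ and the behavior of the Gegenbauer kernel $r_{\min}^l/r_{\max}^{l+3}$ — together with making sure the abstract positivity-improving statement (i) for $-\Delta_{(l)}$ is quoted in a form valid on the weighted space $L^2(\R^+, r^4\,dr)$ rather than on flat $\R^5$.
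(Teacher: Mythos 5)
Your proposal is essentially the same argument as the paper's, which also follows Lenzmann's Perron--Frobenius strategy: semiboundedness from \eqref{L+non-neg}, essential self-adjointness of $-\Delta_{(l)}$ from Reed--Simon Appendix to X.1 plus Kato--Rellich for the bounded potential, a Neumann-series expansion of $(L_{+,(l)}+\mu)^{-1}$ to propagate the positivity-improving property from $(-\Delta_{(l)}+\mu)^{-1}$ and $-V_{(l)}$, and then Theorem XIII.43(a) of Reed--Simon IV. Your version is marginally cleaner in two small respects that the paper glosses over: you apply Kato--Rellich directly to $L_{+,(l)}$ on the half-line (rather than to $L_+$ and then transferring via the spherical-harmonics equivalence, which is what the paper does and is slightly awkward since $L_+$ lives on $\R^5$), and you explicitly justify why the bottom of the spectrum is an eigenvalue (discrete spectrum below the essential spectrum $[1,\infty)$), which the paper implicitly assumes at this point and only addresses later in Lemma \ref{spectrum}.
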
   
Finally, we are ready to prove the following  
 \begin{proposition}\label{null-sp}
	\begin{enumerate}
		\item Null-space of $L_+$ is spanned by the vectors $\partial_{x_j}Q$, $1\leq j\leq 5$.  
		\item $L_-$ is a nonnegative operator and the null-space of $L_-$ is spanned by $Q$.
	\end{enumerate}
\begin{proof}
	\begin{enumerate}
		\item Observe that $Q_{x_j} = Q^\prime(r)\frac{x_j}{r}$. A direct computation yields $L_{+,(1)}Q^\prime = 0$. Thus, we deduce that  $Q^{\prime} < 0$ (from the monotonicity of $Q$) is an eigenfunction of the self-adjoint operator $L_{+,(1)}$ that does not change its sign. In fact, Lemma \ref{PFproperty} implies that $Q^\prime = - \phi_{0,(1)}$, where $\phi_{0,(1)}$ is strictly positive with the corresponding eigenvalue $e_{0,(1)} =0$.    
		
		We now consider a fixed $l\geq 2$ and suppose $e_{0,(l)}$ is the lowest eigenvalue with the associated eigenfunction $\phi_{0,(l)}$ for $L_{+,(l)}$. Now we notice (using \eqref{sLaplacian}, \eqref{sL+exp} and \eqref{Vexp}) that
		\begin{align}\notag
		L_{+,(l)}\phi_{0,(l)} = &\,L_{+,(1)}\phi_{0,(l)} + \frac{l(l+3)-4}{r^2}\phi_{0,(l)} \\\notag
		&+ \frac{16\pi^2}{3}\left(\int_{0}^{\infty}\Big(\frac{1}{6}\,\frac{r_{min}}{r_{max}^4}-\frac{1}{C_l}\,\frac{r_{min}^l}{r_{max}^{l+3}}\Big)Q(s)\,\phi_{0,(l)}(s)\,s^4\,ds\right)Q(r)\\\label{final1}
		>&\,\frac{l(l+3)-4}{r^2}\phi_{0,(l)} \\\label{final2}
		&+ \frac{16\pi^2}{3}\left(\int_{0}^{\infty}\frac{1}{6}\,\frac{r_{min}}{r_{max}^4}\left(1-\frac{6}{C_l}\,\Big(\frac{r_{min}}{r_{max}}\Big)^{l-1}\right)Q(s)\,\phi_{0,(l)}(s)\,s^4\,ds\right)Q(r),
		\end{align} 
		where we have used the fact that $L_{+,(1)}\big|_{\Delta Q^\perp}> 0$. Using the strict positivity of $Q$ along with the strict positivity of $\phi_{0,(l)}$ (from Lemma \ref{PFproperty}), we have that $	\eqref{final2} > 0$ for $l\geq 2$ and $\frac{r_{min}}{r_{max}} < 1$. Using the strict positivity of $\phi_{0,(l)}$ along with the fact that $l\geq 2$, we also have that $\eqref{final1} > 0$. Hence, $L_{+,(l)} > 0$ for $l\geq 2$, and thus, $L_{+,(l)}\phi_{0,(l)} = 0$ has no nonzero $L^2$-solution, which completes the proof of Proposition \ref{null-sp} part (1).
		\item We know from \eqref{L-non-neg} that $L_-$ is nonnegative. Since $L_-Q = 0$ and $Q>0$, we have that $Q$ is the ground state and from the Schr\"odinger operator theory (for example, \cite[Chapter 13]{RS4}), we have that it is the unique positive ground state solution, which completes the proof. 
	\end{enumerate}
\end{proof}\end{proposition}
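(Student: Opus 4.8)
The plan is to treat the two assertions separately, disposing of $L_-$ quickly and devoting the real work to $L_+$ via the spherical-harmonic reduction. For part (2) I would start from $L_-\geq 0$, already recorded in \eqref{L-non-neg}, and note that $L_-Q=0$ is nothing but the ground state equation \eqref{HQ}. Since $L_-=-\Delta+1-(|x|^{-3}\ast Q^2)$ is a Schr\"odinger operator whose potential tends to $0$ at infinity (as $Q^2$ decays exponentially), and since $Q>0$ lies in its kernel, $Q$ is a strictly positive ground state; the Perron--Frobenius principle for such operators (\cite[Chapter 13]{RS4}) then forces the bottom eigenvalue $0$ to be simple, so $\ker L_-=\spn\{Q\}$.

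For part (1), differentiating \eqref{HQ} in $x_j$ yields $L_+\partial_{x_j}Q=0$, so the five functions $\partial_{x_j}Q$ lie in the kernel and the content of the proposition is that nothing else does. The strategy is to decompose an arbitrary $\eta\in\ker L_+\subset L^2(\R^5)$ into spherical harmonics as in \eqref{sdecompose}, which by \eqref{sL+} reduces everything to showing $\ker L_{+,(l)}=\{0\}$ for $l\neq 1$ and $\dim\ker L_{+,(1)}=1$. The radial sector $l=0$ is exactly Claim \ref{Tkernel}. For $l=1$, a direct computation gives $L_{+,(1)}Q'=0$, and $Q'<0$ by the monotonicity of $Q$, so $Q'$ is an eigenfunction that does not change sign; by the Perron--Frobenius property of $L_{+,(1)}$ (Lemma \ref{PFproperty}) the lowest eigenvalue $e_{0,(1)}$ is simple with a strictly positive eigenfunction, and since only a ground state can fail to change sign, necessarily $e_{0,(1)}=0$ and $\ker L_{+,(1)}=\spn\{Q'\}$ is one-dimensional. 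Recombining, the degree-one sector carries exactly $C_1=5$ spherical harmonics, corresponding to the coordinate functions $x_j/r$, and contributes precisely $\spn\{\partial_{x_j}Q:1\leq j\leq 5\}$.

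It remains to show $L_{+,(l)}>0$ — and hence $\ker L_{+,(l)}=\{0\}$ — for every $l\geq 2$, which is the crux. Because the nonlocal potential does not split naively across sectors, I would first use the Gegenbauer / addition-theorem expansion \eqref{expansion}--\eqref{multipole} of $|x-x'|^{-3}$ to read off the exact sector operator \eqref{sL+exp}--\eqref{Vexp} and compare it with the already-understood $l=1$ operator, obtaining an identity of the form
\[ L_{+,(l)}=L_{+,(1)}+\frac{l(l+3)-4}{r^2}+N_{(l)}, \]
where $N_{(l)}$ is the rank-one nonlocal correction $f\mapsto \frac{16\pi^2}{3}\big(\int_0^\infty\kappa_l(r,s)\,Q(s)\,f(s)\,s^4\,ds\big)Q(r)$ with kernel $\kappa_l(r,s)=\frac{1}{C_1}\frac{r_{min}}{r_{max}^4}-\frac{1}{C_l}\frac{r_{min}^l}{r_{max}^{l+3}}$. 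Testing against the strictly positive ground state $\phi_{0,(l)}$ supplied by Lemma \ref{PFproperty}, the term $(L_{+,(1)}\phi_{0,(l)},\phi_{0,(l)})\geq 0$ because $e_{0,(1)}=0$; the centrifugal term contributes $\int_0^\infty\frac{l(l+3)-4}{r^2}\,\phi_{0,(l)}^2\,r^4\,dr>0$ since $l(l+3)-4\geq 6$ for $l\geq 2$; and $(N_{(l)}\phi_{0,(l)},\phi_{0,(l)})>0$ because $C_1<C_l$ and $r_{min}\leq r_{max}$ make $\kappa_l$ strictly positive while $Q$ and $\phi_{0,(l)}$ are strictly positive. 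Hence $e_{0,(l)}=(L_{+,(l)}\phi_{0,(l)},\phi_{0,(l)})>0$, as desired. The main obstacle is precisely this $l\geq 2$ analysis: expanding the nonlocal kernel in Gegenbauer polynomials, isolating the correct comparison operator, and verifying the sign of the correction. Comparing against $L_{+,(1)}$ rather than merely $-\Delta_{(l)}+1$ is what makes the estimate close, since it lets the attractive part of the potential be absorbed into the nonnegative operator $L_{+,(1)}$, leaving only manifestly positive remainders.
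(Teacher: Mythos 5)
Your proposal is correct and follows essentially the same route as the paper: spherical-harmonic reduction via \eqref{sL+}, disposing of $l=0$ by Claim \ref{Tkernel}, identifying $Q'$ as the sign-definite ground state of $L_{+,(1)}$ with $e_{0,(1)}=0$ via Lemma \ref{PFproperty}, and then showing $L_{+,(l)}>0$ for $l\geq 2$ by comparison with $L_{+,(1)}$, with the centrifugal gap $\frac{l(l+3)-4}{r^2}$ and the nonlocal kernel difference both contributing positive terms. Two small remarks. First, your formulation is actually slightly cleaner than the paper's at the $l\geq 2$ step: the paper writes a pointwise inequality $L_{+,(l)}\phi_{0,(l)} > \dots$ and drops the term $L_{+,(1)}\phi_{0,(l)}$ ``since $L_{+,(1)}\big|_{\Delta Q^\perp}>0$'', which is an operator (quadratic-form) statement and does not directly give pointwise nonnegativity of $L_{+,(1)}\phi_{0,(l)}$; your version, testing against the strictly positive $\phi_{0,(l)}$ and invoking $(L_{+,(1)}\phi_{0,(l)},\phi_{0,(l)})\geq 0$ from $e_{0,(1)}=0$, is the rigorous way to carry out the intended argument (and your coefficient $\tfrac{1}{C_1}=\tfrac15$ in $\kappa_l$ is the correct normalization; the paper's $\tfrac16$ in \eqref{final1}--\eqref{final2} appears to be a slip). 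Second, a cosmetic inaccuracy: $N_{(l)}$ is not rank one, since the kernel $\kappa_l(r,s)$ depends on $r$ through $r_{\min},r_{\max}$; this does not affect the argument, as all you use is the pointwise nonnegativity of $\kappa_l(r,s)Q(r)Q(s)$, which holds because $C_l>C_1$ and $r_{\min}/r_{\max}\leq 1$.
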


Now we prove the existence of eigenfunctions for the linearized operator $\mathcal{L}$.
\begin{lemma}\label{eigenfunc}
	The operator $\mathcal{L}$ has two eigenfunctions $\mathcal{Y}_+$, $\mathcal{Y}_-\in\mathcal{S}$ such that 
	\begin{equation}\label{ef}
		\mathcal{L}\mathcal{Y}_+ = e_0\mathcal{Y}_+,\quad \mathcal{L}\mathcal{Y}_- = - e_0\mathcal{Y}_-,
	\end{equation}   
	where $0 < e_0 < \infty$ and $\mathcal{Y}_+ = \overline{\mathcal{Y}}_-$.
\end{lemma}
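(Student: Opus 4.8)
The plan is to reduce the eigenvalue equation for $\mathcal{L}$ to a scalar spectral problem, symmetrize it, and exhibit a negative eigenvalue of the resulting self-adjoint operator. Writing $\mathcal{Y}=(y_1,y_2)$, the equation $\mathcal{L}\mathcal{Y}=e\,\mathcal{Y}$ with $e\neq 0$ is, by \eqref{lineq}, equivalent to $y_2=e^{-1}L_+y_1$ together with $L_-L_+y_1=-e^2 y_1$; pairing the latter with $Q$ and using $L_-Q=0$ (Proposition~\ref{null-sp}) forces $(y_1,Q)=0$, so every such $y_1$ lies in $\{Q\}^\perp$. It therefore suffices to produce $e_0\in(0,\infty)$ and a real $y_1\in\{Q\}^\perp$ with $L_-L_+y_1=-e_0^2y_1$ (Schwartz regularity being checked at the end): setting $y_2:=e_0^{-1}L_+y_1$, one computes directly that $\mathcal{L}(y_1,\pm y_2)=\pm e_0(y_1,\pm y_2)$, so $\mathcal{Y}_+:=(y_1,y_2)$ and $\mathcal{Y}_-:=(y_1,-y_2)$ are the desired eigenfunctions, and since $y_1$ and $L_+y_1$ are real valued, $\mathcal{Y}_-=\overline{\mathcal{Y}_+}$ under the identification $(a,b)\leftrightarrow a+ib$, i.e. $\mathcal{Y}_+=\overline{\mathcal{Y}_-}$.

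To locate $e_0$, I will symmetrize. By Weyl's theorem $\sigma_{\mathrm{ess}}(L_-)=[1,\infty)$, since the potential $|x|^{-3}\ast Q^2$ is bounded and vanishes at infinity; combined with $L_-\geq 0$ (see \eqref{L-non-neg}) and $\ker L_-=\mathrm{span}\{Q\}$ (Proposition~\ref{null-sp}), the value $0$ is an isolated simple eigenvalue, so $L_-\geq c>0$ on $\{Q\}^\perp$ and $L_-^{1/2}$ is a bounded isomorphism of $\{Q\}^\perp$. Setting $\psi=L_-^{-1/2}y_1$, the scalar equation becomes $M\psi=-e^2\psi$ with $M:=L_-^{1/2}L_+L_-^{1/2}$ self-adjoint on $\{Q\}^\perp$. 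Comparing \eqref{L+exp-H} and \eqref{L-exp-H}, $L_+h=L_-h-2\mathcal{T}h$ with $\mathcal{T}h:=\big(|x|^{-3}\ast(Qh)\big)Q$, hence $M=L_-^2-2\,L_-^{1/2}\mathcal{T}L_-^{1/2}$ on $\{Q\}^\perp$. Here the nonlocal term must be controlled: $\mathcal{T}\geq 0$ because in $\R^5$ the kernel $|x|^{-3}$ equals a positive multiple of the kernel of $(-\Delta)^{-1}$, so $(\mathcal{T}h,h)=c_5\,\|(-\Delta)^{-1/2}(Qh)\|_{L^2(\R^5)}^2\geq 0$; and $L_-^{1/2}\mathcal{T}L_-^{1/2}$ is compact, since the two derivatives gained by convolution with $|x|^{-3}=c_5(-\Delta)^{-1}$ absorb the two factors $L_-^{1/2}$, while the two multiplications by the Schwartz function $Q$ provide the spatial decay needed for compactness (routinely, by truncating $|x|^{-3}$ and applying Rellich's theorem). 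Consequently $M$ is bounded below and $\sigma_{\mathrm{ess}}(M)=\sigma_{\mathrm{ess}}(L_-^2)=[1,\infty)$, so $\inf\sigma(M)$ is an eigenvalue as soon as it is negative.

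To see $\inf\sigma(M)<0$ it is enough to exhibit $\phi_0\in\{Q\}^\perp$ with $(L_+\phi_0,\phi_0)<0$, since then $\psi_0:=L_-^{-1/2}\phi_0\in\{Q\}^\perp$ satisfies $(M\psi_0,\psi_0)=(L_+\phi_0,\phi_0)<0$. Let $\Lambda Q:=2Q+x\cdot\nabla Q$ be the generator of the scaling \eqref{scale} at $Q$; differentiating at $\lambda=1$ the expression obtained by inserting $\lambda^2Q(\lambda\cdot)$ into the left side of \eqref{HQ} gives $L_+(\Lambda Q)=-2Q$. Since $(\Lambda Q,Q)=2\|Q\|_{L^2(\R^5)}^2+(x\cdot\nabla Q,Q)=-\tfrac12\|Q\|_{L^2(\R^5)}^2$ by integration by parts, the function $\phi_0:=\Lambda Q+\tfrac12 Q=\tfrac52 Q+x\cdot\nabla Q$ satisfies $\phi_0\perp Q$, and using $L_+(\Lambda Q)=-2Q$, $(L_+Q,Q)=-2Z_H(Q)$ (from the proof of Claim~\ref{-evalue}) and $Z_H(Q)=4\|Q\|_{L^2(\R^5)}^2$ from \eqref{eq:pohid},
\[
(L_+\phi_0,\phi_0)=(L_+\Lambda Q,\Lambda Q)+(L_+\Lambda Q,Q)+\tfrac14(L_+Q,Q)=(1-2-2)\,\|Q\|_{L^2(\R^5)}^2=-3\,\|Q\|_{L^2(\R^5)}^2<0 .
\]
Hence $\inf\sigma(M)=:-e_0^2<0$ is an eigenvalue of $M$, attained at an eigenfunction $\psi_0$ which, since $\phi_0$ is radial and $M$ is real, may be taken radial and real valued. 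Putting $y_1:=L_-^{1/2}\psi_0\in\{Q\}^\perp$, $y_2:=e_0^{-1}L_+y_1$ and reversing the reduction yields $\mathcal{Y}_\pm=(y_1,\pm y_2)$ with $\mathcal{L}\mathcal{Y}_\pm=\pm e_0\mathcal{Y}_\pm$, $\mathcal{Y}_+=\overline{\mathcal{Y}_-}$, and $0<e_0<\infty$ by the lower bound on $M$.

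Finally, to obtain $\mathcal{Y}_\pm\in\mathcal{S}$: from $(-\Delta+1)y_1=e_0y_2+(|x|^{-3}\ast Q^2)y_1+2(|x|^{-3}\ast(Qy_1))Q$ and $(-\Delta+1)y_2=-e_0y_1+(|x|^{-3}\ast Q^2)y_2$, the right-hand sides lie in $H^s(\R^5)$ whenever $y_1,y_2\in H^s(\R^5)$ (the coefficient $|x|^{-3}\ast Q^2$ is $C^\infty$ with bounded derivatives, the convolution term is smoothing), so a bootstrap gives $y_1,y_2\in H^s(\R^5)$ for all $s\geq 0$, hence $C^\infty$ with bounded derivatives. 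Exponential decay of $y_1,y_2$ and all their derivatives — hence $\mathcal{Y}_\pm\in\mathcal{S}$ — then follows from the Bessel-potential/Agmon-type estimates announced in the introduction, applied to the equivalent equation $(L_-L_++e_0^2)y_1=0$, whose leading resolvent $\big((-\Delta+1)^2+e_0^2\big)^{-1}$ has an exponentially decaying kernel while the remaining terms carry the decay of $|x|^{-3}\ast Q^2$. The main obstacle in this plan is the symmetrization step — showing that $M$ is bounded below and $\sigma_{\mathrm{ess}}(M)=[1,\infty)$ — which is exactly where the nonlocal Hartree term enters and must be handled through the positivity and compactness of $L_-^{1/2}\mathcal{T}L_-^{1/2}$ in $\R^5$; the Schwartz-decay step likewise relies essentially on the Bessel-potential machinery developed elsewhere in the paper, whereas the remaining arguments parallel the local NLS case treated in \cite{DR10}.
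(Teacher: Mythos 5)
Your proof follows the paper's overall strategy — symmetrize $L_-L_+$ by conjugating with $L_-^{1/2}$, locate a negative eigenvalue $-e_0^2$ of the self-adjoint operator $M:=L_-^{1/2}L_+L_-^{1/2}$, and assemble $\mathcal{Y}_\pm=\mathcal{Y}_1\pm i\mathcal{Y}_2$ from the eigenfunction — but you execute the spectral step with considerably more care, and the extra work is not cosmetic: it repairs a step that the paper's written proof skips. The paper simply asserts that Claim~\ref{-evalue} implies $M$ has one negative eigenvalue, with eigenfunction ``$\chi$, the eigenfunction corresponding to the only negative eigenvalue of $L_+$.'' That cannot be read literally: by the Perron--Frobenius property (Lemma~\ref{PFproperty}) the $L_+$-ground state is strictly positive, hence \emph{not} orthogonal to $Q$, while $L_-^{1/2}$ annihilates the $Q$-direction; moreover, the identity $L_-L_+\mathcal{Y}_1=-e_0^2\mathcal{Y}_1$ with $\mathcal{Y}_1=L_-^{1/2}\chi$ only follows if $\chi$ is an eigenfunction of $M$ (not of $L_+$). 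Your argument — producing the explicit test vector $\phi_0=\Lambda Q+\tfrac12Q\in\{Q\}^\perp$ with $(L_+\phi_0,\phi_0)=-3\|Q\|_{L^2}^2<0$ via $L_+\Lambda Q=-2Q$, $(Q,\Lambda Q)=-\tfrac12\|Q\|_{L^2}^2$ and \eqref{eq:pohid}, and then transferring through $\psi_0=L_-^{-1/2}\phi_0$ to get $(M\psi_0,\psi_0)<0$ — is exactly what is needed to show $\inf\sigma(M)<0$. You also supply the companion fact that $\sigma_{\mathrm{ess}}(M)=[1,\infty)$, via the positivity of $\mathcal{T}$ and the compactness of $L_-^{1/2}\mathcal{T}L_-^{1/2}$, which is what guarantees that $\inf\sigma(M)$ is actually an isolated eigenvalue rather than essential spectrum; the paper leaves this implicit.

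Conversely, the part of the paper's proof that does the most work — the Schwartz regularity and exponential decay of $\mathcal{Y}_\pm$, proved via the Slaggie--Wichmann iteration and the Bessel-potential kernel bound \eqref{Bkernel} — is treated in your proposal only as a sketch deferring to that machinery. Since this is precisely where the nonlocal Hartree convolution enters and where the $5$d choice matters, that piece of your plan should be carried out rather than cited. Two small slips to fix: it is $L_-^{-1/2}$, not $L_-^{1/2}$, that is the bounded isomorphism of $\{Q\}^\perp$; and the compactness of $L_-^{1/2}\mathcal{T}L_-^{1/2}$, while true, needs a genuine argument because the two factors of $L_-^{1/2}$ consume exactly the two derivatives that $(-\Delta)^{-1}$ provides, so one must lean on the decay of $Q$ and a Rellich truncation, as you indicate parenthetically.
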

\begin{proof}
	We know from \eqref{L-non-neg} that the operator $L_-$ on $L^2(\R^5)$ with the domain 
	\[
	D(-\Delta) = \{u\in L^2(\R^5),\ \ -\Delta u\in L^2(\R^5)\}
	\] is nonnegative. Using Kato-Rellich theorem, one sees that $L_-$ is also self-adjoint. Therefore, invoking \cite[Theorem 3.35]{Kato76}, we have that it has a unique square root  
	 with domain $D((-\Delta)^{1/2})$.
	 
	 We already know that $L_+$ has exactly one negative eigenvalue (Claim \ref{-evalue}). This would also imply that the self-adjoint operator $(L_-)^{1/2}L_+(L_-)^{1/2}$ on $L^2(\R^5)$ with domain $D(\Delta^2)$ also has exactly one negative eigenvalue, say $-e_0^2$. Define $\mathcal{Y}_1 =(L_-)^{1/2}\chi$, where $\chi$ is the eigenfunction corresponding to the only negative eigenvalue of $L_+$. Next, define $\mathcal{Y}_2 = \frac{1}{e_0}L_+\mathcal{Y}_1$ and let $\mathcal{Y}_{\pm} = \mathcal{Y}_1 \pm i\mathcal{Y}_2$. Then we have that $\mathcal{L}\mathcal{Y}_{\pm} = \pm e_0\mathcal{Y}_{\pm}$, which shows the existence of the real eigenvalues.      
	 
	 We now prove the decay of the eigenfunctions at infinity, i.e., $Y_{\pm} \in \mathcal{S}(\R^5)$. It is sufficient to prove this for $\mathcal{Y}_1 = \Re\mathcal{Y}_+$. Thus, we claim that there exists $\theta >0$ and constant $c(\theta)>0$ such that $\big|\mathcal{Y}_1(x)\big|\leq c(\theta)e^{-\theta|x|}$. We adapt the proof presented in \cite{FJL07} for pseudorelavistic Hartree equation (also see \cite{KLR09} for the $4$d Hartree equation), which is an adaption of the argument of Slaggie \& Wichmann \cite{SW62} for Schr\"odinger operators. The differential equation to $\mathcal{Y}_1$ is given by $-L_-L_+\mathcal{Y}_1 = e_0^2\mathcal{Y}_1$, which can be written as 
	 \begin{equation}\label{Y1eq}
	 	\mathcal{Y}_1 = \Big( H_0 + e_0^2\Big)^{-1}  \Big(2V_1(-\Delta + 1)\mathcal{Y}_1 - V_1^2\mathcal{Y}_1 -2\nabla V_1\nabla\mathcal{Y}_1 - 4 \nabla( V_2(\mathcal{Y}_1))\nabla Q + 3 Q^2 \mathcal{Y}_1\Big),
	 \end{equation}
	 where 
	 \begin{equation}\label{V1V2}
	 	H_0 \defeq (-\Delta + 1)^2,\quad V_1\defeq\frac{1}{|x|^{3}}\ast Q^2\quad\text{and}\quad V_2(\mathcal{Y}_1) \defeq \Big(\frac{1}{|x|^3}\ast (Q\mathcal{Y}_1)\Big).
	 \end{equation}
	 Observe that $H_0 + e_0^2$ on the Fourier side is $(1+|\xi|^2)^2+e_0^2\approx (1+|\xi|^2)^2$. Then from the theory of Bessel potentials (see \cite[Chapter V]{Stein70}, \cite[Appendix C]{FJL07b}), we know that the kernel, $G(x)$,  associated to  $(H_0 + e_0^2)^{-1}$ (in $5$d) satisfies 
	 \begin{equation}\label{Bkernel}
	 	|G(x)|\leq c_1\frac{e^{-\delta |x|}}{|x|},
	 \end{equation}
	 for some $\delta > 0$. We begin with the aim of showing the following bound on $\mathcal{Y}_1$ 
	 \begin{equation}\label{Y1-bound}
	 	\big|\mathcal{Y}_1(x)\big| \leq h_{\theta}(x)M(x) + Ce^{-\epsilon |x|},
	 \end{equation}
	 where $\theta >0 $ and $h_\theta(x)$ to be chosen later and 
	 \begin{equation}\label{M}
	 	M(x)\defeq \sup_{x'\in\R^5}\big|\mathcal{Y}_1(x')\big|e^{-\theta|x-x'|}.
	 \end{equation}
	 Now we need to bound each term in \eqref{Y1eq}, for which we obtain estimates on $V_1$ and $V_2$. Starting with $V_2(\mathcal{Y}_1)$ term, where $V_2$ is defined in \eqref{V1V2}, we have
	 \begin{equation*}
	 	\nabla V_2(\mathcal{Y}_1)(x')\nabla Q(x') = -3\left(\int_{\R^5}\frac{Q(y)\,\mathcal{Y}_1(y)}{|x'-y|^4}\,dy\right)\nabla Q(x').
	 \end{equation*}  
	 Using the identity $\mathcal{Y}_1(y) = \mathcal{Y}_1(y)\, e^{\theta|y-x|}\,e^{-\theta|y-x|}$, triangle inequality $|y-x| \leq |y-x'| + |x'-x|$ on the term $e^{\theta|y-x|}$ and \eqref{M} along with the fact that  the ground state and its derivatives decay exponentially, we obtain
	 \begin{equation}\label{V2est-pre}
	 	\big|\nabla V_2(\mathcal{Y}_1)(x')\nabla Q(x')\big| \leq c_2\,e^{-\epsilon|x'|+\theta|x-x'|}\,M(x)\int_{\R^5}\frac{e^{\theta|x'-y|}\,e^{-\epsilon|y|}}{|x'-y|^4}\,dy.
	 \end{equation}
	 Evaluating the integral, with $0<\theta < \epsilon$ and splitting the integral into two regions: $|x'-y|<|x'|/2$ and $|x'-y|>|x'|/2$, we observe that in the region $|x'-y|>|x'|/2$, the exponential decay inside of the integral in \eqref{V2est-pre} will be dominant and in the region $|x'-y|<|x'|/2$, we have   
	 \begin{equation*}
	 	\int_{|x'-y|<|x'|/2}\frac{e^{\theta|y|}\,e^{-\epsilon|x'-y|}}{|y|^4}\,dy \leq \frac{8\,e^{\epsilon|x'|}}{|x'|^4}\int_{|x'-y|<|x'|/2}e^{(\theta-\epsilon)|y|}dy\leq c_3(\theta)\,\frac{8\,e^{\epsilon|x'|}}{|x'|^4},
	 \end{equation*}
	 where we have used the fact that $-|x'-y| \leq |x'| - |y|$. Combining this with \eqref{V2est-pre} and choosing sufficiently large $|x'|$, we get  
	 \begin{equation}\label{V2est}
	 	\big|\nabla V_2(\mathcal{Y}_1)(x')\nabla Q(x')\big| \leq c_4\,\frac{e^{\theta|x-x'|}}{1+|x'|^4}\,M(x).  
	 \end{equation} 
	 Similarly, the estimate for $V_1$ follows by integrating
	 $$
	 \int_{\R^5}\frac{e^{-\epsilon|y|}}{|x'-y|^3}\,dy
	 $$  
	 in an analogous fashion. Thus, we have
	 \begin{equation}\label{V1est}
	 	\big|V_1(x')\big|\leq \frac{c_5}{1+|x'|^3}.
	 \end{equation} 
	 Finally, using \eqref{V1est}, we have
	 \begin{equation}\label{V1square}
	 	\big|V_1^2(x')\big| \leq \frac{c_6}{1+|x'|^6}\quad\text{and}\quad \big|\nabla V_1(x')\big| \leq \frac{c_7}{1+|x'|^4}.
	 \end{equation} 
	 Thus, all the terms involving $V_1$ and $V_2$ are bounded and decay at least as 
	 \begin{equation}\label{V1V2-decay}
	 	\frac{1}{1+|x'|^3}.
	 \end{equation} 
	 Choosing $h_\theta(x)$ to be
	 \begin{equation}\label{h}
	 	h_\theta(x) \defeq c_8\int_{\R^5}\frac{e^{-(\delta-\theta)|x-x'|}}{|x-x'|}\,\frac{1}{1+|x'|^3}\,dx',
	 \end{equation}
	 where $c_8$ depends on $c_1$, $c_4$, $c_5$, $c_6$ and $c_7$. Inserting \eqref{V1V2-decay} and \eqref{Bkernel} into the integral equation of \eqref{V1V2} and combining it together with the definitions \eqref{h} and \eqref{M} yields \eqref{Y1-bound}. 
	
	Our next task is to show that the function $h_\theta(x)$ is bounded and decays as $|x|\rightarrow +\infty$. It turns out that the decay of $h_\theta$ controls the decay of the eigenfunction. Applying Young's inequality, we have
	\begin{equation}\label{h-bnd}
		\|h_\theta\|_{L^\infty} \leq c_8\|e^{\theta|x|}G(x)\|_{L^1}\|(1+|x|^3)^{-1}\|_{L^\infty}=c_9(\theta)<\infty,
	\end{equation} 
	where $e^{\theta|\cdot|}G(\cdot)\in L^1$ for $\theta < \delta$ follows from \cite[Proposition 2, Page 132]{Stein70})   and $(1+|x|^3)^{-1}\in L^\infty$, since  $(1+|x|^3)^{-1}<1$. For the decay, we split the integral in \eqref{h} into two regions: $|x-x'| < \kappa$ (inner region) and  $|x-x'| > \kappa$ (outer region). We first evaluate the inner region
	$$
	\int_{|x-x'|<\kappa}^{}\frac{e^{-(\delta-\theta)|x-x'|}}{|x-x'|}\,\frac{1}{1+|x'|^3}\,dx' \leq c_8^\prime \int_{|y|<\kappa}^{}\frac{dy}{|y|(1+|x-y|^3)} \leq \int_{|y|<\kappa}^{}\frac{dy}{|y|(1+\big||x|-|y|\big|^3)}.
	$$
	Writing in polar coordinates, we get
	$$
	\int_{|y|<\kappa}^{}\frac{dy}{|y|(1+\big||x|-|y|\big|^3)} = \frac{8\pi}{3}\,\int_{0}^{\kappa}\frac{r^3\,dr}{(1+\big||x|-r\big|^3)}\leq \frac{8\pi}{3}\,\frac{\kappa^4}{(1+\big||x|-\kappa\big|^3)},
	$$ 
	and in the outer region we use the uniform bound of $(1+|x'|^3)^{-1} < 1$, the fact that $|x-x'|^{-1}<\kappa^{-1}$ and triangle inequality $-|x-x'| \leq |x|-|x'|$ to obtain 
	$$
	\int_{|x-x'|>\kappa}^{}\frac{e^{-(\delta-\theta)|x-x'|}}{|x-x'|}\,\frac{1}{1+|x'|^3}\,dx' \leq \frac{e^{-(\delta-\theta)|x|}}{\kappa}\int_{\R^5}e^{-(\delta-\theta)|x'|}\,dx'=\frac{c_8^{\prime\prime}e^{-(\delta-\theta)|x|}}{\kappa}.
	$$
	Taking $\kappa = |x|^{1/4}$ we have that at least $h_\theta \leq c_{10}|x|^{-1/4}$ as $|x|\rightarrow\infty$ and $\theta \leq \delta$ holds. Because $h_\theta$ decays, there exists a radius $R$, for a fixed small $\theta < \delta$ such that for
	$|x| > R$ we have that $h\theta \leq  c_{10}R^{-1/4}$. Thus, in this region \eqref{Y1-bound} can be written as 
	\begin{equation}\label{Y1-bound-ext}
	\big|\mathcal{Y}_1(x)\big| \leq c_{10}M(x)R^{-1/4} + Ce^{-\epsilon |x|}.
	\end{equation}
	We also observe from the definition of $M(x)$ in \eqref{M} that for any $y\in \R^5$, we have
	\begin{align}\label{M2}\notag
		M(x) = \sup_{x'\in\R^5} \big|\mathcal{Y}_1(x')\big|e^{-\theta|x-x'|} &= \sup_{x'\in\R^5} \big|\mathcal{Y}_1(x')\big|\Big(\sup_{y\in\R^5}e^{-\theta|x-y|-\theta|y-x'|}\Big)\\
		 &= \sup_{y\in\R^5} \big|\mathcal{Y}_1(y)\big|e^{-\theta|x-y|},
	\end{align}
	where we have used the following identity
	\begin{equation}\label{id1}
		e^{-\theta|x-x'|} = \sup_{y\in\R^5}e^{-\theta|x-y|-\theta|y-x'|}.
	\end{equation}
	Using \eqref{Y1-bound-ext} along with \eqref{id1}, we get
	\begin{align}\label{ext1}\notag
		\sup_{|x'|>R}\big|\mathcal{Y}_1(x')\big|e^{-\theta|x-x'|}&\leq c_{10}R^{-1/4}\sup_{|x'|>R}M(x')  e^{-\theta|x-x'|} + C\sup_{|x'|>R} e^{-\epsilon |x'|-\theta|x-x'|}\\\notag
		&\leq c_{10}R^{-1/4}\sup_{x'\in\R^5}M(x')  e^{-\theta|x-x'|} + C\sup_{|x'|>R} e^{-\epsilon |x'|-\theta|x-x'|}\\
		&=c_{10}R^{-1/4}M(x)   + C\sup_{|x'|>R} e^{-\epsilon |x'|-\theta|x-x'|}.
	\end{align} 
	In the complement of this region, we have
	\begin{align}\label{int1}\notag
		\sup_{|x'|\leq R}&\big|\mathcal{Y}_1(x')\big|e^{-\theta|x-x'|}\\
		&\leq \sup_{|x'|\leq R}\Big[h_\theta(x')\big(\sup_{|y|\leq R}\big|\mathcal{Y}_1(y)\big|e^{-\theta|x'-y|}+\sup_{|y|> R}\big|\mathcal{Y}_1(y)\big|e^{-\theta|x'-y|}\big) + Ce^{-\epsilon |x'|}\Big]e^{-\theta|x-x'|}.
	\end{align}
	For the interior term we have by the boundedness of $\mathcal{Y}$ and $e^{\theta|y|}$ for $|y|\leq R$ that $|\mathcal{Y}|\leq c'_{11}(R)$ and $e^{\theta|y|} < c''_{11}(R,\theta)$, therefore, $\sup_{|y|\leq R}\big|\mathcal{Y}_1(y)\big|e^{-\theta|x'-y|}\leq c_11(R,\theta)e^{-\theta|x'|}$. Using this with \eqref{ext1} (for exterior term $|y|>R$) and \eqref{h-bnd}, we re-write \eqref{int1} as
	\begin{align}\label{int2}\notag
		\sup_{|x'|\leq R}&\big|\mathcal{Y}_1(x')\big|e^{-\theta|x-x'|}\leq  C\sup_{|x'|\leq R}e^{-\epsilon |x'|-\theta|x-x'|}\\
		&\leq c_9\sup_{|x'|\leq R}\Big[c_{11}(R,\theta)e^{-\theta|x'|} + c_{10}R^{-1/4}M(x') + C\sup_{|y|> R}e^{-\epsilon|y|-\theta|x-y|} \Big]e^{-\theta|x-x'|}.
	\end{align}  
	Adding \eqref{ext1} and \eqref{int2}, we obtain
	\begin{equation}\label{int+ext}
		M(x) \leq c_{12}\big(R^{-1/4}M(x) + e^{-\min(\epsilon,\theta)|x|}\big) + c'_12(R,\theta)e^{-\theta|x|}.
	\end{equation}  
	Choosing $R=R_1$ to be sufficiently large and $\theta >0 $ sufficiently small, we get
	\begin{equation}\label{M-bnd}
		M(x)\leq C(R_1,\theta)e^{-\theta|x|}.
	\end{equation} 
	Invoking \eqref{M-bnd} along with \eqref{h-bnd} into \eqref{Y1-bound} yields the desired result.
\end{proof}

We then have the following characterization of the real spectrum of $\mathcal{L}$.
\begin{lemma}\label{spectrum}
	Let $\sigma(\mathcal{L})$ be the spectrum of the operator $\mathcal{L}$ on $L^2$ with domain $D(\mathcal{L}) = H^2$. Then $\sigma(\mathcal{L}) \cap \R = \{-e_0, 0, e_0\}.$
\end{lemma}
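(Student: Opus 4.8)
The plan is to establish three things: the inclusion $\{-e_0,0,e_0\}\subseteq\sigma(\mathcal L)$ (immediate from work already done), the computation of $\sigma_{\mathrm{ess}}(\mathcal L)$ via Weyl's theorem (it will turn out to be purely imaginary, hence disjoint from $\R$), and a rigidity argument ruling out any further real eigenvalue, which rests on the fact used in Lemma~\ref{eigenfunc} that $(L_-)^{1/2}L_+(L_-)^{1/2}$ has exactly one negative eigenvalue, namely $-e_0^2$.

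For the inclusion, I would note that $\mathcal L$ sends $(h_1,h_2)$ to $(-L_-h_2,L_+h_1)$, so $(0,Q)\in\ker\mathcal L$ by Proposition~\ref{null-sp}(2) and thus $0\in\sigma(\mathcal L)$, while Lemma~\ref{eigenfunc} gives the eigenfunctions $\mathcal Y_\pm$ with $\mathcal L\mathcal Y_\pm=\pm e_0\mathcal Y_\pm$; since $0<e_0<\infty$ these are three distinct real points. For the essential spectrum, I would write $\mathcal L=\mathcal L_0+\mathcal V$ with $\mathcal L_0=\bigl(\begin{smallmatrix}0&\Delta-1\\-\Delta+1&0\end{smallmatrix}\bigr)$ and $\mathcal V$ collecting the terms $-(|x|^{-3}\ast Q^2)$ and $-2(|x|^{-3}\ast(Q\,\cdot))Q$ appearing in \eqref{L+exp-H}--\eqref{L-exp-H}. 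First I would verify that $V:=|x|^{-3}\ast Q^2$ lies in $L^q(\R^5)+L^\infty(\R^5)$ for every $q>5/2$ --- near the origin the singularity of $|x|^{-3}$ is integrable against $Q^2\in L^1\cap L^\infty$, and at infinity $V(x)\lesssim|x|^{-3}$ since $\int Q^2<\infty$ --- and that the nonlocal operator $\varphi\mapsto 2(|x|^{-3}\ast(Q\varphi))Q$ has an exponentially decaying kernel because $Q\in\mathcal S$. These two facts make $\mathcal V(\mathcal L_0-z)^{-1}$ compact on $L^2$, so $\mathcal V$ is $\mathcal L_0$-compact and Weyl's theorem gives $\sigma_{\mathrm{ess}}(\mathcal L)=\sigma_{\mathrm{ess}}(\mathcal L_0)$. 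On the Fourier side $\mathcal L_0$ is multiplication by $(1+|\xi|^2)\bigl(\begin{smallmatrix}0&-1\\1&0\end{smallmatrix}\bigr)$, whose eigenvalues are $\pm i(1+|\xi|^2)$, so $\sigma(\mathcal L_0)=i\bigl((-\infty,-1]\cup[1,\infty)\bigr)$ is purely imaginary; hence $\sigma_{\mathrm{ess}}(\mathcal L)\cap\R=\emptyset$, and since $\mathcal L-z$ is Fredholm of index $0$ off $\sigma_{\mathrm{ess}}(\mathcal L)$, analytic Fredholm theory shows $\sigma(\mathcal L)\cap\R$ consists only of isolated eigenvalues of finite algebraic multiplicity.

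It then remains to rule out a real eigenvalue $\lambda\notin\{-e_0,0,e_0\}$. Suppose $\lambda\in\R\setminus\{0\}$ and $\mathcal L(u,v)=\lambda(u,v)$ with $(u,v)\neq 0$, i.e. $-L_-v=\lambda u$ and $L_+u=\lambda v$; by elliptic regularity (as in Lemma~\ref{eigenfunc}) $u,v$ are smooth and rapidly decaying, so all the compositions below are legitimate. If $v\in\ker L_-=\mathrm{span}\{Q\}$, then $L_-v=0$ forces $u=0$, and then $L_+u=0=\lambda v$ forces $v=0$, a contradiction; hence $v\notin\ker L_-$ and $w:=(L_-)^{1/2}v\neq 0$. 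Eliminating $u$ gives $L_+L_-v=-\lambda^2 v$, and applying $(L_-)^{1/2}$ yields $(L_-)^{1/2}L_+(L_-)^{1/2}\,w=-\lambda^2 w$. Thus $-\lambda^2<0$ is an eigenvalue of $(L_-)^{1/2}L_+(L_-)^{1/2}$, which has exactly one negative eigenvalue $-e_0^2$; hence $\lambda^2=e_0^2$. Combined with the inclusion above, this gives $\sigma(\mathcal L)\cap\R=\{-e_0,0,e_0\}$.

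I expect the main obstacle to be the essential-spectrum step: unlike in the local NLS setting, the perturbation $\mathcal V$ contains the nonlocal operator $2(|x|^{-3}\ast(Q\,\cdot))Q$ and the decaying-but-singular multiplier $|x|^{-3}\ast Q^2$, so establishing $\mathcal L_0$-compactness requires some care with Hardy--Littlewood--Sobolev and Sobolev estimates, as does the Fredholm/analyticity argument producing discreteness of $\sigma(\mathcal L)$ off $\sigma_{\mathrm{ess}}(\mathcal L)$ for the non-self-adjoint $\mathcal L$. The algebraic rigidity step is comparatively short, provided one correctly tracks the kernel of $L_-$.
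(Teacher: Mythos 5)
Your proof is correct, and it is in fact more complete than the proof as written in the paper. The three-part structure you give — inclusion of $\{-e_0,0,e_0\}$, essential-spectrum computation, and rigidity via the one-negative-eigenvalue property of $(L_-)^{1/2}L_+(L_-)^{1/2}$ — is the right skeleton, and each step holds up.

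The comparison with the paper is worth noting. The paper's own proof only carries out the essential-spectrum step: it decomposes $|x|^{-3}$ into an $L^p$ piece near the origin and an $L^\infty$ piece at infinity, shows the resulting potentials are in $L^q+L^\infty_\epsilon$, and invokes Theorem~XIII.15(b) of Reed--Simon to conclude about the essential spectrum. The paper then stops — it does not supply the inclusion $\{-e_0,0,e_0\}\subseteq\sigma(\mathcal L)$ nor the argument ruling out other real eigenvalues, both of which you give. Moreover, the paper's concluding line states $\sigma_{\mathrm{ess}}(\mathcal L)=[1,\infty)$, which cannot be taken at face value (if that were the essential spectrum of $\mathcal L$, it would be a real subset, contradicting the lemma); what the relative-compactness argument actually shows is $\sigma_{\mathrm{ess}}(L_\pm)=[1,\infty)$, and the correct conclusion for the matrix operator $\mathcal L$ is precisely the one you derive on the Fourier side, namely $\sigma_{\mathrm{ess}}(\mathcal L)=\sigma(\mathcal L_0)=i\bigl((-\infty,-1]\cup[1,\infty)\bigr)$, hence disjoint from $\R$. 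Your rigidity step, applying $(L_-)^{1/2}$ to $L_+L_-v=-\lambda^2 v$ after ruling out $v\in\ker L_-$, is exactly the argument the paper leaves implicit; the one-negative-eigenvalue fact for $(L_-)^{1/2}L_+(L_-)^{1/2}$ is already asserted in the paper's proof of Lemma~\ref{eigenfunc}, so you are entitled to use it. One small caution: since $\mathcal L$ is not self-adjoint, you should be explicit about which notion of essential spectrum you are invoking (e.g., the complement of the semi-Fredholm domain), as Weyl-type stability and the discreteness/finite-multiplicity conclusion off $\sigma_{\mathrm{ess}}$ depend on the choice; for a relatively compact perturbation of a normal operator such as $\mathcal L_0$, the common definitions coincide and your argument goes through, but it is worth a sentence.
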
 
\begin{proof}
	For $f\in D(-\Delta)$, denote $V_- = \big(|x|^{-3}\ast Q^2\big)$ and write $V_+= V_- + \big(|x|^{-3}\ast Qf\big)Q$. Decompose $|x|^{-3} = g_1(x) + g_2(x)$ with $g_1\in L^p(|x|\leq 1/\epsilon)$ for $p<5/3$ and $g_2\in L^{\infty}(|x|>1/\epsilon)$. We split the potential into two pieces, the part living inside the ball and the part outside. Observe that $\|g_2\|_{L^\infty}\leq \epsilon$ for some arbitrary $\epsilon>0$. Then, $V_+$ can be decomposed as follows 
	\begin{align*}
		V_+ = \underbrace{\big(g_1\ast Q^2\big) + \big(g_1\ast Qf\big)Q}_{V_{+1}} + \underbrace{\big(g_2\ast Q^2\big) + \big(g_2\ast Qf\big)Q}_{V_{+2}},
	\end{align*}
	where $V_{+1}\in L^q(|x|\leq 1/\epsilon)$ for $q\geq 5/2$ as shown below
	\begin{align*}
		\|V_{+1}\|_{L^q}&\leq \|g_1\ast Q^2\|_{L^q} + \|\big(g_1\ast Qf\big)Q\|_{L^q}\\
		&\leq \|g_1\|_{L^{r_1}}\|Q\|^2_{L^\infty} + \|g_1\|_{L^{r_2}}\|Q\|_{L^\infty}\|f\|_{L^2}\|Q\|_{L^\infty}<\infty
	\end{align*} 
	for $f\in D(-\Delta)$. Here, $1<r_1=\frac{q}{q+1}<5/3$ and $10/9<r_2=\frac{2q}{q+2}<5/3$. Next, we show that $V_{+2}\in L^{\infty}(|x|>1/\epsilon)$ with $\|V_{+2}\|_{L^\infty(|x|>1/\epsilon)}<\varepsilon$ as shown below
	\begin{align*}
		\|V_{+2}\|_{L^\infty}&\leq \|g_2\ast Q^2\|_{L^\infty} + \|\big(g_2\ast Qf\big)Q\|_{L^\infty}\\
		&\leq \|g_2\|_{L^\infty}\|Q\|_{L^2}^2 + \|g_2\|_{L^\infty}\|Qf\|_{L^1}\|Q\|_{L^\infty}\quad \text{(Young's inequality)}\\
		&\leq \epsilon\|Q\|_{L^2}\Big(\|Q\|_{L^2} + \|f\|_{L^2}\|Q\|_{L^\infty}\Big)<2\epsilon\|f\|_{L^2},
	\end{align*}
	last inequality follows form the fact that $Q$ decays exponentially. Choosing $\epsilon<\frac{1}{2\|f\|_{L^2}}$ we get that $ \|V_{+2}\|_{L^\infty}<\varepsilon$ for some $0<\varepsilon<1$. Invoking Theorem XIII. 15 (b) from \cite{RS4}, we have that $\sigma_{\text{ess}}(\mathcal{L}) = [1,\infty)$.
\end{proof}

To this end, we prove that the quadratic form $\Phi(h)$ generated by $L_+$ and $L_-$ is positive-definite. We start by introducing the bilinear form 
\begin{equation}
\label{eq:bilin}
B(f,g) \defeq \frac{1}{2}\int_{\R^5}\big(L_+f_1\big)g_1\,dx + \frac{1}{2}\int_{\R^5}\big(L_-f_2\big)g_2\,dx,
\end{equation}
for $f,g\in H^1(\R^5)$, and define the linearized energy
\begin{equation}\label{eq:linenergy}
\begin{aligned}
\Phi(h) \defeq B(h,h) &= \frac{1}{2}\int_{\R^5}\big(L_+h_1\big)h_1\,dx + \frac{1}{2}\int_{\R^5}\big(L_-h_2\big)h_2\,dx\\
&=\frac{1}{2}\int_{\R^5}|\nabla h|^2\,dx + \frac{1}{2}\int_{\R^5}|h|^2\,dx - \frac{1}{2}\int_{\R^5}\big(|x|^{-3}\ast Q^2\big)|h|^2\,dx \\
&\qquad\qquad\qquad\qquad\qquad\qquad\qquad- \int_{\R^5}\big(|x|^{-3}\ast (Qh_1)\big)Qh_1 \,dx.
\end{aligned}
\end{equation}
We then use Proposition \ref{null-sp} to deduce that
\begin{equation}\label{eq:B-ortho}
B(\partial_{x_j}Q,h) =0; \:\:1\leq j\leq 5,\quad \text{and}\quad B(iQ,h) =0.
\end{equation} 
We also mention the antisymmetric property of $\mathcal{L}$ with respect to the bilinear form $B$
\begin{equation}\label{eq:antisym}
B(f,\mathcal{L}g) = -\frac{1}{2}\int_{\R^5}\big(L_+f_1\big)\big(L_-g_2\big)\,dx + \frac{1}{2}\int_{\R^5}\big(L_-f_2\big)\big(L_+g_1\big)\,dx = - B(\mathcal{L}f,g).
\end{equation}
Next from the Lemma \ref{eigenfunc} and \eqref{eq:linenergy}, we get 
\begin{equation}\label{eq:LE-values}
\Phi(\mathcal{Y}_+) = \Phi(\mathcal{Y}_-) = 0.
\end{equation}
Moreover, from \eqref{eq:pohid} 
\begin{equation}\label{eq:LEatQ}
\begin{aligned}
\Phi(Q) &= \frac{1}{2}\int_{\R^5}|\nabla Q|^2\,dx + \frac{1}{2}\int_{\R^5}|Q|^2\,dx - \frac{3}{2}\int_{\R^5}\big(|x|^{-3}\ast Q^2\big)Q^2\,dx \\
&= \int_{\R^5}\big(|x|^{-3}\ast Q^2\big)Q^2 = -4\int_{\R^5}Q^2\,dx < 0.
\end{aligned}
\end{equation}
Now we specify the orthogonality conditions 
\begin{equation}\label{eq:ortho1}
\int_{\R^5}Qh_2\,dx = \int_{\R^5}\big(\partial_{x_j}Q\big)h_1\,dx=0;\:\:\:1\leq j\leq 5,
\end{equation}
\begin{equation}\label{eq:ortho2}
\int_{\R^5}\Delta Qh_1\,dx =0,
\end{equation}
\begin{equation}\label{eq:ortho3}
\int_{\R^5}\mathcal{Y}_1h_2\,dx = \int_{\R^5}\mathcal{Y}_2h_1\,dx =0.
\end{equation}
Then the sign of $\Phi$ under the above set of orthogonality conditions is given by the following Proposition.

\begin{proposition}\label{positivity}
There exists a constant $c > 0$ such that for all radial functions $h\in G_{\perp}\cup G_{\perp}^y$, we have
		\begin{equation}\label{eq:LE-pos}
		\Phi(h)\geq c\,\|h\|_{H^1(\R^5)}^2,
		\end{equation}
		where 
		\[
		G_{\perp}=\{h\in H^1(\R^5)\::\: h \,\text{satisfies}\, \eqref{eq:ortho1}\, \text{and}\, \eqref{eq:ortho2}\},\] 
		and 
		\[G_{\perp}^y=\{h\in H^1(\R^5)\::\:h\, \text{satisfies}\, \eqref{eq:ortho1}\, \text{and} \,\eqref{eq:ortho3}\}.
		\]
\end{proposition}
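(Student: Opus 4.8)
The plan is to exploit the block structure $\Phi(h)=\tfrac12(L_+h_1,h_1)+\tfrac12(L_-h_2,h_2)$: since $h$ is radial, so are $h_1$ and $h_2$, and it is enough to prove an $H^1$-coercivity bound for each of the two quadratic forms separately on the appropriate subspace of $L^2_{\text{rad}}(\R^5)$. Both $G_\perp$ and $G_\perp^y$ impose $\int_{\R^5}Qh_2\,dx=0$ from \eqref{eq:ortho1}, so the $L_-$ contribution is treated identically in both cases; they differ only in the constraint on $h_1$, namely $h_1\perp\Delta Q$ (from \eqref{eq:ortho2}) in $G_\perp$ versus $h_1\perp\mathcal{Y}_2$ (from \eqref{eq:ortho3}) in $G_\perp^y$; the condition $h_1\perp\partial_{x_j}Q$ in \eqref{eq:ortho1} is automatic for radial $h_1$, and the extra condition $h_2\perp\mathcal{Y}_1$ in $G_\perp^y$ only helps. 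So it suffices to show (A) that $(L_-h_2,h_2)\ge c_-\|h_2\|_{H^1}^2$ for all radial $h_2$ with $\int Qh_2=0$, and (B) that $(L_+h_1,h_1)\ge c_+\|h_1\|_{H^1}^2$ for all radial $h_1\perp g$ with $g$ either $\Delta Q$ or $\mathcal{Y}_2$.

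For (A): by \eqref{L-non-neg} and Kato--Rellich, $L_-$ is self-adjoint and nonnegative; as in the proof of Lemma~\ref{spectrum}, $\sigma_{\text{ess}}(L_-)=[1,\infty)$, and by Proposition~\ref{null-sp}(2), $\ker L_-|_{\text{rad}}=\spn\{Q\}$ (with $Q$ radial). Hence $0$ is an isolated, simple eigenvalue of $L_-|_{\text{rad}}$, so $(L_-h_2,h_2)\ge\gamma_0\|h_2\|_{L^2}^2$ for some $\gamma_0>0$ whenever $\int Qh_2=0$. Since $|x|^{-3}\ast Q^2\in L^\infty(\R^5)$ (because $Q$ is Schwartz and $|x|^{-3}$ is locally integrable in $\R^5$ with $|x|^{-3}$-decay at infinity), the identity $(L_-h_2,h_2)=\|\nabla h_2\|_{L^2}^2+\|h_2\|_{L^2}^2-((|x|^{-3}\ast Q^2)h_2,h_2)$ gives $\|\nabla h_2\|_{L^2}^2\le(1+C\gamma_0^{-1})(L_-h_2,h_2)$, and combining with the $L^2$-bound yields (A).

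For (B): on $L^2_{\text{rad}}$, the operator $L_+$ is self-adjoint and bounded below, has exactly one negative eigenvalue --- Claim~\ref{-evalue}, whose eigenfunction is radial because $L_{+,(l)}\ge0$ for every $l\ge1$ (proof of Proposition~\ref{null-sp}) --- has trivial kernel (Claim~\ref{Tkernel}), and has $\sigma_{\text{ess}}(L_+)=[1,\infty)$; in particular $0\in\rho(L_+|_{\text{rad}})$ and $L_+|_{\text{rad}}$ is boundedly invertible. For such an operator and a radial $g\ne0$, a standard spectral/Lagrange-multiplier argument (cf. \cite{DR10}) shows that $(L_+^{-1}g,g)\le0$ implies $(L_+f,f)\ge0$ for all radial $f\perp g$, and $(L_+^{-1}g,g)<0$ implies $(L_+f,f)\ge c\,\|f\|_{L^2}^2$ for all radial $f\perp g$, for some $c>0$. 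The lack of compactness on $\R^5$ is harmless: for a minimizing sequence of $\inf\{(L_+f,f):\|f\|_{L^2}=1,\ f\perp g\}$, boundedness in $H^1$ follows from the boundedness of $|x|^{-3}\ast Q^2$ and of the nonlocal form $f\mapsto\int(|x|^{-3}\ast(Qf))Qf\,dx$ (which is $\le C\|f\|_{L^2}^2$ by Hardy--Littlewood--Sobolev and $Q\in L^5(\R^5)$); the potential and nonlocal terms are weakly continuous because their coefficients vanish at infinity; and if the weak limit were $0$ the infimum would be $\ge1$, since out at infinity $L_+$ coincides with $-\Delta+1$ up to a vanishing error. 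Hence any vanishing of the infimum produces a nonzero minimizer, whose Euler--Lagrange equation, together with $\ker L_+|_{\text{rad}}=\{0\}$ and $(L_+^{-1}g,g)<0$, is contradictory. The passage from $L^2$- to $H^1$-coercivity is then exactly as in (A).

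It remains to verify $(L_+^{-1}g,g)<0$ for $g=\mathcal{Y}_2$ and $g=\Delta Q$. For $g=\mathcal{Y}_2$: by Lemma~\ref{eigenfunc} and \eqref{ef}, $\mathcal{Y}_2=\tfrac1{e_0}L_+\mathcal{Y}_1$ with $\mathcal{Y}_1=(L_-)^{1/2}\chi$, so $L_+^{-1}\mathcal{Y}_2=\tfrac1{e_0}\mathcal{Y}_1$ and $(L_+^{-1}\mathcal{Y}_2,\mathcal{Y}_2)=\tfrac1{e_0^2}(L_+\mathcal{Y}_1,\mathcal{Y}_1)=\tfrac1{e_0^2}\big((L_-)^{1/2}L_+(L_-)^{1/2}\chi,\chi\big)=-\|\chi\|_{L^2}^2<0$, using $(L_-)^{1/2}L_+(L_-)^{1/2}\chi=-e_0^2\chi$ from the proof of Lemma~\ref{eigenfunc}. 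For $g=\Delta Q$: \eqref{L+non-neg} already gives $(L_+f,f)\ge0$ for radial $f\perp\Delta Q$, hence $(L_+^{-1}\Delta Q,\Delta Q)\le0$, and it suffices to show this quantity is nonzero. By \eqref{HQ} and \eqref{L+exp-H}, $L_+Q=2(\Delta Q-Q)$, while differentiating at $\lambda=1$ the fact that $\lambda^2Q(\lambda\cdot)$ solves the rescaled ground-state equation gives $L_+\big(2Q+r\partial_rQ\big)=-2Q$ (the scaling identity behind $Q_1$ in the proof of Claim~\ref{Tkernel}); hence $L_+\big(-\tfrac12(Q+r\partial_rQ)\big)=\Delta Q$, and the Pohozaev identities \eqref{eq:pohid}, together with $\int(r\partial_rQ)\,\Delta Q\,dx=\tfrac32\|\nabla Q\|_{L^2}^2$, give $(L_+^{-1}\Delta Q,\Delta Q)=-\tfrac14\|\nabla Q\|_{L^2}^2<0$. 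This proves (B) in both cases, and (A)$+$(B) gives $\Phi(h)\ge\tfrac12\min(c_+,c_-)\|h\|_{H^1}^2$. The main obstacle is the $L_+$ part: having the spectral picture of $L_+|_{\text{rad}}$ (one negative eigenvalue, trivial kernel) in usable form, running the coercivity criterion while keeping the non-compactness of $\R^5$ under control, and --- most delicate --- verifying that the Vakhitov--Kolokolov-type quantities $(L_+^{-1}(\Delta Q),\Delta Q)$ and $(L_+^{-1}\mathcal{Y}_2,\mathcal{Y}_2)$ are strictly negative.
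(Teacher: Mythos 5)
Your proof is correct, and it genuinely diverges from the paper's, most notably in how it handles $G_{\perp}^{y}$. The paper treats $G_{\perp}$ by separating $\Phi$ into the $L_+$ and $L_-$ blocks and running a compactness/Lagrange-multiplier argument for each; for $G_{\perp}^{y}$ it switches to a different device, a linear-algebraic dimension count: if $\Phi(\tilde{h})\leq 0$ for some $\tilde{h}\in G_{\perp}^{y}\setminus\{0\}$, then $\Phi$ is non-positive on the $8$-dimensional span $\{\partial_{x_j}Q,\,iQ,\,\mathcal{Y}_+,\,\tilde{h}\}$ because all these vectors are $B$-orthogonal with $\Phi=0$ on each, and this contradicts the already-established coercivity of $\Phi$ on the codimension-$7$ subspace $G_{\perp}$. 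The dimension count avoids any explicit spectral computation with $\mathcal{Y}_\pm$. You instead observe that the orthogonality conditions defining $G_{\perp}^{y}$ also split into a condition on $h_1$ ($h_1\perp\mathcal{Y}_2$) and conditions on $h_2$ ($h_2\perp Q$, $h_2\perp\mathcal{Y}_1$), so the same block decomposition works, and you reduce both cases to a single abstract fact: for a self-adjoint radial operator with index one, trivial kernel, and $\sigma_{\text{ess}}=[1,\infty)$, coercivity on $g^\perp$ holds whenever $(L_+^{-1}g,g)<0$. You then verify this Vakhitov--Kolokolov-type quantity directly, getting $(L_+^{-1}\Delta Q,\Delta Q)=-\tfrac14\|\nabla Q\|_{L^2}^2$ (the paper's intermediate formula at the analogous point has $-\tfrac12$, a harmless slip, but yours is the correct constant) and, more interestingly, $(L_+^{-1}\mathcal{Y}_2,\mathcal{Y}_2)=-\|\chi\|_{L^2}^2<0$ via $\mathcal{Y}_1=(L_-)^{1/2}\chi$ and $(L_-)^{1/2}L_+(L_-)^{1/2}\chi=-e_0^2\chi$. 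This second computation is a genuine addition: the paper never needs it because the dimension count substitutes for it. The trade-off is that the paper's $G_{\perp}^{y}$ argument is shorter and uses no new hard analysis once $G_{\perp}$-coercivity is in hand, whereas yours is more uniform, isolates the single spectral criterion that does all the work, and avoids the slightly delicate bookkeeping of which orthogonality conditions $G_{\perp}$ has versus which vectors span the negative space. One small caveat worth spelling out if you write this up in full: the compactness step for radial minimizing sequences must rule out vanishing ($f_*=0$), which you do via the observation that $(L_+f_n,f_n)\to\|\nabla f_n\|_{L^2}^2+1\geq 1$ if the nonlocal terms disappear; this is exactly the mechanism the paper uses in its \eqref{eq:non0limit}, so the compactness parts of the two arguments are in fact parallel even though the reduction to a minimization problem is packaged differently.
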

\begin{proof} The non-negativity of $\Phi$ on $G_{\perp}$ follows from \eqref{L-non-neg} and \eqref{L+non-neg}. Now to establish the coercivity on $G_{\perp}$, we show that there exists a positive constant $c_*$ such that for any $h\in G_{\perp}$
\begin{equation}\label{eq:coerciveG}
\Phi(h)\geq c_*\,\|h\|_{H^1(\R^5)}^2.
\end{equation} 
We start by showing that there exists a positive constant $c_1$ such that for every $h\in G_{\perp}$ (note that the condition $\big(Q,h_2\big)$ is superfluous)
\begin{equation}\label{eq:L+coercive}
\begin{aligned}
\Phi_1(h_1)\defeq\int_{\R^5}\big(L_+h_1\big)h_1\,dx \geq c_1\|h_1\|_{L^2(\R^5)}^2.
\end{aligned}
\end{equation} 
Assume that \eqref{eq:L+coercive} does not hold, then there exists a sequence of real-valued $H^1$-functions $h_n\in G_{\perp}$ such that
\[
\lim\limits_{n\rightarrow +\infty}\Phi_1(h_n)=\lim\limits_{n\rightarrow +\infty}\int_{\R^5}\big(L_+h_n\big)h_n\,dx =0,\:\:\:\text{and}\:\:\:\|h_n\|_{L^2(\R^5)} = 1.
\] 
This yields 
\begin{equation}\label{eq:non0limit}
0\leq \int_{\R^5}|\nabla h_n|^2\,dx = -1 + \int_{\R^5}\big(|x|^{-3}\ast Q^2\big)h_n^2\,dx + \int_{\R^5}\big(|x|^{-3}\ast (Qh_n)\big)Qh_n\,dx. 
\end{equation}
The function $|x|^{-3}$ can be split into two pieces, the part living inside the unit ball and the part outside as $|x|^{-3} = g_1(x) + g_2(x)$ with $g_1\in L^1$ and $g_2\in L^\infty$, where $g_1 = |x|^{-3}$ and $g_2(x) =0$ inside the unit ball, $g_1(x) =0$ and $g_2(x)=|x|^{-3}$ otherwise.  By Young's inequality and the fact that $Q\in\mathcal{S}(\R^5)$, we have
\[
\int_{\R^5}\big(\big(g_1(x)+g_2(x)\big)\ast Q^2\big)h_n^2\,dx \leq \|g_1\|_{L^1}\|Q\|_{L^\infty}^2\|h_n\|_{L^2}^2 + \|g_2\|_{L^\infty}\|Q\|_{L^2}^2\|h_n\|_{L^2}^2\leq C\|h_n\|_{L^2}^2<\infty,
\]
and
\[
\begin{aligned}
\int_{\R^5}\big(|x|^{-3}\ast (Qh_n)\big)Qh_n\,dx &\leq \|g_1\|_{L^1}\|Qh_n\|_{L^2}^2 + \|g_2\|_{L^\infty}\|Qh_n\|_{L^1}^2\\
&\leq \|g_1\|_{L^1}\|Q\|_{L^\infty}^2\|h_n\|_{L^2}^2 + \|g_2\|_{L^\infty}\|Q\|_{L^2}^2\|h_n\|_{L^2}^2\leq C\|h_n\|_{L^2}^2<\infty.
\end{aligned}
\]
Therefore, $h_n$ is bounded in $H^1$ and thus, there exists $h_* \in H^1$ such that a subsequence of $h_n$ converges
weakly in $H^1$ to $h_*$. By the compactness lemma for radial functions, we have $h_n \rightarrow h_*$ in $L^p(\R^5)$ for $2<p<\frac{10}{3}$. We then have that
\[
\int_{\R^5}\big(|x|^{-3}\ast Q^2\big)\big(h_n^2-h_*^2\big)\,dx \lesssim \|Q\|_{L^{\frac{20}{7}}(\R^5)}^2\|h_n+h_*\|_{L^{\frac{20}{7}}(\R^5)}
\|h_n-h_*\|_{L^{\frac{20}{7}}(\R^5)} \longrightarrow 0
\]
and
\[
\begin{aligned}
\int_{\R^5}&\big(|x|^{-3}\ast (Qh_n)\big)Qh_n\,dx - \int_{\R^5}\big(|x|^{-3}\ast (Qh_*)\big)Qh_*\,dx \\
&= \int_{\R^5}\big(|x|^{-3}\ast (Qh_n)\big)Q(h_n-h_*)\,dx +\int_{\R^5}\big(|x|^{-3}\ast (Q(h_n-h_*))\big)Qh_*\,dx \\
&\lesssim \|Q\|^2_{L^{\frac{20}{7}}(\R^5)}\Big(\|h_n\|_{L^{\frac{20}{7}}(\R^5)}+\|h_*\|_{L^{\frac{20}{7}}(\R^5)}\Big)\|h_n-h_*\|_{L^{\frac{20}{7}}(\R^5)} \longrightarrow 0.
\end{aligned}
\]
Thus, by \eqref{eq:non0limit}, we deduce that $h_*\neq 0$. Furthermore, by the lower semi-continuity property of $H^1$ norm together with the compactness and the non-negativity of $\Phi_1$ on $G_\perp$, we have
\[
0\leq \Phi_1(h_*) \leq \liminf_{n\rightarrow +\infty} \Phi_1(h_n) =0. 
\] 
Therefore, 
\begin{equation}\label{eq:Phi1h*}
\int_{\R^5}\big(L_+h_*\big)h_*\,dx=\Phi_1(h_*)=0,
\end{equation}
and $h_*$ is the solution to the minimization problem
\[
0= \frac{\Phi_1(h_*)}{\|h_*\|_{L^2(\R^5)}} = \min_{h\in \Gamma} \frac{\Phi_1(h)}{\|h\|_{L^2(\R^5)}},\ \ \text{where}\ \ \Gamma= \big\{h\in H^1\ : \ h \ \text{satisfies} \ \eqref{eq:ortho1} \ \text{and} \ \eqref{eq:ortho2}\big\}.
\]
Thus, there exists Lagrange multipliers $\lambda_0,\ldots ,\lambda_6$ such that
\begin{equation}\label{eq:LM}
L_+h_* = \lambda_0\Delta Q +\sum_{j=1}^{5}\lambda_j\partial_{x_j}Q +  \lambda_6h_*.
\end{equation}
Taking the inner product of \eqref{eq:LM} with $h_*$ and using the fact that $h_*\neq 0$ satisfies \eqref{eq:ortho1}, \eqref{eq:ortho2} together with $\langle L_+h_*,h_*\rangle_{L^2} = 0$, we have $\lambda_6 = 0$. Now taking the inner product of \eqref{eq:LM} with $\partial_{x_j}Q$ and using the relations $\langle \partial_{x_i}Q,\partial_{x_j}Q\rangle_{L^2} = 0$ for $i\neq j$, $\langle \Delta Q,\partial_{x_j}Q\rangle_{L^2} = 0$ along with the fact that $\partial_{x_j}Q \in \text{ker} L_+$ for $1\leq j \leq 5$, \eqref{eq:LM} reduces to
\[
0= \lambda_j \int_{\R^5} |\partial_{x_j}Q|^2\,dx,
\]  
which yields that $\lambda_1=\lambda_2=\ldots=\lambda_5=0$. Hence, \eqref{eq:LM} becomes 
\begin{equation}\label{eq:LM1}
L_+h_* = \lambda_0\Delta Q. 
\end{equation}
Denote $\Lambda Q = 2Q + x\cdot\nabla Q$. We now use the relation $L_+\big(\Lambda Q\big) = -2Q$ together with $L_+Q=-2\big(|x|^{-3}\ast Q^2\big)Q$ to deduce that
\begin{equation}\label{eq:LM2}
L_+\left(\frac{\lambda_0}{2}\big(Q-\Lambda Q\big)\right) = \lambda_0\Delta Q = L_+h_*.
\end{equation}
Now by \eqref{eq:LM2} and recalling that $\partial_{x_j}Q \in \text{ker} L_+$ for $1\leq j \leq 5$, we infer that there exist $\mu_1,\dots,\mu_5$ such that
\[
h_* = \frac{\lambda_0}{2}\big(Q-\Lambda Q\big) + \sum_{j=1}^{5}\mu_j\partial_{x_j}Q.
\]
Again, taking the inner product of the previous expression with $\partial_{x_j}Q$ and using the fact that $h_*\in G_{\perp}$ along with the identities $\langle\Lambda Q,\partial_{x_j}Q\rangle_{L^2} =0 = \langle Q,\partial_{x_j}Q\rangle_{L^2}$, $\langle \partial_{x_i}Q,\partial_{x_j}Q\rangle_{L^2} = 0$ for $i\neq j$, we deduce that $\mu_j =0$ for all $j$. Hence,
\begin{equation}\label{eq:min-sol}
h_* = \frac{\lambda_0}{2}\big(Q-\Lambda Q\big)
\end{equation} 
Now by using \eqref{eq:LM1} and \eqref{eq:min-sol}, we compute
\[
\Phi_1(h_*) = \int_{\R^5}\big(L_+h_*\big)h_*\,dx = \frac{\lambda_0^2}{2}\int_{\R^5}\Delta Q\big(Q-\lambda Q\big)\,dx = -\frac{\lambda_0^2}{2}\int_{\R^5}|\nabla Q|^2\,dx.
\]
Invoking \eqref{eq:Phi1h*}, we have $\lambda_0 = 0$, which implies that $h_*=0$, a contradiction. This completes the proof of \eqref{eq:L+coercive}. Now by the definition of $\Phi_1$, we have that for $\varepsilon >0$ small enough 
\[
\varepsilon\Phi_1(h_1) \geq \frac{\varepsilon}{2}\int_{\R^5}|\nabla h_1|^2\,dx - \frac{c_1}{2}\int_{\R^5}|h_1|^2\,dx
\]
for all radial functions $h_1\in H^1(\R^5)$, here $c_1$ is the same constant as in \eqref{eq:L+coercive}. Adding this to \eqref{eq:L+coercive}, we get that for some positive constant $c$,
\[
\Phi_1(h_1) \geq c\|h_1\|_{H^1(\R^5)}^2.
\]
Using an analogous argument we can establish that for some positive constant $c_2$ and a real-valued radial function $h_2\in H^1(\R^5)$ such that $\langle Q, h_2\rangle_{L^2}=0$, we have
\[
\Phi_2(h_2)\defeq\int_{\R^5}\big(L_-h_2\big)h_2\,dx \geq c_2\|h_2\|_{H^1(\R^5)}^2.
\]
We now establish the coercivity of $\Phi$ on $G_{\perp}^y$. We start by showing that $\Phi(h)>0$ for all $h\in G_{\perp}^y\setminus \{0\}$. Assume by contradiction that there exists $\tilde{h}\in G_{\perp}^y\setminus \{0\}$ such that $\Phi(\tilde{h})\leq 0$. From \eqref{eq:B-ortho} and the computation
\begin{equation}\label{eq:coer1}
\begin{aligned}
B\big(\mathcal{Y}_+,\tilde{h}\big) &= \frac{1}{2}\int_{\R^5}\big(L_+\mathcal{Y}_1\big)\tilde{h}_1\,dx + \frac{1}{2}\int_{\R^5}\big(L_-\mathcal{Y}_2\big)\tilde{h}_2\,dx\\
&= \frac{e_0}{2}\int_{\R^5}\mathcal{Y}_2\tilde{h}_1\,dx + \frac{e_0}{2}\int_{\R^5}\mathcal{Y}_1\tilde{h}_2\,dx=0,
\end{aligned}
\end{equation}
we have that $\partial_{x_j}Q$ for $j=1,2,\dots,5$, $iQ$, $\mathcal{Y}_+=\mathcal{Y}_1 + i\mathcal{Y}_2$ and $\tilde{h}$ are orthogonal in the bilinear form $B$. Since $\Phi(\partial_{x_j}Q)=\Phi(iQ)=\Phi(\mathcal{Y}_+)=0$, we get $\Phi(h)\leq 0$ for all $h \in \text{span}\{\partial_{x_j}Q,iQ,\mathcal{Y}_+,\tilde{h}\}$, $j=1,2,\dots,5$. We now claim that these vectors in the span are independent. Indeed, assume that for some real numbers $\alpha_j$, $\beta_1$, $\beta_2$, $\beta_3$, we have
\[
\sum_{j=1}^{5}\alpha_j\partial_{x_j}Q + \beta_1 iQ + \beta_2\mathcal{Y}_+ + \beta_3\tilde{h} = 0,
\]
then $\beta_2 B\big(\mathcal{Y}_+,\mathcal{Y}_-\big) = 0$, and since $B\big(\mathcal{Y}_+,\mathcal{Y}_-\big) = -e_0\big(L_-\mathcal{Y}_2,\mathcal{Y}_2\big) \neq 0$, we have $\beta_2 = 0$. Since $\partial_{x_j}Q$, $iQ$ and $\tilde{h}$ are orthogonal in $L^2$, we also obtain that $\alpha_j=\beta_1=\beta_3 =0.$ This finishes the proof of claim. Since we know that $\Phi$ is positive definite on $G_{\perp}$, hence, it cannot be non-positive on $\text{span}\{\partial_{x_j}Q,iQ,\mathcal{Y}_+,\tilde{h}\}$, $j=1,2,\dots,5$, a contradiction. Therefore, $\Phi(h)$ is positive definite on $G_{\perp}^y\setminus \{0\}.$ 

Finally, to prove the coercivity, we assume by contradiction that there exists a sequence $h_n\in G_{\perp}^y$ such that 
\[
\lim\limits_{n\rightarrow +\infty}\Phi(h_n) = 0,\quad\text{with}\ \ \|h_n\|_{L^2(\R^5} =1.
\] 
Now following a similar argument as used before to prove \eqref{eq:Phi1h*}, we deduce that $h_n$ converges weakly to $h_*$ in $H^1\setminus {0}$. In particular, $h_*\neq 0$ and using compactness together with the lower semi-continuity associated with the weak convergence, we have 
\[
\Phi(h_*) \leq \liminf_{n\rightarrow +\infty} \Phi(h_n) =0,
\] 
which contradicts the definite positivity of $\Phi$ on $G_{\perp}^y\setminus \{0\}.$
\end{proof}

\section{Special solutions}\label{S:special-solutions}

The goal of this section is to demonstrate a richer dynamics by constructing special solutions $Q^+$ and $Q^-$ (see Theorem \ref{mainthm1}). These solutions have the same mass and energy as the ground state and are close to the ground state solution in one time direction, however, exhibit a different behavior in other time direction due to a contrasting gradient size in comparison to the ground state. To accomplish this, we begin with the construction of a family of approximate solutions to \eqref{H}. 
\subsection{Approximate solutions}
\begin{proposition}\label{approx-solns}
	Let $A\in\R$. There exists a sequence $(Z_j^A)_{j\geq 1}$ of functions in $\mathcal{S}(\R^5)$ such that $Z_1^A = A\mathcal{Y}_+$, and if $\mathcal{V}_k^A := \sum_{j=1}^{k}e^{-je_0t}Z_j^A$, then as $t\rightarrow +\infty$ we have
	\begin{equation}\label{approx-solns1}
		\epsilon_k := \partial_t\mathcal{V}_k^A + \mathcal{L}\mathcal{V}_k^A - R(\mathcal{V}_k^A) = O(e^{-(k+1)e_0t}) 
	\end{equation}
	in $\mathcal{S}(\R^5)$.
	\begin{proof}
		We construct the functions $Z_1^A,\ldots,Z_k^A$ by induction on $j$. Note that if $\mathcal{V}_1^A = e^{-e_0t}Z_1^A$, then
		$$
		\epsilon_1 = \partial_t\mathcal{V}_1^A + \mathcal{L}\mathcal{V}_1^A - R(\mathcal{V}_1^A) = -R(\mathcal{V}_1^A),
		$$
		since $\mathcal{L}\mathcal{Y}_+ = e_0\mathcal{Y}_+$.
		From the definition \eqref{Rexp-H} and Lemma \ref{eigenfunc} we have that $R(\mathcal{V}_1^A)=R(e^{-e_0t}Z_1^A)=O(e^{-2e_0t})$, since the leading order in $R$ is quadratic, which yields \eqref{approx-solns1} for $k=1$. Assume now that the solutions $Z_1^A,\ldots,Z_k^A$ are known and satisfy \eqref{approx-solns1}. Note that 
		$$
		\partial_t\mathcal{V}_k^A = \sum_{j=1}^{k}(-je_0)e^{-je_0t}Z_j^A.
		$$
		We can write from \eqref{approx-solns1} that 
		\begin{equation}\label{apps1}
			\epsilon_k(x,t) = \sum_{j=1}^{k}e^{-je_0t}\big((-je_0)Z_j^A(x) + \mathcal{L}Z_j^A(x)\big) - R(\mathcal{V}_k^A(x,t)),
		\end{equation}   
		which says that there exist functions $\mathcal{Z}^A_j\in\mathcal{S}(\R^5)$ (recall that $Z_j^A\in\mathcal{S}(\R^5)$) such that \eqref{apps1} can be written as
		\begin{equation}\label{apps2}
			\epsilon_k(x,t) = \sum_{j=1}^{k+1}e^{-je_0t}\mathcal{Z}_j^A(x) - R(\mathcal{V}_k^A(x,t)).
		\end{equation}		 
		By the decay in time in \eqref{approx-solns1} $\mathcal{Z}_j^A=0$ for all $j\leq k$  and observe from \eqref{Rexp-H} that 
		\begin{align*}
			R(\mathcal{V}_{k}^A) &= i  \int_{\R^5}\frac{|\mathcal{V}_{k}^A(x)|^2Q(y)+2Q(x)\mathcal{V}_{k}^A(x)\mathcal{V}_{k}^A(y)+|\mathcal{V}_{k}^A(x)|^2\mathcal{V}_{k}^A(y)}{|x-y|^3}\,dx \\
			&= \int_{\R^5}\frac{\sum_{j=2}^{3k}e^{-je_0t}f_{jk}(x,y)}{|x-y|^3}\,dx,
		\end{align*}
		where $f_{jk}$'s are in $\mathcal{S}(\R^5)$. Thus, by using the argument in Lemma \ref{eigenfunc} (for the decay of potential terms in \eqref{V1V2}), there exits $\mathcal{U}_{k+1}\in\mathcal{S}(\R^5)$ such that as $t\rightarrow +\infty$, \eqref{apps2} becomes		
		\begin{equation}\label{apps3}
			\epsilon_k(x,t) = -R(\mathcal{V}_{k}^A) - e^{-(k+1)e_0t}\mathcal{U}_{k+1}^A + O(e^{-(k+2)e_0t}).
		\end{equation}
		Note that by Lemma \ref{spectrum}, $(k+1)e_0$ is not in the spectrum of $\mathcal{L}$. Define 
		$$
		Z_{k+1} \defeq (\mathcal{L}-(k+1)e_0)^{-1}\mathcal{U}_{k+1}.
		$$ 
		From Lemma \ref{eigenfunc}, we know that $Z_{k+1}\in\mathcal{S}(\R^5)$. Then we have
		\begin{equation*}
			\epsilon_{k+1} = \partial_t\mathcal{V}_{k+1}^A + \mathcal{L}\mathcal{V}_{k+1}^A - R(\mathcal{V}_{k+1}^A)
		\end{equation*} 
		 Using the definition, $\mathcal{V}_{k+1}^A=\mathcal{V}_{k}^A + e^{-(k+1)e_0t}Z_{k+1}^A$, we have
		 \begin{align*}
		 \epsilon_{k+1} &= \epsilon_k -(k+1)e_0e^{-(k+1)e_0t}Z_{k+1}^A + \mathcal{L}\big(e^{-(k+1)e_0t}Z_{k+1}^A\big) +R(\mathcal{V}_{k}^A)- R(\mathcal{V}_{k+1}^A)\\
		 &=\epsilon_k +e^{-(k+1)e_0t}\mathcal{U}_{k+1}^A +R(\mathcal{V}_{k}^A)- R(\mathcal{V}_{k+1}^A).
		 \end{align*} 
	  By \eqref{apps3}, we get
	  $$
	  \epsilon_{k+1} = - R(\mathcal{V}_{k+1}^A) + O(e^{-(k+2)e_0t}),
	  $$  
	  which implies that $R(\mathcal{V}_{k+1}^A) = O(e^{-(k+2)e_0t})$ in $\mathcal{S}(\R^5)$, thus, yielding \eqref{approx-solns1} for $k+1$.
  	\end{proof}
  	
\end{proposition}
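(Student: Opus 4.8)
The plan is to construct the sequence $(Z_j^A)_{j\ge 1}$ by induction on $j$, matching coefficients of $e^{-je_0t}$ in the error expansion. The base case sets $Z_1^A = A\,\mathcal{Y}_+$; because $\mathcal{L}\mathcal{Y}_+ = e_0\mathcal{Y}_+$ (Lemma \ref{eigenfunc}), the linear part $\partial_t\mathcal{V}_1^A + \mathcal{L}\mathcal{V}_1^A$ vanishes identically, so $\epsilon_1 = -R(\mathcal{V}_1^A)$; since $R$ in \eqref{Rexp-H} has no linear term (its lowest-order contribution is quadratic in $h$), and $Z_1^A\in\mathcal{S}(\R^5)$ with the convolution kernels handled as in the decay estimates of Lemma \ref{eigenfunc}, we get $\epsilon_1 = O(e^{-2e_0 t})$ in $\mathcal{S}(\R^5)$, which is \eqref{approx-solns1} for $k=1$.

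For the inductive step, assume $Z_1^A,\dots,Z_k^A\in\mathcal{S}(\R^5)$ are known with \eqref{approx-solns1} holding at level $k$. Expanding $\epsilon_k$ as in \eqref{apps1}, the terms $\sum_{j=1}^{k}e^{-je_0t}\big((-je_0)Z_j^A + \mathcal{L}Z_j^A\big)$ together with the Taylor expansion of $R(\mathcal{V}_k^A)$ in powers of $e^{-e_0 t}$ (the nonlocal quadratic and cubic pieces produce only integer multiples $e^{-je_0t}$ with $j\ge 2$, and each spatial coefficient lies in $\mathcal{S}(\R^5)$ by the same Newtonian-potential/Bessel-potential decay arguments as in Lemma \ref{eigenfunc}) reorganize $\epsilon_k$ into a finite sum $\sum_{j=1}^{N}e^{-je_0t}\mathcal{Z}_j^A(x)$ plus a genuine $O(e^{-(N+1)e_0t})$ remainder. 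The hypothesis that $\epsilon_k = O(e^{-(k+1)e_0t})$ forces $\mathcal{Z}_j^A \equiv 0$ for $j\le k$, so the first surviving coefficient is $\mathcal{Z}_{k+1}^A = \mathcal{U}_{k+1}^A\in\mathcal{S}(\R^5)$, yielding \eqref{apps3}.

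Now I would invoke Lemma \ref{spectrum}: since $\sigma(\mathcal{L})\cap\R = \{-e_0,0,e_0\}$ and $\sigma_{\mathrm{ess}}(\mathcal{L}) = [1,\infty)$ on the imaginary axis, the real number $(k+1)e_0 \ge 2e_0$ is in the resolvent set of $\mathcal{L}$ (for $k\ge 1$), so $(\mathcal{L}-(k+1)e_0)^{-1}$ is a bounded operator; moreover it maps $\mathcal{S}(\R^5)$ into $\mathcal{S}(\R^5)$ by the elliptic regularity and exponential-decay machinery already used in Lemma \ref{eigenfunc} (the symbol is comparable to $(1+|\xi|^2)$ up to the perturbative potential terms). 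Setting $Z_{k+1}^A \defeq (\mathcal{L}-(k+1)e_0)^{-1}\mathcal{U}_{k+1}^A$ then exactly cancels the $e^{-(k+1)e_0t}$ coefficient: a direct substitution into $\epsilon_{k+1} = \partial_t\mathcal{V}_{k+1}^A + \mathcal{L}\mathcal{V}_{k+1}^A - R(\mathcal{V}_{k+1}^A)$, using $\mathcal{V}_{k+1}^A = \mathcal{V}_k^A + e^{-(k+1)e_0t}Z_{k+1}^A$ and \eqref{apps3}, gives $\epsilon_{k+1} = -R(\mathcal{V}_{k+1}^A) + O(e^{-(k+2)e_0t})$, and since $R(\mathcal{V}_{k+1}^A) = O(e^{-2e_0t})$ and more precisely its expansion starts at order consistent with $O(e^{-(k+2)e_0t})$ after the cancellation, we obtain \eqref{approx-solns1} at level $k+1$.

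The main obstacle I expect is not the algebraic bookkeeping but the functional-analytic claim that $(\mathcal{L}-(k+1)e_0)^{-1}$ preserves the Schwartz class: one must show that solving $(\mathcal{L}-\mu)Z = \mathcal{U}$ with $\mathcal{U}\in\mathcal{S}$, $\mu\notin\sigma(\mathcal{L})$ and $\mu$ real, produces a $Z$ that decays faster than any polynomial and has all derivatives in $L^2$. This requires combining a bootstrap in Sobolev spaces (using that the nonlocal potential terms $|x|^{-3}\ast(Q\,\cdot\,)Q$ etc.\ are smoothing/decaying perturbations of $(-\Delta+1)^2$) with the Bessel-potential exponential-decay estimate \eqref{Bkernel} applied iteratively, exactly paralleling the proof of decay of $\mathcal{Y}_1$ in Lemma \ref{eigenfunc}; one should also confirm that the $t$-expansion of the nonlocal $R$-term is legitimate in the topology of $\mathcal{S}(\R^5)$, i.e.\ that each $f_{jk}(x,y)$ produces, after convolution against $|x-y|^{-3}$, a Schwartz function of $x$ — again a consequence of Newton's theorem and the exponential decay of $Q$ and of the $Z_j^A$.
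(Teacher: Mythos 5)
Your proposal follows essentially the same route as the paper: the same inductive construction of $Z_j^A$, the same observation that the linear part of $\epsilon_1$ vanishes by $\mathcal{L}\mathcal{Y}_+ = e_0\mathcal{Y}_+$, the same reorganization of $\epsilon_k$ into powers of $e^{-e_0 t}$ followed by extraction of the $(k+1)$st coefficient $\mathcal{U}_{k+1}^A$, and the same inversion $Z_{k+1}^A = (\mathcal{L}-(k+1)e_0)^{-1}\mathcal{U}_{k+1}^A$ justified by Lemma \ref{spectrum}. The one point where you go beyond the paper is that you explicitly flag the need to verify that $(\mathcal{L}-(k+1)e_0)^{-1}$ maps $\mathcal{S}(\R^5)$ to $\mathcal{S}(\R^5)$ (and sketch how to get this from the Bessel-potential decay machinery of Lemma \ref{eigenfunc}); the paper disposes of this in one line by reference to Lemma \ref{eigenfunc}, so your version is the same argument made a notch more careful rather than a genuinely different approach.
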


\subsection{Special solutions near an approximate solution}
We now make use of a fixed point argument around an approximate solution constructed in the previous section.
For $f\in L_t^8L_x^{\frac{20}{7}}(I\times\R^5)$ such that $|\nabla|^{\frac{1}{2}} f\in L_t^{\frac{8}{3}}L_x^{\frac{20}{7}}(I\times\R^N)$, we define
		\begin{equation}\label{X-norm}
		\|f\|_{X(I)}\defeq \|f\|_{L_I^{8}L_x^{\frac{20}{7}}} + \||\nabla|^{\frac{1}{2}}f\|_{L_I^{\frac{8}{3}}L_x^{\frac{20}{7}}}
		\end{equation} 
		to present the estimates in a compact manner.
\begin{proposition}\label{sp-solns}
	Let $A\in\R$. There exists $k_0 > 0$ such that for any $k \geq k_0$, there exists $C>0$, $t_k  > 0$ and a solution $U^A$ to \eqref{H} such that for $t\geq t_k$
	\begin{equation}\label{sp-solns1}
		\||\nabla|^{\frac{1}{2}}(U^A - e^{it}Q - e^{it}\mathcal{V}_k^A)\|_{L_t^{\frac{8}{3}}L_x^{\frac{20}{7}}([t,+\infty)\times \R^5)} \leq Ce^{-(k+\frac{1}{2})e_0t}. 
	\end{equation}
	Furthermore, $U^A$ is the unique solution of \eqref{H} satisfying \eqref{sp-solns1} for large $t$. Finally, $U^A$ is independent of $k$ and satisfies 
	\begin{equation}\label{sp-solns2}
		\|U^A - e^{it}Q - Ae^{(i-e_0)t}\mathcal{Y}_+\|_{H^1(\R^5)} \leq Ce^{-2e_0t}, \quad\text{for large}\,\,\,t.
	\end{equation}
	\begin{proof}
		We first observe that the equation \eqref{lineq} can be written as
		\begin{align}\label{eqS}
			ih_t + \Delta h - h &= -S(h)\\\label{Sexp}
			& \defeq -\big[(|x|^{-3}\ast Q^2)h +2(|x|^{-3}\ast(Qh_1))(Q+h) +(|x|^{-3}\ast|h|^2)(Q+h)\big].
		\end{align}
		Define $h=e^{-it}U^A - Q - \mathcal{V}_k^A$ such that $U^A$ is a solution to \eqref{H} if and only if $h$ satisfies
		\begin{align}\label{h-eq}
			ih_t + \Delta h -h = -V(h)- R(h+\mathcal{V}_k^A) + R(\mathcal{V}_k^A) - \epsilon_k,
		\end{align} 
		where 
		\begin{equation}\label{eq:Vexp}
		V(h)\defeq (|x|^{-3}\ast Q^2)h +2(|x|^{-3}\ast(Qh_1))Q,
		\end{equation}
		$R$ is defined in \eqref{Rexp-H} and by Proposition \ref{approx-solns}, as $t\rightarrow+\infty$, we have $\epsilon_k=O(e^{-(k+1)e_0t})\in \mathcal{S}(\R^5)$.  Now showing the existence of $U^A$ (the solution to \eqref{H}) satisfying \eqref{sp-solns1} is equivalent to the fixed-point problem of the integral equation 
		\begin{align*}
		h(t) &= \mathcal{M}(h)(t)\\
		&\defeq -i \int_{t}^{+\infty}e^{i(t-s)(\Delta-1)}\Big[ V(h(s)) + R(h(s)+\mathcal{V}_k^A(s)) - R(\mathcal{V}_k^A(s)) + \epsilon_k(s)\Big]ds
		\end{align*}
		associated to \eqref{h-eq} with 
		$$
		\| h\|_{X([t,+\infty))} \leq e^{-(k+\frac{1}{2})e_0t}
		$$ 
		for $k\geq 1$ and $t\geq t_k$ to be chosen later. We show that $\mathcal{M}$ is a contraction on $B$ defined by
		$$
		B = B(k,t_k) \defeq \big\{h\in E,\,\,\,\,\|h\|_{E} \leq 1\big\},
		$$
		where
		\begin{align*}
			E &= E(k,t_k)\\
			& \defeq \Big\{h \in L_t^{8}L_x^{\frac{20}{7}}([t,+\infty)\times \R^5),\,\,\,|\nabla|^{\frac{1}{2}} h \in L_t^{\frac{8}{3}}L_x^{\frac{20}{7}}([t,+\infty)\times \R^5),\\
			&\qquad\qquad\qquad\qquad\qquad\qquad\qquad \|h\|_{E} = \sup_{t\geq t_k}e^{(k+\frac{1}{2})e_0t}\| h\|_{X([t,+\infty))}  < \infty\Big\}.
		\end{align*}
		Using Sobolev embedding and Strichartz estimate, we have for $h\in E$ and $k$, $t_k$ large such that $t\geq t_k$
		\begin{align}\label{V-est1}
			\|\mathcal{M}(h)\|_{X([t,+\infty))} \leq &\,\,c \,\||\nabla|^{\frac{1}{2}} V(h)\|_{L_t^{\frac{8}{5}}L_x^{\frac{20}{13}}([t,+\infty)\times \R^5)} \\\label{R-est1}
			&+ c\,\||\nabla|^{\frac{1}{2}}(R(h+\mathcal{V}_k^A) - R(\mathcal{V}_k^A))\|_{L_t^{\frac{8}{5}}L_x^{\frac{20}{13}}([t,+\infty)\times \R^5)} \\\label{eps-est}
			&+c \,\||\nabla|^{\frac{1}{2}}\epsilon_k\|_{L_t^{\frac{8}{5}}L_x^{\frac{20}{13}}([t,+\infty)\times \R^5)}.
		\end{align}
		To proceed further, we need the following lemma to estimate the terms on the right-hand side of the above inequality.   
	
				\begin{lemma}\label{R-V-est}
					Let $I$ be a finite time interval and consider $h$, $z$, $w\in L_t^8L_x^{\frac{20}{7}}(I\times\R^5)$ such that $|\nabla|^{\frac{1}{2}} h$, $|\nabla|^{\frac{1}{2}} z$, $|\nabla|^{\frac{1}{2}} w\in L_t^{\frac{8}{3}}L_x^{\frac{20}{7}}(I\times\R^N)$. Then
					\begin{equation}\label{V-est}
					\||\nabla|^{\frac{1}{2}}V(h)\|_{L_I^{\frac{8}{5}}L_x^{\frac{20}{13}}} \lesssim |I|^{\frac{1}{2}}\|h\|_{X(I)},
					\end{equation}  
					and
					\begin{align}\label{R-est}\notag
					\||\nabla|^{\frac{1}{2}}(R(z) &\,\,-\,\, R(w))\|_{L_I^{\frac{8}{5}}L_x^{\frac{20}{13}}}\\
					&\lesssim \Big(|I|^{\frac{1}{8}}\Big(\|z\|_{X(I)}+\|w\|_{X(I)}\Big) +\|z\|^2_{X(I)}+\|w\|^2_{X(I)}\Big)\|z-w\|_{X(I)}
					\end{align}
				\end{lemma}
			\begin{proof}
				Using the H\"older's and Hardy-Littlewood-Sobolev inequalities, we get 
				\begin{align*}
		\||\nabla|^{\frac{1}{2}} V(h)\|_{L_I^{\frac{8}{5}}L_x^{\frac{20}{13}}} \lesssim &\,\||\nabla|^{\frac{1}{2}}(|x|^{-3}\ast Q^2)\|_{L_I^{2}L_x^{\frac{10}{3}}}\|h\|_{L_I^{8}L_x^{\frac{20}{7}}}
				+ \,\,||x|^{-3}\ast Q^2\|_{L_I^{4}L_x^{\frac{10}{3}}}\||\nabla|^{\frac{1}{2}}h\|_{L_I^{\frac{8}{3}}L_x^{\frac{20}{7}}}\\
				+\||\nabla|^{\frac{1}{2}}(&|x|^{-3}\ast (Qh_1))\|_{L_I^{2}L_x^{\frac{10}{3}}}\|Q\|_{L_I^{8}L_x^{\frac{20}{7}}}
				+ \||x|^{-3}\ast (Qh_1)\|_{L_I^{4}L_x^{\frac{10}{3}}}\||\nabla|^{\frac{1}{2}}Q\|_{L_I^{\frac{8}{3}}L_x^{\frac{20}{7}}}\\
				\leq&\,\,  \|Q\|_{L_I^{8}L_x^{\frac{20}{7}}}\||\nabla|^{\frac{1}{2}}Q\|_{L_I^{\frac{8}{3}}L_x^{\frac{20}{7}}}\|h\|_{L_I^{8}L_x^{\frac{20}{7}}}
				+ \|Q\|^2_{L_I^{8}L_x^{\frac{20}{7}}} \||\nabla|^{\frac{1}{2}}h\|_{L_I^{\frac{8}{3}}L_x^{\frac{20}{7}}}\\
				&+ \Big(\|Q\|_{L_I^{8}L_x^{\frac{20}{7}}}\||\nabla|^{\frac{1}{2}}h\|_{L_I^{\frac{8}{3}}L_x^{\frac{20}{7}}}+\||\nabla|^{\frac{1}{2}}Q\|_{L_I^{\frac{8}{3}}L_x^{\frac{20}{7}}}\|h\|_{L_I^{8}L_x^{\frac{20}{7}}}\Big)\|Q\|_{L_I^{8}L_x^{\frac{20}{7}}}\\
				&+ \|Q\|_{L_I^{8}L_x^{\frac{20}{7}}} \|h\|_{L_I^{8}L_x^{\frac{20}{7}}} \||\nabla|^{\frac{1}{2}}Q\|_{L_I^{\frac{8}{3}}L_x^{\frac{20}{7}}}.
				\end{align*}  
				Since $Q$ and its derivatives are in $\mathcal{S}(\R^5)$ and $I$ is finite, we therefore, obtain \eqref{V-est}. Now we estimate the difference \eqref{R-est1}  
							
				\begin{align*}
				\||\nabla|^{\frac{1}{2}}(R(z) &- R(w))\|_{L_I^{\frac{8}{5}}L_x^{\frac{20}{13}}} \\
				\lesssim &\,\,\||\nabla|^{\frac{1}{2}}\big((|x|^{-3}\ast(|z|^2-|w|^2))Q\big)\|_{L_I^{\frac{8}{5}}L_x^{\frac{20}{13}}} + \||\nabla|^{\frac{1}{2}}\big((|x|^{-3}\ast(Q(z-w))z\big)\|_{L_I^{\frac{8}{5}}L_x^{\frac{20}{13}}}\\
				&+ \||\nabla|^{\frac{1}{2}}\big((|x|^{-3}\ast(Q(\bar{z}-\bar{w}))z\big)\|_{L_I^{\frac{8}{5}}L_x^{\frac{20}{13}}} + \||\nabla|^{\frac{1}{2}}\big((|x|^{-3}\ast(Qw))(z-w)\big)\|_{L_I^{\frac{8}{5}}L_x^{\frac{20}{13}}}\\
				&+ \||\nabla|^{\frac{1}{2}}\big((|x|^{-3}\ast(Q\bar{w}))(z-w)\big)\|_{L_I^{\frac{8}{5}}L_x^{\frac{20}{13}}} + \||\nabla|^{\frac{1}{2}}\big((|x|^{-3}\ast(|z|^2-|w|^2))z\big)\|_{L_I^{\frac{8}{5}}L_x^{\frac{20}{13}}} \\
				&+ \||\nabla|^{\frac{1}{2}}\big((|x|^{-3}\ast|w|^2)(z-w)\big)\|_{L_I^{\frac{8}{5}}L_x^{\frac{20}{13}}}\\
				\equiv&\, (I) + (II) + (III) + (IV) + (V) + (VI) + (VII).
				\end{align*}
				We again use H\"older's and Hardy-Littlewood-Sobolev inequalities to estimate (I) and we get,
				\begin{align*}
				(I)
				\lesssim &\,\,\Big(\|z\|_{L_I^{8}L_x^{\frac{20}{7}}} + \|w\|_{L_I^{8}L_x^{\frac{20}{7}}}\Big)\||\nabla|^{\frac{1}{2}}(z-w)\|_{L_I^{\frac{8}{3}}L_x^{\frac{20}{7}}}\|Q\|_{L_I^{8}L_x^{\frac{20}{7}}}\\
				&+\Big(\|z\|_{L_I^{8}L_x^{\frac{20}{7}}} + \|w\|_{L_I^{8}L_x^{\frac{20}{7}}}\Big)\|z-w\|_{L_I^{8}L_x^{\frac{20}{7}}}\||\nabla|^{\frac{1}{2}}Q\|_{L_I^{\frac{8}{3}}L_x^{\frac{20}{7}}}.
				\end{align*}
				Similarly, the estimate for $(II)$ is given by 
				\begin{align*}
				(II) \lesssim &\,\, \|Q\|_{L_I^{8}L_x^{\frac{20}{7}}}\||\nabla|^{\frac{1}{2}}(z-w)\|_{L_I^{\frac{8}{3}}L_x^{\frac{20}{7}}}\|z\|_{L_I^{8}L_x^{\frac{20}{7}}}
				+\||\nabla|^{\frac{1}{2}}Q\|_{L_I^{\frac{8}{3}}L_x^{\frac{20}{7}}}\|z-w\|_{L_I^{8}L_x^{\frac{20}{7}}}\|z\|_{L_I^{8}L_x^{\frac{20}{7}}}\\
				&+\|Q\|_{L_I^{8}L_x^{\frac{20}{7}}}\|z-w\|_{L_I^{8}L_x^{\frac{20}{7}}}  \||\nabla|^{\frac{1}{2}}z\|_{L_I^{\frac{8}{3}}L_x^{\frac{20}{7}}}.
				\end{align*}
				The estimate for $(III)$ is analogous to $(II)$. To estimate $(IV)$, we write
				\begin{align*}
				(IV)\lesssim &\,\,\|Q\|_{L_I^{8}L_x^{\frac{20}{7}}}\||\nabla|^{\frac{1}{2}}w\|_{L_I^{\frac{8}{3}}L_x^{\frac{20}{7}}}\|z-w\|_{L_I^{8}L_x^{\frac{20}{7}}}
				+ \||\nabla|^{\frac{1}{2}}Q\|_{L_I^{\frac{8}{3}}L_x^{\frac{20}{7}}}\|w\|_{L_I^{8}L_x^{\frac{20}{7}}}\|z-w\|_{L_I^{8}L_x^{\frac{20}{7}}}\\
				&+ \|Q\|_{L_I^{8}L_x^{\frac{20}{7}}} \|w\|_{L_I^{8}L_x^{\frac{20}{7}}} \||\nabla|^{\frac{1}{2}}(z-w)\|_{L_I^{\frac{8}{3}}L_x^{\frac{20}{7}}}.
				\end{align*}
				The estimate for $(V)$ is similar to $(IV)$. To estimate $(VI)$, we write
				\begin{align*}
				(VI) \lesssim &\,\,\Big(\|z\|_{L_I^{8}L_x^{\frac{20}{7}}} + \|w\|_{L_I^{8}L_x^{\frac{20}{7}}}\Big)\|z-w\|_{L_I^{8}L_x^{\frac{20}{7}}}\||\nabla|^{\frac{1}{2}}z\|_{L_I^{\frac{8}{3}}L_x^{\frac{20}{7}}}\\
				 	&+ \Big(\|z\|_{L_I^{8}L_x^{\frac{20}{7}}} + \|w\|_{L_I^{8}L_x^{\frac{20}{7}}}\Big)\||\nabla|^{\frac{1}{2}}(z-w)\|_{L_I^{\frac{8}{3}}L_x^{\frac{20}{7}}}\|z\|_{L_I^{8}L_x^{\frac{20}{7}}}. 
				\end{align*}
				Finally, the estimate of $(VII)$ is given by
				\begin{align*}
				(VII)\lesssim \|w\|_{L_I^{8}L_x^{\frac{20}{7}}}\||\nabla|^{\frac{1}{2}}w\|_{L_I^{\frac{8}{3}}L_x^{\frac{20}{7}}}\|z-w\|_{L_I^{8}L_x^{\frac{20}{7}}}
				+ \|w\|^2_{L_I^{8}L_x^{\frac{20}{7}}} \||\nabla|^{\frac{1}{2}}(z-w)\|_{L_I^{\frac{8}{3}}L_x^{\frac{20}{7}}}.
				\end{align*}
Putting all the estimates together along with the properties of $Q$ and the definition of $X(I)$ norm, we obtain \eqref{R-est}. 
			\end{proof}
			To finish the argument, let $0<\tau_0<1$ and use Lemma \ref{R-V-est}, in particular, invoke \eqref{V-est} first together with the definitions of $B=B(k,t_k)$ and $E=E(k,t_k)$ for $t\geq t_k$ to get
			\[
			\||\nabla|^{\frac{1}{2}}V(h)\|_{L_t^{\frac{8}{5}}L_x^{\frac{20}{13}}([t,t+\tau_0]\times\R^5)}\lesssim \tau_0^{\frac{1}{2}}\|h\|_{X([t,t+\tau_0])}\lesssim \tau_0^{\frac{1}{2}}e^{-(k+\frac{1}{2})e_0t}\|h\|_{E} \lesssim \tau_0^{\frac{1}{2}}e^{-(k+\frac{1}{2})e_0t},
			\]
			Summing up this for all the time intervals $[t + j\tau_0, t + (j+1)\tau_0]$ for $j=0,1,2,3,\ldots$, we obtain a geometric series, which yields the estimate for \eqref{V-est1}
			\begin{align}\label{V-est-final}\notag
			\eqref{V-est1} = ||\nabla|^{\frac{1}{2}}V(h)\|_{L_t^{\frac{8}{5}}L_x^{\frac{20}{13}}([t,+\infty)\times\R^5)}&\leq C\tau_0^{\frac{1}{2}}e^{-(k+\frac{1}{2})e_0t}\sum_{j=0}^{\infty} e^{-(k+\frac{1}{2})e_0j\tau_0}\\
			&= \frac{C\tau_0^{\frac{1}{2}}e^{-(k+\frac{1}{2})e_0t}}{1-e^{-(k+\frac{1}{2})e_0\tau_0}}.
			\end{align}
						We now shift our attention to the difference term and employ a similar reasoning via Lemma \ref{R-V-est}, in particular, we use \eqref{R-est}  to write
			\begin{align*}
			\||\nabla|^{\frac{1}{2}}&(R(h+\mathcal{V}_k^A) - R(\mathcal{V}_k^A))\|_{L_t^{\frac{8}{5}}L_x^{\frac{20}{13}}([t,t+\tau_0]\times\R^N)} \\
			&\leq C_1\Big(\|h\|_{X([t,t+\tau_0])}+\|\mathcal{V}_k^A\|_{X([t,t+\tau_0])}+\|h\|^2_{X([t,t+\tau_0])}+\|\mathcal{V}_k^A\|^2_{X([t,t+1])}\Big)\|h\|_{X([t,t+\tau_0])}\\
			&\leq C_2(k)e^{-(k+\frac{3}{2})e_0t},
			\end{align*}
			 where in the last step we have used the explicit form of $\mathcal{V}_k^A$ (see Proposition \ref{approx-solns}) and the fact that $h\in E$. Again, summing up this over all time intervals $[ t + j\tau_0, t+(j+1)\tau_0]$ for $j=0,1,2,3,\ldots$, we obtain the control on \eqref{R-est1} for $t\geq t_k$
			 \begin{align}\label{R-est-final}\notag
			 \eqref{R-est1} = \||\nabla|^{\frac{1}{2}}(R(h+\mathcal{V}_k^A) - R(\mathcal{V}_k^A))\|_{L_t^{\frac{8}{5}}L_x^{\frac{20}{13}}([t,+\infty)\times\R^5)}&\leq C_2(k)e^{-(k+\frac{3}{2})e_0t}\sum_{j=0}^{\infty}e^{-(k+\frac{3}{2})e_0j\tau_0}\\
			 &=\frac{C_ke^{-(k+\frac{3}{2})e_0t}}{1-e^{-(k+\frac{3}{2})e_0\tau_0}}.
			 \end{align}
			 Finally, since $\epsilon_k=O(e^{-(k+1)e_0t})$ for $t\geq t_k$ large enough, we have  
			 \[
			\eqref{eps-est} = \||\nabla|^{\frac{1}{2}}\epsilon_k\|_{L_t^{\frac{8}{5}}L_x^{\frac{20}{13}}([t,+\infty)\times \R^5)} \leq C_3(k) e^{-(k+1)e_0t}.
			 \]
			 Putting the estimates together, we obtain for $t\geq t_k$ with $t_k$ large enough that
			\[
			\eqref{V-est1}+\eqref{R-est1}+\eqref{eps-est} \leq \left(\frac{C\tau_0^{\frac{1}{2}}}{1-e^{-(k+\frac{1}{2})e_0\tau_0}}+\frac{C_2(k)e^{-e_0t_k}}{1-e^{-(k+\frac{3}{2})e_0\tau_0}} + C_3(k)e^{-\frac{e_0t_k}{2}}\right)e^{-(k+\frac{1}{2})e_0t_k}. 
			\]
Choosing $\tau_0$, $k_0$ and a large enough time $t_k$ such that $\tau_0^{1/2} = \frac{1}{8C}$, $e^{-(k_0+\frac{1}{2})e_0\tau_0}\leq \frac{1}{2}$ and $\tfrac{C_2(k_0)e^{-e_0t_k}}{1-e^{-(k_0+\frac{3}{2})e_0\tau_0}}+C_3(k_0)e^{-\tfrac{e_0t_k}{2}}\leq \frac{1}{4}$, we finally get 
			\[
			\|\mathcal{M}(h)\|_{X([t,+\infty))} \lesssim \frac{1}{2},
			\]
for all $k\geq k_0$ and $t\geq t_k$. Therefore, $\mathcal{M}(h)\in E$ and $\mathcal{M}$ maps $B=B(k,t_k)$ into itself. We are left to prove the contraction property of $\mathcal{M}$.  Consider $h$, $g\in B$ and by the definition of $\mathcal{M}$, we have
\begin{align*}
\|\mathcal{M}(h)-\mathcal{M}(g)\|_{E} = \|\mathcal{M}(h)-\mathcal{M}(g)\|_{X([t,+\infty))} \leq &\,\,c\||\nabla|^{\frac{1}{2}}\big(V(h)-V(g)\big)\|_{L_t^{\frac{8}{5}}L_x^{\frac{20}{13}}([t,+\infty)\times\R^5)} \\
+c\||\nabla|^{\frac{1}{2}}\big(R(h&+\mathcal{V}_k^A) - R(g+\mathcal{V}_k^A)\big)\|_{L_t^{\frac{8}{5}}L_x^{\frac{20}{13}}([t,+\infty)\times\R^5)}.
\end{align*}
Using a similar reasoning as above (i.e., we inoke the definition of $V(h)$, \eqref{V-est}, \eqref{V-est-final}, \eqref{R-est} with $z=h+\mathcal{V}_k^A$ and $w=g+\mathcal{V}_k^A$ and \eqref{R-est-final} along with the definitions of $B=B(k,t_k)$ and $E=E(k,t_k)$), we obtain for $t\geq t_k$ with $t_k$ large enough
\[
\|\mathcal{M}(h)-\mathcal{M}(g)\|_{E} \leq\left(\frac{C\tau_0^{\frac{1}{2}}}{1-e^{-(k+\frac{1}{2})e_0\tau_0}}+\frac{C_2(k)e^{-e_0t_k}}{1-e^{-(k+\frac{3}{2})e_0\tau_0}} \right)e^{-(k+\frac{1}{2})e_0t_k} \|h-g\|_{E}.
\]
Again, choosing $\tau_0$, $k_0$ and a large enough time $t_k$ such that $\tau_0^{1/2} = \frac{1}{8C}$, $e^{-(k_0+\frac{1}{2})e_0\tau_0}\leq \frac{1}{2}$ and $\tfrac{C_2(k_0)e^{-e_0t_k}}{1-e^{-(k_0+\frac{3}{2})e_0}}\leq \frac{1}{4}$, shows that $\mathcal{M}$ is a contraction on $B$. Thus, for every $k\geq k_0$, there exists a unique solution $U^A$ to \eqref{H} satisfying \eqref{sp-solns1}. Observe that the uniqueness holds in the class of solutions to \eqref{H} satisfying \eqref{sp-solns1} for larger times, since the fixed point argument still remains valid for $t\geq t_{k_1}\geq t_k$. Therefore, uniqueness of solutions of \eqref{H} (and uniqueness of fixed point) shows that $U^A$ does not depend on $k$. 

Finally, to show \eqref{sp-solns2}, by Strichartz estimate, definition of $\mathcal{M}$ and Lemma \ref{R-V-est}, we have that for a large $k>0$, $t\geq t_k$ large enough and $h\in B$
\begin{align*}
\|h\|_{H^1}&\leq c \,\|\nabla \big(V(h) +(R(h+\mathcal{V}_k^A) - R(\mathcal{V}_k^A))+\epsilon_k\big)\|_{L_t^{\frac{8}{5}}L_x^{\frac{20}{13}}([t,+\infty)\times \R^5)}\leq ce^{-(k+\frac{1}{2})e_0t}.
\end{align*}
This together with the definition that $\mathcal{V}_k^A=Ae^{-e_0t}\mathcal{Y}_++\mathcal{O}\big(e^{-2e_0t}\big)$ from Proposition \ref{approx-solns} inserted into $h=e^{it}U^A-Q-\mathcal{V}_k^A$ yields \eqref{sp-solns2}.	
	\end{proof}
	
\end{proposition}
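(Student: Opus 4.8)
The plan is to reduce the construction of $U^A$ to a contraction-mapping argument for the remainder
\[
h \defeq e^{-it}U^A - Q - \mathcal{V}_k^A,
\]
i.e.\ $U^A$ with the soliton $e^{it}Q$ and the approximate solution $e^{it}\mathcal{V}_k^A$ of Proposition~\ref{approx-solns} subtracted off. Substituting this into \eqref{H} and using \eqref{lineq} (equivalently \eqref{eqS}), the function $h$ must solve $ih_t + \Delta h - h = -V(h) - R(h+\mathcal{V}_k^A) + R(\mathcal{V}_k^A) - \epsilon_k$, where $V$ is the part of the Hartree potential linear in $h$ and $\epsilon_k = O(e^{-(k+1)e_0 t})$ in $\mathcal{S}(\R^5)$ by Proposition~\ref{approx-solns}. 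I would then integrate the Duhamel formula \emph{from} $t=+\infty$ so that the free evolution disappears, turning the problem into finding a fixed point of $\mathcal{M}(h)(t) = -i\int_t^{+\infty} e^{i(t-s)(\Delta-1)}\big[V(h) + R(h+\mathcal{V}_k^A) - R(\mathcal{V}_k^A) + \epsilon_k\big]\,ds$ in the complete space of $h$ with $\sup_{t \ge t_k} e^{(k+\frac12)e_0 t}\|h\|_{X([t,+\infty))} \le 1$, where the $X$-norm is the $\dot H^{1/2}$-scaling Strichartz norm \eqref{X-norm}. The weight $e^{-(k+\frac12)e_0 t}$ is the natural one: $\mathcal{V}_k^A \sim A e^{-e_0 t}\mathcal{Y}_+$ has size $e^{-e_0 t}$, so the leading quadratic self-interaction sits at $e^{-2e_0 t}$, and one wants $h$ strictly below the last retained term $e^{-k e_0 t}$.

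The heart of the matter is a multilinear Strichartz estimate (Lemma~\ref{R-V-est}): applying Strichartz and Sobolev embedding with the $\dot H^{1/2}$-admissible pair $(8,\tfrac{20}{7})$ and the source exponents $(\tfrac85,\tfrac{20}{13})$, one peels off the convolution kernel $|x|^{-3}$ (a Riesz potential of order $2$ on $\R^5$) by Hardy--Littlewood--Sobolev and distributes the half-derivative via the fractional Leibniz rule, exploiting that $Q,|\nabla|^{1/2}Q \in \mathcal{S}(\R^5)$ so their $X$-norms on a finite slab $I$ are bounded by powers of $|I|$. This yields $\||\nabla|^{1/2}V(h)\|_{L_I^{8/5}L_x^{20/13}} \lesssim |I|^{1/2}\|h\|_{X(I)}$ and a cubic-type bound for $R(z)-R(w)$. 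Inserting $z = h+\mathcal{V}_k^A$, $w = \mathcal{V}_k^A$ on a short slab $[t,t+\tau_0]$ with $\tau_0$ small, using the explicit form and decay of $\mathcal{V}_k^A$, and summing the geometric series over the slabs $[t+j\tau_0, t+(j+1)\tau_0]$, $j \ge 0$, gives $\|\mathcal{M}(h)\|_{X([t,+\infty))} \le \tfrac12$ once $k \ge k_0$ and $t \ge t_k$ are large; running the same computation on $\mathcal{M}(h)-\mathcal{M}(g)$ produces the contraction, and Banach's theorem yields the unique $h$, hence $U^A$, satisfying \eqref{sp-solns1}.

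For the remaining assertions: uniqueness for large times and $k$-independence follow from rerunning the fixed-point argument on $[t_{k_1},+\infty)$ for any $t_{k_1} \ge t_k$ and any $k' \ge k$ — the estimates above are unchanged — together with backward uniqueness for the Cauchy problem \eqref{H}, which forces two solutions obeying \eqref{sp-solns1} to coincide. To get \eqref{sp-solns2} I would re-run the Duhamel estimate with the full gradient $\nabla$ in place of $|\nabla|^{1/2}$; the same multilinear bounds plus the rapid decay of $Q$ and the $\mathcal{S}$-regularity of the $Z_j^A$ give $\|h(t)\|_{H^1} \lesssim e^{-(k+\frac12)e_0 t}$, and plugging the expansion $\mathcal{V}_k^A = A e^{-e_0 t}\mathcal{Y}_+ + O(e^{-2e_0 t})$ (Proposition~\ref{approx-solns}) into $h = e^{-it}U^A - Q - \mathcal{V}_k^A$ yields \eqref{sp-solns2}.

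The main obstacle is the nonlocal, convolution-type nonlinearity. Compared with a pointwise power, the term $(|x|^{-3}\ast(Qh_1))Q$ couples distant regions, so one cannot simply Leibniz pointwise: the half-derivative has to be moved onto the convolution and controlled by Hardy--Littlewood--Sobolev with the sharp $\R^5$ exponents, and the bookkeeping of keeping every H\"older triple simultaneously admissible for the $\dot H^{1/2}$-Strichartz pair $(8,\tfrac{20}{7})$ \emph{and} retaining a positive power of $|I|$ (needed to sum the geometric series over time-slabs) is where most of the work lies. A secondary technical point is confirming that $\epsilon_k$, guaranteed only to lie in $\mathcal{S}(\R^5)$ with size $e^{-(k+1)e_0 t}$, indeed contributes $\||\nabla|^{1/2}\epsilon_k\|_{L_t^{8/5}L_x^{20/13}([t,\infty))} \lesssim e^{-(k+1)e_0 t}$, which rests on the Schwartz estimates from Proposition~\ref{approx-solns}.
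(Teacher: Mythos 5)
Your proposal reproduces the paper's own argument essentially step for step: the same reduction to $h = e^{-it}U^A - Q - \mathcal{V}_k^A$, the same Duhamel-from-$+\infty$ fixed-point map in the same weighted Strichartz space $E$, the same multilinear H\"older/Hardy--Littlewood--Sobolev estimates (Lemma~\ref{R-V-est}) with the half-derivative distributed by a fractional Leibniz split, the same short-slab geometric-series summation, and the same upgrade to an $H^1$ bound via Strichartz to deduce \eqref{sp-solns2}. Your invocation of forward/backward uniqueness for \eqref{H} to propagate the fixed-point uniqueness from $[t_{k_1},+\infty)$ down to $[t_k,+\infty)$ and to conclude $k$-independence is exactly the mechanism the paper uses, just stated slightly more explicitly.
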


\section{Modulation}\label{sec:modulat}
In this section, we study the proximity of the solutions of \eqref{H} to the ground state solution. We first state a lemma based on the variational characterization of $Q$ (see \cite[Section 4]{AKAR2}) and the concentration-compactness principle (see \cite[Lemma III.1]{Lions84I}) showing that for any $u\in H^1(\R^5)$ there exists $x_0 \in \R^5$ and $\gamma_0\in\R$ such that $e^{i\gamma_0}u(\cdot + x_0) \rightarrow Q$ strongly in $H^1(\R^5)$, which can be considered for any minimizer). To be more precise, for $u\in H^1(\R^5)$, we define
\begin{equation}\label{alpha}
\alpha(u) = \left|\int|\nabla Q|^2 - \int|\nabla u|^2\right|
\end{equation}
and assume that 
\begin{equation}\label{ME-cond}
	M[u] = M[Q],\qquad E[u] = E[Q].
\end{equation} 
We then have
\begin{lemma}\label{var-char}
Let $u\in H^1(\R^5)$ satisfy \eqref{ME-cond} and $\delta(\alpha(u))$ is small enough. Then there exists parameters $x_0\in\R^5$ and $\gamma_0\in\R$ such that 
\[
\|Q-e^{i\gamma_0}u(\cdot-x_0)\|_{H^1(\R^5)} < \delta(\alpha(u)),
\]
where $\delta(\alpha(u))\rightarrow 0$ as $\alpha(u)\rightarrow 0$.
\end{lemma}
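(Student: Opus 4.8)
The plan is to argue by contradiction via a concentration--compactness argument, exploiting that the hypotheses \eqref{ME-cond} together with $\alpha(u)$ small force $u$ to be an almost-optimizer of the sharp Gagliardo--Nirenberg inequality \eqref{H-GNC}. Set
\[
\delta(a)\defeq \sup\Big\{\inf_{x_0\in\R^5,\,\gamma_0\in\R}\big\|Q-e^{i\gamma_0}u(\cdot-x_0)\big\|_{H^1(\R^5)}\: : \: u\in H^1(\R^5)\ \text{satisfies}\ \eqref{ME-cond}\ \text{and}\ \alpha(u)\le a\Big\},
\]
so that the assertion is equivalent to $\delta(a)\to 0$ as $a\to 0^{+}$. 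If this fails, there exist $\epsilon_0>0$ and a sequence $u_n\in H^1(\R^5)$ with $M[u_n]=M[Q]$, $E[u_n]=E[Q]$, $\alpha(u_n)\to 0$, and $\inf_{x_0,\gamma_0}\|Q-e^{i\gamma_0}u_n(\cdot-x_0)\|_{H^1(\R^5)}\ge\epsilon_0$ for all $n$.

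First I would check that $(u_n)$ is a maximizing sequence for $Z_H$ under the mass and gradient budget. Indeed $M[u_n]=M[Q]$ gives $\|u_n\|_{L^2(\R^5)}=\|Q\|_{L^2(\R^5)}$, while $\alpha(u_n)\to 0$ gives $\|\nabla u_n\|_{L^2(\R^5)}^2\to\|\nabla Q\|_{L^2(\R^5)}^2$; hence $(u_n)$ is bounded in $H^1(\R^5)$, and from $E[u_n]=E[Q]$ and \eqref{energyH} we get $Z_H(u_n)\to Z_H(Q)$. By \eqref{H-GNC} and the characterization of $Q$ as the unique optimizer of the Gagliardo--Nirenberg functional $J$ in \eqref{GN-func} (\cite{AKAR1}, \cite{AKAR2}), $Z_H(Q)$ is exactly the supremum of $Z_H$ over $\{\|u\|_{L^2(\R^5)}\le\|Q\|_{L^2(\R^5)},\ \|\nabla u\|_{L^2(\R^5)}\le\|\nabla Q\|_{L^2(\R^5)}\}$, and equality is attained only on the orbit of $Q$ under phase, scaling, modulus and translation; the two norm constraints pin the scaling and modulus to $1$, so equality forces $e^{i\gamma}Q(\cdot-y)$.

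Next I would apply the concentration--compactness principle \cite[Lemma III.1]{Lions84I} to the densities $|u_n|^2$. The \emph{vanishing} alternative is excluded: if $\sup_{y\in\R^5}\int_{B(y,1)}|u_n|^2\to 0$, then the standard vanishing lemma (with $(u_n)$ bounded in $H^1$) gives $u_n\to 0$ in $L^p(\R^5)$ for $2<p<\tfrac{10}{3}$, and since by Hardy--Littlewood--Sobolev $Z_H(u)\lesssim\|u\|_{L^{20/7}(\R^5)}^4$, this forces $Z_H(u_n)\to 0$, contradicting $Z_H(u_n)\to Z_H(Q)>0$. The \emph{dichotomy} alternative is excluded by a strict subadditivity argument: splitting $u_n$ into two spatially separated pieces carrying masses $\mu$ and $\|Q\|_{L^2(\R^5)}^2-\mu$, the decay of the kernel $|x-y|^{-3}$ makes the nonlocal cross terms vanish, so $Z_H$ is asymptotically additive along the splitting, and then the sharp constant in \eqref{H-GNC} together with the strict superadditivity under the gradient/mass budget shows the split value is strictly below $Z_H(Q)$ unless $\mu\in\{0,\|Q\|_{L^2(\R^5)}^2\}$. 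Hence, up to a subsequence and translations $x_n$, $u_n(\cdot+x_n)\to v$ strongly in $H^1(\R^5)$ with $\|v\|_{L^2(\R^5)}=\|Q\|_{L^2(\R^5)}$, $\|\nabla v\|_{L^2(\R^5)}=\|\nabla Q\|_{L^2(\R^5)}$ and $Z_H(v)=Z_H(Q)$; by the previous paragraph $v=e^{i\gamma_0}Q$ for some $\gamma_0\in\R$, so $\|Q-e^{-i\gamma_0}u_n(\cdot+x_n)\|_{H^1(\R^5)}\to 0$, contradicting the choice of $u_n$. Taking $\delta$ as above then proves the lemma.

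The step I expect to be the main obstacle is ruling out the dichotomy case: establishing the strict binding inequality for the constrained maximization of $Z_H$, which requires a careful localization of $u_n$, control of the cross terms generated by the long-range convolution kernel, and the verification that a genuine splitting is strictly energetically unfavorable; by contrast, the vanishing case and the final identification of the profile $v$ with $Q$ are routine given the Hardy--Littlewood--Sobolev estimates and the variational characterization of $Q$ already recorded in the paper.
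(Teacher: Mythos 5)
Your proposal is correct and follows exactly the route the paper indicates without carrying out: a contradiction argument via the concentration--compactness principle \cite[Lemma III.1]{Lions84I} combined with the variational characterization of $Q$ as the unique optimizer of the Gagliardo--Nirenberg functional \eqref{GN-func}. The paper states Lemma \ref{var-char} with only citations and no proof, so your sketch (ruling out vanishing by Hardy--Littlewood--Sobolev, ruling out dichotomy by strict subadditivity of $(g,m)\mapsto g^{3/2}m^{1/2}$ together with the decay of the kernel $|x-y|^{-3}$ which kills the cross terms, and then identifying the strong $H^1$ limit with $e^{i\gamma_0}Q$ using the pinned scaling from the two norm constraints) supplies precisely the details the paper leaves implicit.
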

We now choose the modulation parameters $\gamma$ and $X$ through certain orthogonality conditions and use the Implicit Function Theorem to obtain the following orthogonal decomposition of the solution.
\begin{lemma}\label{modulation}
	There exist $\alpha_0 > 0$ and a positive function $\delta(\alpha)$ defined for $0<\alpha\leq \alpha_0$ such that the following is true. For all $u \in H^1(\R^5)$ satisfying \eqref{ME-cond} and $\alpha(u) < \alpha_0$, there exist $(X, \gamma)$ such that $v=e^{-i\gamma}u(x+X)$ satisfies
	\begin{equation}\label{orthocond}
	\Im\int v\,Q =0,\quad \Re\int v\,\partial_{x_k}Q =0 
	\end{equation}
	and
	$$
	\|v-Q\|_{H^1(\R^5)} \leq \delta(\alpha),
	$$
	where $	\delta(\alpha)\rightarrow 0$ as $\alpha\rightarrow 0$. The parameters $(X, \gamma)$ are unique and the mapping $u \mapsto (X, \gamma)$ is $C^1$.
\end{lemma}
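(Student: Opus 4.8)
The plan is to set up the modulation parameters $(X,\gamma)$ as solutions of the orthogonality system \eqref{orthocond} and extract them via the Implicit Function Theorem, exactly in the classical Weinstein–style fashion adapted to the present nonlocal setting. First I would start from Lemma \ref{var-char}: given $u\in H^1(\R^5)$ satisfying \eqref{ME-cond} with $\alpha(u)$ small, there are parameters $(x_0,\gamma_0)$ with $\|Q-e^{i\gamma_0}u(\cdot-x_0)\|_{H^1(\R^5)}<\delta(\alpha(u))$. Thus it suffices to work with $\tilde u \defeq e^{i\gamma_0}u(\cdot - x_0)$, which is already $H^1$-close to $Q$, and to find a \emph{small} correction $(Y,\beta)$ so that $v \defeq e^{-i\beta}\tilde u(\cdot + Y)$ satisfies \eqref{orthocond}; the final $(X,\gamma)$ is then the composition of the two transformations.

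Next I would define the map $F:\R^5\times\R\times H^1(\R^5)\to\R^6$ by
\begin{equation*}
F\big(Y,\beta;w\big) = \left(\,\Im\int e^{-i\beta}w(x+Y)\,Q\,dx,\ \Re\int e^{-i\beta}w(x+Y)\,\partial_{x_1}Q\,dx,\ \dots,\ \Re\int e^{-i\beta}w(x+Y)\,\partial_{x_5}Q\,dx\,\right).
\end{equation*}
Since $Q$ is real, radial and smooth with Schwartz decay, $F$ is $C^1$ in all variables (differentiation under the integral is justified by the rapid decay of $Q$ and $\nabla Q$). One checks $F(0,0;Q)=0$: indeed $\Im\int Q\cdot Q=0$ trivially, and $\int Q\,\partial_{x_k}Q = \frac12\int\partial_{x_k}(Q^2)=0$. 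I would then compute the Jacobian of $F$ with respect to $(Y,\beta)$ at the point $(0,0;Q)$. A direct calculation gives, up to nonzero scalar factors, $\partial_\beta$ of the first component equal to $-\int Q^2 = -\|Q\|_{L^2}^2\neq 0$ and independent of the other rows, while $\partial_{Y_j}$ of the $(k+1)$-st component equals $-\int \partial_{x_j}Q\,\partial_{x_k}Q = -\delta_{jk}\|\partial_{x_k}Q\|_{L^2}^2$ by the radial symmetry of $Q$ (off-diagonal terms vanish by oddness), and the mixed entries vanish. Hence the $6\times 6$ Jacobian is diagonal with nonzero entries, so it is invertible. The Implicit Function Theorem then yields, for $w$ in an $H^1$-neighborhood of $Q$, a unique $C^1$ map $w\mapsto(Y(w),\beta(w))$ with $(Y(Q),\beta(Q))=(0,0)$ and $F(Y(w),\beta(w);w)=0$.

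To conclude, I would fix $\alpha_0>0$ small enough and a function $\delta(\alpha)\to 0$ so that whenever $\alpha(u)<\alpha_0$ the element $\tilde u$ from Lemma \ref{var-char} lies inside the IFT neighborhood; applying the above with $w=\tilde u$ produces $(Y,\beta)$ with $|Y|+|\beta|$ as small as we like when $\alpha(u)\to 0$, and then $v=e^{-i(\gamma_0-\beta)}u(\cdot - x_0 + Y)$ satisfies \eqref{orthocond}. Writing $X = x_0 - Y$ and $\gamma=\gamma_0-\beta$ gives the claimed decomposition; the bound $\|v-Q\|_{H^1}\le\delta(\alpha)$ follows by combining the Lemma \ref{var-char} estimate with the continuity of the transformations and the smallness of $(Y,\beta)$, after possibly enlarging $\delta$. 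Uniqueness of $(X,\gamma)$ and the $C^1$ dependence $u\mapsto(X,\gamma)$ are inherited from the IFT, since the orthogonality conditions \eqref{orthocond} pin down the parameters locally. The main obstacle — and the only place requiring care — is verifying non-degeneracy of the Jacobian, i.e.\ that the six orthogonality directions $\{iQ,\partial_{x_1}Q,\dots,\partial_{x_5}Q\}$ are genuinely transverse to the symmetry group action; this reduces to the linear-independence and orthogonality relations for $Q$ and $\nabla Q$, which hold because $Q$ is a positive radial ground state, but one must be attentive that the nonlocal structure of \eqref{H} plays no role here since the orthogonality conditions involve only $Q$ itself, not the Hartree nonlinearity.
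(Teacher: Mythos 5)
Your proposal is correct and follows essentially the same route as the paper: set up the six orthogonality functionals, check that at $(Q,0,0)$ they all vanish and that the $6\times 6$ Jacobian with respect to $(X,\gamma)$ is diagonal with nonzero entries (using $\int Q\,\partial_{x_k}Q=0$, the radial symmetry giving $\int\partial_{x_j}Q\,\partial_{x_k}Q=\delta_{jk}\|\partial_{x_k}Q\|_{L^2}^2$, and $\int Q^2\ne 0$), then apply the Implicit Function Theorem. You make explicit the preliminary reduction via Lemma \ref{var-char} — translating/rotating $u$ to $\tilde u$ in the $H^1$-neighborhood of $Q$ where the IFT map is defined, then composing the two group elements — which the paper leaves implicit; this is a helpful clarification rather than a different method. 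One small slip: with $v=e^{-i\beta}\tilde u(\cdot+Y)$ and $\tilde u=e^{i\gamma_0}u(\cdot-x_0)$ you get $v(x)=e^{i(\gamma_0-\beta)}u(x+Y-x_0)$, so matching $v=e^{-i\gamma}u(\cdot+X)$ gives $X=Y-x_0$ and $\gamma=\beta-\gamma_0$, not $X=x_0-Y$, $\gamma=\gamma_0-\beta$ as you wrote; this is a sign bookkeeping error and does not affect the argument.
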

\begin{proof}
We define the following functionals on $H^1 \times \R^5 \times \R$ as $(u,X,\gamma) \mapsto \rho^k_{u,X,\gamma}\in\R^6$, $1\leq k\leq 6$:
\[
\rho^k_{u,X,\gamma} \defeq \Re\int e^{-i\gamma}u(x+X)\,\partial_{x_k}Q\,dx,\,\,\,1\leq k\leq 5,
\]
and
\[
\rho^6_{u,X,\gamma} \defeq\Im\int e^{-i\gamma}u(x+X) \,Q\,dx.
\]
We now set up the Jacobian matrix. Computing $\frac{\partial}{\partial X_j}\big(\rho^k_{u,X,\gamma}\big)$ for $1\leq j\leq 5$ at $u=Q$, $X=0$ and $\gamma=0$, we have for $j=k$ that
\[
\frac{\partial\rho^k_{u,X,\gamma}}{\partial X_j}\bigg\vert_{u=Q, \,X=0,\, \gamma=0} = -\|\partial_{x_j}Q\|_{L^2(\R^5)}, 
\]
and if $j\neq k$, we have
\[
\frac{\partial\rho^k_{u,X,\gamma}}{\partial X_j}\bigg\vert_{u=Q, \,X=0,\, \gamma=0} = -\Re\int \,\partial_{x_j}Q\,\partial_{x_k}Q\,dx = 0, 
\]
since $Q$ is radial. Next, we compute $\frac{\partial}{\partial \gamma}\big(\rho^k_{u,X,\gamma}\big)$,
\[
\frac{\partial\rho^k_{u,X,\gamma}}{\partial \gamma} = \Re\int \,-ie^{-i\gamma}u(x+X)\,\partial_{x_k}Q\,dx,
\] 
and thus,
\[
\frac{\partial\rho^k_{u,X,\gamma}}{\partial \gamma}\bigg\vert_{u=Q, \,X=0,\, \gamma=0} = \Im \int \,Q\,\partial_{x_k}Q\,dx =0.
\]
We now compute the derivatives for the functional $\rho^6_{u,X,\gamma}$. We have that
\[
\frac{\partial\rho^6_{u,X,\gamma}}{\partial X_j}\bigg\vert_{u=Q, \,X=0,\, \gamma=0} =\Im\int Q \,\partial_{x_j}Q\,dx = 0,
\]
and
\[
\frac{\partial\rho^6_{u,X,\gamma}}{\partial \gamma}\bigg\vert_{u=Q, \,X=0,\, \gamma=0} =-\int Q^2\,dx = -\|Q\|_{L^2(\R^3)}.
\]
Note that at $u=Q, X=0, \gamma=0$, we have
 $$
 \big(\rho^1_{Q,0,0},\,\rho^2_{Q,0,0},\,\rho^3_{Q,0,0},\,\rho^4_{Q,0,0},\,\rho^5_{Q,0,0},\,\rho^{6}_{Q,0,0}\big)=(0,0,0,0,0,0)
 $$ 
	and the Jacobian matrix is nonzero, thus, we can apply the Implicit Function theorem to obtain the existence of $\delta_0>0$, $\mu_0>0$, a neighborhood $V$ of $(Q,0,0)\in\R^{6}$ and a unique $C^1$ map $(X,\gamma)\,:\,\{u\in H^1\,:\,\|u-Q\|_{H^1}<\delta_0\}\rightarrow V$ such that the orthogonality conditions \eqref{orthocond} are satisfied and $|X| + |\gamma| \leq \mu_0$. 
\end{proof}

Let $U_{\alpha_0} = \{t\in I\,:\,\alpha(u(t)) < \alpha_0\}$, where $I$ is the maximal time interval of existence of $u$. Observe that since one can always assume a smaller $\delta_1$ by the continuity of $u(t)$ with respect to $t$ on a closed time interval before the blow-up time such that $\alpha_0<\delta_1$ for all time $u(t)$ is defined, and therefore, by Lemma \ref{modulation}, there exists functions $X(t)\in\R^5$ and a modified parameter $\theta(t)=\gamma(t)-t\in\R$ (since we want to show that $u$ is close to $e^{it}Q$, up to a constant modulation parameter) for $t\in U_{\alpha_0}$ with the following decomposition
\begin{equation}\label{decomp}
	e^{-i\theta(t)-it}u(x+X(t),t) = (1+\beta(t))Q(x) + h(x,t),
\end{equation}
where 
\begin{equation*}
	\beta(t) = \frac{1}{\|\nabla Q\|_{L^2(\R^5)}^2}\,\Re\left(e^{-i\theta(t)-it}\int\nabla u(x+X(t),t)\cdot\nabla Q(x)\,dx\right) - 1.
\end{equation*}
Here, $\beta(t)$ is chosen such that the orthogonality condition 
\begin{equation}\label{orthocond1}
\Re\int h(x,t)\,\Delta Q(x)\,dx = 0,
\end{equation}
appearing in \eqref{L+non-neg} and in Proposition \ref{positivity}, is satisfied. Moreover, by Lemma \ref{modulation}, $h$ also satisfies the orthogonality conditions \eqref{orthocond}. 

The next two lemmas related to the parameters follows the proof  \cite[Lemma 4.2, 4.3 and 4.4]{DR10} verbatim. We therefore omit the details here. 
\begin{lemma}\label{mod-para}
	Let $u(t)$ be a solution to \eqref{H} satisfying \eqref{ME-cond}.
	Then, taking a smaller $\alpha_0$, if necessary, the following estimate hold for $t\in U_{\alpha_0}$:
$$
|\beta(t)|\approx \left|\int Qh_1(t)\right| \approx  \|h(t)\|_{H^1}\approx \alpha(u(t)).
$$
Furthermore, we have
$$
|\beta^\prime(t)| + |X^\prime(t)| + |\gamma^\prime(t)| \lesssim \alpha(u(t)). 
$$
\end{lemma}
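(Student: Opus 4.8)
The plan is to follow the modulation analysis of \cite[Section~4]{DR10}, the only substantive change being the handling of the nonlocal nonlinearity in the Taylor expansions. Write $g = (1+\beta)Q + h$. Inserting \eqref{decomp} into the conservation of mass $M[g]=M[Q]$ and using $\int Q\,h_2 = 0$ from \eqref{orthocond} gives
\begin{equation*}
2\beta\,\|Q\|_{L^2}^2 + \beta^2\,\|Q\|_{L^2}^2 + 2(1+\beta)\!\int Q\,h_1\,dx + \|h\|_{L^2}^2 = 0 ,
\end{equation*}
so $\big|\!\int Q\,h_1\big| = |\beta|\,\|Q\|_{L^2}^2 + O\big(\beta^2 + \|h\|_{L^2}^2\big)$ and, by Cauchy--Schwarz, $|\beta|\lesssim \|h\|_{H^1}$ up to the same quadratic error. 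Since the cross term $\int \nabla Q\cdot\nabla h_1 = -\int \Delta Q\,h_1$ vanishes by \eqref{orthocond1}, the definition of $\alpha$ gives directly
\begin{equation*}
\alpha(u) = \big|(2\beta+\beta^2)\,\|\nabla Q\|_{L^2}^2 + \|\nabla h\|_{L^2}^2\big| ,
\end{equation*}
which is $\approx|\beta|$ as soon as $\|h\|_{H^1}$ is known to be of higher order. So the whole equivalence reduces to the bound $\|h\|_{H^1}\lesssim|\beta|$.

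To obtain this bound I would expand $E[g]$ around $(1+\beta)Q$. Using the ground state equation \eqref{HQ}, the Pohozaev relations \eqref{eq:pohid}, and the three orthogonality conditions \eqref{orthocond}, \eqref{orthocond1} in concert, the terms linear in $h$ cancel against the linear-in-$\beta$ contribution (once $\int Q\,h_1$ is eliminated via the mass relation above), so that $E[g]-E[Q]=0$ becomes
\begin{equation*}
\Phi(h) = c_0\,\beta^2 + O\big((|\beta|+\|h\|_{H^1})^3\big) , \qquad c_0>0 ,
\end{equation*}
with $\Phi$ the linearized energy \eqref{eq:linenergy}. Since $h$ is radial and satisfies \eqref{eq:ortho1}--\eqref{eq:ortho2}, Proposition \ref{positivity} gives $\Phi(h)\ge c\,\|h\|_{H^1}^2$, and absorbing the cubic error for $\alpha_0$ small forces $\|h\|_{H^1}^2\lesssim\beta^2$; feeding this back closes the chain $|\beta|\approx\big|\!\int Q\,h_1\big|\approx\|h\|_{H^1}\approx\alpha(u)$. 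This step is the main obstacle: one must check that \emph{every} term linear in $h$ in the energy expansion disappears, and, unlike in the local case of \cite{DR10}, one must control the cubic and quartic remainders generated by the convolution potential in $H^1$, which is carried out by Hardy--Littlewood--Sobolev and Young's inequalities exactly as in Lemma \ref{R-V-est} and the proof of Proposition \ref{positivity}.

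For the parameter derivatives, inserting \eqref{decomp} into \eqref{H} yields an evolution equation for $h$ of the schematic form $\partial_t h = -\mathcal{L}h + R(h) + \beta'(t)\,V_\beta + \sum_{k=1}^{5} X_k'(t)\,V_k + \theta'(t)\,V_\theta$, where $V_\beta, V_k, V_\theta\in\mathcal{S}(\R^5)$ are explicit functions of $Q$ (there are also lower-order terms, products of a parameter derivative with $h$, which are moved to the left-hand side). Differentiating in $t$ the seven conditions \eqref{orthocond}, \eqref{orthocond1} satisfied by $h$ and substituting this equation produces a $7\times 7$ linear system for $(\beta',X_1',\ldots,X_5',\theta')$ whose right-hand side consists of pairings of $\mathcal{L}h$ and $R(h)$ against fixed Schwartz functions, hence is $O(\|h\|_{H^1}) = O(\alpha(u))$ by the previous step. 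The matrix of the system is an $O(\|h\|_{H^1})$-perturbation of the invertible Jacobian already computed in Lemma \ref{modulation}, enlarged by the pairing $\langle Q,\Delta Q\rangle = -\|\nabla Q\|_{L^2}^2\neq 0$ that governs the $\beta$-direction, so it stays invertible for $\alpha_0$ small; inverting it gives $|\beta'| + |X'| + |\theta'|\lesssim\alpha(u)$, which is the asserted bound since $\theta=\gamma-t$.
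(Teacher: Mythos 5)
Your proposal is correct and follows essentially the same route that the paper relies on: the paper itself states that the proof of Lemma \ref{mod-para} ``follows the proof [DR10, Lemma 4.2, 4.3 and 4.4] verbatim'' and omits the details, and your argument is precisely that argument carried out for the Hartree nonlinearity. I checked the key computations: the energy expansion does give $\Phi(h)=\tfrac23\beta^2\|\nabla Q\|_{L^2}^2+O\big((|\beta|+\|h\|_{H^1})^3\big)$ after using the mass relation, the Pohozaev identities \eqref{eq:pohid}, and $\int\nabla Q\cdot\nabla h_1=0$; the linear-in-$\beta$ terms indeed cancel; and the $7\times7$ modulation system is an $O(\alpha)$-perturbation of a block-diagonal invertible matrix. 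One remark worth keeping in mind: your closing of the energy step invokes Proposition \ref{positivity}, which is proved only for radial $h$, whereas the lemma as stated does not say ``radial.'' This is not a defect of your argument relative to the paper — the paper has exactly the same restriction (it is why Theorem \ref{mainthm2} is stated for radial data and why the modulation parameters $X(t)$ collapse to $0$ in that case) — but if one wanted the lemma in the generality in which it is phrased, the coercivity would have to be extended beyond the radial class, which the paper does not do.
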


\begin{lemma}\label{mod-exp}
	Let $u$ be a solution to \eqref{H} satisfying $M[u] = M[Q]$ and $E[u] = E[Q]$. Assume that $u$ is defined on $[0,+\infty)$ and that there exists constants $c$, $C > 0$ such that
	$$
	\int_{t}^{+\infty}\alpha(u(s))\,ds\leq Ce^{-ct}.
	$$
	Then there exists $\gamma_0\in\R$ and $x_0\in\R^5$ such that 
	$$
	\|e^{i\gamma_0}u(\cdot+x_0)-e^{it}Q\|_{H^1}\leq Ce^{-ct}.
	$$
\end{lemma}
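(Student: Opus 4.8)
The plan is to use the integrability hypothesis $\int_t^{+\infty}\alpha(u(s))\,ds\le Ce^{-ct}$ together with the modulation decomposition \eqref{decomp} to show that the modulation parameters $\theta(t)$ and $X(t)$ converge as $t\to+\infty$ to limits $\theta_0$, $x_0$, with exponential rate, and that $\|h(t)\|_{H^1}$ already decays exponentially by Lemma \ref{mod-para}. Setting $\gamma_0=-\theta_0$ (recalling $\gamma(t)=\theta(t)+t$) and reading off \eqref{decomp} will then give the claim.

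First I would note that since $\alpha(u)$ is integrable on $[0,+\infty)$, we have $\alpha(u(t))\to 0$, so for $t$ large enough $t\in U_{\alpha_0}$ and the decomposition \eqref{decomp} with the orthogonality conditions \eqref{orthocond}, \eqref{orthocond1} is available. By Lemma \ref{mod-para} we have $|\beta'(t)|+|X'(t)|+|\gamma'(t)|\lesssim \alpha(u(t))$, hence for $t_2>t_1$ large,
\[
|X(t_2)-X(t_1)| + |\theta(t_2)-\theta(t_1)| \le \int_{t_1}^{t_2}\big(|X'(s)|+|\gamma'(s)|\big)\,ds \lesssim \int_{t_1}^{\infty}\alpha(u(s))\,ds \le Ce^{-ct_1}.
\]
This shows $\big(X(t),\theta(t)\big)$ is Cauchy as $t\to+\infty$, so it converges to some $(x_0,\theta_0)$, and letting $t_2\to+\infty$ in the above gives the exponential rate $|X(t)-x_0|+|\theta(t)-\theta_0|\le Ce^{-ct}$. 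Similarly, by Lemma \ref{mod-para}, $\|h(t)\|_{H^1}\approx\alpha(u(t))\le\int_t^{\infty}\alpha(u(s))\,ds\le Ce^{-ct}$ (using that $\alpha(u(t))$ itself is bounded by its tail integral up to a constant, which follows since $\alpha$ is, up to the constants of Lemma \ref{mod-para}, comparable to $\|h\|_{H^1}$ and $|\beta(t)|$, and $|\beta'|\lesssim\alpha$ forces $\beta(t)\to0$ and $|\beta(t)|\lesssim\int_t^\infty\alpha$); likewise $|\beta(t)|\le Ce^{-ct}$.

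Then I would rewrite \eqref{decomp} as
\[
e^{-i\theta_0-it}u(x+x_0,t) - e^{it}\cdot e^{-it}Q(x) = \big(e^{i(\theta(t)-\theta_0)}-1\big)\big((1+\beta(t))Q(x-?)+\dots\big) + \dots,
\]
more carefully: starting from $e^{-i\theta(t)-it}u(x+X(t),t)=(1+\beta(t))Q(x)+h(x,t)$, I would substitute $x\mapsto x - X(t) + x_0$ is not quite it — rather estimate directly
\[
\big\|e^{-i\theta_0-it}u(\cdot+x_0,t) - Q\big\|_{H^1}
\le \big\| e^{-i\theta_0-it}u(\cdot+x_0,t) - e^{-i\theta(t)-it}u(\cdot+X(t),t)\big\|_{H^1} + \beta(t)\|Q\|_{H^1} + \|h(t)\|_{H^1},
\]
and control the first term by the triangle inequality splitting the phase difference $|e^{-i\theta_0}-e^{-i\theta(t)}|\le|\theta(t)-\theta_0|$ (times $\|u\|_{H^1}\approx\|Q\|_{H^1}$) and the translation difference $\|u(\cdot+x_0,t)-u(\cdot+X(t),t)\|_{H^1}$; the latter is the place where one must be slightly careful, since translation is only strongly continuous, not Lipschitz, on $H^1$. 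The main obstacle is therefore estimating this translation difference with an \emph{exponential} rate: the clean way is to write $\|u(\cdot+x_0)-u(\cdot+X(t))\|_{H^1}\le |x_0-X(t)|\sup_{s\in[0,1]}\|\nabla u(\cdot + X(t)+s(x_0-X(t)))\|_{H^1}$, which is legitimate provided $u(t)\in H^2$, or, avoiding any regularity assumption, by using the decomposition \eqref{decomp} once more to write $u(\cdot+X(t),t)$ as $e^{i\theta(t)+it}\big((1+\beta(t))Q+h(t)\big)$ and translating \emph{that} expression by $x_0-X(t)$: since $Q$ is Schwartz the term $\|Q(\cdot+x_0-X(t))-Q\|_{H^1}\lesssim|x_0-X(t)|\le Ce^{-ct}$ directly, and $\|h(\cdot+x_0-X(t),t)\|_{H^1}=\|h(t)\|_{H^1}\le Ce^{-ct}$, and $\beta(t)\le Ce^{-ct}$. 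Collecting all the exponentially small contributions and setting $\gamma_0=-\theta_0$ completes the proof; this mirrors \cite[Lemma 4.5]{DR10}.
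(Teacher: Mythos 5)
Your overall strategy is the right one and is exactly the DR10 argument that the paper cites without reproducing: use Lemma~\ref{mod-para} to show that the modulation parameters converge exponentially, then pass the translation through the decomposition \eqref{decomp} rather than through $u$ itself (translating the Schwartz function $Q$, which is Lipschitz in $H^1$ under translation, and $h$, for which translation is an $H^1$-isometry). The choice $\gamma_0=-\theta_0$, $x_0=\lim X(t)$, and the final triangle inequality are all correct, and your care about the translation step is well-placed and well-resolved.

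The one genuine gap is the sentence ``since $\alpha(u)$ is integrable on $[0,+\infty)$, we have $\alpha(u(t))\to 0$.'' Integrability of a nonnegative function does \emph{not} imply pointwise decay (consider tall thin spikes with decreasing area), and in fact you need $\alpha(u(t))<\alpha_0$ for \emph{all} large $t$ before any of Lemma~\ref{mod-para} is even applicable, so you cannot bootstrap through $|\beta'|\lesssim\alpha$ and $|\beta|\approx\alpha$ without first knowing that $t\in U_{\alpha_0}$. Your later justification (``$|\beta'|\lesssim\alpha$ forces $\beta\to 0$ and $|\beta|\lesssim\int_t^\infty\alpha$'') is therefore circular as written: it invokes the modulation decomposition precisely where you have not yet established it holds. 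The correct fix is a short continuity/bootstrap argument: pick $t_1$ large with $\alpha(u(t_1))<\epsilon_1\ll\alpha_0$ (such $t_1$ exist since $\alpha\in L^1$) \emph{and} $\int_{t_1}^\infty\alpha$ small; on the maximal interval $[t_1,T)$ on which $\alpha<\alpha_0$, the modulation estimates give $|\beta(t)|\le|\beta(t_1)|+C\int_{t_1}^t\alpha\,ds<\alpha_0/2$ (in the sense $\alpha(u(t))\approx|\beta(t)|$), which forces $T=+\infty$ by continuity. Only after this step are the uniform bounds $|\beta(t)|,\|h(t)\|_{H^1},|X(t)-x_0|,|\theta(t)-\theta_0|\lesssim\int_t^\infty\alpha\le Ce^{-ct}$ legitimate on all of $[t_1,+\infty)$, which is what you need before the final triangle-inequality step. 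With that persistence-of-smallness argument inserted, your proof is complete and matches the standard one.
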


\section{Covergence to $Q$ in the case $\|u_0\|_{L^2(\R^5)}\|\nabla u_0\|_{L^2(\R^5)} < \| Q\|_{L^2(\R^5)}\|\nabla Q\|_{L^2(\R^5)}$}\label{S:less}

In this section, we prove that the solutions on the same mass and energy level as $Q$ that do not blow-up in finite positive time and that do not scatter forward in time must converge exponentially to $Q$ as $t\rightarrow +\infty$. 
\begin{proposition}\label{blow-up}
	Let $u$ be a solution to \eqref{H} defined on $[0,+\infty)$ (i.e., globally defined for positive times) satisfying
	\begin{equation}\label{above}
		M[u]=M[Q],\quad E[u]=E[Q]\quad\text{and}\quad\|\nabla u_0\|_{L^2(\R^5)} > \|\nabla Q\|_{L^2(\R^5)}.
	\end{equation}
	Assume furthermore that either $|x|u_0\in L^2(\R^5)$ or $u_0$ is radial. 
	Then there exist $x_0\in\R^5$, $\gamma_0\in\R$ and constants $c$, $C > 0$ such that
		\begin{equation}\label{exp-conv2Q}
	\|u-e^{it+i\gamma_0}Q(\cdot+x_0)\|_{H^1(\R^5)}\leq Ce^{-ct}.
	\end{equation}
	Moreover, $u$ blows up for finite negative time of existence.
\end{proposition}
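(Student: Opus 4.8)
I would follow the scheme of \cite[Sections~5--6]{DR10}, treating the spectral and modulation results of Sections~\ref{S:lin-oper}--\ref{sec:modulat} as black boxes; the only genuinely Hartree-specific ingredient is the virial identity for the nonlocal nonlinearity. Recall the Hartree virial identity $\frac{d^2}{dt^2}\int_{\R^5}|x|^2|u|^2\,dx = 8\|\nabla u\|_{L^2(\R^5)}^2 - 6\,Z_H(u)$, which together with $E[u]=E[Q]$ and the Pohozaev relations \eqref{eq:pohid} reduces, on any time interval where $\|\nabla u(t)\|_{L^2}^2>\|\nabla Q\|_{L^2}^2$, to
\begin{equation}\label{virial-reduced}
\tfrac{d^2}{dt^2}\int_{\R^5}|x|^2|u(t)|^2\,dx = -4\,\alpha(u(t)),\qquad \alpha(u(t))=\|\nabla u(t)\|_{L^2}^2-\|\nabla Q\|_{L^2}^2>0 .
\end{equation}
The first step is to show the gradient never drops: if $\|\nabla u(t_1)\|_{L^2}=\|\nabla Q\|_{L^2}$ for some $t_1$, then $M[u(t_1)]=M[Q]$ and $E[u(t_1)]=E[Q]$ force $Z_H(u(t_1))=Z_H(Q)$, so $u(t_1)$ minimizes $J$ in \eqref{GN-func}; by the variational characterization of $Q$ and uniqueness for \eqref{H}, $u$ would equal $e^{it}Q$ up to symmetries, contradicting \eqref{above}. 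Hence $\alpha(u(t))>0$ on the whole maximal interval of existence and \eqref{virial-reduced} holds there.

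\textbf{Integrability of $\alpha$.} In the finite-variance case set $V(t)=\int|x|^2|u(t)|^2\,dx\ge 0$; in the radial case set $V(t)=V_R(t)=\int\varphi_R|u(t)|^2\,dx$ with $\varphi_R(x)=|x|^2$ for $|x|\le R$, $\varphi_R$ radial and bounded with bounded derivatives, so that the right-hand side of \eqref{virial-reduced} picks up an error supported in $\{|x|>R\}$ that I would control, uniformly in $t$, by Hardy--Littlewood--Sobolev on the exterior region together with the radial Sobolev decay $|u(x)|\lesssim|x|^{-2}\|\nabla u\|_{L^2}^{1/2}\|u\|_{L^2}^{1/2}$, making it negligible for $R$ large. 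Since $V\ge 0$ on $[0,+\infty)$ and $V''\le 0$ there, $V'$ is nonincreasing and bounded below, whence $\int_0^{+\infty}\alpha(u(t))\,dt<+\infty$. In particular $\alpha(u(t_n))\to0$ along some $t_n\to+\infty$, and Lemma~\ref{var-char} then gives $\gamma_n,x_n$ with $e^{i\gamma_n}u(t_n,\cdot-x_n)\to Q$ in $H^1(\R^5)$.

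\textbf{Exponential decay and convergence.} Fix $N$ with $\alpha(u(t_N))$ below the threshold $\alpha_0$ of Lemma~\ref{modulation}, and let $[t_N,t^\ast)$ be the maximal interval on which $\alpha(u(t))<\alpha_0$; there the decomposition \eqref{decomp} applies, and Lemma~\ref{mod-para} gives $|\beta(t)|\approx\|h(t)\|_{H^1}\approx\alpha(u(t))$ and $|\beta'(t)|+|X'(t)|+|\gamma'(t)|\lesssim\alpha(u(t))$ (this is where the coercivity of Proposition~\ref{positivity} enters). Put $\mathcal{B}(t)\defeq\frac14\frac{d}{dt}V(t)$, so that $\mathcal{B}'(t)=-\alpha(u(t))$ by \eqref{virial-reduced}; since $\mathcal{B}$ is linear in $h(t)$, the modulation decomposition yields $|\mathcal{B}(t)|\lesssim\alpha(u(t))$ on $[t_N,t^\ast)$, and evaluating along $t_n$ shows $\lim_{t\to+\infty}\mathcal{B}(t)=0$, hence $\mathcal{B}(t)=\int_t^{+\infty}\alpha(u(s))\,ds\ge 0$. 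Thus $\mathcal{B}'(t)=-\alpha(u(t))\le-\mathcal{B}(t)/C$, giving $\mathcal{B}(t)\le\mathcal{B}(t_N)e^{-(t-t_N)/C}$; combined with $\alpha(u(t))\approx|\beta(t)|\lesssim\int_t^{t+1}\alpha(u(s))\,ds\le\mathcal{B}(t)$ (using $|\beta'|\lesssim|\beta|$) this forces $\alpha(u(t))\le Ce^{-c(t-t_N)}$ on $[t_N,t^\ast)$, so $t^\ast=+\infty$ and $\int_t^{+\infty}\alpha(u(s))\,ds\le Ce^{-ct}$ for all $t\ge0$. Lemma~\ref{mod-exp} then produces $\gamma_0\in\R$, $x_0\in\R^5$ and $c,C>0$ with $\|u-e^{it+i\gamma_0}Q(\cdot+x_0)\|_{H^1(\R^5)}\le Ce^{-ct}$, which is \eqref{exp-conv2Q}.

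\textbf{Finite negative blow-up time, and main obstacle.} Suppose $u$ were defined on all of $\R$. In the finite-variance case $V\ge0$ is concave on $\R$ with $V''=-4\alpha(u(t))<0$ everywhere (Step~1), which is impossible; hence the backward time of existence is finite, and by the blow-up alternative $\|\nabla u(t)\|_{L^2}\to+\infty$ as $t$ decreases to it. In the radial case I would first apply the already-established forward statement to the time-reversed solution $\bar u(-t,\cdot)$ to get $\alpha(u(t))\to0$ also as $t\to-\infty$, hence $\sup_{t\in\R}\|u(t)\|_{H^1}<\infty$, and then run the truncated-virial convexity argument on $\R$ (with uniform control of the exterior error as above) to reach the same contradiction. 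I expect the main obstacle to be precisely this uniform-in-time control of the nonlocal tail terms in the truncated virial identity in the radial case: before convergence is known, $u(t)$ carries no quantitative spatial localization, so the estimate of $|x|^{-3}\ast|u|^2$ on $\{|x|>R\}$ must be squeezed entirely out of the radial Sobolev bound and a judicious choice of $R$; everything else — coercivity (Proposition~\ref{positivity}), the modulation estimates (Lemmas~\ref{mod-para}, \ref{mod-exp}) and the variational characterization (Lemma~\ref{var-char}) — is already in place, so the remainder transcribes from \cite{DR10}.
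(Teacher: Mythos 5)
Your scaffolding --- virial identity, concavity of $V$ (or $V_R$), integrability of $\alpha$, modulation, Lemma~\ref{mod-exp} --- matches the paper's, and the sign argument, the integrability step, and the negative-time contradiction are fine. But the step where you pass from $\int\alpha<\infty$ to exponential decay has a genuine gap. You assert ``$|\mathcal{B}(t)|\lesssim\alpha(u(t))$ since $\mathcal{B}$ is linear in $h$'', via the modulation decomposition. This is not justified: $\mathcal{B}(t)=\Im\int(x\cdot\nabla u)\bar u\,dx$ is quadratic in $u$, and after recentering $u$ at $X(t)$ via \eqref{decomp}, the change of variable produces a term proportional to $X(t)\cdot P[u]$, where $P[u]$ is a conserved quantity and $X(t)$ is not bounded a priori. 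The modulation estimates control only the piece of $\mathcal{B}$ that vanishes with $\alpha$, not this drift term, unless one first shows $P[u]=0$ or $|X(t)|$ bounded --- which you do not. (In the radial setting $X(t)\equiv 0$ and the issue disappears; the paper's radial Lemma~\ref{lem:4bup-exp} does use a bound of exactly this form, $|V_R'(t)|\lesssim R^2\alpha(u(t))$, and your sketch there is essentially right. Your conflation of the two sub-cases hides where extra input is actually required.)

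The paper's finite-variance step uses a different, modulation-free mechanism: a Cauchy--Schwarz-type ``uncertainty'' inequality $\big(V'(t)\big)^2\leq C\,V(t)\,\alpha(u(t))^2$, proved by testing the sharp Gagliardo--Nirenberg functional \eqref{GN-func} with $e^{i\lambda\varphi}f$ and using the Hartree Pohozaev identities \eqref{eq:pohid} to show that $\|\nabla f\|_{L^2}^2 - \|\nabla Q\|_{L^2}^2\big(Z_H(f)/Z_H(Q)\big)^{2/3}=O(\alpha(f)^2)$. This pointwise inequality holds for all $t$ --- no smallness of $\alpha$, no modulation, no control of $X$ or $P$ --- and combined with $V'>0$, $V''<0$ it gives exponential decay of $V'$ directly, with no bootstrap. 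This is the genuinely Hartree-specific ingredient your proposal is missing; the exterior-tail estimate you single out as the main obstacle in the radial case, while real, is handled essentially as you anticipate.
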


\subsection{Finite variance solutions}\label{subs:finvar}
We start by proving the Proposition \ref{blow-up} for the solutions with the assumption of finite variance, i.e., $|x|u_0\in L^2(\R^5)$.
Define the variance, $ V(t) \defeq \int|x|^2|u(x,t)|^2\,dx$, then 
$$
V^\prime(t) = 4\Im\int (x\cdot\nabla u(x,t))\,\bar{u}(x,t)\,dx.
$$  
Recalling $Z_H$ defined in \eqref{energyH} together with $Z_{H}(Q) = \frac{4}{3}\|\nabla Q\|_{L^2(\R^5)}^2$ (as a consequence of Pohozhaev identities, see \cite[Section 4]{AKAR2}) and $E[u]=E[Q]$, we have
$$
V^{\prime\prime}(t) = 8\,\|\nabla u\|_{L^2(\R^5)}^2 - 6\,Z_{H}(u) = 8\, \big(\|\nabla Q\|_{L^2(\R^5)}^2-\|\nabla u\|_{L^2(\R^5)}^2\big) 
$$
Therefore,
$$
V^{\prime\prime}(t) = -8\,\alpha (u(t)) < 0.
$$
The Proposition \ref{blow-up} is now proved via the following lemma. 
\begin{lemma}\label{4blow-up}
	Let $u(t)$ be a solution to \eqref{H} defined on $[0,+\infty)$ (i.e., globally defined for positive times) satisfying \eqref{above} and $|x|u_0\in L^2(\R^5)$. Then, for all $t$ in the interval of existence of $u$,
	\begin{equation}\label{pos-Vder}
		\Im\int (x\cdot\nabla u(x,t))\,\bar{u}(x,t)\,dx > 0, 
	\end{equation}
	and there exists constants $c$, $C > 0$ such that for all $t\geq 0$
	\begin{equation}\label{exp-control}
		\int_{t}^{+\infty}\alpha(u(s))\,ds\leq Ce^{-ct}.
	\end{equation}
\end{lemma}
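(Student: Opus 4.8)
The plan is to use the concavity of the variance $V(t)=\int_{\R^5}|x|^2|u(x,t)|^2\,dx$ — already recorded just above the statement, with $V''(t)=-8\alpha(u(t))$ — together with the modulation machinery of Section~\ref{sec:modulat}. First I would check that the strict inequality $\|\nabla u(t)\|_{L^2(\R^5)}>\|\nabla Q\|_{L^2(\R^5)}$ persists on the whole interval of existence: were $\|\nabla u(t_1)\|_{L^2}=\|\nabla Q\|_{L^2}$ at some $t_1$, the relations in \eqref{above} would force $Z_H(u(t_1))=Z_H(Q)$, so $u(t_1)$ optimizes \eqref{H-GNC} and hence coincides with $Q$ up to the symmetries (the mass constraint pinning down the scaling); by uniqueness for \eqref{H}, $u$ would then be a modulated $e^{it}Q$ for all time, contradicting \eqref{above}. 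Thus $\alpha(u(t))>0$ and $V''(t)<0$ everywhere. Since $V\ge 0$ is strictly concave on $[0,+\infty)$, $V'(t_0)\le 0$ for some $t_0$ would give $V'(t)<0$ for $t>t_0$ and hence $V(t)\to-\infty$, which is impossible; therefore $V'(t)>0$ for all $t\ge 0$, which is precisely \eqref{pos-Vder}.

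Next, since $V'$ is positive and strictly decreasing it converges to some $\ell\ge 0$, whence $\int_0^{+\infty}\alpha(u(s))\,ds=\frac18(V'(0)-\ell)<\infty$. I would then show $\alpha(u(t))\to 0$ as $t\to+\infty$ — more than the immediate $\liminf_{t\to\infty}\alpha(u(t))=0$. Granting this, fix $T$ with $\alpha(u(t))<\alpha_0$ for $t\ge T$ and apply Lemma~\ref{modulation} to get the decomposition \eqref{decomp} with, by Lemma~\ref{mod-para}, $\|h(t)\|_{H^1}\approx|\beta(t)|\approx\alpha(u(t))$ and $|\beta'(t)|+|X'(t)|+|\gamma'(t)|\lesssim\alpha(u(t))$; in particular $\int_T^{+\infty}|X'|<\infty$, so $X(t)$ is bounded (indeed convergent) on $[0,+\infty)$.

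The heart of the argument is the estimate $|V'(t)|\lesssim(1+\sqrt{V(t)})\,\alpha(u(t))$. Writing $u(t)=e^{i\gamma(t)}\bigl(Q(\cdot-X(t))+r(\cdot,t)\bigr)$ with $\|r(t)\|_{H^1}\lesssim\alpha(u(t))$ and substituting into $V'(t)=4\,\Im\int_{\R^5}(x\cdot\nabla u)\bar u$: the leading term vanishes because $Q$ is real; the terms linear in $r$ are $\lesssim(1+|X(t)|)\|r(t)\|_{H^1}\lesssim\alpha(u(t))$ after Cauchy--Schwarz and one integration by parts; and the quadratic term $\Im\int(x\cdot\nabla r)\bar r=\int x\cdot\Im(\bar r\nabla r)$ is $\lesssim\||x|r(t)\|_{L^2}\|\nabla r(t)\|_{L^2}\lesssim(\sqrt{V(t)}+|X(t)|)\,\alpha(u(t))$ once $\int|x|^2|r|^2$ is compared with $V(t)$ (when $u_0$ is radial $P[u]=0$ and there is no momentum term; in the general finite-variance case one carries the bounded term $X(t)\cdot P[u]$). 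From this, $\frac{d}{dt}\sqrt{V(t)}=\frac{V'(t)}{2\sqrt{V(t)}}\lesssim\alpha(u(t))$ whenever $V(t)\ge 1$, so integrating with $\alpha\in L^1$ gives $\sup_{t\ge 0}V(t)<\infty$; feeding this back upgrades the estimate to $|V'(t)|\lesssim\alpha(u(t))$ for $t\ge T$. Finally, with $\phi(t):=\int_t^{+\infty}\alpha(u(s))\,ds=\frac18(V'(t)-\ell)\le\frac18 V'(t)$, I get $\phi(t)\lesssim\alpha(u(t))=-\phi'(t)$ for $t\ge T$, hence $\phi'\le-c_0\phi$ and $\phi(t)\le\phi(T)e^{-c_0(t-T)}$; since $\phi$ is nonincreasing it is bounded on $[0,T]$, yielding \eqref{exp-control}.

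I expect the main obstacles to be twofold. First, proving $\alpha(u(t))\to 0$ along the whole half-line, so that Lemma~\ref{modulation} is available for all large $t$: $\alpha\in L^1$ only gives this along a sequence, and ruling out that $\|\nabla u(t)\|_{L^2}$ escapes to $+\infty$ intermittently needs either an a priori $H^1$-bound on $u$ or a compactness/rigidity input, following the template of \cite{DR10} (here the nonlocal potential also forces care in estimating $\frac{d}{dt}\alpha(u(t))$ through the equation). Second, closing the bootstrap for $V$: the bound for $V'$ is genuinely coupled to $\sqrt{V}$, so one must first deduce $V\in L^\infty$ from $\alpha\in L^1$ and only then run the Gronwall step; the surviving momentum term in the non-radial finite-variance case makes this coupling more delicate and forces one to also use $|X(t)-X(+\infty)|\lesssim\phi(t)$.
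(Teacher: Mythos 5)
Your proof of \eqref{pos-Vder} is essentially the paper's, and the extra step you add (ruling out $\|\nabla u(t)\|_{L^2}=\|\nabla Q\|_{L^2}$ at any $t$, so that $V''<0$ strictly) is a sound clarification of a point the paper leaves implicit.

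For \eqref{exp-control}, however, you take a genuinely different route than the paper, and yours has a gap you yourself flag but do not close. You want to obtain $|V'(t)|\lesssim (1+\sqrt{V(t)})\,\alpha(u(t))$ by writing $u=e^{i\gamma}(Q(\cdot-X)+r)$ via Lemma~\ref{modulation}, but that modulation decomposition is only available when $\alpha(u(t))<\alpha_0$, so your argument needs $\alpha(u(t))\to 0$ along the whole half-line. From $\alpha\in L^1$ you only get $\liminf\alpha=0$, and upgrading this to a limit would require an a priori $H^1$ bound on $u$ or some control on $\tfrac{d}{dt}\alpha(u(t))$ through the equation; neither is available at this stage (indeed, in the present regime $\|\nabla u(t)\|_{L^2}$ is not known to be bounded). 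This is a real hole, not a technicality.

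The paper sidesteps it entirely with a variational ``uncertainty principle'': it proves, unconditionally for any $f\in H^1$ with finite variance, $M[f]=M[Q]$ and $E[f]=E[Q]$, the inequality
\begin{equation*}
\left|\Im\int (\nabla\varphi\cdot\nabla f)\,\bar f\,dx\right|^2 \;\leq\; C\,\alpha^2(f)\int|\nabla\varphi|^2|f|^2\,dx ,
\end{equation*}
by applying the sharp Gagliardo--Nirenberg inequality \eqref{H-GNC} to $e^{i\lambda\varphi}f$ and observing that the resulting quadratic in $\lambda$ is nonnegative, so its discriminant is $\le 0$; the Pohozaev identities \eqref{eq:pohid} then show the coefficient multiplying $\int|\nabla\varphi|^2|f|^2$ is $O(\alpha^2(f))$. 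With $\varphi=|x|^2$ this gives $(V')^2\le CV(V'')^2$ with no smallness of $\alpha$ and no modulation at all, after which the boundedness of $V$ and the Gronwall step proceed exactly as in the tail of your argument. So the key estimate you were trying to extract from modulation is in fact available for free from the optimality of $Q$ in \eqref{H-GNC}; using that instead removes the need to prove $\alpha(u(t))\to 0$ and closes the proof.
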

\begin{proof}
If \eqref{pos-Vder} does not hold, then there exists $t_1$ such that $V^{\prime}(t_1)\leq 0$. Since $V^{\prime\prime}(t) < 0$, for any $t_0>t_1$, $V^{\prime}(t) <V^{\prime}(t_0) < 0$, for all $t\geq t_0$. This implies that $V(t) < 0$ for large time $t$, yielding a contradiction to the definition of $V(t)$. This proves \eqref{pos-Vder}.

To prove \eqref{exp-control}, we claim that 
\begin{equation}\label{uncertain-pri}
\big(V^{\prime}(t)\big)^2 \leq C\,V(t)\big(V^{\prime\prime}(t)\big)^2,
\end{equation}
which follows from the following lemma that is proved using the argument presented in \cite[Appendix B]{DR10}.
\begin{lemma}
Let $\varphi\in C^1(\R^5)$ and $f\in H^1(\R^5)$. Assume that $V(f) < +\infty$, and $M[f]=M[Q]$, $E[f]=E[Q]$. Then
\begin{equation*}
\left|\int \big(\nabla\varphi \cdot \nabla f\big) \bar{f}\,dx\right|^2 \leq C\,\alpha^2(f)\int |\nabla\varphi|^2|f|^2.
\end{equation*}
\end{lemma}
\begin{proof}
We recall the Gagliardo-Nirenberg inequality \eqref{H-GNC} and the fact that $Q$ is the unique minimizer of Gagliardo-Nirenberg functional \eqref{GN-func}, which for $u = e^{i\lambda \varphi} f$
yields 
\[
Z_H(f)\|\nabla Q\|_{L^2(\R^5)}^3 \leq Z_H(Q)\|\nabla (e^{i\lambda \varphi} f)\|_{L^2(\R^5)}^3,
\]
since $\|e^{i\lambda \varphi} f\|_{L^2(\R^5)} = \|Q\|_{L^2(\R^5)}$. We now rewrite the above inequality (by expanding $\|\nabla (e^{i\lambda \varphi} f)\|_{L^2(\R^5)}^2$ after taking the power of $\frac{2}{3}$) as
\[
\lambda^2\int |\nabla\varphi|^2| f |^2\,dx + 2\lambda\Im\int (\nabla\varphi\cdot\nabla f)\bar{f}\,dx + \|\nabla f\|_{L^2(\R^5)}^2 - \frac{\|\nabla Q\|_{L^2(\R^5)}^2}{(Z_H(Q))^{2/3}} (Z_H(f))^{2/3} \geq 0
\]
for all $\lambda \in \R$. The discriminant of the above quadratic equation in $\lambda$ needs to be negative, which gives
\[
\left|\Im\int (\nabla\varphi\cdot\nabla f)\bar{f}\,dx\right|^2 \leq \left(\int |\nabla\varphi|^2| f |^2\,dx\right)\left(\|\nabla f\|_{L^2(\R^5)}^2 - \frac{\|\nabla Q\|_{L^2(\R^5)}^2}{(Z_H(Q))^{2/3}} (Z_H(f))^{2/3}\right).
\] 
Let $\tilde{\alpha}(f) = \| |\nabla Q\|_{L^2(\R^5)}^2  - \| |\nabla f\|_{L^2(\R^5)}^2$ such that $\alpha(f) = |\tilde{\alpha}(f)|$. Thus, since 
\[
 \| \nabla f\|_{L^2(\R^5)}^2 = \| \nabla Q\|_{L^2(\R^5)}^2 -  \tilde{\alpha}(f),
 \]
 combining this with $E[f] = E[Q]$, we have
 \[
 Z_H(f) = Z_H(Q) - 2\tilde{\alpha}(f).
 \]
 We then write 
 \begin{align*}
 \|\nabla f\|_{L^2(\R^5)}^2 &- \frac{\|\nabla Q\|_{L^2(\R^5)}^2}{(Z_H(Q))^{2/3}} (Z_H(f))^{2/3}\\
 & =  \| \nabla Q\|_{L^2(\R^5)}^2 -  \tilde{\alpha}(f) - \frac{\|\nabla Q\|_{L^2(\R^5)}^2}{(Z_H(Q))^{2/3}} \big(Z_H(Q) - 2\tilde{\alpha}(f)= \big)^{2/3}\\
 & =  \| \nabla Q\|_{L^2(\R^5)}^2 -  \tilde{\alpha}(f) - \|\nabla Q\|_{L^2(\R^5)}^2\left(1 - \frac{4}{3}\frac{\tilde{\alpha}(f)}{Z_H(Q)} + O\big(\alpha^2(f)\big)\right).
 \end{align*}
 Using the relation $Z_H(Q) = \frac{4}{3} \|\nabla Q\|_{L^2(\R^5)}^2$ from \eqref{eq:pohid}, we get
 \[
 \|\nabla f\|_{L^2(\R^5)}^2 - \frac{\|\nabla Q\|_{L^2(\R^5)}^2}{(Z_H(Q))^{2/3}} (Z_H(f))^{2/3} = O\big(\alpha^2(f)\big),
 \]
 which concludes the proof.
\end{proof}
Now taking $\varphi = |x|^2$, we obtain \eqref{uncertain-pri}. Since $V^{\prime}(t)> 0$ and $V^{\prime\prime}(t) < 0$ for all $t\geq 0$, we rewrite \eqref{uncertain-pri} as
\[
\frac{V^{\prime}(t)}{\sqrt{V(t)}} \leq -C\,V^{\prime\prime}(t).
\] 
Integrating from $0$ to $t\geq 0$, we get
\[
\sqrt{V(t)} - \sqrt{V(0)} \leq -C \big(V^{\prime}(t)-V^{\prime}(0)\big)\leq CV^{\prime}(0),
\]
which shows that $V(t)$ is bounded for $t\geq 0$. Using this fact in \eqref{uncertain-pri} implies that
\[
V^{\prime}(t) \leq -CV^{\prime\prime}(t) \implies V^{\prime}(t) \leq Ce^{-ct}.
\]
This together with the observation that
\[
V^{\prime}(t) = -\int_{t}^{+\infty}V^{\prime\prime}(s)\,ds = 8\int_{t}^{+\infty}\alpha(u(s)\,ds,
\]
implies \eqref{exp-control}, which completes the proof of Lemma \ref{4blow-up}.
\end{proof}
Now we are equipped to finish the proof of Proposition \ref{blow-up} in the finite variance case. Assume that $u(x,t)$ is globally defined for negative times, and consider $v(x,t) = u(x,-t)$. Then $v(x,t)$ is a solution of \eqref{H} satisfying Lemma \ref{4blow-up}. Therefore, by \eqref{pos-Vder}, we have
\[
0 < \Im\int (x\cdot\nabla v(x,-t))\,\bar{v}(x,-t)\,dx = - \Im\int (x\cdot\nabla u(x,t))\,\bar{u}(x,t)\,dx, 
\]
for all times $t$ in the domain of existence of $u(x,t)$. This contradicts \eqref{pos-Vder}, which then implies that the negative time of existence of $u(x,t)$ is finite. And, finally \eqref{exp-conv2Q} follows from combining \eqref{exp-control} together with Lemma \ref{mod-exp}. 

\label{subs:radialsoln}\subsection{Radial solutions}
We now prove Proposition \ref{blow-up} for radial solutions, i.e., assume that $u$ is radial satisfying \eqref{above}. The key is to prove that the radial solutions have finite variance, which will then conclude Proposition \ref{blow-up} via the argument presented in the case of finite variance solutions in Section \ref{subs:finvar}. 

We will work with a truncated variance, for which we consider a radial function $\phi$ such that
$$
\phi(r) \geq 0\quad\text{and}\quad\phi^{\prime\prime}(r)\leq 2\quad\text{for}\,\,r\geq 0,
$$
furthermore, 
\[
\phi(r)=\begin{cases}
r^2 &\text{if}\,\,\,0\leq r\leq 1,\\
0 &\text{if}\,\,\,r\geq 2.
\end{cases}
\]
Define $\phi_R=R^2\phi\left(\tfrac{|x|}{R}\right)$ and let 
\begin{equation}\label{eq:locvar}
V_R(t) = \int\phi_R(x)|u(x,t)|^2\,dx
\end{equation}
be the localized variance. Then
\begin{equation}\label{VR-1stDer}
V_R^\prime(t) = 2\,\Im\int u(t)\,\nabla u(t) \cdot \nabla\phi_R\,dx
\end{equation}
and, recalling $Z_H$ from \eqref{H-GNC},
\begin{equation}\label{VR-2ndDer}
	V_{R}^{\prime\prime} = 8\,\|\nabla u\|_{L^2}^2 - 6\,Z_{H}(u) + A_{R}(u(t)) = -8\,\alpha(u(t)) + A_{R}(u(t)),
\end{equation}
where
\begin{align}\label{eq:locvarAR}
	A_{R}(u(t)) = \int_{\R^5}&\,4\Big(\phi_R^{\prime\prime}-2\Big)|\nabla u|^2\,dx - \int_{\R^5}\Delta^2\phi_R|u|^2\,dx\\\notag 
	&+6\int_{\R^5}\int_{\R^5}\left(1-\frac{1}{2}\,\frac{R}{|x|}\phi^\prime\left(\frac{|x|}{R}\right)\right)\frac{x(x-y)|u(x)|^2\,|u(y)|^2}{|x-y|^{5}}\,dx\,dy\\\notag
	&-6\int_{\R^5}\int_{\R^5}\left(1-\frac{1}{2}\,\frac{R}{|y|}\phi^\prime\left(\frac{|y|}{R}\right)\right)\frac{y(x-y)|u(x)|^2\,|u(y)|^2}{|x-y|^{5}}\,dx\,dy.
\end{align}
To establish Proposition \ref{blow-up}, we will also need the following lemma.
\begin{lemma}\label{4blow-up-rad}
Let $u(t)$ be a radial solution to \eqref{H} defined on $[0,+\infty)$ (i.e., globally defined for positive times) satisfying \eqref{above}. Then, there exists $R_1 > 0$ such that for all $R\geq R_1$ and for all $t$ in the interval of existence of $u$,
	\begin{equation}\label{concavity}
	V_R^{\prime\prime}(t) \leq -4\alpha(u(t)),
	\end{equation}
	\begin{equation}\label{pos-Vder-rad}
		V_R^{\prime} (t)> 0. 
	\end{equation}
\end{lemma}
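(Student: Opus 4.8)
The plan is to show that the error term $A_R(u(t))$ in \eqref{VR-2ndDer} is, for $R$ large, small compared to $\alpha(u(t))$, so that the sign of $V_R''$ is dictated by the $-8\alpha$ term; this gives \eqref{concavity}, and then \eqref{pos-Vder-rad} follows from a sign argument identical to the one used for \eqref{pos-Vder} in Lemma \ref{4blow-up}. First I would bound $A_R(u(t))$. By construction of $\phi$, the factors $\phi_R''-2$, $\Delta^2\phi_R$, and $1-\tfrac12\tfrac{R}{|x|}\phi'(\tfrac{|x|}{R})$ are all supported in the region $|x|\geq R$ (the last one because $\phi(r)=r^2$ for $r\leq 1$ forces $\tfrac12 r^{-1}\phi'(r)=1$ there), and $|\Delta^2\phi_R|\lesssim R^{-2}$. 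Hence the local terms are controlled by $\int_{|x|\geq R}|\nabla u|^2 + R^{-2}\int_{|x|\geq R}|u|^2$, and the nonlocal terms by $\int_{|x|\geq R \text{ or } |y|\geq R}\frac{|u(x)|^2|u(y)|^2}{|x-y|^{4}}\,dx\,dy$. For radial $H^1$ functions with fixed mass, the radial Sobolev (Strauss) estimate $|u(x)|\lesssim |x|^{-2}\|u\|_{H^1}$ on $|x|\geq 1$ makes all of these quantities $o_R(1)$ as $R\to\infty$, uniformly in $t$ since $\|u(t)\|_{H^1}$ is controlled on the interval of existence (mass is conserved and $\|\nabla u(t)\|_{L^2}$ stays bounded: if it blew up, $\alpha(u(t))\to\infty$, whereas below the positive-time hypothesis forces the solution to be global, and one argues as in \cite{GW10}). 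More precisely, I would show $|A_R(u(t))|\leq \varepsilon(R)$ with $\varepsilon(R)\to 0$; the delicate point, though, is that we need $|A_R(u(t))|\leq 4\alpha(u(t))$, and $\alpha(u(t))$ could a priori be very small.

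To handle this, I would use the strict inequality $\|\nabla u_0\|_{L^2} > \|\nabla Q\|_{L^2}$ from \eqref{above} together with the variational characterization: since $M[u]E[u]=M[Q]E[Q]$ and the gradient is strictly above the threshold at $t=0$, the dichotomy (as in \cite{GW10}, \cite{AKAR1}) shows $\|\nabla u(t)\|_{L^2} > \|\nabla Q\|_{L^2}$ for all $t$ and moreover $\|\nabla u(t)\|_{L^2}^2 - \|\nabla Q\|_{L^2}^2$ is bounded below by a positive constant depending only on $\alpha(u_0)$ — indeed $\tilde\alpha(u(t))=\|\nabla Q\|_{L^2}^2-\|\nabla u(t)\|_{L^2}^2$ is continuous, never zero (that would force $u(t)$ to equal $Q$ up to symmetry, contradicting the strict gradient inequality), and $V_R''<0$ keeps $V_R'$ decreasing, which combined with $V_R\geq 0$ prevents $\alpha(u(t))\to 0$ along a sequence without $V_R'\to-\infty$, hence $\alpha(u(t))\geq \alpha_* > 0$ uniformly. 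Granting a uniform lower bound $\alpha(u(t))\geq \alpha_* > 0$, I simply pick $R_1$ so that $\varepsilon(R)\leq 4\alpha_*$ for $R\geq R_1$, which yields \eqref{concavity}.

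With \eqref{concavity} in hand, \eqref{pos-Vder-rad} is proved by contradiction exactly as in Lemma \ref{4blow-up}: if $V_R'(t_1)\leq 0$ for some $t_1$, then because $V_R'' \leq -4\alpha(u(t)) < 0$ we get $V_R'(t) < V_R'(t_1 + 1) < 0$ for all $t\geq t_1+1$, forcing $V_R(t)\to -\infty$, which contradicts $V_R(t)=\int \phi_R|u|^2 \geq 0$. Therefore $V_R'(t) > 0$ throughout the interval of existence.

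\textbf{Main obstacle.} The routine part is the estimate $|A_R(u(t))|\lesssim \varepsilon(R)$ via radial Sobolev embedding — this is standard and mirrors the Ogawa–Tsutsumi / Holmer–Roudenko localized virial arguments. The genuine difficulty is establishing the \emph{uniform-in-time} lower bound $\alpha(u(t))\geq \alpha_* > 0$: one must rule out that the solution's gradient drifts back toward the threshold value as $t\to\infty$. This is where the hypothesis $\|\nabla u_0\|_{L^2} > \|\nabla Q\|_{L^2}$ (as opposed to $\geq$) and the rigidity of the mass–energy threshold are essential — I would extract it from the variational analysis in \cite{AKAR1}, \cite{GW10}, noting that on the threshold the function $t\mapsto\|\nabla u(t)\|_{L^2}$ cannot cross $\|\nabla Q\|_{L^2}$ without $u$ coinciding with a ground-state orbit, so it stays strictly on one side; the uniform bound then follows from continuity plus the monotonicity of $V_R'$ obtained above (a mild bootstrap: a small $\alpha$ at late times combined with $V_R''\leq -4\alpha$ is compatible, so the cleanest route is really the variational lower bound rather than the ODE for $V_R$).
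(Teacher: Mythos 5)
Your overall plan is the right one, and the final sign argument for \eqref{pos-Vder-rad} is exactly what the paper does. But the pivotal step — the claimed uniform-in-time lower bound $\alpha(u(t))\geq\alpha_*>0$ — is \emph{false}, and it is false in a way that is fatal to the argument. The lemma you are proving is the key ingredient in Proposition \ref{blow-up}, whose conclusion is that $u(t)$ converges \emph{to} the ground state orbit with $\alpha(u(t))\to 0$ exponentially as $t\to+\infty$. So any proof that begins by positing $\alpha(u(t))\geq\alpha_*>0$ contradicts the very thing the lemma is designed to establish. The supporting reasoning you offer also doesn't hold up: ``$\tilde\alpha$ is continuous and never vanishes'' gives nonvanishing, not a positive infimum on $[0,+\infty)$; and the $V_R$-based bootstrap is circular, since you invoke $V_R''<0$ (which is what you want to prove) while also implicitly assuming $\|\nabla u(t)\|_{L^2}$ is uniformly bounded (which is again an \emph{a posteriori} consequence of the convergence to $Q$, not an \emph{a priori} fact for an above-threshold solution).

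The mechanism the paper uses instead is a two-regime split in $\alpha(u(t))$. When $\alpha(u(t))$ is small, one invokes the modulation decomposition (Lemma \ref{mod-para}): $u(t)=e^{i(t+\theta(t))}\bigl(Q(\cdot-X(t))+g(t)\bigr)$ with $\|g(t)\|_{H^1}\lesssim\alpha(u(t))$. The crucial structural fact is that $A_R(Q)\equiv 0$ for every $R$, so $A_R(u)=A_R(u)-A_R(Q)$ and the Taylor expansion in $g$ produces terms that are either quadratic or higher in $\|g\|_{H^1}\sim\alpha$, or linear in $g$ but carrying an exponentially small factor $e^{-cR}$ coming from the exponential decay of $Q$ on the region $|x|\gtrsim R$ where the virial weight is not exactly $|x|^2$. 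The net result is $|A_R(u(t))|\leq c\bigl(\alpha+e^{-cR}+\alpha^3\bigr)\alpha(u(t))$, and one then chooses $R_1$ large and $\alpha_1$ small so that the bracket is $\leq 4$. When $\alpha(u(t))\geq\alpha_1$, one uses the crude radial-Sobolev bound $A_R(u)\leq c_2 R^{-2}+c_3 R^{-12/5}\|\nabla u\|_{L^2}^{3/5}$ together with the relation $\|\nabla u\|_{L^2}^2=\|\nabla Q\|_{L^2}^2+\alpha(u(t))$ valid above the threshold, and checks via a concavity argument on $f_R(\alpha)=c_2R^{-2}+c_3R^{-12/5}(\|\nabla Q\|^2_{L^2}+\alpha)^{3/10}-4\alpha$ that the bound $A_R\leq 4\alpha$ holds for all $\alpha\geq\alpha_1$ once $R$ is large — crucially, this works without any uniform bound on $\|\nabla u(t)\|_{L^2}$ because the left side grows only like $\alpha^{3/10}$. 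The essential idea your proposal is missing is precisely the use of $A_R(Q)=0$ as a cancellation in the small-$\alpha$ regime: that is what lets the error be proportional to $\alpha$ rather than merely $o_R(1)$.
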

\begin{proof} We start by showing that 
\begin{equation}\label{bound-AR}
A_R(u(t)) \leq  4\alpha(u(t)),
\end{equation}
which will imply \eqref{concavity} via \eqref{VR-2ndDer}.
Using $\phi_R^{\prime\prime} \leq 2$ (by definition), we have that the first term is negative, and again by definition, we have $|-\Delta^2\phi_R|\lesssim 1/R^2$, thus  
$$
- \int\Delta^2\phi_R|u|^2\,dx\leq \frac{c_1}{R^2}\|u\|_{L^2(\R^5)}^2.
$$
Therefore,
\begin{align*}
A_{R}&(u(t)) \leq\,  \frac{c_1}{R^2}\|u\|_{L^2(\R^5)}^2 \\
&+6\int_{\R^5}\int_{\R^5}\left[\left(1-\frac{1}{2}\,\frac{R}{|x|}\phi^\prime\left(\frac{|x|}{R}\right)\right)x-\left(1-\frac{1}{2}\,\frac{R}{|y|}\phi^\prime\left(\frac{|y|}{R}\right)\right)y\right]\frac{(x-y)|u(x)|^2\,|u(y)|^2}{|x-y|^{5}}\,dx\,dy.
\end{align*}
The last term in the above inequality can be estimated by breaking down into the following regions (observe that the integral vanishes in the region $|x|,|y|\leq R$);
\begin{itemize}
	\item Region I: $|x|\approx |y|.$ In this region we have
	$$
	|x|>R,\,\,\,|y|> R.
	$$
	Observe that
	$$
	\left|\left(1-\frac{R}{2|x|}\phi'\left(\frac{|x|}{R}\right)\right)x-\left(1-\frac{R}{2|y|}\phi'\left(\frac{|y|}{R}\right)\right)y\right|\lesssim |x-y|.
	$$
	Thus, using H\"older's, Hardy-Littlewood-Sobolev and radial Sobolev inequalities, we get
	$$
	\int\int\frac{\chi_{\{|y|>R\}}|u(y)|^2}{|x-y|^3}\chi_{\{|x|>R\}}|u(x)|^2\,dx\,dy\lesssim\frac{1}{R^{\frac{12}{5}}}\|\nabla u\|^{\frac{3}{5}}_{L^2(\R^5)}\|u\|^{\frac{17}{5}}_{L^2(\R^5)}.
	$$ 	
	\item Region II: $\max\{|x|,|y|\}\gg\min\{|x|,|y|\}$ and $\max\{|x|,|y|\}>R.$ We consider two cases:
	\begin{itemize}
		\item Case (a):	$|x|\ll|y|\approx |x-y|,\quad |y|>R$ and $|x|<R$. In this case the term becomes
		$$
		\int\int\frac{1}{|x-y|^3}\,\chi_{\{|y|>R\}}|u(y)|^2\,|u(x)|^2\,dx\,dy.
		$$
		Using the triangle inequality and  the definition of $\phi$, we have
		\begin{align*}
		\Big|&\left(1-\frac{R}{2|x|}\phi'\left(\frac{|x|}{R}\right)\right)x-\left(1-\frac{R}{2|y|}\phi'\left(\frac{|y|}{R}\right)\right)y\Big|\\
		&\leq |x|\left(1-\frac{R}{2|x|}\phi'\left(\frac{|x|}{R}\right)\right)+|y|\left(1-\frac{R}{2|y|}\phi'\left(\frac{|y|}{R}\right)\right)\\
		&\lesssim |y|\approx|x-y|,
		\end{align*}
		since $1-\frac{R}{2|x|}\phi'\left(\frac{|x|}{R}\right)<1$ and $1-\frac{R}{2|y|}\phi'\left(\frac{|y|}{R}\right)>\frac{1}{2}$. Again using H\"older's, Hardy-Littlewood-Sobolev and  radial Sobolev inequalities, we bound the above integral by
		$$
		\frac{1}{R^{\frac{12}{5}}}\,\|\nabla u\|^{\frac{3}{5}}_{L^2(\R^5)}\|u\|^{\frac{17}{5}}_{L^2(\R^5)}.
		$$
		\item Case (b): $|y|\ll|x|\approx |x-y|,\quad |x|>R$ and $|y|<R$. This case is symmetric and treated with a similar argument as in Case (a).
	\end{itemize}
\end{itemize}
This gives that 
\begin{equation}\label{pre-bound}
A_{R}(u(t)) \leq \frac{c_2}{R^2} + \frac{c_3}{R^{12/5}}\|\nabla u\|_{L^2(\R^5)}^{3/5}
\end{equation}
by conservation of mass.

We now obtain the estimate on $A_R$ in terms of $\alpha(u(t)$, when $\alpha(u(t))$ is small. First, observe that $\tfrac{d}{dt}\big(\int\,\phi_R(x)|Q(x)|^2\,dx\big) = 0$, and thus, $A_R(Q) = 0$, for all $R>0$. We now invoke Lemma \ref{mod-para} and denote $g=\alpha Q + h$, we then have
\[
u(t) = e^{it}\big(Q +  g(t)\big),\quad  \text{with}\,\,\,\| g(t) \|_{H^1} \lesssim \alpha(u(t)).
\]
Here, we assume that $t\in U_{\alpha_1} = \{t\in I\,\,:\,\,\,\alpha(u(t))<\alpha_1\},$ where $I$ is the maximal time interval of existence of $u(t)$ and $\alpha_1\in (0,\alpha_0)$. We write using the H\"older's, Hardy-Littlewood-Sobolev and Sobolev inequalities,
\begin{align*}
|A_R(u(t))| =&\,\, |A_R(Q+g)| = |A_R(Q+g) - A_R(Q)|\\
\leq&\,\, c_4\int_{|x|\geq R} |\nabla g|^2 + |\nabla Q\cdot \nabla g| + \frac{1}{R^2}\big(|g|^2 + Q|g|\big)\\
&+c_5\iint\limits_{|x|\geq R \,\,\cup\,\, |y|\geq R}\frac{|Q(x)|^2|g(y)|^2 + |g(x)|^2|Q(y)|^2 + |g(x)|^2|g(y)|^2}{|x-y|^3}\\
&+c_5\iint\limits_{|x|\geq R \,\,\cup\,\, |y|\geq R}\frac{|Q(x)|^2Q(y)|g(y)| + |g(x)|^2Q(y)|g(y)| + Q(x)|g(x)||g(y)|^2}{|x-y|^3}\\
\leq&\,\, c_6\Big(\|\nabla g\|^2_{L^2(\R^5)} + e^{-cR}\Big(\|\nabla g\|_{L^2(\R^5)} + \|\nabla g\|^2_{L^2(\R^5)} + \|\nabla g\|^3_{L^2(\R^5)}\Big) + \|\nabla g\|^4_{L^2(\R^5)}\Big).
\end{align*}
Now we invoke Lemma \ref{mod-para} such that for $t\in U_{\alpha_0}$ and $\alpha(u(t))$ small enough, we have
\begin{align*}
|A_R(u(t))|&\leq c_6\Big( (\alpha(u(t)))^2+ e^{-cR}\Big(\alpha(u(t)) + (\alpha(u(t)))^2 + (\alpha(u(t)))^3\Big) + (\alpha(u(t)))^4\Big)\\
&= c_6\Big( \alpha(u(t))+ e^{-cR}\Big(1 + \alpha(u(t)) + (\alpha(u(t)))^2\Big) + (\alpha(u(t)))^3\Big)\alpha(u(t)),
\end{align*}
 which shows that there exist $R_1>0$ such that for a large enough $R\geq R_1$, $e^{-cR_1}<1$ and $\alpha_1>0$ such that for $t\in U_{\alpha_1}$, we obtain the bound \eqref{bound-AR}, as claimed.

On the other hand, if $\alpha(u(t))>\alpha_1$, we consider the function 
\[
f_R(\alpha) \defeq \frac{c_2}{R^2} + \frac{c_3}{R^{12/5}}\Big(\|\nabla Q\|^2_{L^2(\R^5)}+\alpha(u(t))\Big)^{3/10} - 4 \alpha(u(t)).
\]
Observe that $f_R^{\prime\prime}(\alpha) < 0$ for any $\alpha(u(t)) > 0$. Choose $R_2\geq R_1$ such that $f_R^{\prime}(\alpha_1) < 0$ and $f_R(\alpha_1) \leq 0$. This implies that $f_R(\alpha) $ is non-positive for all $\alpha(u(t))>\alpha_1$. Therefore, we obtain \eqref{bound-AR} from \eqref{pre-bound} for $R\geq \max\{R_1,R_2\}$ when $\alpha(u(t))>\alpha_1$. Combining \eqref{bound-AR} with \eqref{VR-2ndDer} yields \eqref{concavity}. Lastly, since $V_R^{\prime\prime}(t)<0$ and $V_R(t)$ is positive, we must have $V_R^{\prime}(t) > 0$, as desired.
\end{proof}

Next we establish the following exponential bound.
\begin{lemma}\label{lem:4bup-exp}
Let $u$ satisfies the assumptions of Proposition \ref{blow-up}, then there exists constants $c,C>0$ such that for $R\geq \max\{R_1,R_2\}$, we have
\begin{equation}\label{eq:exp-bnd}
\int_{t}^{+\infty}\alpha(u(s))\,ds\leq Ce^{-ct},
\end{equation}
for all $t\geq 0$.
\end{lemma}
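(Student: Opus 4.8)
The plan is to transplant the argument of Lemma~\ref{4blow-up} to the truncated variance $V_R$, using the two facts already in hand from Lemma~\ref{4blow-up-rad}: for $R\ge\max\{R_1,R_2\}$ one has $V_R''(t)\le-4\alpha(u(t))$ and $V_R'(t)>0$ on the interval of existence. The goal is to promote the concavity of $V_R$ to an exponential decay of $V_R'$, and then observe that, since $-V_R''\ge4\alpha(u(t))\ge0$ and $V_R'$ is positive and (strictly) decreasing, it has a limit $\ell\ge0$ as $t\to+\infty$, so that
\[
\int_t^{+\infty}\alpha(u(s))\,ds\le\frac14\int_t^{+\infty}\big(-V_R''(s)\big)\,ds=\frac14\big(V_R'(t)-\ell\big)\le\frac14\,V_R'(t),
\]
which reduces \eqref{eq:exp-bnd} to the decay estimate $V_R'(t)\le Ce^{-ct}$.

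The key step is the pointwise bound $V_R'(t)\lesssim\alpha(u(t))$. Here I would invoke the variance/uncertainty inequality proved inside Lemma~\ref{4blow-up} with the weight $\varphi=\phi_R$ in place of $|x|^2$: its proof only uses $u=e^{i\lambda\varphi}f$ together with the sharp Gagliardo--Nirenberg inequality \eqref{H-GNC}--\eqref{GN-func} and the relation $Z_H(Q)=\tfrac43\|\nabla Q\|_{L^2(\R^5)}^2$ from \eqref{eq:pohid}, and the finite-variance hypothesis stated there is used only to ensure $\int|\nabla\varphi|^2|f|^2<\infty$, which for $\varphi=\phi_R$ holds automatically because $\nabla\phi_R\in L^\infty(\R^5)$ is compactly supported. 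Recalling $V_R'(t)=2\,\Im\int u(t)\,\nabla u(t)\cdot\nabla\phi_R\,dx$ from \eqref{VR-1stDer}, this yields
\[
\big|V_R'(t)\big|^2\le C\,\alpha^2(u(t))\int_{\R^5}|\nabla\phi_R|^2\,|u(t)|^2\,dx\le C_R\,\alpha^2(u(t)),
\]
where the last inequality uses conservation of mass together with either the crude bound $|\nabla\phi_R|^2\le\|\nabla\phi_R\|_{L^\infty}^2$ or, choosing $\phi$ so that $\sqrt\phi$ is Lipschitz, $|\nabla\phi_R|^2\lesssim\phi_R$. Taking square roots gives $V_R'(t)\le C_R\,\alpha(u(t))$. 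Note that, in contrast to the finite-variance case, no preliminary boundedness argument for $V_R$ is needed, since the weight $\nabla\phi_R$ is bounded.

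Combining this with \eqref{concavity} one obtains $V_R''(t)\le-4\alpha(u(t))\le-\tfrac{4}{C_R}V_R'(t)$. Since $V_R'(t)>0$ by \eqref{pos-Vder-rad}, the function $t\mapsto e^{ct}V_R'(t)$ with $c=4/C_R$ is nonincreasing, hence $V_R'(t)\le V_R'(0)e^{-ct}$; feeding this into the first display gives $\int_t^{+\infty}\alpha(u(s))\,ds\le\tfrac14V_R'(0)e^{-ct}$, which is \eqref{eq:exp-bnd}. The only genuinely delicate point is the use of the variance/uncertainty lemma with the truncated weight $\phi_R$: one must verify that its proof does not actually require finite variance (it does not) and that the $R$-dependence of the constant $C_R$ is harmless, which it is once $R\ge\max\{R_1,R_2\}$ is fixed. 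With \eqref{eq:exp-bnd} in place, Proposition~\ref{blow-up} for radial data follows exactly as in the finite-variance case: \eqref{eq:exp-bnd} combined with Lemma~\ref{mod-exp} gives \eqref{exp-conv2Q}, and reversing time yields a contradiction with \eqref{pos-Vder-rad} unless the negative time of existence is finite.
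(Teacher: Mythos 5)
Your proof is correct, and it takes a genuinely different route to the key pointwise estimate $|V_R'(t)|\lesssim\alpha(u(t))$. The paper establishes this by first observing that it suffices to treat the regime $\alpha(u(t))<\alpha_0$, then invoking the modulation decomposition of Lemma~\ref{mod-para} to write $u(t)=e^{it}(Q+g(t))$ with $\|g(t)\|_{H^1}\lesssim\alpha(u(t))$, expanding $V_R'(t)$, and applying Hardy's inequality to the cross terms (the case $\alpha(u(t))\ge\alpha_0$ being elementary). You instead apply the variance/uncertainty sublemma from Lemma~\ref{4blow-up} directly with the weight $\varphi=\phi_R$, which is legitimate: its proof uses only the sharp Gagliardo--Nirenberg inequality, $\|e^{i\lambda\varphi}f\|_{L^2}=\|Q\|_{L^2}$, $E[f]=E[Q]$, and the Pohozaev relation \eqref{eq:pohid}, and the finite-variance hypothesis only serves to make $\int|\nabla\varphi|^2|f|^2$ finite, which with $\varphi=\phi_R$ is automatic since $\nabla\phi_R\in L^\infty$. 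This bypasses the modulation theory entirely and avoids the case split on the size of $\alpha(u(t))$; the trade-off is that one must verify the sublemma's $O(\alpha^2)$ bound on $\|\nabla f\|_{L^2}^2-\|\nabla Q\|_{L^2}^2(Z_H(Q))^{-2/3}(Z_H(f))^{2/3}$ holds uniformly in $\alpha$ (it does: with $s=2\alpha/Z_H(Q)$ the quantity equals $\|\nabla Q\|_{L^2}^2\big(1+\tfrac23 s-(1+s)^{2/3}\big)$, which is $O(s^2)$ uniformly on $[0,\infty)$). The concluding Gronwall-type step is essentially equivalent in the two arguments: the paper integrates $V_R''\le-4\alpha$ against $|V_R'|\lesssim\alpha$ to get $\int_t^\infty\alpha\lesssim\alpha(u(t))$ and then applies Gronwall, while you convert the same two facts into the differential inequality $V_R''\le -cV_R'$ and integrate directly.
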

\begin{proof}
We start by showing that under the hypothesis of Proposition \ref{blow-up}
\begin{equation}\label{eq:expc1}
\big|V_R^{\prime}(t)\big| \leq CR^2\alpha(u(t))
\end{equation}
for any $R>0$ and all $t\geq 0$. 

By the definition of $\alpha(u(t))$ from Section \ref{sec:modulat}, it will be sufficient to prove \eqref{eq:expc1} for $\alpha(u(t)) < \alpha_0$ (see Lemma \ref{mod-para}). By Lemma \ref{mod-para} and denoting $g=\alpha Q + h$, we then have
\[
u(t) = e^{it}\big(Q +  g(t)\big),\quad  \text{with}\,\,\,\| g(t) \|_{H^1} \lesssim \alpha(u(t)).
\]
Therefore, using \eqref{VR-1stDer} and that $|\nabla\phi_R|\leq CR$, we have
\begin{equation*}
\begin{aligned}
\big|\partial_tV_R(t)\big| &= 2\left|\Im \int_{\R^5} (Q+\overline{g(t)})(\nabla Q+\nabla g(t))\cdot\nabla \phi_Rdx\right|\\
&\leq CR^2 \int_{\R^5} \frac{1}{|x|}\big|(Q\nabla g(t)+\overline{g(t)}\nabla Q+\overline{g(t)}\nabla g(t))\big|dx\\
&\leq CR^2 \left(\|\nabla g(t)\|_{L^2(\R^5)}\left( \int_{\R^5} \frac{1}{|x|^2}|Q(x)|^2dx\right)^{1/2} + \|\nabla Q\|_{L^2(\R^5)}\left( \int_{\R^5} \frac{1}{|x|^2}|g(t)|^2dx\right)^{1/2}\right)\\
&\qquad\qquad\qquad\qquad\qquad\qquad+ CR^2\|\nabla g(t)\|_{L^2(\R^5)}\left( \int_{\R^5} \frac{1}{|x|^2}|g(t)|^2dx\right)^{1/2}.
\end{aligned}
\end{equation*}
By the Hardy's inequality and the fact that $Q$ and its derivatives are in $\mathcal{S}(\R^5)$, we obtain
\[
\big|\partial_tV_R(t)\big| \leq C R^2\left(\|\nabla g(t)\|_{L^2(\R^5)} +\|\nabla g(t)\|_{L^2(\R^5)}^2\right),
\]
which yields \eqref{eq:expc1} from Lemma \ref{mod-para} for $t\in U_{\alpha_0}$ and $\alpha(u(t))$ small enough.

Let us fix $R\geq \max\{R_1,R_2\}$. Then from \eqref{concavity} and \eqref{eq:expc1}, we obtain
\[
4\int_t^T \alpha(u(s))\,ds \leq -\int_t^T V^{\prime\prime}_R(t)\,dt = V^{\prime}_R(t) - V^{\prime}_R(T) \leq CR^2\alpha(u(t)).
\]  
Letting $T\rightarrow +\infty$, we have
\[
\int_t^{+\infty} \alpha(u(s))\,ds \leq C\alpha(u(t)),
\] 
for some $C>0$, and therefore, the Gronwall's inequality gives \eqref{eq:exp-bnd}. 
\end{proof}

Now invoking Lemma \ref{mod-exp}, there exists a sequence $t_n\rightarrow+\infty$ such that $\alpha(u(t_n)) \rightarrow 0$. Furthermore, Lemma \ref{var-char} allows us to extract a subsequence (if necessary) such that $u(t_n)\rightarrow e^{i\theta_1}Q$ in $H^1$ for some $\theta_1\in\R$. Since $V_R(t)$ is increasing (see \eqref{pos-Vder-rad}), we get
\[
V_R(0) = \int R^2\phi\left(\frac{|x|}{R}\right)|u_0|^2\,dx \leq \int R^2\phi\left(\frac{|x|}{R}\right)|Q|^2\,dx.
\]
Thus, letting $R\rightarrow +\infty$ yields the finite variance of radial solution. To obtain the blow-up for negative times, we assume that $u(x,t)$ is globally defined for negative times, and consider $v(x,t) = u(x,-t)$. Then $v(x,t)$ is a solution of \eqref{H} satisfying Lemma \ref{4blow-up-rad}. Therefore, by \eqref{pos-Vder-rad}, we have
\[
0 < \Im\int (x\cdot\nabla v(x,-t))\,\bar{v}(x,-t)\,dx = - \Im\int (x\cdot\nabla u(x,t))\,\bar{u}(x,t)\,dx, 
\]
for all times $t$ in the domain of existence of $u(x,t)$. This contradicts \eqref{pos-Vder-rad}, which then implies that the negative time of existence of $u(x,t)$ is finite. And, finally \eqref{exp-conv2Q} follows from combining Lemma \ref{lem:4bup-exp} together with Lemma \ref{mod-exp},  thereby concluding the proof of Proposition \ref{blow-up}. 


\section{Covergence to $Q$ in the case $\|u_0\|_{L^2(\R^5)}\|\nabla u_0\|_{L^2(\R^5)} < \| Q\|_{L^2(\R^5)}\|\nabla Q\|_{L^2(\R^5)}$}\label{S:more}

The main purpose of this section is to establish that if $u(t)$ is a threshold solution with $\|\nabla u_0\|_{L^2(\R^5)} < \|\nabla Q\|_{L^2(\R^5)}$, which does not scatter as $t\rightarrow\infty$, then $u(t)$ converges to the ground state solution $Q$ exponentially as $t\rightarrow\infty$.
\begin{proposition}\label{main-sub}
	Let $u$ be a solution to \eqref{H} satisfying
	\begin{equation}\label{below}
		M[u]=M[Q],\quad E[u]=E[Q],\quad\|\nabla u_0\|_{L^2(\R^5)} < \|\nabla Q\|_{L^2(\R^5)}
	\end{equation}
	and $\|u\|_{S(\dot{H}^{1/2};[0,+\infty))} = +\infty$, i.e., $u$ does not scatter for positive times. Then there exist $x_0\in\R^5$, $\gamma_0\in\R$ and constants $c$, $C > 0$ such that
	$$
	\|u-e^{it+i\gamma_0}Q(\cdot+x_0)\|_{H^1(\R^5)}\leq Ce^{-ct}.
	$$	
\end{proposition}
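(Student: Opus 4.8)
The plan is to adapt the argument of Duyckaerts--Roudenko \cite{DR10} for the subcritical-gradient regime, now relying on the spectral description of $\mathcal{L}$ built in Section \ref{S:lin-oper}. First observe that under \eqref{below} the solution is global forward in time and bounded in $H^1$: since $M[u]E[u]=M[Q]E[Q]$, the sharp Gagliardo--Nirenberg inequality \eqref{H-GNC}, conservation of mass and energy, and a continuity argument trap $\|\nabla u(t)\|_{L^2}<\|\nabla Q\|_{L^2}$ for all $t$, so $\alpha(u(t))=\|\nabla Q\|_{L^2}^2-\|\nabla u(t)\|_{L^2}^2>0$, there is no finite-time blow-up, and $\sup_{t\ge 0}\|u(t)\|_{H^1}<\infty$. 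By Lemma \ref{mod-exp}, it then suffices to produce constants $c,C>0$ with $\int_t^{+\infty}\alpha(u(s))\,ds\le Ce^{-ct}$ for $t$ large.

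The first step is to exhibit a sequence $t_n\to+\infty$ with $\alpha(u(t_n))\to 0$, equivalently (by Lemma \ref{var-char}) a sequence along which $u(t_n)\to Q$ in $H^1$ up to the symmetries. This is where the hypothesis $\|u\|_{S(\dot{H}^{1/2};[0,+\infty))}=+\infty$ is essential: running the concentration--compactness/rigidity scheme for \eqref{H} (as in \cite{GW10}; see also \cite{AKAR1}), any nontrivial profile or remainder in a linear profile decomposition of $\{u(t_n)\}$ would lie strictly below the mass-energy threshold, hence would scatter, and by the stability theory for \eqref{H} this would force $u$ itself to scatter forward in time --- a contradiction. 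Consequently $\{u(t):t\ge 0\}$ is precompact in $H^1$ modulo scaling, phase and translation, and in particular $\liminf_{t\to\infty}\alpha(u(t))=0$. (In the radial case the translation parameter is trivial, which lightens the bookkeeping in what follows.)

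Once $\alpha(u(t_0))$ is small for some large $t_0$, the modulation decomposition of Section \ref{sec:modulat} applies near $Q$, so that $u(x,t)=e^{i\gamma(t)}\big((1+\beta(t))Q+h(t)\big)(x-X(t))$ with the orthogonality conditions \eqref{orthocond}--\eqref{orthocond1}, $\|h(t)\|_{H^1}\approx|\beta(t)|\approx\alpha(u(t))$, and $|\dot X(t)|+|\dot\beta(t)|+|\dot\gamma(t)|\lesssim\alpha(u(t))$ (Lemma \ref{mod-para}). Decomposing $h(t)=a_+(t)\,\mathcal{Y}_1+a_-(t)\,\mathcal{Y}_2+r(t)$, the linearized equation $h_t+\mathcal{L}h=R(h)+(\text{modulation terms})$ together with $\mathcal{L}\mathcal{Y}_\pm=\pm e_0\mathcal{Y}_\pm$ gives, after a suitable refinement of the orthogonality conditions killing the linear modulation forcing in the $\mathcal{Y}_\pm$ directions, $\dot a_\pm=\pm e_0 a_\pm+O(\alpha(u(t))^2)$; meanwhile the coercivity of $\Phi$ on the center directions (Proposition \ref{positivity}) combined with $M[u]=M[Q]$, $E[u]=E[Q]$ --- which forces the quadratic form to be of higher order, $\Phi(h)=O(\|h\|_{H^1}^3)$ after removing the $\beta Q$ component --- shows $\|r(t)\|_{H^1}$ is of higher order relative to $\|h(t)\|_{H^1}$, so $\alpha(u(t))\approx|a_+(t)|+|a_-(t)|$ up to higher-order terms. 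Since $u$ is globally bounded in $H^1$, $a_+(t)$ cannot grow exponentially; a standard trapping/bootstrap argument (as in \cite{DR10}) then shows the modulation regime persists for all $t\ge t_0$, that $a_+$ is slaved to the decaying part, $a_+(t)=-e^{e_0 t}\int_t^{+\infty}e^{-e_0 s}O(\alpha(u(s))^2)\,ds=o(\alpha(u(t)))$, and hence $\alpha(u(t))\le Ce^{-e_0 t}$. Integrating yields $\int_t^{+\infty}\alpha(u(s))\,ds\le Ce^{-ct}$, and Lemma \ref{mod-exp} produces $x_0,\gamma_0$ with $\|u-e^{it+i\gamma_0}Q(\cdot+x_0)\|_{H^1}\le Ce^{-ct}$.

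The main obstacle is the first step: establishing precompactness modulo symmetries (equivalently, the approximating sequence $t_n$) requires the full concentration--compactness and stability theory for the nonlocal equation \eqref{H} --- a linear profile decomposition adapted to the $\dot{H}^{1/2}$-critical scaling, the long-time perturbation lemma for the Hartree nonlinearity, and the associated virial rigidity --- all of which the convolution potential makes more delicate than in the local NLS setting. A secondary technical point is the careful control of the nonlocal error terms in the modulation equations, ensuring that $\dot a_\pm$ and $\|r\|_{H^1}$ are governed by $\alpha(u(t))$ itself (and are genuinely quadratic in the $\mathcal{Y}_\pm$ directions), which is precisely what lets the spectral gap $e_0$ from Lemma \ref{spectrum} propagate into exponential decay.
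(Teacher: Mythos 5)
Your first step (global existence and $H^1$ boundedness via the sharp Gagliardo--Nirenberg trapping) and your compactness step are essentially what the paper does, and your compactness argument is close to the paper's Lemma \ref{compact}. After that, though, you switch to a modulation-plus-spectral-decomposition route, whereas the paper never touches the eigenfunctions $\mathcal{Y}_\pm$ in the proof of Proposition \ref{main-sub}; it reserves that for the uniqueness argument (Lemma \ref{lem:expo1}, Proposition \ref{prop:unique}). The paper instead proceeds entirely via a localized virial: Lemma \ref{control1} gives $\int_\sigma^\tau \alpha(u(t))\,dt \lesssim \big(1+\sup_{[\sigma,\tau]}|x(t)|\big)\big(\alpha(u(\sigma))+\alpha(u(\tau))\big)$ for \emph{all} $\sigma\le\tau$, without any smallness assumption on $\alpha$; Lemma \ref{control2} shows $\frac{1}{T}\int_0^T\alpha\to 0$; Lemma \ref{control3} controls $|x(\tau)-x(\sigma)|$ by $\int_\sigma^\tau\alpha$; combining these shows $x(t)$ is bounded, which then upgrades the virial estimate to $\int_\sigma^{+\infty}\alpha(u(t))\,dt\le C\alpha(u(\sigma))$, and Gronwall plus Lemma \ref{mod-exp} finish.

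The critical gap in your version is circularity in the ``slaving'' of the unstable mode. You write $a_+(t) = -e^{e_0 t}\int_t^{+\infty} e^{-e_0 s}\,O(\alpha(u(s))^2)\,ds$ and conclude $a_+(t)=o(\alpha(u(t)))$, hence $\alpha(u(t))\le Ce^{-e_0 t}$. But this integration requires knowing in advance that $\alpha(u(s))^2$ decays fast enough for the integral to be controlled by $e^{-e_0 t}$ (for instance $\alpha(u(s))\lesssim e^{-\gamma s}$ for some $\gamma$ close to $e_0$). You cannot obtain this from $\liminf_{t\to\infty}\alpha(u(t))=0$ alone: the modulation decomposition only applies on the set $\{\alpha(u(t))<\alpha_0\}$, and you have not shown that this set is a half-line $[t_0,\infty)$. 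Saying ``a standard trapping/bootstrap argument then shows the modulation regime persists'' hides exactly the hard part: persistence of smallness would itself need an a priori decay input for $a_+$, and the ODE $\dot a_+=e_0 a_+ + O(\alpha^2)$ has a genuinely unstable mode. The paper's virial estimate is precisely the substitute --- it is valid for all $t$ regardless of whether $\alpha$ is small, which is what breaks the circle.

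Two secondary issues: (i) the claim that ``a suitable refinement of the orthogonality conditions'' removes the linear modulation forcing from the $\mathcal{Y}_\pm$ directions is asserted but not obviously achievable --- you have seven modulation parameters ($\gamma,X,\beta$) and they are already spent enforcing \eqref{orthocond}--\eqref{orthocond1}, not orthogonality to $\mathcal{Y}_1$ and $\mathcal{Y}_2$; and (ii) the statement ``$\Phi(h)=O(\|h\|_{H^1}^3)$ after removing the $\beta Q$ component'' is not meaningful as written: $\Phi$ is a quadratic form, and what you actually get from $M[u]=M[Q]$, $E[u]=E[Q]$ together with Proposition \ref{positivity} is a lower bound $\Phi(h)\gtrsim\|h\|_{H^1}^2$ on the orthogonal complement, not a cubic upper bound. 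If you want a spectral-decomposition route, you would essentially be redoing the paper's Lemma \ref{lem:expo1}, but that lemma's hypothesis \eqref{eq:expo2} already assumes the exponential decay you are trying to prove.
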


\subsection{Compactness properties}\label{subs:compact}
The first step is to establish that the solution $u(t)$ to the \eqref{H} satisfying \eqref{below} is compact in $H^1(\R^5)$.
\begin{lemma}\label{compact}
	Let $u$ be a solution of \eqref{H} satisfying the assumptions of Proposition \ref{main-sub}.Then there exists a continuous function $x(t)$ such that
	\begin{equation}
		K \defeq \{u(x+x(t),t);\,\,\,t\in[0,+\infty)\}
	\end{equation}
	has a compact closure in $H^1(\R^5)$.
\end{lemma}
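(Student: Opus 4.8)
The plan is to follow the standard concentration–compactness / minimal element strategy (as in Duyckaerts–Roudenko for the $3$d cubic NLS, adapted to the nonlocal Hartree nonlinearity). First I would invoke the profile decomposition adapted to the $\dot H^{1/2}$-critical Hartree scaling, together with the perturbation (long-time stability) theory for \eqref{H}: for a solution $u$ with $\|u\|_{S(\dot H^{1/2};[0,+\infty))}=+\infty$ but satisfying the subthreshold gradient bound in \eqref{below}, one shows that the orbit cannot be split into two nontrivial asymptotically orthogonal pieces, because the mass-energy threshold \eqref{ME} forces all the energy into a single profile; if there were two profiles with positive energy, each would sit strictly below the threshold, hence (by the Gagliardo–Nirenberg inequality \eqref{H-GNC}, the coercivity relations, and the subthreshold scattering result of \cite{GW10}) each would scatter, and then a nonlinear superposition argument would make $u$ itself scatter, contradicting $\|u\|_{S(\dot H^{1/2})}=+\infty$.

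Concretely, the key steps, in order, are: (i) fix a sequence of times $t_n\to+\infty$ and apply the linear profile decomposition to the bounded (in $H^1$, hence in $\dot H^{1/2}$) sequence $u(\cdot,t_n)$; (ii) use the Pythagorean-type decompositions of mass, energy, and the kinetic/potential functionals together with \eqref{ME} to conclude there is exactly one profile $\psi^1$ carrying all of the mass and energy, with the remainder going to zero in $\dot H^{1/2}$ (and in fact, after extracting, in $H^1$, since $M[u]=M[Q]$ and $E[u]=E[Q]$ pin down the $\dot H^1$ norm up to the remainder); (iii) translate $\psi^1$ to the origin by a sequence $x(t_n)$, so that $u(\cdot+x(t_n),t_n)$ converges in $H^1$ along this subsequence; (iv) upgrade convergence along sequences to relative compactness of the full orbit $\{u(\cdot+x(t),t):t\ge 0\}$ by a contradiction argument: if some sequence escaped every compact set, reapply (i)–(iii) to get a contradiction with $\|u\|_{S(\dot H^{1/2})}=+\infty$; (v) finally, check that $x(t)$ can be chosen continuous, using local well-posedness and the continuity of the flow in $H^1$ (a routine covering/gluing argument on compact time intervals).

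The step I expect to be the main obstacle is (ii), the rigidity of the profile decomposition: one must show that a \emph{single} profile absorbs all the mass and energy and that the $\dot H^{1/2}$-small remainder is actually $H^1$-small along a subsequence. For the NLS this uses the sharp Gagliardo–Nirenberg inequality and the variational characterization of $Q$; here the nonlocal potential $Z_H$ and the convolution structure require the convolution-type Gagliardo–Nirenberg inequality \eqref{H-GNC}, the Pohozaev relations \eqref{eq:pohid}, and control of cross terms $\int (|x|^{-3}\ast (\psi^j \psi^k)) \psi^j\psi^k$ between distinct profiles via Hardy–Littlewood–Sobolev together with the asymptotic orthogonality of the profiles' space-time supports/scales — the nonlocality makes these cross terms more delicate than in the pure-power case, but they still vanish in the limit. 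Once rigidity is in hand, the scattering-of-subthreshold-pieces input from \cite{GW10} and the nonlinear perturbation lemma close the argument exactly as in \cite{DR10}.
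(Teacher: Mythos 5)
Your proposal follows the same concentration--compactness strategy the paper uses: profile decomposition applied to $u(\cdot,t_n)$, rigidity via the energy Pythagorean expansion (here the paper cites the expansion and scattering threshold results from \cite{AKAR1} rather than \cite{GW10}, but the content is the same) forcing a single nontrivial profile with vanishing $H^1$ remainder, extraction of a convergent subsequence after translation, and a continuous choice of $x(t)$. The only minor step you leave implicit that the paper carries out explicitly is verifying that the time parameters $t_n^1$ of the single surviving profile converge to a finite limit (otherwise small-data theory would force scattering), but this is standard and your outline is otherwise faithful to the argument.
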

\begin{proof}
	Take a sequence $0\leq t_n\rightarrow +\infty$; we want to show that for every sequence $t_n$ there exists a subsequence $x_n$ such that $u(x+x_n,t_n)$ has a converging subsequence in $H^1(\R^5)$. Applying the linear profile decomposition (\cite[Theorem 6.1]{AKAR1}) to $ u(x,t_n)$ (which is uniformly bounded in $H^1$ and non-scattering), we have
	\begin{equation}\label{crit.eledecomp}
	u_n\defeq u(x,t_n) = \sum_{j=1}^{M}e^{-it_n^j\Delta}\psi^j(x-x^j_n)+W^M_n(x),
	\end{equation}
	 where profiles $\psi^j\in H^1$, $W_n^M$ satisfies 
	 \begin{equation}\label{smallrem}
		\lim\limits_{M\rightarrow\infty}\left(\lim\limits_{n\rightarrow\infty}\|e^{it\Delta}W_n^M\|_{L_t^8L_x^{\frac{20}{7}}}\right)=0,
		\end{equation}
		and sequences $x_n^j$ and $t_n^j$ satisfy
		\begin{equation}\label{pairdivg}
		\lim\limits_{n\rightarrow\infty} |t_n^j-t_n^k|+|x_n^j-x_n^k|=+\infty,
		\end{equation}
		for $1\leq k\neq j\leq M$.
		 The key is to show that there is exactly one nonzero profile, i.e., $M=1$ with $W_n^1\rightarrow 0$ in $H^1$. If more than one $\psi^j \neq 0$, then by the energy Pythagorean expansion (\cite[Proposition 6.2]{AKAR1}) and for $s=0$ in \cite[(6.4)]{AKAR1} there exists an $\varepsilon>0$ such that for all $j$, we have
	\begin{align}\label{eq:ME}
	M[e^{it_n^j\Delta}\psi^j]E[e^{-it_n^j\Delta}\psi^j]&\leq M[Q]E[Q]-\varepsilon,\\\label{eq:MG}
	\|\psi^j\|_{L^2(\R^5)}\|\nabla\psi^j\|_{L^2(\R^5)}&\leq \|Q\|_{L^2(\R^5)}\|\nabla Q\|_{L^2(\R^5)} - \varepsilon.
	\end{align}
	Now recall that by  \cite[Theorem 1.1]{AKAR1}, if a solution $v(t)$ of \eqref{H} with initial data $v_0\in H^1$ satisfies 
\begin{align}\label{eq:MEv}
M[v_0] E[v_0] &< M[Q]E[Q],\\\label{MGv}
\left\| v_0 \right\|_{L^2(\R^5)} \left\| \nabla v_0 \right\|_{L^2(\R^5) }& <  \left\| Q \right\|_{L^2(\R^5)} \left\| \nabla Q \right\|_{L^2(\R^5)}, 
\end{align}
then $v(t)$ scatters in both time directions. We now invoke the existence of wave operators \cite[Lemma 5.4]{AKAR1} for the equation \eqref{H}, which says that there exist a function $v^j_0\in H^1$ such that $v^j(t)$ solving \eqref{H} satisfies
\[
 \|v^j(t_n^j) - e^{-it_n^j\Delta}\psi^j\|_{H^1(\R^5)} \rightarrow 0\quad\text{as}\quad n\rightarrow\infty.
\]
   One can think of $v^j(t)$ as the nonlinear evolution of each separate profile (``bump") and consider a linear sum of nonlinear evolutions of those ``bumps":
	$$
	\widetilde{u}_n(t,x)=\sum_{j=1}^{M}v^j(t-t_n^j,x-x^j_n).
	$$
	Now following similar arguments as in \cite[Theorem 6.3]{AKAR1}, we compare $u_n(t)$ (nonlinear evolution of the entire sum of the bumps) with $\widetilde{u}_n(t)$ (a sum of nonlinear evolutions of each bump) and one can show that only for large $n$, $u_n$ can be approximated by $\widetilde{u}_n$ (for large $M$ and positive times). Therefore, $u_n$ must also scatter (by \cite[Theorem 3.3]{AKAR1}) for positive time, which yields a contradiction, thereby, showing that there is exactly one nonzero profile, i.e., $\psi^1 \neq 0$ and $\psi^j =0$ for all $2\leq j\leq M$, such that
	\begin{align}\label{crit.decomp}
	u_n=e^{-it_n^1\Delta}\psi^1(x-x^1_n)+W^1_n
	\end{align}
	with
	\begin{align}\label{limrem}
	\lim\limits_{n\rightarrow +\infty}\|\widetilde{W}^1_n\|_{H^1}=0.
	\end{align}
	If not, then we are back to the situation, where there exists $\varepsilon>0$ such that for all $j$, we have
	\[
	M[e^{it_n^1\Delta}\psi^1]E[e^{-it_n^1\Delta}\psi^1]\leq M[Q]E[Q]-\varepsilon.	\]
	Therefore, we can conclude by the previous arguments that $u_n$ has finite scattering size.
	Now, we show that $t_n^1$ converges to some finite $t^*$. Since then $e^{-it_n^1\Delta}\psi^1\rightarrow e^{-it^*\Delta}\psi^1$, combining this with \eqref{limrem} and \eqref{crit.decomp} implies that $u_n$ converges in $H^1(\R^5)$. Thus, for each $n$, select $x(t_n^1)=x_n^1\in\R^5$ such that $u_n=u(\cdot-x_n^1,t_n^1)\in K$. Define $x(t)$ to be the continuous function that connects	$x(t_n^1)$ to $x(t_{n+1}^1)$ by a straight line in $\R^N$. Suppose the opposite happens, i.e., $|t_n^1|\rightarrow +\infty$, then we have two cases. First, consider $t_n^1\rightarrow -\infty$. Then applying nonlinear flow to \eqref{crit.decomp}, we obtain
	\begin{align*}
	\|e^{-it\Delta}u_n\|_{S(\dot{H}^{1/2};[0,+\infty))} \leq \|&e^{-i(t-t_n^1)\Delta}\psi^1(x-x^1_n)\|_{S(\dot{H}^{1/2};[0,+\infty))}+\|e^{-it\Delta}W^1_n\|_{S(\dot{H}^{1/2};[0,+\infty))}.
	\end{align*}
	Since $t_n^1 \rightarrow -\infty$, we get
	\begin{align*}
	 \|e^{-i(t-t_n^1)\Delta}\psi^1(x-x^1_n)\|_{S(\dot{H}^{1/2};[0,+\infty))}=\|e^{-it\Delta}\psi^1(x-x^1_n)\|_{S(\dot{H}^{1/2};[-t_n^1,+\infty))} \leq \frac{\delta}{2},
	\end{align*}
	and by Strichartz estimate along with \eqref{limrem}, we get
	$$
	\|e^{-it\Delta}W^1_n\|_{S(\dot{H}^{1/2})} \leq\frac{\delta}{2}
	$$
	for $\delta>0$ (given) for sufficiently large $n$, $M$. Thus, 
	$$
	\|e^{-it\Delta}u_n\|_{S(\dot{H}^{1/2};[0,+\infty))}\leq \delta.
	$$
	Choosing $\delta>0$ sufficiently small, we obtain $\|u_n\|_{S(\dot{H}^{1/2})}\leq 2\delta$ (by the small data theory \cite[Proposition 3.1]{AKAR1}), which is in contradiction with the fact that $u_n$ is non-scattering. With the similar argument, for $n$ large, assuming $t_n^1\rightarrow +\infty$, we obtain
	$$
	\|e^{-it\Delta}u_n\|_{S(\dot{H}^{1/2};(-\infty,0])}\leq \delta,
	$$
	which again yields a contradiction as above. Hence, $t_n^1$ must converge to some finite $t^*$. This finishes the proof of Lemma \ref{compact}.
\end{proof}

\begin{remark}\label{modify-x(t)} 
Let $u$ be a solution of \eqref{H} satisfying \eqref{below}. One may choose $x(t)$ to be continuous on $\R$ and can modify $x(t)$ so that $K$ remains pre-compact in $H^1$ and for all $t\in U_{\alpha_0}$, $x(t) = X(t)$ (see \cite[Corollary 6.3]{DR10}).
\end{remark}
Proof of the following lemma, which establishes the control over parameter $x(t)$, follows verbatim as in the nonlinear Schr\"odinger equation, see \cite[Lemma 6.4]{DR10} (or refer \cite{Thesis} for details). 

\begin{lemma}\label{x(t)-control}
	Let $u$ be as in Proposition \ref{main-sub}, $x(t)$ be continuous as in Remark \ref{modify-x(t)} and set $K$ is pre-compact in $H^1$. Then
	\begin{equation}\label{0-moment}
		P[u] = \Im\int \bar{u}\nabla u\,dx=0.
	\end{equation}
	Moreover,
	\begin{equation}\label{control}
		\frac{x(t)}{t}\rightarrow 0 \,\,\,\text{as}\,\,\, t\rightarrow +\infty.
	\end{equation}
\end{lemma}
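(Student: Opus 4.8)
The plan is to establish the two assertions separately, proving first that the momentum vanishes and then, using this, that the translation parameter is sublinear.

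\textbf{Part 1: $P[u]=0$.} Suppose for contradiction that $p\defeq P[u]\neq 0$. The Hartree nonlinearity $(|x|^{-3}\ast|u|^2)u$ depends on $u$ only through $|u|^2$ and a translation-invariant convolution, so \eqref{H} is invariant under the Galilean transformation $u\mapsto w(x,t)=e^{i(\frac12 v_0\cdot x-\frac14|v_0|^2 t)}u(x-v_0t,t)$. Taking $v_0=-2p/M[u]$, a direct computation (using that $|w|=|u(\cdot-v_0t)|$, so $Z_H$ is unchanged, and conservation of mass and momentum) gives $M[w]=M[u]=M[Q]$, $E[w]=E[u]-\frac{|p|^2}{2M[u]}$, and $\|\nabla w(0)\|_{L^2(\R^5)}^2=\|\nabla u_0\|_{L^2(\R^5)}^2-\frac{|p|^2}{M[u]}$. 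Hence $M[w]E[w]<M[Q]E[Q]$ and, since $\|\nabla u_0\|_{L^2(\R^5)}<\|\nabla Q\|_{L^2(\R^5)}$ by \eqref{below}, also $\|w(0)\|_{L^2(\R^5)}\|\nabla w(0)\|_{L^2(\R^5)}<\|Q\|_{L^2(\R^5)}\|\nabla Q\|_{L^2(\R^5)}$. By the scattering theorem \cite[Theorem 1.1]{AKAR1}, $w$ is global and scatters in $H^1$ in both time directions, so $w(t)=e^{it\Delta}\phi_\pm+o_{H^1}(1)$ as $t\to\pm\infty$; combined with the dispersive decay $\|e^{it\Delta}\phi\|_{L^3(\R^5)}\to0$ for $\phi\in H^1(\R^5)$ (approximate $\phi$ in $H^1$ by Schwartz functions, apply the dispersive estimate to the approximant and the embedding $H^1(\R^5)\hookrightarrow L^3(\R^5)$ to the error), this yields $\|w(t)\|_{L^3(\R^5)}\to0$ as $t\to+\infty$. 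Since $|w(x,t)|=|u(x+v_0t,t)|$ we get $\|u(t)\|_{L^3(\R^5)}=\|w(t)\|_{L^3(\R^5)}\to0$. But along any $t_n\to+\infty$ a subsequence of $u(\cdot+x(t_n),t_n)$ converges in $H^1$ to some $v_\infty$ with $M[v_\infty]=M[Q]>0$, hence $v_\infty\neq0$ and $\|u(t_n)\|_{L^3(\R^5)}=\|u(\cdot+x(t_n),t_n)\|_{L^3(\R^5)}\to\|v_\infty\|_{L^3(\R^5)}>0$ by precompactness of $K$, a contradiction. Therefore $P[u]=0$.

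\textbf{Part 2, Step (i): a crude linear bound $|x(t)|\le A(1+t)$.} Fix $\varepsilon\in(0,\sqrt{M[u]}/2)$. Precompactness of $K$ gives $R_\varepsilon>0$ with $\int_{|x-x(t)|\ge R_\varepsilon}(|u|^2+|\nabla u|^2)\,dx\le\varepsilon^2$ for all $t\ge0$, and $\sup_t\|u(t)\|_{H^1(\R^5)}=:C_*<\infty$. For a cutoff $\psi$ with $\psi\equiv1$ on $[0,1]$, $\psi\equiv0$ on $[2,\infty)$, $0\le\psi\le1$, set $\phi_{c,\rho}(x)=\psi(|x-c|/\rho)$; then (the nonlocal term being real contributes nothing) $\bigl|\tfrac{d}{dt}\int\phi_{c,\rho}|u|^2\bigr|\le\frac{C}{\rho}\|u\|_{L^2}\|\nabla u\|_{L^2}$. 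Taking $c=x(t_0)$, $\rho=10R_\varepsilon$, and noting $\int\phi_{x(t_0),\rho}|u(t_0)|^2\ge M[u]-\varepsilon^2$, one finds that for $|t-t_0|\le\tau_\varepsilon\sim R_\varepsilon\varepsilon^2$ the mass of $u(t)$ in $B(x(t_0),2\rho)$ stays $\ge M[u]-2\varepsilon^2>M[u]/2$; since the mass of $u(t)$ in $B(x(t),R_\varepsilon)$ is also $>M[u]/2$, these balls must meet, so $|x(t)-x(t_0)|\le21R_\varepsilon$. Iterating over $[0,T]$ yields $|x(T)|\le|x(0)|+21R_\varepsilon(T/\tau_\varepsilon+1)$, i.e. a linear bound $|x(t)|\le A(1+t)$ with $\varepsilon$ now fixed.

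\textbf{Part 2, Step (ii): $x(t)/t\to0$.} Now let $\varepsilon>0$ be arbitrary and $R_1=R_\varepsilon$. For $T>0$ put $R=R(T):=AT+A+R_1$, so $|x(s)|\le R-R_1$ for $s\in[0,T]$ by Step (i). Let $y_R(t)=\int\psi(|x|/R)\,x\,|u(x,t)|^2\,dx$; differentiating and using $P[u]=0$ gives $|y_R'(s)|\le C\int_{|x|\ge R}|u||\nabla u|\,dx\le C\varepsilon^2$ for $s\in[0,T]$, so $|y_R(T)-y_R(0)|\le C\varepsilon^2T$. Since $u_0\in H^1(\R^5)$, $|y_R(0)|\le\int_{|x|\le2R}|x||u_0|^2\le\sqrt R\,M[u]+2R\!\int_{|x|\ge\sqrt R}|u_0|^2=o(R)=o(T)$, hence $|y_R(0)|\le\varepsilon^2T$ for $T$ large. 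Writing $y_R(T)=x(T)\!\int\psi(|x|/R)|u(T)|^2+\int\psi(|x|/R)(x-x(T))|u(T)|^2$, with $\int\psi(|x|/R)|u(T)|^2\ge M[u]-\varepsilon^2$ and the remainder bounded by $R_1M[u]+3R\varepsilon^2$, we get $|x(T)|(M[u]-\varepsilon^2)\le C'\varepsilon^2T+C R_1$. Dividing by $T$ and letting $T\to\infty$, $\limsup_T|x(T)|/T\le C'\varepsilon^2/M[u]$; as $\varepsilon>0$ is arbitrary, $x(t)/t\to0$.

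\textbf{Main obstacle.} Part 1 is routine once the scattering result \cite[Theorem 1.1]{AKAR1} is available, but it is indispensable: without $P[u]=0$ the truncated first moment $y_R$ drifts at rate $\sim2|P[u]|$ and no control on $x(t)$ survives. The genuinely delicate point is the calibration of scales in Step (ii): the truncation radius $R$ must be large enough that on $[0,T]$ the tail $\{|x|\ge R\}$ sits inside the "good" region $\{|x-x(s)|\ge R_\varepsilon\}$ where precompactness forces $y_R'$ to be $O(\varepsilon^2)$, yet small enough (of order $T$) that $y_R(0)$ and the error in $y_R(T)\approx x(T)M[u]$ remain $o(T)$. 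This is precisely what the a priori linear bound of Step (i)—obtained from the fact that, by the mass transport estimate, the bulk of $|u(t)|^2$ cannot move faster than a fixed speed—makes possible, and it is the step that must be executed with care; everything else follows the scheme of \cite[Lemma 6.4]{DR10}.
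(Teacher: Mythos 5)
Your proof is correct and follows essentially the same route as the cited reference \cite[Lemma 6.4]{DR10}, which the paper defers to: the Galilean boost combined with the below-threshold scattering theorem to force $P[u]=0$, and then a truncated first-moment argument (preceded by the mass-transport estimate giving the a priori linear bound on $x(t)$) to conclude $x(t)/t\to 0$. The only blemish is a typo in Part 1 — $|w(x,t)|=|u(x-v_0t,t)|$, not $|u(x+v_0t,t)|$ — which does not affect the (translation-invariant) $L^3$ norms being compared.
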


\subsection{Proof of Proposition \ref{main-sub}}\label{subs:Prop6.1}
To prove Proposition \ref{main-sub}, we now require the following key pieces. In Section \ref{s-sec:expconv}, we use the localized virial argument that allows us to control the parameter $\alpha(u(t))$ defined in \eqref{alpha}. Then, in Section \ref{s-sec:meanconv}, we first prove that the parameter $\alpha(u(t))$ converges to $0$ in mean, followed by establishing the control on the variations of the parameter $x(t)$. 
\subsubsection{Exponential convergence}\label{s-sec:expconv}
We start with the first key piece and prove that $\alpha(u(t))$ is bounded using the local virial argument together with the modulation estimates from Section \ref{sec:modulat}.
\begin{lemma}\label{control1}
Let $u$ be a solution of \eqref{H} satisfying the assumptions of Proposition \ref{main-sub} and $x(t)$ be as in Remark \ref{modify-x(t)}. Then there exists a constant $c_{10}>0$ such that  if $0 \leq \sigma \leq \tau$ 
\begin{equation}\label{alpha-control}
\int_{\sigma}^{\tau} \alpha(u(t))\,dt \leq c_{10}  \Big[ 1+ \sup_{ t \in [\sigma, \tau]} |x(t)| \Big] \big[ \alpha(u(\sigma))+ \alpha(u(\tau)) \big]   .
\end{equation}
\end{lemma}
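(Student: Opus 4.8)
The plan is to run a localized virial (Morawetz-type) argument with the truncation scale chosen adaptively in terms of $\sup_{[\sigma,\tau]}|x(t)|$, exactly in the spirit of \cite[Section 6]{DR10}. Set $R \gg 1 + \sup_{t\in[\sigma,\tau]}|x(t)|$ and let $\phi_R(x) = R^2\phi(|x|/R)$ be the same truncation weight used in Section \ref{S:less}. Define the truncated virial quantity centered to follow the solution, $V_R(t) \defeq \int \phi_R(x - x(t))\,|u(x,t)|^2\,dx$, or rather its first derivative $Z_R(t) \defeq 2\,\Im\int \overline{u}\,\nabla u\cdot\nabla\phi_R(\cdot - x(t))\,dx$. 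First I would compute $Z_R'(t)$: it produces the main virial term $8\|\nabla u\|_{L^2}^2 - 6 Z_H(u) = -8\,\alpha(u(t))$ (using $E[u]=E[Q]$ and the Pohozaev relations \eqref{eq:pohid}), plus error terms $A_R(u(t))$ coming from the region $|x - x(t)| \gtrsim R$ where $\phi_R$ is not exactly quadratic, plus a term from the motion of $x(t)$, namely $-2\,\Im\int \overline{u}\,\nabla u \cdot (x'(t)\cdot\nabla)\nabla\phi_R$.

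The key point is that on $U_{\alpha_0}$ the compactness of $K$ (Lemma \ref{compact}) forces $u(t)$ to be concentrated near $x(t)$, so by uniform smallness of the tails on $K$ together with the modulation bound $\|g(t)\|_{H^1}\lesssim \alpha(u(t))$ from Lemma \ref{mod-para}, all the $A_R$ error terms are bounded by $o_R(1)\,\alpha(u(t))$, hence absorbed into the main term once $R$ is large. Meanwhile $x'(t) \lesssim \alpha(u(t))$ (Lemma \ref{mod-para}) and $|\nabla\phi_R|\lesssim R$, $|\nabla^2\phi_R|\lesssim 1$, so the motion term and $Z_R$ itself are both $\lesssim R^2\,\alpha(u(t))$ — this is where the factor $1 + \sup|x(t)|$ enters, since I must take $R \approx 1 + \sup_{[\sigma,\tau]}|x(t)|$ so that the support of $\phi_R(\cdot - x(t))$ really does contain the bulk of $u$ for all $t\in[\sigma,\tau]$. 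Outside $U_{\alpha_0}$, i.e. where $\alpha(u(t)) \geq \alpha_0$, one argues as in Lemma \ref{4blow-up-rad}: the raw bound $|A_R| \lesssim R^{-2} + R^{-12/5}\|\nabla u\|_{L^2}^{3/5}$ is $\leq 4\alpha(u(t))$ once $\alpha(u(t))$ is bounded below, so the concavity $Z_R'(t) \leq -4\,\alpha(u(t))$ holds on all of $[\sigma,\tau]$ for $R$ large enough.

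Then I would simply integrate: $4\int_\sigma^\tau \alpha(u(t))\,dt \leq -\int_\sigma^\tau Z_R'(t)\,dt = Z_R(\sigma) - Z_R(\tau) \leq |Z_R(\sigma)| + |Z_R(\tau)|$, and each endpoint term is bounded by $C R^2\,\alpha(u(\cdot)) \leq C(1 + \sup_{[\sigma,\tau]}|x(t)|)^2\,\alpha(u(\cdot))$ by the argument above — and in fact, refining the endpoint estimate using $\|g\|_{H^1}\lesssim\alpha$ and Hardy's inequality as in the proof of Lemma \ref{lem:4bup-exp}, one gets the linear power $1 + \sup|x(t)|$ rather than its square (the extra power of $R$ from $|\nabla\phi_R|\lesssim R$ is compensated by the $|x|^{-1}$ decay extracted via Hardy after pairing against $g$, which is $H^1$-small). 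That yields \eqref{alpha-control} with $c_{10}$ depending only on the constants above. The main obstacle I anticipate is the bookkeeping of the $A_R$ error terms and the $x'(t)$-term: one has to be careful that the implicit constants there do not depend on $R$, which requires genuinely using precompactness of $K$ (uniform decay of the tails of $u(x+x(t),t)$) rather than just boundedness in $H^1$, and one must handle the nonlocal Hartree term $Z_H$ in the virial identity, whose "error" piece involves the double integral $\iint \frac{x\cdot(x-y)}{|x-y|^5}|u(x)|^2|u(y)|^2$ truncated outside $|x-x(t)|,|y-x(t)|\gtrsim R$ — this is the analogue of the regions I,II split in Lemma \ref{4blow-up-rad} and must be re-done with the center $x(t)$, but the radial-Sobolev gains are replaced by the compactness-of-$K$ tail bounds.
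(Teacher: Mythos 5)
Your high-level plan (localized virial plus modulation on $U_{\alpha_0}$ plus compactness of $K$ to control $A_R$) matches the paper's strategy, and your two-case split for $A_R$ (small $\alpha$ via the decomposition $u=e^{i(t+\theta)}(Q+g)(\cdot-X(t))$ with $\|g\|_{H^1}\lesssim\alpha$; large $\alpha$ via precompactness of $K$) is exactly right. The sign you quote for the virial term is backwards — in this below-threshold case one gets $V_R''=8\alpha+A_R\geq 4\alpha>0$, convexity rather than concavity — but that does not affect the final $|V_R'(\tau)|+|V_R'(\sigma)|$ bound.

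The genuine gap is your choice of a solution-following weight $\phi_R(\cdot-x(t))$. The paper uses the fixed-center $\phi_R(x)=R^2\phi(|x|/R)$, so no derivative of the translation parameter ever enters; the factor $1+\sup_{[\sigma,\tau]}|x(t)|$ appears simply because one must take $R\geq R_5(1+|x(t)|)$ so that the fixed ball of radius $\sim R$ actually covers the region where $u(\cdot,t)$ (which is concentrated near $x(t)$) lives. In your centered version the weight covers the bulk of $u$ automatically for any large fixed $R$, so the factor $1+\sup|x(t)|$ has no natural origin — and more seriously, $Z_R'(t)$ picks up the motion term $-2\,\Im\!\int \bar u\,\nabla u\cdot\big(x'(t)\cdot\nabla\big)\nabla\phi_R(\cdot-x(t))\,dx$, which you bound by $|x'(t)|\lesssim\alpha(u(t))$ citing Lemma \ref{mod-para}. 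But Lemma \ref{mod-para} controls $X'(t)$, the modulation parameter, and $x(t)=X(t)$ only on the set $U_{\alpha_0}=\{\alpha(u(t))<\alpha_0\}$ (Remark \ref{modify-x(t)}). Outside $U_{\alpha_0}$, $x(t)$ is the continuous compactness parameter built by linear interpolation between lattice times in Lemma \ref{compact}, and there is no bound of the form $|x'(t)|\lesssim\alpha(u(t))$ available there; indeed boundedness of $x(t)$ and control of its variation (Lemma \ref{control3}) are things one still needs to prove, and Lemma \ref{control1} is an input to that argument. So as written your motion term is uncontrolled on $[\sigma,\tau]\setminus U_{\alpha_0}$, and the proof does not close. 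The fix is to follow the paper and use the static weight $\phi_R(x)$ with $R\sim 1+\sup_{[\sigma,\tau]}|x(t)|$, which eliminates the $x'(t)$ term entirely; the change of variables $y=x-X(t)$ then enters only when estimating $A_R$ and $V_R'$ via the modulation decomposition, at which point the correct linear power $|V_R'(t)|\leq c\,R\,\alpha(u(t))$ falls out directly (there is no $R^2$ to begin with, since $|\nabla\phi(x/R)|\lesssim 1$ and $\Im(\bar Q\nabla Q)=0$ kills the leading order).
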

\begin{proof}
We again consider the localized variance defined by \eqref{eq:locvar} in section \ref{subs:radialsoln}. Then
\begin{equation}\label{eq:locvar-1Der}
V_R^{\prime}(t) =  2R\,\Im\int \bar{u}\,\nabla u \cdot \nabla\phi\left(\frac{x}{R}\right)\,dx
\end{equation}
and
\begin{equation}\label{eq:locvar-2Der}
	V_{R}^{\prime\prime}(t) =  8\,\alpha(u(t)) + A_{R}(u(t)),
\end{equation}
where $A_R$ is defined by 
\begin{align}\label{eq:locvar-AR}\notag
A_R(u(t)) \defeq  &4 \sum_{j \neq k }\int \frac{\partial^2\phi}{\partial x_j \partial x_k} \left(\frac{x}{R} \right) \frac{ \partial u }{\partial x_j} \frac{\partial \bar{u} }{\partial x_k} + 4 \sum_{j} \int \left( \frac{\partial^2 \phi}{\partial x_j^2 } \left(\frac{x}{R} \right) -2\right) \left| \partial_{x_j} u \right|^2\\
&- \frac{1}{R^2} \int |u|^2 \Delta^2\phi \left(\frac{x}{R} \right)+6\int\int\left(1-\frac{1}{2}\,\frac{R}{|x|}\phi^\prime\left(\frac{|x|}{R}\right)\right)\frac{x(x-y)|u(x)|^2\,|u(y)|^2}{|x-y|^{5}}\\\notag
	&-6\int\int\left(1-\frac{1}{2}\,\frac{R}{|y|}\phi^\prime\left(\frac{|y|}{R}\right)\right)\frac{y(x-y)|u(x)|^2\,|u(y)|^2}{|x-y|^{5}}.
\end{align}
We start with proving that for $\varepsilon >0,$ there exists a constant $R_\varepsilon >0$ such that
\begin{equation}\label{bound-AR}
    \forall t \geq 0 , \quad R \geq R_{\varepsilon} ( 1+ |x(t)|) \Longrightarrow \left| A_R(u(t))\right| \leq \varepsilon \,\alpha(u(t)). 
\end{equation}
First, consider the case when $\alpha(u(t))$ is small, i.e., $\alpha(u(t))<\alpha_0$, where $\alpha_0>0$ as in section \ref{sec:modulat}. Let $g=\alpha Q + h$ as in the proof of Lemma \eqref{4blow-up-rad}, then invoking Lemma \ref{mod-para} and decomposition \eqref{decomp}, we have
\begin{equation}\label{eq:Qg-decomp}
u(x,t) = e^{i(t+\theta(t))}\big(Q (x-X(t))+  g(x-X(t),t)\big)\,\,\, \text{with}\,\,\,\| g(t) \|_{H^1(\R^5)} \lesssim \alpha(u(t)).
\end{equation}
Here, we assume that $t\in D_{\alpha_1}$ with $\alpha_1\in (0,\alpha_0)$. Applying the change of variables $\tilde{x}=x-X(t)$ in \eqref{eq:locvar-AR} and using the fact that $A_R\big(e^{i{\theta_0}+it}Q(\cdot+x_0)\big) =0$ for all $R$ and $t$ with $\theta_0$ and $x_0$ are fixed, we get
\begin{align*}
|A_R(u(t))| &= \big|A_R(u(t)) - A_R(e^{i(t+\theta(t))}Q (x-X(t)))\big|\\
\leq c_7&\,\int_{|\tilde{x}+X(t)|\geq R} |\nabla g(\tilde{x})|^2 + |\nabla Q(\tilde{x})\cdot \nabla g(\tilde{x})| + \frac{1}{R^2}\big(|g(\tilde{x})|^2 + Q(\tilde{x})|g(\tilde{x})|\big)\\
+\,c_7&\iint\limits_{|\tilde{x}+X(t)|\geq R \,\,\cup\,\, |\tilde{y}+Y(t)|\geq R}\frac{|Q(\tilde{x})|^2|g(\tilde{y})|^2 + |g(\tilde{x})|^2|Q(\tilde{y})|^2 + |g(\tilde{x})|^2|g(\tilde{y})|^2}{|\tilde{x}-\tilde{y}|^3}\\
+\,c_7&\iint\limits_{|\tilde{x}+X(t)|\geq R \,\,\cup\,\, |\tilde{y}+Y(t)|\geq R}\frac{|Q(\tilde{x})|^2Q(\tilde{y})|g(\tilde{y})| + |g(\tilde{x})|^2Q(\tilde{y})|g(\tilde{y})| + Q(\tilde{x})|g(\tilde{x})||g(\tilde{y})|^2}{|\tilde{x}-\tilde{y}|^3}\\
\leq c_8&\,\,\Big(\|\nabla g\|^2_{L^2(\R^5)} + e^{-c|\tilde{x}|}\Big(\|\nabla g\|_{L^2(\R^5)} + \|\nabla g\|^2_{L^2(\R^5)} + \|\nabla g\|^3_{L^2(\R^5)}\Big) + \|\nabla g\|^4_{L^2(\R^5)}\Big).
\end{align*}
Now using $\| g \|_{H^1} \lesssim \alpha(u(t)) $ yields that for any $R \geq R_3+ |X(t)|$, we have
\begin{align*}
      \left| A_R(u(t)) \right|  &\leq c_9 \Big( (\alpha(u(t)))^2+ e^{-R_3}\Big(\alpha(u(t)) + (\alpha(u(t)))^2 + (\alpha(u(t)))^3\Big) + (\alpha(u(t)))^4\Big)\\ 
    &=c_9 \Big( \alpha(u(t))+ e^{-R_3}\Big(1 + \alpha(u(t)) + (\alpha(u(t)))^2\Big) + (\alpha(u(t)))^3\Big)\alpha(u(t)) \\ 
    &\leq \varepsilon \alpha(u(t)),
\end{align*}
provided $R_3>0$ is such that $c_9e^{-R_3}\leq \frac{\varepsilon}{2} $ and $\alpha_1$ is such that $c_9\big(\alpha_1 + e^{-R_3}\big(\alpha_1 + \alpha_1^2\big) +\alpha_1^3\big) \leq \frac \varepsilon 2$. Since $\alpha_1\in (0,\alpha_0)$ and $x(t)=X(t)$ for all $t \in U_{\alpha_0}$ (by Remark \ref{modify-x(t)}), we obtain \eqref{bound-AR} for $\alpha(u(t)) < \alpha_1$.
 
 For the case $\alpha(u(t))\geq \alpha_1$, by \eqref{eq:locvar-AR}, we have 
\begin{align*}
    \left| A_R(u(t))\right| \lesssim &\int_{|x-x(t)|\geq R-|x(t)|} | \nabla u (t)|^2 + |u(t)|^2 dx\\
     +6&\iint\limits_{\Omega}\left[\left(1-\frac{1}{2}\,\frac{R}{|x|}\phi^\prime\left(\frac{|x|}{R}\right)\right)x-\left(1-\frac{1}{2}\,\frac{R}{|y|}\phi^\prime\left(\frac{|y|}{R}\right)\right)y\right]\frac{(x-y)|u(x)|^2\,|u(y)|^2}{|x-y|^{5}},
\end{align*}
where $\Omega = \{|x-x(t)|\geq R-x(t) \,\,\cup\,\, |y-y(t)|\geq R-y(t)\}.$
Note that in the region $\Omega$, we have
\begin{equation}\label{omReg1}
	\left|\left(1-\frac{R}{2|x|}\phi'\left(\frac{|x|}{R}\right)\right)x-\left(1-\frac{R}{2|y|}\phi'\left(\frac{|y|}{R}\right)\right)y\right|\lesssim |x-y|
	\end{equation}
	Thus, by H\"older's, Hardy-Littlewood-Sobolev and Sobolev inequalities, we get
	\begin{equation}\label{omReg1est}
	\iint\limits_{}\frac{\chi_{|y|>R}|u(y)|^2}{|x-y|^3}\chi_{|x|>R}|u(x)|^2\,dx\,dy\lesssim\|u\|^{4}_{L^{\frac{20}{7}}(\R^5)} \lesssim \|\nabla u\|^{4}_{L^2(\R^5)}.
	\end{equation}
	 Now by the compactness of $K$ (see Remark \ref{modify-x(t)}), there exists $R_4>0$ such that for $R \geq |x(t)|+R_4$ and $\alpha(u(t))\geq \alpha_1$ 
\begin{equation}\label{alpha-big}
     \left| A_R(u(t)) \right| \leq  \varepsilon \alpha_1 \leq \varepsilon \alpha(u(t)).
\end{equation}
This completes the proof of \eqref{bound-AR}. 

To conclude the proof of Lemma \ref{control1}, we now invoke \eqref{bound-AR} along with the \eqref{eq:locvar-2Der} and get that there exists $R_5>0$ such that for $R \geq R_5(1+|x(t)|)$, we have
\begin{align*}
     \left| V_R^{\prime\prime}(t) \right| \geq 4 \alpha(u(t)).
\end{align*}
Let $R=R_5(1+\sup_{\sigma \leq t \leq \tau } |x(t)|).$ Then
\begin{align}
\label{alpha-bdd}
    2 \int_{\sigma}^{\tau} \alpha(u(t)) \,dt \leq \int_{\sigma}^{\tau} V^{''}_R(t) \,  dt \leq V^{'}_R(\tau)-V^{'}_R(\sigma). 
\end{align}
Again, considering two cases: for $\alpha(u(t))<\alpha_0,$ we again use the decomposition \eqref{eq:Qg-decomp} along with change of the variable $y=x-X(t)$ and write
\begin{align*}
    V^{\prime}_R(t)=&\,\,2 R \Im \left(\int \bar{g}(y)\; \nabla \phi\left(\frac{y+X(t)}{R}\right) \cdot \nabla Q(y)dy +  \int Q(y)  \nabla\phi \left( \frac{y+X(t)}{R} \right) \cdot \nabla g(y) \, dy\right) \\ 
   & + 2R \Im \int \bar{g}(y) \nabla\phi \left( \frac{y+X(t)}{R} \right) \cdot  \nabla g(y) dy ,
\end{align*}
which yields 
\[ 
 \big| V^{'}_R(t) \big| \lesssim R\big(e^{-|y|}\alpha(u(t))+\big(\alpha(u(t))\big)^2\big) \leq c_{10}R \,\alpha(u(t)). 
 \]
This inequality is also valid for $\delta(t) \geq \delta_0,$ because we can always choose a bigger $\alpha(u(t))$ to satisfy the estimate (see \eqref{alpha-big}). Going back to \eqref{alpha-bdd}, we have 
\begin{align*}
\int_{\sigma}^{\tau} \alpha(u(t))\,dt &\leq c_{10}\, R \big(\alpha(u(\sigma)) + \alpha(u(\tau))\big) \\ &\leq c_{10} \, R_5\left(1+\sup_{\sigma \leq t \leq \tau } |x(t)| \right) \big[\alpha(u(\sigma)) + \alpha(u(\tau))\big].
\end{align*}
This concludes the proof of Lemma \ref{control1}.
\end{proof}
\subsubsection{Convergence in mean}\label{s-sec:meanconv}
The following convergence result is the key to proving that the parameter $x(t)$ is bounded, which is the essential ingredient in proving Proposition \ref{main-sub}.
\begin{lemma}\label{control2}
Suppose $u(t)$ is a solution of \eqref{H} satisfying assumptions of  Proposition \ref{main-sub}. Then 
\begin{equation}
    \label{alpha-conv}
        \lim_{T \rightarrow + \infty} \frac 1T \int_{0}^T \alpha(u(t))\, dt=0.
    \end{equation}
\end{lemma}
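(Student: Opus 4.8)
The strategy is the classical compactness-plus-virial argument from Duyckaerts--Roudenko, adapted to the nonlocal setting. Suppose \eqref{alpha-conv} fails. Then there exist $\varepsilon_0>0$ and a sequence $T_n\rightarrow+\infty$ with $\frac{1}{T_n}\int_0^{T_n}\alpha(u(t))\,dt\geq\varepsilon_0$. The idea is to feed this lower bound into the estimate \eqref{alpha-control} of Lemma \ref{control1} together with the sublinear growth \eqref{control} of $x(t)$ from Lemma \ref{x(t)-control} to derive a contradiction. Concretely, one wants to show that the time-averaged lower bound on $\alpha$ forces $\alpha(u(t))$ to stay bounded \emph{below} on a set of times of positive density, which is incompatible with the compactness of $K$ unless $u(t)$ approaches $Q$ — but then $\alpha(u(t))\rightarrow0$ contradicting the averaged lower bound.

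\textbf{Key steps, in order.} First I would record the basic consequences of compactness: by Lemma \ref{compact}, $K=\{u(\cdot+x(t),t)\}$ is precompact in $H^1$, so for any $\eta>0$ there is $\rho(\eta)>0$ with $\int_{|x-x(t)|\geq\rho(\eta)}(|\nabla u|^2+|u|^2)\,dx\leq\eta$ uniformly in $t\geq0$; also $\alpha(u(t))$ is bounded on $[0,+\infty)$. Second, I would choose $\sigma=0$ and $\tau=T_n$ in \eqref{alpha-control}: since $\alpha(u(0))$ is a fixed constant and $\alpha(u(T_n))$ is bounded, this gives
\[
\varepsilon_0 T_n\leq\int_0^{T_n}\alpha(u(t))\,dt\leq c_{10}\Big(1+\sup_{0\leq t\leq T_n}|x(t)|\Big)\big(\alpha(u(0))+\alpha(u(T_n))\big)\leq C\Big(1+\sup_{0\leq t\leq T_n}|x(t)|\Big).
\]
Third, by \eqref{control}, $|x(t)|/t\rightarrow0$, hence $\sup_{0\leq t\leq T_n}|x(t)|=o(T_n)$ as $n\rightarrow\infty$ (a short argument: split $[0,T_n]$ at a large fixed time and use continuity of $x$ on the compact initial piece). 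Combining, $\varepsilon_0 T_n\leq C(1+o(T_n))$, which is impossible for $n$ large. This is the whole argument.

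\textbf{Main obstacle.} The delicate point is not the final contradiction but making precise that $\sup_{0\leq t\leq T_n}|x(t)|=o(T_n)$ follows from \eqref{control}. One must rule out that $|x(t)|$ has large oscillations on $[0,T_n]$ that are not detected by the pointwise ratio $|x(t)|/t$; since $x(t)$ is continuous (Remark \ref{modify-x(t)}) and \eqref{control} says $|x(t)|\leq\delta(t)\,t$ with $\delta(t)\rightarrow0$, for any $\delta>0$ there is $t_\delta$ with $|x(t)|\leq\delta t\leq\delta T_n$ for $t_\delta\leq t\leq T_n$, while on $[0,t_\delta]$ the continuous function $x$ is bounded by a constant $M_\delta$; hence $\sup_{[0,T_n]}|x(t)|\leq M_\delta+\delta T_n$, giving $\limsup_n\sup_{[0,T_n]}|x(t)|/T_n\leq\delta$ for every $\delta>0$, i.e.\ $o(T_n)$. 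Plugging this into the displayed chain of inequalities yields $\varepsilon_0\leq\limsup_n\frac{C(1+M_\delta+\delta T_n)}{T_n}=C\delta$ for all $\delta>0$, forcing $\varepsilon_0=0$, a contradiction. This completes the proof.
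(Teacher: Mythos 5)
Your proof is correct, and it follows a genuinely different route from the paper's. You derive the mean-zero convergence by plugging $\sigma=0$, $\tau=T$ directly into Lemma \ref{control1}, then combine the bound $\alpha(u(\sigma))+\alpha(u(\tau))\leq 2\|\nabla Q\|_{L^2}^2$ (which follows because the assumption $\|\nabla u_0\|_{L^2}<\|\nabla Q\|_{L^2}$ persists for all time, so $\alpha(u(t))=\|\nabla Q\|_{L^2}^2-\|\nabla u(t)\|_{L^2}^2$ is uniformly bounded) with the sublinear growth $\sup_{[0,T]}|x|=o(T)$ deduced from \eqref{control}. This gives $\int_0^T\alpha(u(t))\,dt\leq C(1+o(T))$ outright, so the contradiction framing is not even essential. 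The paper, by contrast, does not invoke Lemma \ref{control1} here at all: it returns to the localized virial identity $V_R''=8\alpha+A_R$, chooses a cutoff radius $R=\varepsilon T+R_6(\varepsilon)+1$ that depends on both $\varepsilon$ and $T$ to force $|x-x(t)|\gtrsim R_6(\varepsilon)$ on the support of $A_R$ (so that compactness of $K$ gives $|A_R|\lesssim\varepsilon$), and then integrates $V_R''$ using the crude bound $|V_R'|\lesssim R$. Your version is shorter and leans on already-established estimates; the paper's is more self-contained at this spot and mirrors the original Duyckaerts--Roudenko presentation. Both are valid, and there is no circularity in your use of Lemma \ref{control1} since it precedes Lemma \ref{control2} in the logical order. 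One small refinement: boundedness of $\alpha(u(t))$ follows already from conservation and the gradient trapping, without needing compactness of $K$, though either justification works.
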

\begin{proof}
We start with recalling the localized variance and its second derivative from \eqref{eq:locvar-2Der}
\[
V_{R}^{\prime\prime}(t) =  8\,\alpha(u(t)) + A_{R}(u(t)),
\]
where $A_R$ is defined in \eqref{eq:locvar-AR}. Observe that by the definition of $\phi_R$, we have that $A_R(u(t)) = 0$ for $|x|\leq R$. Therefore,
\begin{align}\label{AR-bound4x>R}\notag
  \big| A_R(u(t)) \big| &\lesssim \int_{|x|\geq R} \left| \nabla u \right|^2+ \frac{1}{R^2} |u|^2\\
  +6&\iint\limits_{\Omega}\left[\left(1-\frac{1}{2}\,\frac{R}{|x|}\phi^\prime\left(\frac{|x|}{R}\right)\right)x-\left(1-\frac{1}{2}\,\frac{R}{|y|}\phi^\prime\left(\frac{|y|}{R}\right)\right)y\right]\frac{(x-y)|u(x)|^2\,|u(y)|^2}{|x-y|^{5}},
\end{align}
where $\Omega = \{|x|\geq R \,\,\cup\,\, |y|\geq R\}.$
We now appeal to \eqref{control} and choose $T_0(\varepsilon)>0$ with $\varepsilon > 0$ fixed such that for all $t \geq T_0(\varepsilon)$
\[ 
|x(t)| \leq \varepsilon t .
\]
For $T\geq T_0(\varepsilon),$ select $R=\varepsilon T+ R_6(\varepsilon)+1.$
 This selection of $R$ implies $R_6(\varepsilon)+\varepsilon T \leq R,$ which then gives $R\leq |x| = |x-x(t)|+|x(t)| \leq |x-x(t)|+\varepsilon T \Longrightarrow |x-x(t)| \geq R-\varepsilon T \geq R_6(\varepsilon)$ (analogously, $|y-y(t)|\geq R_7(\varepsilon)$), thus 
\begin{align}\label{AR-bound4x>R-fin}\notag
    \big| A_R(u(t)) \big| &\lesssim  \int_{|x-x(t)|\geq R_8(\varepsilon)} \left| \nabla u \right|^2+ \frac{1}{R^2} |u|^2\\\notag
  +6&\iint\limits_{\Omega(t)}\left[\left(1-\frac{1}{2}\,\frac{R}{|x|}\phi^\prime\left(\frac{|x|}{R}\right)\right)x-\left(1-\frac{1}{2}\,\frac{R}{|y|}\phi^\prime\left(\frac{|y|}{R}\right)\right)y\right]\frac{(x-y)|u(x)|^2\,|u(y)|^2}{|x-y|^{5}}\\
 & \lesssim \varepsilon,
\end{align}
where $\Omega(t) = \{|x-x(t)|\geq R_8(\varepsilon) \,\,\cup\,\, |y-y(t)|\geq R_8(\varepsilon)\}$ and $R_8=\max\{R_6,R_7\}$. The last inequality follows from the compactness of $K$, Lemma \ref{compact} and Remark \ref{modify-x(t)}.

Now recalling the first derivative of localized variance from \eqref{eq:locvar-1Der} and the fact that $\big|V^{'}_R(t)\big|\lesssim R$, we have
\[  \int_{T_0(\varepsilon)}^T V^{''}_{R}(t)\, dt \leq \big| V^{'}_{R}(T) \big|+\big| V^{'}_{R}(T_0(\varepsilon)) \big| \lesssim R .
\]
On the other hand, using the expression of second derivative of localized variance \eqref{eq:locvar-2Der} along with \eqref{AR-bound4x>R} and \eqref{AR-bound4x>R-fin}, we have
\begin{equation*}
    \int_{T_0(\varepsilon)}^T \alpha(u(t))\, dt \lesssim R+\varepsilon(T-T_0(\varepsilon)) \lesssim  \varepsilon T+R_8(\varepsilon)+1,
\end{equation*}
where the (hidden) constant is independent of $T$ and $\varepsilon.$ Finally, we compute
\begin{align*}
    \frac 1T \int_{0}^T \alpha(u(t)) \,dt &= \frac 1T \int_{0}^{T_0(\varepsilon)} \alpha(u(t)) \, dt + \int_{T_0(\varepsilon)}^T \alpha(u(t))\, dt\\
    &\lesssim \frac 1T \int_{0}^{T_0(\varepsilon)} \alpha(u(t)) \, dt + \frac 1T (R_8(\varepsilon)+1)+ \varepsilon.
\end{align*}
Letting $T \rightarrow + \infty,$ we obtain 
\[
\limsup_{T \rightarrow + \infty} \frac 1T \int_{0}^T \alpha(u(t))\, dt \lesssim \varepsilon.
\]
Now taking $\varepsilon \rightarrow 0$ (since $\varepsilon$ is arbitrary), we obtain \eqref{alpha-conv}, as desired.
\end{proof}

The following lemma provides the control on the variations of translation parameter. We remark that the proof is similar to the one in \cite[Lemma 6.8]{DR10}.
\begin{lemma}\label{control3}
There exists a constant $c_{11}>0$ such that 
\begin{equation}\label{x-var-control}
\forall \sigma,\tau>0\quad \text{with} \quad \sigma+1 \leq \tau, \quad |x(\tau)-x(\sigma)| \leq c_{11} \int_{\sigma}^{\tau} \alpha(u(t))\, dt  .
\end{equation}
\end{lemma}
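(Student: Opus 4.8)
The plan is to prove \eqref{x-var-control} following \cite[Lemma 6.8]{DR10}. First I would reduce to short time intervals: for any $\tau\ge\sigma+1$ one can split $[\sigma,\tau]$ into finitely many consecutive subintervals each of length between $1$ and $2$, and since both sides of \eqref{x-var-control} are additive along such a decomposition (the left side by the triangle inequality), it suffices to prove the estimate when $1\le\tau-\sigma\le2$. So fix such $\sigma,\tau$, let $\alpha_0>0$ be the threshold from Lemma \ref{modulation}, and distinguish two cases. In the first case, $\alpha(u(t))<\alpha_0$ for all $t\in[\sigma,\tau]$: then by Remark \ref{modify-x(t)} we have $x(t)=X(t)$ on $[\sigma,\tau]$, and by Lemma \ref{mod-para} the modulation parameter $X$ is $C^1$ there with $|X'(t)|\lesssim\alpha(u(t))$, so $|x(\tau)-x(\sigma)|\le\int_\sigma^\tau|X'(t)|\,dt\lesssim\int_\sigma^\tau\alpha(u(t))\,dt$, which is the claim.

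In the second case, $\alpha(u(t_0))\ge\alpha_0$ for some $t_0\in[\sigma,\tau]$, and I would show that both sides of \eqref{x-var-control} are comparable to a fixed positive constant. For the right-hand side, observe that $\|\nabla u(t)\|_{L^2}^2=2E[u]+\tfrac12 Z_H(u(t))$ and that $\frac{d}{dt}Z_H(u(t))=4\int\nabla\big(|x|^{-3}\ast|u|^2\big)\cdot\Im(\bar u\,\nabla u)\,dx$ is bounded uniformly in $t$ (by the Hardy--Littlewood--Sobolev inequality together with the uniform $H^1$-bound on $u$, which holds because \eqref{below} forces $\|\nabla u(t)\|_{L^2}\le\|\nabla Q\|_{L^2}$ for all $t$). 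Hence $t\mapsto\alpha(u(t))$ is Lipschitz with a universal constant $L$, so $\alpha(u(t))\ge\alpha_0/2$ whenever $|t-t_0|\le\alpha_0/(2L)$; since $t_0\in[\sigma,\tau]$ and $\tau-\sigma\ge1$, the interval $[\sigma,\tau]$ contains a subinterval of fixed length $\ell>0$ on which $\alpha(u(t))\ge\alpha_0/2$, and therefore $\int_\sigma^\tau\alpha(u(t))\,dt\ge\tfrac{\alpha_0}{2}\ell=:\delta_1>0$.

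For the left-hand side in the second case, pre-compactness of $K$ in $H^1$ (Lemma \ref{compact}) together with continuous dependence of the flow of \eqref{H} on the initial data yields a universal constant $C_0$ with $|x(t)-x(s)|\le C_0$ whenever $|t-s|\le1$ and $t,s$ lie in the interval of existence: if this failed, passing to limits along the compact orbit $\{u(\cdot+x(t),t)\}$ would force a nonzero $H^1$ function (the evolution, over a bounded time, of a limit profile, which is nonzero since $E[u]=E[Q]>0$) to coincide with a weak $L^2$-limit of spatial translates escaping to infinity, a contradiction. Splitting $[\sigma,\tau]$ at its midpoint then gives $|x(\tau)-x(\sigma)|\le2C_0\le(2C_0/\delta_1)\int_\sigma^\tau\alpha(u(t))\,dt$. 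Taking $c_{11}$ to be the larger of the constants produced in the two cases and summing over the decomposition into unit-scale subintervals completes the proof.

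The main obstacle is precisely the uniform displacement bound $|x(t)-x(s)|\le C_0$ for $|t-s|\le1$ used in the second case: this is where pre-compactness of $K$ and the (non-canonical) construction of $x(t)$ off the modulation region $U_{\alpha_0}$ enter in an essential way, and one must verify that this choice of $x(t)$ is compatible with the bound — for instance by arranging $x(t)$ to be locally Lipschitz on $U_{\alpha_0}^{c}$ with a controlled constant, as in \cite[Corollary 6.3]{DR10}. Everything else is routine and parallels \cite[Lemma 6.8]{DR10}.
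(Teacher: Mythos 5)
Your overall strategy --- reduce to unit-scale intervals, split into a ``$\alpha$ small'' case handled by the modulation estimate $|X'(t)|\lesssim\alpha(u(t))$, and a ``$\alpha$ large'' case handled by showing both sides are bounded away from zero --- is the right shape and matches the structure of \cite[Lemma 6.8]{DR10}. Your case 1 is correct. The difficulty is in case 2, where you invoke a quantitative Lipschitz bound on $t\mapsto\alpha(u(t))$, and this step does not hold.

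Specifically, you claim that $\frac{d}{dt}Z_H(u(t)) = 4\int\nabla\big(|x|^{-3}\ast|u|^2\big)\cdot\Im(\bar u\,\nabla u)\,dx$ is bounded uniformly in $t$ ``by Hardy--Littlewood--Sobolev together with the uniform $H^1$-bound''. Rewriting via $\nabla(|x|^{-3}\ast|u|^2) = |x|^{-3}\ast\nabla(|u|^2)$, the integral is $8\iint |x-y|^{-3}\,\Re(\bar u\nabla u)(y)\cdot\Im(\bar u\nabla u)(x)\,dx\,dy$, and HLS in $\R^5$ with exponent $3$ requires pairing exponents $p,q$ with $\tfrac1p+\tfrac1q=\tfrac{7}{5}$. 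But $\bar u\nabla u$ lies only in $L^s$ for $s\in[1,5/4]$ when $u$ is merely $H^1$-bounded, i.e.\ $\tfrac1p,\tfrac1q\in[\tfrac45,1]$, so $\tfrac1p+\tfrac1q\ge\tfrac85>\tfrac75$ and HLS does not close. This is not a technicality: the Hartree nonlinearity here is $\dot H^{1/2}$-critical, and the scaling $u_\lambda(x,t)=\lambda^2u(\lambda x,\lambda^2t)$ shows that $\partial_t\|\nabla u_\lambda\|_{L^2}^2$ scales like $\lambda^3$ while $\|u_\lambda\|_{H^1}^4$ scales like $\lambda^2$ as $\lambda\to\infty$, so no bound of the form $|\partial_t\alpha(u)|\le L(\|u\|_{H^1})$ can exist. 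Thus the claim that $\alpha$ stays above $\alpha_0/2$ on a subinterval of fixed length is unjustified, and the lower bound $\int_\sigma^\tau\alpha(u)\,dt\ge\delta_1$ does not follow from your argument. The paper instead establishes, by a compactness/contradiction argument using the precompactness of $K$ and the stability of the flow, a genuinely qualitative dichotomy (equation \eqref{alpha-inf-sup}): on any length-$2$ window either $\alpha$ stays below $\alpha_0$ or it stays above a fixed $\alpha_2>0$. That dichotomy gives the lower bound on the integral in case 2 with no Lipschitz control, and is the step you would need to substitute for your Lipschitz claim. Your bound $|x(t)-x(s)|\le C_0$ for $|t-s|\le1$ in case 2 is, by contrast, a correct consequence of precompactness of $K$.
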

\begin{proof}
Let $\alpha_0>0$ be as in Lemma \ref{modulation}. We start with proving that there exists $\alpha_2>0$ such that for all $\tau\geq 0$,
\begin{equation}\label{alpha-inf-sup}
     \inf_{t\in[\tau,\tau+2]} \alpha(u(t)) \geq \alpha_2 \qquad \text{ or } \qquad \sup_{t\in[\tau,\tau+2]} \alpha(u(t)) < \alpha_0.
\end{equation}
If not, there exist $t_n,t_n'\geq 0$ such that 
\begin{equation}\label{contrad-inf-sup}
    \alpha(u(t_n)) \xrightarrow[n \rightarrow +\infty]{} 0 ,\qquad \alpha(u(t_n'))\geq \alpha_0, \qquad |t_n-t_n'|\leq 2.
\end{equation}
If necessary, we may assume (extracting a subsequence)
\begin{equation}
\label{def-tau}
    \lim_{n \rightarrow +\infty} t_n-t_n'=\tau \in [-2,2].
\end{equation}
By the compactness of $K,$ we have  
\[
u(t_n', \cdot + x(t_n'))  \xrightarrow[n \longrightarrow + \infty ]{} v_0  \in H^1(\R^5).
\]
Note that if $t'_n$ goes to $+ \infty,$ then $\left| x(t_n')\right| $ converges (after extraction) to a limit $x_0\in \R^5$ by Lemma \ref{x(t)-control}. 
Introducing $w_0(x)=v_0(x+x_0),$ then we have 
\[
u(t_n', \cdot+x(t_n^{'})) \xrightarrow[n \longrightarrow + \infty ]{}    w_0(\cdot-x_0)     \in H^1(\R^5).
\]
In other words, we obtain
\begin{equation}\label{compact-conv}
u(t_n') \xrightarrow[n \longrightarrow + \infty ]{}  w_0 \in  H^1(\R^5).
\end{equation}
We now appeal to the second hypothesis in \eqref{contrad-inf-sup}$,\alpha(u(t_n'))= \| \nabla Q \|_{L^2(\R^5)}^2 - \|\nabla u(t_n',\cdot+x(t_n'))\|_{L^2(\R^5)}^2 \geq \alpha_0 >0,$ which implies that
\[
    \|  \nabla w_0 \|^2_{L^2(\R^5)}  < \| \nabla Q \|_{L^2(\R^5)} .
\]
Let $w(t)$ be a solution of \eqref{H} with initial data $w_0$, then by continuity of the flow of the equation \eqref{H}, we have for all $t \in I$ (here, $I$ is the maximal time of existence)
\begin{equation}\label{contradiction}
    \| \nabla  w(t) \|^2_{L^2(\R^5)}  < \| \nabla Q \|^2_{L^2(\R^5)} .
\end{equation}
It follows that $I=\R$ and again utilizing the continuity of the flow along with \eqref{def-tau} and \eqref{compact-conv}, we have 
    \[
    u(t_n) \xrightarrow[n \longrightarrow + \infty]{} w(\tau) \in H^1(\R^5).
    \] 
Finally, invoking the first hypothesis in \eqref{contrad-inf-sup}, $\delta(t_n) \rightarrow 0$, i.e.,  $\| \nabla w(\tau) \|^2_{L^2(\R^5)} = \| \nabla Q  \|_{L^2(\R^5)}, $ which contradicts \eqref{contradiction}, completing the proof of \eqref{alpha-inf-sup}. 

The rest of the argument follows \cite[Lemma 6.8]{DR10} verbatim. 
\end{proof}
\subsubsection{Closing the curtain on Proposition \ref{main-sub}}
We now have all the ingredients to complete the proof of Proposition \ref{main-sub}. We begin with showing that $x(t)$ is bounded. By invoking Lemma \ref{control2}, there exists a sequence $t_n$ such that $t_{n+1} \geq t_n + 1$ for all $n$ and $\alpha(u(t_n)) \rightarrow 0$. Appealing to Lemmas \ref{control1} and \ref{control3}, there exists a constant $C>0$ such that if $n>N$ and $t\in [t_N +1, t_n]$, then
\[
\big|x(t)-x(t_N)\big| \leq C \Big[ 1+ \sup_{ s \in [t_N, t_n]} |x(s)| \Big] \big[ \alpha(u(t_N))+ \alpha(u(t_n)) \big].
\]
Fix $N$ large enough such that $\alpha(u(t_N))+ \alpha(u(t_n))\leq \frac{1}{2C}$ and choose $t\in [t_N+1,t_n]$ such that $\big|x(t)\big| = \sup\limits_{t_N+1\leq s\leq t_n}\big|x(s)\big|$, then     
\[
\sup\limits_{t_N+1\leq s\leq t_n}\big|x(s)\big|  \leq C(N) + \frac{1}{2} \sup\limits_{t_N+1\leq s\leq t_n}\big|x(s)\big|,
\]
where
\[
C(N) = \big|x(t_N)\big| + \frac{1}{2} \Big(1+ \sup\limits_{t_N\leq s\leq t_N+1}\big|x(s)\big|\Big).
\]
Taking $n\rightarrow \infty$, we see that $|x(t)|$ is bounded on $[t_N+1,+\infty)$, and hence by continuity, on $[0,+\infty)$. 

Now using the boundedness of $x(t)$ along with Lemma \ref{control1}, we have 
\[
\int_{\sigma}^{\tau} \alpha(u(t))\,dt  \leq  C \big[ \alpha(u(\sigma))+ \alpha(u(\tau)) \big].
\]  
We now fix $\sigma \geq 0$ and choose $\tau=t_n$ (note that $t_n$ is given by Lemma \ref{control2} such that $\alpha(u(t_n))\rightarrow 0$), letting $n\rightarrow \infty$, we get  
\[
\int_{\sigma}^{\infty} \alpha(u(t))\,dt  \leq C \alpha(u(\sigma)).
\] 
Applying Gronwall's inequality and then invoking Lemma \ref{mod-exp}, we conclude the proof of Proposition \ref{main-sub}.
\subsection{Scattering for the negative times}
Assume that $u(t)$ does not scatter for negative time. Then by applying the analogous arguments from Sections \ref{subs:compact} and \ref{subs:Prop6.1}, we have
\begin{enumerate}
\item there exists a continuous function $x(t)$ such that 
		$
		K \defeq \{u(\cdot+x(t),t);\,\,\,t\in\R\}
		$
	has a compact closure in $H^1(\R^5)$,
	\item there exists a constant $C_1>0$ such that  if $0 \leq \sigma \leq \tau$ 
\[
\int_{\sigma}^{\tau} \alpha(u(t))\,dt \leq C_1  \Big[ 1+ \sup_{ t \in [\sigma, \tau]} |x(t)| \Big] \big[ \alpha(u(\sigma))+ \alpha(u(\tau)) \big]   ,
\]
\item there exists a decreasing sequence $\tilde{t}_n\rightarrow -\infty$ such that  $\alpha(u(\tilde{t}_n)) \rightarrow 0$ as $n\rightarrow\infty$,
\item lastly, there exists a constant $C_2>0$ such that 
\[
\forall \sigma,\tau>0\quad \text{with} \quad \sigma+1 \leq \tau, \quad |x(\tau)-x(\sigma)| \leq C_2 \int_{\sigma}^{\tau} \alpha(u(t))\, dt  .
\]
\end{enumerate} 
We know that (2)-(4) implies that $ \alpha(u(t)) \rightarrow 0$ as $t\rightarrow \pm\infty$ and $x(t)$ is bounded for $t\in\R$. From (2), we then obtain 
\[
\int_{\sigma}^{\tau} \alpha(u(t))\,dt  \leq  C \big[ \alpha(u(\sigma))+ \alpha(u(\tau)) \big],
\]  
where $-\infty <\sigma \leq \tau<\infty$. Allowing $\sigma$ to approach $-\infty$ and $\tau$ to $\infty$, we get 
\[
\int_{\R}^{} \alpha(u(t))\,dt =0.
\]
Therefore, $\alpha(u(t)) = 0$ for all $t\in\R$, which contradicts the gradient assumption in \eqref{below}, i.e., $\|\nabla u_0\|_{L^2(\R^5)} < \|\nabla Q\|_{L^2(\R^5)}$.
\section{Uniqueness and classification result}\label{S:Uniq}
This section is devoted to establishing a uniqueness result for threshold solutions which will conclude the proof of Theorem \ref{mainthm2}. 
\subsection{Estimates on exponentially decaying solutions of the linearized equation} We start by studying the solutions of the (approximate) linearized equation
\begin{equation}\label{eq:expo1}
h_t + \mathcal{L}h = \varepsilon,\quad x\in\R^5,\ t\in (t_0,+\infty),
\end{equation}
such that the solutions decay exponentially as $t\rightarrow +\infty$
\begin{equation}\label{eq:expo2}
\|h\|_{H^1(\R^5)}\leq Ce^{-\gamma_1t},\quad \|\nabla\varepsilon\|_{L_{[t,+\infty)}^{\frac{8}{5}}L_x^{\frac{20}{13}}} + \|\varepsilon\|_{L_{[t,+\infty)}^{\frac{8}{3}}L_x^{\frac{20}{7}}}\leq Ce^{-\gamma_2t},
\end{equation}
where $0<\gamma_1 < \gamma_2$. For the next couple of lemmas, denote by $\gamma^{-}$ a positive number arbitrary close to $\gamma$ such that $0 < \gamma^{-} < \gamma$. As a consequence of Strichartz estimates together with Lemma \ref{R-V-est}, we have the following lemma.
\begin{lemma}\label{lem:expo0}
Let $h$ be a solution to \eqref{eq:expo1}. If for some positive constants $c$ and $C$
\begin{equation}\label{eq:expo0}
\|h\|_{H^1(\R^5)}\leq Ce^{-ct},\quad \forall t>0,
\end{equation}
then 
\begin{equation}\label{eq:expo2.5} 
\| h\|_{X([t,+\infty))} \leq Ce^{-ct}.
\end{equation}
\end{lemma}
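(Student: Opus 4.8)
The plan is to rewrite \eqref{eq:expo1} as a forced Schr\"odinger equation and then run the same Strichartz--Duhamel bootstrap on short time intervals that underlies Proposition \ref{sp-solns}, with the nonlinear remainder there replaced by the given linear forcing $\varepsilon$. The point of the statement is that the hypothesis \eqref{eq:expo0} is only a pointwise-in-time $H^1$ bound, whereas \eqref{eq:expo2.5} is a spacetime Strichartz ($X$-norm) bound; the gap is bridged through the equation.

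First I would reformulate. Writing $h=h_1+ih_2$, $\varepsilon=\varepsilon_1+i\varepsilon_2$ and using the explicit formulas \eqref{L+exp-H}--\eqref{L-exp-H} for $L_\pm$, a direct computation shows that $h_t+\mathcal{L}h=\varepsilon$ is equivalent to the scalar equation
\[
i h_t+\Delta h-h=-V(h)+i\varepsilon,\qquad V(h)=\big(|x|^{-3}\ast Q^2\big)h+2\big(|x|^{-3}\ast(Qh_1)\big)Q ,
\]
that is, with exactly the potential term $V$ from \eqref{eq:Vexp}: the diagonal pieces $-(|x|^{-3}\ast Q^2)$ in both $L_\pm$ and the off-diagonal piece $-2(|x|^{-3}\ast(Qh_1))Q$ in $L_+$ assemble precisely into $-V(h)$. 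Duhamel's formula then gives, on any interval $[a,b]$ and for $s\in[a,b]$,
\[
h(s)=e^{i(s-a)(\Delta-1)}h(a)-i\int_a^s e^{i(s-\tau)(\Delta-1)}\big(-V(h(\tau))+i\varepsilon(\tau)\big)\,d\tau .
\]

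Next I would run the short-interval estimate exactly as in Proposition \ref{sp-solns}. Fix $\tau_0\in(0,1)$, apply the Strichartz estimate for the $\dot H^{1/2}$-admissible pairs defining the $X$-norm \eqref{X-norm}, bound the $V(h)$ contribution over $I=[a,a+\tau_0]$ by \eqref{V-est} of Lemma \ref{R-V-est} (so $\||\nabla|^{1/2}V(h)\|_{L^{8/5}_IL^{20/13}_x}\lesssim\tau_0^{1/2}\|h\|_{X(I)}$), and use $\dot H^{1/2}\hookrightarrow(\text{Strichartz space})$ for the data term, obtaining
\[
\|h\|_{X([a,a+\tau_0])}\le C_0\|h(a)\|_{H^{1/2}}+C_0\tau_0^{1/2}\|h\|_{X([a,a+\tau_0])}+C_0\||\nabla|^{1/2}\varepsilon\|_{L^{8/5}_{[a,a+\tau_0]}L^{20/13}_x}.
\]
Choosing $\tau_0$ with $C_0\tau_0^{1/2}\le\frac12$ and absorbing the middle term (the a priori finiteness of $\|h\|_{X(I)}$ on bounded $I$, needed for this absorption, follows from the local theory and a continuity-in-the-interval-length argument, since $h$ is a genuine $H^1$ solution), I apply this with $a=t+j\tau_0$, $j\ge0$, and sum. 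By \eqref{eq:expo0}, $\|h(t+j\tau_0)\|_{H^{1/2}}\le\|h(t+j\tau_0)\|_{H^1}\le Ce^{-c(t+j\tau_0)}$, while by the decay of $\varepsilon$ in \eqref{eq:expo2} (together with an elementary spatial interpolation for $|\nabla|^{1/2}$ and the fact that $\varepsilon\in\mathcal{S}(\R^5)$ in the applications) the forcing term over $[t+j\tau_0,t+(j+1)\tau_0]$ is $\le Ce^{-\gamma_2(t+j\tau_0)}$ with $\gamma_2>c$. Using $\|h\|_{X([t,+\infty))}\le\sum_{j\ge0}\|h\|_{X([t+j\tau_0,t+(j+1)\tau_0])}$, both contributions sum to convergent geometric series and yield $\|h\|_{X([t,+\infty))}\le Ce^{-ct}$, which is \eqref{eq:expo2.5}.

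The argument is, in essence, the linear analogue of the fixed-point/contraction estimate for the map $\mathcal{M}$ in the proof of Proposition \ref{sp-solns}, so no new ideas are required; the only points demanding care are the algebraic reformulation in the first step (verifying that $h_t+\mathcal{L}h=\varepsilon$ is precisely $ih_t+\Delta h-h=-V(h)+i\varepsilon$) and the a priori $X$-integrability on bounded intervals that makes the absorption step rigorous. Routine bookkeeping aside, I expect the reformulation to be the only mildly delicate ingredient.
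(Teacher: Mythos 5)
Your argument is essentially the paper's: both scalarize $h_t+\mathcal Lh=\varepsilon$ to $ih_t+\Delta h-h=-V(h)+i\varepsilon$, run Duhamel with $\dot H^{1/2}$-Strichartz and the $V$-estimate \eqref{V-est} on short time intervals $[t+j\tau_0,t+(j+1)\tau_0]$, absorb the linear potential, and sum the resulting geometric series (the paper invokes \cite[Claim 5.8]{DM09} for this last step). The only distinction is that the paper specializes at once to $\varepsilon=R(h)$, so the Duhamel iteration produces quadratic and cubic $X$-norm terms closed by a short continuity argument, whereas you keep $\varepsilon$ abstract and invoke \eqref{eq:expo2} for its decay — equivalent in the intended applications, since $R(h)$ is superlinear in $h$ and so inherits the required faster decay from \eqref{eq:expo0} via Lemma \ref{R-V-est}.
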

\begin{proof}
We write \eqref{eqS} as $ih_t+\Delta h -h  + V(h) +R(h)=0,$
where $V(h)$ is defined in \eqref{eq:Vexp} and $R(h)$ is given by \eqref{Rexp-H}. By Duhamel's formula, Strichartz estimates and Lemma \ref{R-V-est}, we get
\[
\|h\|_{X([t,t+\tau))}\leq c\left( \||\nabla|^{\frac{1}{2}}h\|_{L^2_x} + \tau^{\frac{1}{2}}\|h\|_{X([t,t+\tau])} + \tau^{\frac{1}{8}}\|h\|^2_{X([t,t+\tau])} + \|h\|^3_{X([t,t+\tau])}\right).
\]
By \eqref{eq:expo0}, there exits a $\tau=\tau_0=\frac{1}{4C^2}$ small enough such that
\[
\|h\|_{X([t,t+\tau_0))}\leq C\left(e^{-ct} + \tau_0^{\frac{1}{2}}\|h\|_{X([t,t+\tau_0])} +  \|h\|^3_{X([t,t+\tau_0])}\right) < 2Ce^{-ct}.
\]
If not, then there exists $\tau \in (0, \tau_0]$ such that $\|h\|_{X([t,t+\tau])}=2Ce^{-ct}$ for a fixed $t>0$. Then
\[
2Ce^{-ct} \leq Ce^{-ct} + \tau^{\frac{1}{2}}2C^2e^{-ct} + 8C^4e^{-3ct}\leq 2Ce^{-ct}+ 8C^4e^{-3ct},
\]
which is a contradiction for large $t$. Therefore, for large $t$, we have
\[
\|h\|_{X([t,t+\tau_0))}\leq 2Ce^{-ct},\quad \tau_0=\frac{1}{4C^2}.
\]
Invoking \cite[Claim 5.8]{DM09}, yields \eqref{eq:expo2.5}.
\end{proof}

This brings us to the main ingredient for proving the uniqueness result. The following estimate is proved using the coercivity of the linearized energy, see \eqref{eq:linenergy} and Proposition \ref{positivity}.
\begin{lemma}\label{lem:expo1}
Under the assumptions \eqref{eq:expo2}, the following estimates hold.
\begin{enumerate}
\item[(i)] If $\gamma_2\leq e_0$ or $e_0 < \gamma_1$, then
\begin{equation}\label{eq:expo2a}
\|h\|_{H^1(\R^5)}\leq  Ce^{-\gamma_2^{-}t}.
\end{equation}
\item[(ii)] If $ \gamma_1 <e_0< \gamma_2$, then there exists $A\in\R$ such that
\begin{equation}\label{eq:expo2b}
 \|h - Ae^{-e_0t}\mathcal{Y}_+\|_{H^1(\R^5)}\leq Ce^{-\gamma_2^{-}t}.
\end{equation}
\end{enumerate}
\end{lemma}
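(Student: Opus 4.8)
The plan is to mimic the proof of the analogous lemma in Duyckaerts--Roudenko \cite{DR10} (and Duyckaerts--Merle \cite{DM09}), using the spectral decomposition of $\mathcal{L}$ furnished by Lemma \ref{eigenfunc} and Lemma \ref{spectrum}, together with the coercivity of the linearized energy $\Phi$ from Proposition \ref{positivity}. The basic idea is a bootstrap in the decay rate: given that $h$ decays like $e^{-\gamma_1 t}$, one shows it actually decays like $e^{-\gamma_2^- t}$ unless there is an obstruction coming from the eigenvalue $-e_0$ of $\mathcal{L}$, in which case the obstruction is exactly an $e^{-e_0 t}\mathcal{Y}_+$ component.

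First I would set up the modulation/projection framework. Write $h = h_\perp + \text{(projections onto the finite-dimensional spectral pieces of }\mathcal{L})$, namely the generalized null space spanned by $\partial_{x_j}Q$ and $iQ$, plus the $\mathcal{Y}_\pm$ directions with coefficients $a_\pm(t) = $ (suitably normalized) $B$- or bilinear-pairing of $h$ with $\mathcal{Y}_\mp$. Using \eqref{eq:antisym} and \eqref{ef}, the coefficient $a_+(t)$ (the coefficient along the \emph{unstable} direction $\mathcal{Y}_+$, eigenvalue $+e_0$ for $-\mathcal{L}$) satisfies an ODE of the form $a_+' - e_0 a_+ = (\text{pairing of }\varepsilon\text{ and nonlinear remainder})$, and similarly $a_-' + e_0 a_- = \cdots$. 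Integrating the stable-direction equation from $t$ to $+\infty$ (legitimate since everything decays) gives $|a_-(t)| \lesssim e^{-\gamma_2^- t}$ once the right-hand side is controlled at rate $e^{-\gamma_2 t}$; integrating the unstable equation, using that $h$ is globally bounded in the future with exponential decay, forces $a_+(t) = A e^{-e_0 t} + O(e^{-\gamma_2^- t})$ for a constant $A$ determined by $a_+$; and when $\gamma_2 \le e_0$ or $e_0 < \gamma_1$ one checks $A$ must vanish (in the first case because $e^{-e_0t}$ decays too slowly to be consistent with $\|h\|\lesssim e^{-\gamma_1 t} \le$ the forced lower bound; in the second because $h$ already decays faster than $e^{-e_0 t}$, so no $\mathcal{Y}_+$ component can be present). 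For the projections onto $\partial_{x_j}Q$ and $iQ$ one uses \eqref{eq:B-ortho}: pairing \eqref{eq:expo1} against these directions with the bilinear form $B$ kills the $\mathcal{L}h$ term by antisymmetry and leaves only the $\varepsilon$ and quadratic-in-$h$ terms, giving decay at rate $e^{-\gamma_2^- t}$ (up to, possibly, slowly decaying ``secular'' terms that are absorbed into the definition of $\mathcal{Y}_+$-type corrections, exactly as in \cite{DR10}).

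Then I would handle the coercive part $h_\perp$, which is orthogonal (in $B$) to $\mathcal{Y}_+$, $\mathcal{Y}_-$, $iQ$ and the $\partial_{x_j}Q$'s, i.e.\ lies in the space where Proposition \ref{positivity} applies (after checking the orthogonality conditions \eqref{eq:ortho1}--\eqref{eq:ortho3} are met by $h_\perp$, which is the reason one must subtract \emph{both} $\mathcal{Y}_\pm$ components). Define $y(t) = \Phi(h_\perp(t))$. Differentiating, using $\partial_t h_\perp = -\mathcal{L}h_\perp + (\text{error terms from }\varepsilon, \text{ nonlinearity, and the finite-dim.\ coefficients})$ and the fact that $\Phi$ is a conserved-type quadratic form for the linear flow $\partial_t + \mathcal{L}$ (because $B(\mathcal{L}f,f) = 0$ by \eqref{eq:antisym}), one gets $|y'(t)| \lesssim \|h_\perp\| \cdot (\text{error}) \lesssim y(t)^{1/2} e^{-\gamma_2 t} + e^{-2\gamma_1 t}\cdot(\ldots)$, after invoking Proposition \ref{positivity} to bound $\|h_\perp\|_{H^1}^2 \lesssim y(t)$. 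A Gronwall-type / differential-inequality argument then yields $y(t) \lesssim e^{-2\gamma_2^- t}$ (using $\gamma_1 < \gamma_2$ so the quadratic term $e^{-2\gamma_1 t}$ is not the bottleneck once we feed back the improved bound; this may require one intermediate bootstrap step passing through an intermediate rate $\gamma_1 < \gamma' < \gamma_2$). Combining the bound on $h_\perp$ with the bounds on the finite-dimensional coefficients gives \eqref{eq:expo2a} in case (i) and \eqref{eq:expo2b} in case (ii), with $A$ the coefficient of $\mathcal{Y}_+$ isolated above.

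The main obstacle I expect is the careful bookkeeping of the ``secular'' terms and the self-improving (bootstrap) structure: one does not get the sharp rate $\gamma_2^-$ in a single pass, because the nonlinear/remainder terms contributing to the ODEs for the spectral coefficients and to $y'(t)$ involve $h$ itself, so one first proves a crude improvement (e.g.\ from rate $\gamma_1$ to rate $\min(2\gamma_1,\gamma_2)^-$, modulo a possible $\mathcal{Y}_+$ term), then iterates finitely many times until reaching $\gamma_2^-$. Making this iteration clean — and in particular verifying that the constant $A$ produced at each stage is consistent (i.e.\ the $\mathcal{Y}_+$ coefficient stabilizes) — is the technical heart. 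A secondary subtlety is ensuring Proposition \ref{positivity} genuinely applies to $h_\perp$: one must confirm that after projecting off $\mathcal{Y}_\pm$, $iQ$, $\partial_{x_j}Q$ (and adjusting by the $\beta(t)Q$ / $\Delta Q$ orthogonality as in \eqref{orthocond1}), $h_\perp$ lands in $G_\perp^y$, which forces the specific choice of orthogonality conditions \eqref{eq:ortho3} and is where the radiality hypothesis of Proposition \ref{positivity} is used. Aside from these points the argument is a line-by-line adaptation of \cite[Section 5]{DR10}, with the convolution nonlinearity estimates replaced by Lemma \ref{R-V-est}.
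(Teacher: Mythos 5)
Your proposal is correct and follows essentially the same route the paper takes: spectrally decompose $h$ along $\mathcal{Y}_\pm$, the generalized kernel directions $iQ$, $\partial_{x_j}Q$, and a remainder $h_\perp\in G_\perp^y$; derive first-order ODEs for the coefficients via the antisymmetry \eqref{eq:antisym}; control $\alpha_\pm$ by integrating from $t$ to $+\infty$ (stable) or from $0$ to $t$ (unstable), which produces the $A e^{-e_0 t}$ obstruction exactly when $\gamma_1<e_0<\gamma_2$; apply the coercivity of $\Phi$ from Proposition \ref{positivity} to $h_\perp$; and iterate finitely many times, raising $\gamma_1$ toward $\gamma_2$. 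The one cosmetic difference is that you propose to differentiate $\Phi(h_\perp)$ directly, whereas the paper integrates $\frac{d}{dt}\Phi(h)=2B(\varepsilon,h)$ and then extracts $B(h_\perp,h_\perp)$ from the algebraic identity $\Phi(h)=2\alpha_+\alpha_-+B(h_\perp,h_\perp)$ — slightly cleaner since it avoids tracking $\partial_t h_\perp$, but not materially different.
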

\begin{proof}
We write the normalized eigenfunctions of $\mathcal{L}$ as
\[
Q_0 = \frac{iQ}{\|Q\|_{L^2}},\quad Q_j = \frac{\partial_{x_j}Q}{\|\partial_{x_j}Q\|_{L^2}},
\] 
and observe that by \eqref{eq:B-ortho}, we have
\[
B\big(Q_j,h\big) = 0, \quad \|Q_j\|_{L^2} = 1,\forall j = 0,1,\dots, 5,\ \forall h\in H^1. 
\]
We also recall that $B\big(\mathcal{Y}_+,\mathcal{Y}_-\big) \neq 0$. Normalize the eigenfunctions $\mathcal{Y}_+$, $\mathcal{Y}_-$ such that $B\big(\mathcal{Y}_+,\mathcal{Y}_-\big) =1$. We now decompose $h(t)$ as 
\begin{equation}\label{eq:expo3}
h(t) = \alpha_+(t)\mathcal{Y}_+ + \alpha_-(t)\mathcal{Y}_- + \sum_{j=0}^{5}\beta_j(t)Q_j + h_\perp, 
\end{equation} 
where $h_\perp \in G_{\perp}^y$ and from \eqref{eq:LE-values} together with the fact that $Q_j\in \text{ker}\mathcal{L}$, $j=0,1,\ldots,5$, we have
\begin{equation}\label{eq:expo4}
\alpha_+(t) = B\big(h(t),\mathcal{Y}_-\big),\quad \alpha_-(t) = B\big(h(t),\mathcal{Y}_+\big)
\end{equation}
\begin{equation}\label{eq:expo5}
\beta_j(t) = \big(h(t),Q_j\big) -\alpha_+(t) \big(\mathcal{Y}_+,Q_j\big) -\alpha_-(t) \big(\mathcal{Y}_-,Q_j\big),\ \ j=0,\ldots,5.
\end{equation}
We now proceed in several steps.\\

{\bf Step 1:} {\it Differential equations on the coefficients.} We show that
\begin{equation}\label{eq:expo6}
\frac{d}{dt}\left(e^{e_0t}\alpha_+(t)\right) = e^{e_0t}B\left(\varepsilon,\mathcal{Y}_-\right),\quad \frac{d}{dt}\left(e^{-e_0t}\alpha_-(t)\right) = e^{-e_0t}B\left(\varepsilon,\mathcal{Y}_+\right),
\end{equation}  
\begin{equation}\label{eq:expo7}
\frac{d}{dt}\left(\beta_j(t)\right) = \left(\varepsilon,Q_j\right) - \left(\mathcal{Y}_+,Q_j\right)B\left(\varepsilon,\mathcal{Y}_-\right) - \left(\mathcal{Y}_-,Q_j\right)B\left(\varepsilon,\mathcal{Y}_+\right) - \left(\mathcal{L}h_{\perp}, Q_j\right),
\end{equation}  
\begin{equation}\label{eq:expo8}
\frac{d}{dt}\left(\Phi(h(t))\right) = 2B\left(\varepsilon,h\right).
\end{equation}
By \eqref{eq:expo1} and \eqref{eq:expo4}, we have
\[
\begin{aligned}
\alpha^{\prime}_+(t) = B\left(h_t,\mathcal{Y}_-\right) &= B\left(\varepsilon-\mathcal{L}h,\mathcal{Y}_-\right) \\
&= B\left(\varepsilon,\mathcal{Y}_-\right) + B\left(h,\mathcal{L}\mathcal{Y}_-\right)=B\left(\varepsilon,\mathcal{Y}_-\right) - e_0\alpha_+(t),
\end{aligned}
\]
which yields the first differential equation for $\alpha_+(t)$ in \eqref{eq:expo6}. The second equation on $\alpha_-(t)$ in \eqref{eq:expo6} follows analogously. 

Differentiating \eqref{eq:expo5} using \eqref{eq:expo1}, we get
\[
\beta_j^{\prime}(t) = \left(\varepsilon - \mathcal{L}h -\alpha_+^{\prime}\mathcal{Y}_+ - \alpha_-^{\prime}\mathcal{Y}_-, Q_j\right).
\]
Using $\mathcal{L}h=e_0\alpha_+\mathcal{Y}_+ - e_0\alpha_-\mathcal{Y}_- + \mathcal{L}h_\perp$ yields
\begin{equation}\label{eq:expo7a}
\beta_j^{\prime}(t) = \left(\varepsilon,Q_j\right) - \left(e_0\alpha_++\alpha_+^{\prime}\right)\left(\mathcal{Y}_+,Q_j\right) - \left(e_0\alpha_-+\alpha_-^{\prime}\right)\left(\mathcal{Y}_-, Q_j\right) - \left(\mathcal{L}h_{\perp},Q_j\right),
\end{equation}
which together with \eqref{eq:expo6} gives \eqref{eq:expo7}. 

Lastly, we differentiate $\Phi(h(t))$ to obtain
\[
\frac{d}{dt}\left(\Phi(h(t))\right) = 2B\left(h_t,h\right) = -2B\left(\mathcal{L}h,h\right) + 2B\left(\varepsilon,h\right),
\] 
using \eqref{eq:antisym} we infer that $B\left(\mathcal{L}h,h\right)=0$, which in turn gives \eqref{eq:expo8}. 

{\bf Step 2:} {\it Estimates on $\alpha_{\pm}(t)$.} We now claim that 
\begin{equation}\label{eq:expo9}
\left|\alpha_-(t)\right| \leq Ce^{-\gamma_2t},
\end{equation}
\begin{equation}\label{eq:expo10a}
\left|\alpha_+(t)\right| \leq Ce^{-\gamma_2^{-}t},\ \ \text{if}\ \gamma_2< e_0\ \text{or}\ e_0 < \gamma_1,
\end{equation}
and there exist $A\in \R$ such that
\begin{equation}\label{eq:expo10b}
\left|\alpha_+(t) - Ae^{-e_0t}\right| \leq Ce^{-\gamma_2t}, \ \ \text{if}\  \gamma_1 < e_0 < \gamma_2.
\end{equation}
We use \eqref{eq:bilin} to write 
\[\begin{aligned}
2B\left(f,g\right) = &\int_{\R^5}\nabla f_1\nabla g_1\,dx +\int_{\R^5}f_1g_1\,dx +\int_{\R^5}\nabla f_2\nabla g_2\,dx +\int_{\R^5}f_2g_2\,dx\\
&-\int_{\R^5}\left(|x|^{-3}\ast Q^2\right)\left(f_1g_1+f_2g_2\right)\,dx-2\int_{\R^5}\left(|x|^{-3}\ast \left(Qf_1\right)\right)Qg_1\,dx 
\end{aligned}\]
and for any finite time interval $I$, we estimate
\begin{equation}\label{eq:expo11}
\int_I\left|B\left(f,g\right)\right|dt\lesssim \|\nabla f\|_{L_I^{\frac{8}{5}}L_x^{\frac{20}{13}}}\|\nabla g\|_{L_I^{\frac{8}{3}}L_x^{\frac{20}{7}}} +3|I|^{\frac{1}{4}}\|Q\|^2_{L_I^{\infty}L_x^{\frac{20}{7}}}\|f\|_{L_I^{\frac{8}{3}}L_x^{\frac{20}{7}}}\|g\|_{L_I^{\frac{8}{3}}L_x^{\frac{20}{7}}}.
\end{equation}
Therefore, using the assumption on $\varepsilon(t)$ in \eqref{eq:expo2} together with the \cite[Claim 5.8]{DM09}, we get
\[
\int_{t}^{+\infty}\left|e^{-e_0s}B\left(\varepsilon(s), \mathcal{Y}_-\right)\right|ds \leq Ce^{-(e_0+\gamma_2)t}.
\]
Integrating the equation on $\alpha_-$ in \eqref{eq:expo6} from $t$ to $+\infty$ and using the above estimate, we obtain \eqref{eq:expo9}.

To prove \eqref{eq:expo10a}, we first consider the case $e_0<\gamma_1$. Integrating the equation on $\alpha_+$ in \eqref{eq:expo6} from $t$ to $+\infty$ and using the assumption on $\varepsilon(t)$ in \eqref{eq:expo2} together with estimate \eqref{eq:expo11}, we obtain
\[
\left|e^{e_0t}\alpha_+(t)\right|\leq \int_{t}^{+\infty}\left|e^{e_0s}B\left(\varepsilon(s), \mathcal{Y}_+\right)\right|ds \leq Ce^{(e_0-\gamma_2)t},
\]
which implies \eqref{eq:expo10a} for $e_0 <\gamma_1$. Now if $e_0> \gamma_2$, integrating the $\alpha_+$ equation in \eqref{eq:expo6} between $0$ and $t$, we get
\[
e^{e_0t}\alpha_+(t) - \alpha_+(0) = \int_0^t e^{e_0s}B\left(\varepsilon(s), \mathcal{Y}_+\right)ds.
\]
By the assumption on $\varepsilon$ in \eqref{eq:expo2} together with \eqref{eq:expo11}, we have
\[
\left|e^{e_0t}\alpha_+(t)\right| \leq \alpha_+(0) +
Ce^{(e_0-\gamma_2)t},
\]
which yields \eqref{eq:expo10a} for $e_0>\gamma_2$. Lastly, we consider the case $\gamma_1<e_0<\gamma_2$. Again, using the assumption on $\varepsilon(t)$ in \eqref{eq:expo2} together with the \cite[Claim 5.8]{DM09}, we get
\[
\int_{t}^{+\infty}\left|e^{e_0s}B\left(\varepsilon(s), \mathcal{Y}_+\right)\right|ds \leq Ce^{(e_0-\gamma_2)t}<\infty.
\]
Combining this together with the $\alpha_+$ equation in \eqref{eq:expo6}, we get that 
\[
\lim\limits_{t\rightarrow\infty}e^{e_0t}\alpha_+(t) = A\quad\text{and}\quad \left|e^{e_0t}\alpha_+(t) - A\right|\leq Ce^{(e_0-\gamma_2)t},
\]
which implies \eqref{eq:expo10b}. 

{\bf Step 3:} {\it Conclusion when $e_0>\gamma_2$ or when $e_0<\gamma_2$ and $A=0$.} By Steps 1 and 2, we have
\begin{equation}\label{eq:expo12a}
\left|\alpha_+(t)\right| + \left|\alpha_+^{\prime}(t)\right| \leq Ce^{-\gamma_2^{-}t},
\end{equation}
and
\begin{equation}\label{eq:expo12b}
\left|\alpha_-(t)\right| + \left|\alpha_-^{\prime}(t)\right| \leq Ce^{-\gamma_2t}.
\end{equation}
Invoking the decomposition \eqref{eq:expo3} together with \eqref{eq:LE-values} and the fact that $h_\perp\in G_{\perp}^y$, we get
\[
\Phi(h(t)) = B(h(t),h(t)) = 2\alpha_+(t)\alpha_-(t) + B\left(h_\perp(t),h_\perp(t)\right).  
\]
Again by integrating \eqref{eq:expo8} and using the assumption \eqref{eq:expo2} together with \cite[Claim 5.8]{DM09} and estimate \eqref{eq:expo11}, we obtain
\[
\left|\Phi(h(t))\right| \leq Ce^{-(\gamma_1+\gamma_2)t}.
\]
This together with \eqref{eq:expo12a} and \eqref{eq:expo12b} gives
\[
\left|B\left(h_\perp,h_\perp\right)\right| \leq C\left(e^{-\gamma_2^{-}t} + e^{-(\gamma_1+\gamma_2)t}\right) \leq Ce^{-(\gamma_1+\gamma_2)t}.
\]
By Proposition \ref{positivity}, we obtain
\begin{equation}\label{eq:expo12c}
\|h_\perp\|^2_{H^1(\R^5)} \lesssim \left|B\left(h_\perp,h_\perp\right)\right| \leq Ce^{-(\gamma_1+\gamma_2)t}.
\end{equation}
Now we deduce the bound on $\beta_j$. Consider \eqref{eq:expo7a} and observe that the assumption on $\varepsilon$ in \eqref{eq:expo2} gives that $\big|(\varepsilon, Q_j)\big| \leq Ce^{-\gamma_2t}$. Furthermore, by \eqref{eq:expo12a} and \eqref{eq:expo12b}, we have 
\[
 \big|- \left(e_0\alpha_++\alpha_+^{\prime}\right)\left(\mathcal{Y}_+,Q_j\right) - \left(e_0\alpha_-+\alpha_-^{\prime}\right)\left(\mathcal{Y}_-, Q_j\right)\big| \leq Ce^{-\gamma_2t}.
 \]
We are left to deal with the last term in \eqref{eq:expo7a}, i.e., $(\mathcal{L}h_\perp,Q_j)$. We observe that $(\mathcal{L}h_\perp,Q_j) = (h_\perp,\mathcal{L}^*Q_j)$, where
$\displaystyle \mathcal{L}^* =  \begin{pmatrix}
	0 & L_+\\
	-L_+ & 0
	\end{pmatrix}
$ is the $L^2$-adjoint of $\mathcal{L}$. By the decay properties of $Q$ and its derivatives, we have that $\mathcal{L}^*Q_j\in L^2(\R^5)$ and therefore, 
\[
\big|(\mathcal{L}h_\perp,Q_j)\big|\leq C\|h_\perp\|_{L^2(\R^5)}\leq C\|h_\perp\|_{H^1(\R^5)} \leq Ce^{-\frac{\gamma_1+\gamma_2}{2}t}.
\]
Putting this together and integrating \eqref{eq:expo7a} (or \eqref{eq:expo7}) between $t$ and $+\infty$, we get
\begin{equation}\label{eq:expo12d}
\big|\beta_j(t)\big| \leq Ce^{-\frac{\gamma_1+\gamma_2}{2}t}.
\end{equation}
Hence, collecting \eqref{eq:expo12a}, \eqref{eq:expo12b}, \eqref{eq:expo12d} and \eqref{eq:expo12d} with the decomposition \eqref{eq:expo3}, we see that $h$ and $\varepsilon$ satisfy the assumptions \eqref{eq:expo2}, but with $\frac{\gamma_1+\gamma_2}{2}$, a different number than $\gamma_1$. By iterating the argument, we obtain 
\[
\|h(t)\|_{H^1(\R^5)}\leq Ce^{-\gamma_2^{-}t},
\]
concluding the proof when $e_0>\gamma_2$ or when $e_0<\gamma_2$ and $A=0$.

{\bf Step 4:} {\it Conclusion when $\gamma_1<e_0 < \gamma_2$.} Observe that since $\mathcal{L}\mathcal{Y}_+ = e_0\mathcal{Y}_+$, we have that $\tilde{h}(t) = h(t)-Ae^{-e_0t}\mathcal{Y}_+$ satisfies the equation \eqref{eq:expo1} with the same $\varepsilon$ and the assumptions \eqref{eq:expo2}. From \eqref{eq:expo10b}, we have $\lim\limits_{t\rightarrow\infty} e^{e_0t}\tilde{\alpha}_+(t)$, where $\tilde{\alpha}_+$ is the coefficient of $\mathcal{Y}_+$ in the decomposition of $\tilde{h}(t)$. Therefore, by Steps 1 and 2, $\tilde{h}$ and $\varepsilon$ satisfy the assumptions of Step 3, i.e., $\tilde{h}$ and $\varepsilon$ satisfy the assumptions \eqref{eq:expo2}, but with $\gamma_1$ replaced with $\frac{\gamma_1+\gamma_2}{2}$. By iterating the argument, we get \eqref{eq:expo2a} for $\tilde{h}$, which in turn implies the condition \eqref{eq:expo2b} for the original $h$, thereby completing the proof.
\end{proof}


 \subsection{Uniqueness}
 We are now able to show the following uniqueness result.
\begin{proposition}\label{prop:unique}
Let $u$ be a solution of \eqref{H}, defined on $[t_0,+\infty)$ such that $E[u]=E[Q]$, $M[u]=M[Q]$ and there exist constants $c,C>0$ with
\begin{equation}\label{eq:uni1}
\|u-e^{it}Q\|_{H^1(\R^5)} \leq Ce^{-ct},
\end{equation}
for all $t\geq t_0$. Then there exists $A\in\R$ such that $u=U^A$, where $U^A$ is the solution of \eqref{H} defined in Proposition \ref{sp-solns}.
\end{proposition}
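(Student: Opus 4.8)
The plan is to analyze $h(t)\defeq e^{-it}u(t)-Q$, which by \eqref{lineq} solves $h_t+\mathcal{L}h=R(h)$, and for which \eqref{eq:uni1} gives $\|h(t)\|_{H^1(\R^5)}\le Ce^{-ct}$; since this bound only weakens when $c$ is decreased, we may assume $0<c<e_0$. First I would place $h$ in the setting of the exponential lemmas above: Lemma \ref{lem:expo0} upgrades the $H^1$ decay to $\|h\|_{X([t,+\infty))}\le Ce^{-ct}$, and then Lemma \ref{R-V-est} applied with $z=h$, $w=0$ and summed over unit time intervals (as in the proof of Proposition \ref{sp-solns}) shows that $\varepsilon\defeq R(h)$ satisfies the bounds in \eqref{eq:expo2} with $\gamma_1=c$ and $\gamma_2=2c$. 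Since $Q$ and $\mathcal{Y}_\pm$ are radial, everything here stays in the radial class, so the coercivity of Proposition \ref{positivity}, and hence Lemma \ref{lem:expo1}, is available for $h$.

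\textbf{Step 1 (extracting $A$).} I would iterate Lemma \ref{lem:expo1}, noting that at every stage $\gamma_2=2\gamma_1$. Initially $\gamma_2=2c\le e_0$, so part (i) of Lemma \ref{lem:expo1} applies and doubles the decay rate up to an arbitrarily small loss in the exponent. Because the rate roughly doubles while $\gamma_2=2\gamma_1$, after finitely many steps it enters the window $\gamma_1<e_0<2\gamma_1=\gamma_2$, and then part (ii) produces a number $A\in\R$ with
\[
\|h(t)-Ae^{-e_0t}\mathcal{Y}_+\|_{H^1(\R^5)}\le Ce^{-(e_0+\delta)t}
\]
for some $\delta>0$. (If $A=0$ this merely says $h$ decays strictly faster than $e^{-e_0t}$; the rest of the argument is insensitive to this, since then $\mathcal{V}_k^0\equiv 0$ and $U^0=e^{it}Q$.)

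\textbf{Step 2 (matching the approximate solution).} Fix $k\ge k_0$ with $k_0$ as in Proposition \ref{sp-solns}, and set $h^{(k)}(t)\defeq e^{-it}u(t)-Q-\mathcal{V}_k^A(t)$. By \eqref{h-eq}, $h^{(k)}$ solves $h^{(k)}_t+\mathcal{L}h^{(k)}=\varepsilon^{(k)}$ with $\varepsilon^{(k)}=R(h^{(k)}+\mathcal{V}_k^A)-R(\mathcal{V}_k^A)-\epsilon_k$. Using $\|\mathcal{V}_k^A\|_{X}\lesssim e^{-e_0t}$, the bound $\epsilon_k=O(e^{-(k+1)e_0t})$ from Proposition \ref{approx-solns}, and Lemma \ref{R-V-est}, one sees that whenever $\|h^{(k)}\|_{H^1(\R^5)}\le Ce^{-\gamma t}$ with $\gamma>e_0$ the source $\varepsilon^{(k)}$ decays at rate $\min(\gamma+e_0,(k+1)e_0)$. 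Since by Step 1 we have $h^{(k)}=\big(h-Ae^{-e_0t}\mathcal{Y}_+\big)-\sum_{j=2}^{k}e^{-je_0t}Z_j^A$, which decays strictly faster than $e^{-e_0t}$, I would now bootstrap using part (i) of Lemma \ref{lem:expo1} in the regime $e_0<\gamma_1<\gamma_2$: this raises the decay rate of $h^{(k)}$ by $e_0$ at each step, so after finitely many steps $\|h^{(k)}\|_{H^1(\R^5)}\le Ce^{-(k+\frac12)e_0t}$, and Lemma \ref{lem:expo0} promotes this to $\|h^{(k)}\|_{X([t,+\infty))}\le Ce^{-(k+\frac12)e_0t}$. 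This is precisely the statement that $u$ satisfies \eqref{sp-solns1} for this $k$ and all large $t$; hence by the uniqueness assertion of Proposition \ref{sp-solns}, $u=U^A$ (and $u=e^{it}Q=U^0$ when $A=0$).

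\textbf{Main obstacle.} The delicate part is the bookkeeping in the two bootstraps: one must control the arbitrarily small exponent losses so that the running decay rate lands inside the interval $(\gamma_1,\gamma_2)$ containing $e_0$ exactly when part (ii) of Lemma \ref{lem:expo1} is invoked, and one must check that the nonlinear remainders $R(h)$, $R(h^{(k)}+\mathcal{V}_k^A)-R(\mathcal{V}_k^A)$ genuinely improve at each iteration — which relies on $R$ being at least quadratic together with the explicit structure of $\mathcal{V}_k^A$ and $\epsilon_k$ from Propositions \ref{approx-solns} and \ref{sp-solns}. The analytic core (the ODEs for the modal coefficients $\alpha_\pm,\beta_j$ and the coercivity estimate \eqref{eq:expo12c} for $h_\perp$) is already packaged in Lemma \ref{lem:expo1}, so no new estimate is needed beyond careful iteration.
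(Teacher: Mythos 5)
Your proof is correct and follows essentially the same route as the paper: one first iterates Lemma \ref{lem:expo1} on $h=e^{-it}u-Q$ (using that $R$ is at least quadratic so $\gamma_2=2\gamma_1$) until the window containing $e_0$ is reached and part (ii) produces $A$, and then one bootstraps the decay of $e^{-it}u-Q-\mathcal{V}_k^A$ up to rate $(k+\tfrac12)e_0$ and invokes the uniqueness clause of Proposition \ref{sp-solns}. Your presentation is in fact cleaner than the paper's, which suffers from a notational clash in its Steps 2--3 (the symbol $\mathcal{V}_k^A$ is recycled to denote the difference you call $h^{(k)}$); the mathematical content is the same.
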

\begin{proof} Let $u=e^{it}(Q+h)$ be a (radial) solution to \eqref{H} satisfying the above hypothesis.
{\bf Step 1:} We start by showing that for all $t\geq t_0$,
\begin{equation}\label{eq:uni2}
\|h(t)\|_{H^1(\R^5)} \leq Ce^{-e_0^-t}. 
\end{equation}
 Observe that $h$ satisfies the linearized equation \eqref{lineq}, i.e., $h_t + \mathcal{L}h = R(h)$, where $R(h)$ is given by \eqref{Rexp-H}. By the estimate \eqref{R-est} in Lemma \ref{R-V-est} and Lemma \ref{lem:expo0} together with \eqref{eq:uni1} and \cite[Claim 5.8]{DM09}, we get 
\[
\|\nabla R(h)\|_{L_{[t,+\infty)}^{\frac{8}{5}}L_x^{\frac{20}{13}}} + \|R(h)\|_{L_{[t,+\infty)}^{\frac{8}{3}}L_x^{\frac{20}{7}}}\leq Ce^{-2ct}.
\]
We are therefore now in the setting of Lemma \ref{lem:expo1} with $g=R(h)$, $\gamma_1 = c$ and $\gamma_2 = 2c$. If $c < e_0 < 2c$, \eqref{eq:expo2b} gives
\[
\|h(t)\|_{H^1(\R^5)} \leq C\big(e^{-e_0t} + e^{-2c^-t}\big).
\] 
Now if $e_0 \leq 2c^-$, the above estimate directly implies \eqref{eq:uni2}. On the other hand, if $2c^-<e_0$, we get $\|h(t)\|_{H^1(\R^5)} \leq Ce^{-2c^-t}$, and an iteration argument again yields \eqref{eq:uni2}. And, in the case $e_0\notin (\gamma_1,\gamma_2]$, we again get  $\|h(t)\|_{H^1(\R^5)} \leq Ce^{-2c^-t}$, which as before yields \eqref{eq:uni2}.

{\bf Step 2:} We now consider the special solutions $U^A = e^{it}(Q+\mathcal{V}_k^A+h)$ constructed in Proposition \ref{sp-solns}. We will show that there exists $A\in \R$ such that for all $\gamma>0$, there exists a $C>0$ and for all $t\geq t_0$, we have 
\begin{equation}\label{eq:uni3}	
 \|\mathcal{V}_k^A\|_{H^1} + \|\mathcal{V}_k^A\|_{X([t,+\infty))} \leq Ce^{-\gamma t}.
 \end{equation}
 By Step 1, $h(t)$ satisfies the assumptions of Lemma \ref{lem:expo1} with $\gamma_1 = e_0^-$, $\gamma_2 = 2e_0^-$. Thus, there exists $A \in \R$ such that
 \begin{equation}\label{eq:uni4}
 \|h - Ae^{-e_0t}\mathcal{Y}_+\|_{H^1(\R^5)} \leq Ce^{-2e_0^-t}.
 \end{equation}
 On the other hand, by \eqref{sp-solns2} in the Proposition \ref{sp-solns},  
 \[
 \|h + \mathcal{V}_k^A- Ae^{-e_0t}\mathcal{Y}_+\|_{H^1(\R^5)} \leq Ce^{-2e_0t}.
 \] 
 This together with \eqref{eq:uni4} and Lemma \ref{lem:expo0} gives \eqref{eq:uni3} for $\gamma<2e_0^-$. Now we assume that \eqref{eq:uni3} holds for some $\gamma =\gamma_3 > e_0$, and show that it then also holds for $\gamma_3 + \frac{e_0}{2}$. Observe that $\mathcal{V}_k^A$ satisfies the equation
 \[
 \partial_t\mathcal{V}_k^A + \mathcal{L}\mathcal{V}_k^A = R(h)-R(h+\mathcal{V}_k^A).
 \]
 By Lemma \ref{R-V-est}, \eqref{R-est} to be precise, together with \cite[Claim 5.8]{DM09}, we get
 \[
\| R(h)-R(h+\mathcal{V}_k^A)\|_{L_{[t,+\infty)}^{\frac{8}{5}}L_x^{\frac{20}{13}}} + \| R(h)-R(h+\mathcal{V}_k^A)\|_{L_{[t,+\infty)}^{\frac{8}{3}}L_x^{\frac{20}{7}}} \leq Ce^{-(e_0+\gamma_3)t}.
 \] 
We therefore infer that $\mathcal{V}_k^A$ satisfies the assumptions of Lemma \ref{lem:expo1} with $\gamma_1 = \gamma_3$ and $\gamma_2 = \gamma_3+e_0$ with $\gamma_2^- = \gamma_3 + \frac{e_0}{2}$. Invoking Lemma \ref{lem:expo0} yields \eqref{eq:uni3}. This completes Step 2.

{\bf Step 3:} We now conclude the proof by using \eqref{eq:uni3}with $\gamma = (k_0 + 1)e_0$, where $k_0$ is given by
Proposition \ref{sp-solns}, we get that for large $t > 0$,
\[
\|u-U^A\|_{X([t,+\infty))} = \|\mathcal{V}_{k_0}^A\|_{X([t,+\infty))} \leq Ce^{-(k_0+\frac{1}{2})e_0t}.
\]
Hence, by uniqueness in Proposition \ref{sp-solns}, we get as desired that $u = U^A$.
 \end{proof}
 

 \subsection{Proof of Theorem \ref{mainthm2}} We first show that if $A\neq 0$, $U^A$ is either equal to $Q^+$ (if $A>0$) or $Q^-$ (if $A<0$) up to a translation in time and a multiplication by a complex number of modulus $1$. Fix $A>0$ and choose $T_A$ such that $|A|e^{-e_0T_A} = 1$. By \eqref{sp-solns2} in Proposition \ref{sp-solns}, we have 
\begin{equation}\label{eq:last1}
\|e^{-iT_A}U^A(t+T_A) - e^{it}Q - e^{(i-e_0)t}\mathcal{Y}_+\|_{H^1(\R^5)}\leq Ce^{-2e_0(t+T_A)} \leq Ce^{-2e_0t}.
\end{equation}
We observe that $U^A(t+T_A)$ satisfies the hypotheses of Proposition \ref{prop:unique}, and thus, there exists $\tilde{A}$ such that $e^{-iT_A}U^A(t+T_A) = U^{\tilde{A}}$. In addition, by \eqref{eq:last1}, we know that $\tilde{A}=1$. Now letting $Q^+(x,t) = e^{-it_0}U^{+1}(x,t+t_0)$, we obtain $U^A=e^{iT_A}Q^+(t-T_A)$. A similar argument for $A<0$ gives $U^A= e^{iT_A}Q^-(t-T_A)$.

We are now equipped to present the proof of Theorem \ref{mainthm2}. Let $u$ be a (radial) solution of \eqref{H} satisfying $M[u]E[u]=M[Q]E[Q],$ rescaling $u$ we may assume $M[u]=M[Q]$ and $E[u]=E[Q]$.

The case (a), i.e., if $\|\nabla u_0\|_{L^2} > \|\nabla Q\|_{L^2}$. By assumption, $u$ does not blow-up for both finite positive and negative times. Replacing $u(x,t)$ by $u(x,-t)$ if necessary, we may assume that $u$ does not blow-up for positive times. Then by Proposition \ref{blow-up}, there exist $x_0\in\R^5$, $\gamma_0\in\R$, and $c,C>0$ such that  
\[
\|u-e^{it+i\gamma_0}Q(\cdot+x_0)\|_{H^1(\R^5)}\leq Ce^{-ct}.
\]
Hence, $e^{-i\gamma_0}u(x+x_0,t)$ satisfies the assumptions of Proposition \ref{prop:unique}, which shows that  $e^{-i\gamma_0}u(x+x_0,t)= U^A$ for some $A$. Since $\|\nabla u_0\|_{L^2} > \|\nabla Q\|_{L^2}$, the parameter $A$ must be positive, and hence $u=Q^+$ up to the symmetries of the equation.

The case (b), i.e., $\|\nabla u_0\|_{L^2} = \|\nabla Q\|_{L^2}$, follows from the variational characterization of $Q$.

The case (c), i.e., $\|\nabla u_0\|_{L^2} < \|\nabla Q\|_{L^2}$ is similar to the case (a). We know (by assumption) $u$ does not scatter in both time directions. Replacing $u(x,t)$ by $u(x,-t)$ if necessary, we may assume that $u$ does not scatter for positive times. By Proposition \ref{main-sub}, there exist $x_0\in\R^5$, $\gamma_0\in\R$, and $c,C>0$ such that  
\[
\|u-e^{it+i\gamma_0}Q(\cdot+x_0)\|_{H^1(\R^5)}\leq Ce^{-ct}.
\]
Invoking a similar argument as in the case (a), we obtain that for some parameter $A<0$ 
\[
e^{-i\gamma_0}u(x+x_0,t)= U^A = e^{-tT_A}Q^-(t+T_A).
\]
Hence, $u=Q^-$ up to the symmetries of the equation. This finishes the proof of Theorem \ref{mainthm2}.


\bibliography{Andy-references}
\bibliographystyle{abbrv}
\end{document}